\long\def\forget#1{}
\def\quotes#1{{''#1''}} 
\DeclareMathOperator{\Quot}{Quot}
\DeclareMathOperator{\Spec}{Spec}
\DeclareMathOperator{\id}{id}
\DeclareMathOperator{\coker}{coker}
\DeclareMathOperator{\im}{im}
\newcommand{\sep}{{\rm sep}}
\newcommand{\alg}{{\rm alg}}
\DeclareMathOperator{\inv}{inv}
\newcommand{\op}{{\rm op}}
\DeclareMathOperator{\weight}{wt}
\DeclareMathOperator{\supp}{supp}
\DeclareMathOperator{\Hom}{Hom}
\DeclareMathOperator{\End}{End}
\DeclareMathOperator{\Isog}{Isog}
\DeclareMathOperator{\QHom}{QHom}
\DeclareMathOperator{\QEnd}{QEnd}
\DeclareMathOperator{\QIsog}{QIsog}
\DeclareMathOperator{\Gal}{Gal}
\newtheoremstyle{statement}%
{2\parskip}
{\parskip}
{\itshape}
{}
{\bfseries}
{}
{\newline}
{}
\newtheoremstyle{notice}%
{2\parskip}
{\parskip}
{}
{}
{\itshape}
{}
{\newline}
{}
\newtheorem{all}{all}[section]
\theoremstyle{plain}
\newtheorem{Definition}[all]{Definition}
\newtheorem{Lemma}[all]{Lemma}
\newtheorem{Proposition}[all]{Proposition}
\newtheorem{Corollary}[all]{Corollary}
\newtheorem{Theorem}[all]{Theorem}
\theoremstyle{remark}
\theoremstyle{definition}
\newtheorem{Remark}[all]{Remark}
\newtheorem{Example}[all]{Example}
\def\Proofof#1{Proof $($#1\,$)$.} 
\newenvironment{suchthat}
{\setlength{\parskip}{0ex}%
\begin{enumerate}\setlength{\parskip}{0ex}\setlength{\itemsep}{0ex}%
}{%
\end{enumerate}%
}%
\def\II#1{{[\,#1\,]}}  
\def\dual#1{{#1}^\vee} 
\def\Z{\mathbb{Z}} 
\def\N{\mathbb{N}} 
\def\Ff{{\mathbb{F}}} 
\def\Fq{{\mathbb{F}_q}} 
\def\Fs{{\mathbb{F}_s}} 
\def\PP{\mathbb{P}} 
\def\O{{\cal O}} 
\def\I{{\cal J}} 
\def\F{{\cal F}} 
\def\OC{{\O_C}} 
\def\OS{{\O_S}}
\def\FF{{\underline{\F}}} 
\def\ZZ{{\underline{0}}} 
\def\ulM{{\underline{M\!}\,}{}}
\def\ulN{{\underline{N\!}\,}{}}
\def\ulTM{{\underline{\,\,\wt{\!\!M}\!}\,}{}}
\def\ulTN{{\underline{\wt{N}\!}\,}{}}
\def\ulHM{{\underline{\hat M\!}\,}{}}
\def\ulHN{{\underline{\hat N\!}\,}{}}
\def\P{{\mit\Pi}} 
\def\t{{\mit\tau}} 
\def\s{{\sigma^*}} 
\def\TP{\widetilde{\P}} 
\def\Tt{\tilde{\tau}}
\def\TFF{\widetilde{\FF}}
\def\chr{\varepsilon} 
\def\otimesidOCL#1{\!\otimes1} 
\def\matr#1#2#3#4{\left(\genfrac{}{}{0pt}{}{#1}{#2}\,\genfrac{}{}{0pt}{}{#3}{#4}\right)}
\def\tmatr#1#2#3#4{{\textstyle\Big(\genfrac{}{}{0pt}{}{#1}{#2}\,\genfrac{}{}{0pt}{}{#3}{#4}\Big)}}
\def\smatr#1#2#3#4{{\scriptstyle\big(\genfrac{}{}{0pt}{}{#1}{#2}\,\genfrac{}{}{0pt}{}{#3}{#4}\big)}}
\def\tvect#1#2{{\textstyle\big(\genfrac{}{}{0pt}{}{#1}{#2}\big)}}
\def\TA{\tilde{A}}
\def\Lsep{{L^{\sep}}}
\def\CFs{{C_\Fs}} 
\def\Av{{A_v}} 
\def\AvG{{\Av\II{G}}} 
\def\Qv{{Q_v}} 
\def\QvG{{\Qv\II{G}}} 
\def\cl#1{{\overline{#1}}} 
\def\FS{{\cl{\Ff}_s}} 
\def\VvFF{{V_v\FF}} 
\def\GalFSFs{{\Gal(\FS/\Fs)}} 
\def\xQv{\otimes_Q\Qv}
\def\Fss{{\Ff_{s'}}}
\def\xFss{\otimes_\Fs\Fss}
\def\smallexact#1#2#3#4#5#6#7#8#9{%
{\,#1\rightarrow#3\rightarrow#5%
\ifthenelse{\equal{#7}{}}{}{\rightarrow#7%
\ifthenelse{\equal{#9}{}}{}{\rightarrow#9}}%
\,}}
\def\exact#1#2#3#4#5#6#7#8#9{%
{#1\longrightarrow#3\longrightarrow#5%
\ifthenelse{\equal{#7}{}}{}{\longrightarrow#7%
\ifthenelse{\equal{#9}{}}{}{\longrightarrow#9}}%
\,}}
\def\bigexact#1#2#3#4#5#6#7#8#9{%
\begin{CD}%
#1 @>{#2}>> #3 @>{#4}>> #5 
\ifthenelse{\equal{#7}{}}{}{@>{#6}>> #7 
\ifthenelse{\equal{#9}{}}{}{@>{#8}>> #9 }}
\end{CD}%
}
\newcommand{\DS}{\displaystyle}
\newcommand{\TS}{\textstyle}
\newcommand{\SC}{\scriptstyle}
\newcommand{\SSC}{\scriptscriptstyle}
\let\setminus\smallsetminus
\DeclareMathOperator{\Id}{Id}
\DeclareMathOperator{\charakt}{char}
\def\isoto{\stackrel{\;\sim\es}{\longrightarrow}}
\newcommand{\longto}{\longrightarrow}
\newcommand{\onto}{\mbox{\mathsurround=0pt \;$\longrightarrow \hspace{-0.7em} \to$\;}}
\newcommand{\es}{\enspace}
\DeclareMathOperator{\rk}{rk}
\newcommand{\wt}{\widetilde}
\newcommand{\wh}{\widehat}
\newcommand{\Fa}{{\mathfrak{a}}}
\newcommand{\ulK}{{\ul K}}
\newcommand{\BF}{{\mathbb{F}}}
\newcommand{\CM}{{\cal{M}}}
\newcommand{\ul}[1]{{\underline{#1}}}
\newcommand{\dbl}{{\mathchoice{\mbox{\rm [\hspace{-0.15em}[}}
                              {\mbox{\rm [\hspace{-0.15em}[}}
                              {\mbox{\scriptsize\rm [\hspace{-0.15em}[}}
                              {\mbox{\tiny\rm [\hspace{-0.15em}[}}}}
\newcommand{\dbr}{{\mathchoice{\mbox{\rm ]\hspace{-0.15em}]}}
                              {\mbox{\rm ]\hspace{-0.15em}]}}
                              {\mbox{\scriptsize\rm ]\hspace{-0.15em}]}}
                              {\mbox{\tiny\rm ]\hspace{-0.15em}]}}}}
\newcommand{\dpl}{{\mathchoice{\mbox{\rm (\hspace{-0.15em}(}}
                              {\mbox{\rm (\hspace{-0.15em}(}}
                              {\mbox{\scriptsize\rm (\hspace{-0.15em}(}}
                              {\mbox{\tiny\rm (\hspace{-0.15em}(}}}}
\newcommand{\dpr}{{\mathchoice{\mbox{\rm )\hspace{-0.15em})}}
                              {\mbox{\rm )\hspace{-0.15em})}}
                              {\mbox{\scriptsize\rm )\hspace{-0.15em})}}
                              {\mbox{\tiny\rm )\hspace{-0.15em})}}}}
\def\?{\ 
???\ \immediate\write16{}
\immediate\write16{Warning: There was still a question mark . . . }
\immediate\write16{}}
\newcommand{\BHAaa}{Proposition~7.4}   
\newcommand{\BHAbb}{Proposition~4.2}   
\newcommand{\BHAcc}{Remark~5.5}        
\newcommand{\BHAdd}{Proposition~9.11}  
\newcommand{\BHAee}{Theorem~9.9}       
\newcommand{\BHAff}{Proposition~1.6}   
\newcommand{\BHAgghh}{Theorems~8.6 and 8.7}
\newcommand{\BHAii}{Theorem~7.8}       
\newcommand{\BHAjj}{Proposition~7.3}   
\newcommand{\BHAkk}{Proposition~6.10}  
\newcommand{\BHAmm}{Proposition~9.4}   
\newcommand{\BHAllmm}{Propositions~6.5 and 9.4}
\newcommand{\BHAnn}{Proposition~5.1}   
\newcommand{\BHAoo}{Theorem~3.1}       
\newcommand{\BHApp}{Corollary~3.5}     
\newcommand{\BHAqq}{Theorem~9.5}       
\newcommand{\BHArr}{Corollary~5.4}     
\newcommand{\BHAss}{Proposition~1.2}   
\newcommand{\BHAtt}{Theorem~9.8}       
\newcommand{\BHAuu}{Definition~3.2}    
\begin{document}

\author{Matthias Bornhofen, Urs Hartl
\footnote{The second author acknowledges support of the Deutsche Forschungsgemeinschaft in form of DFG-grant HA3006/2-1 and SFB 478}
}

\title{Pure Anderson Motives over Finite Fields}

\date{October 30, 2008} 

\maketitle

\begin{abstract}
\noindent
In the arithmetic of function fields Drinfeld modules play the role that elliptic curves take on in the arithmetic of number fields. As higher dimensional generalizations of Drinfeld modules, and as the appropriate analogues of abelian varieties, G.~Anderson introduced pure $t$-motives. In this article we study the arithmetic of the latter. We investigate which pure $t$-motives are semisimple, that is, isogenous to direct sums of simple ones. We give examples for pure $t$-motives which are not semisimple. Over finite fields the semisimplicity is equivalent to the semisimplicity of the endomorphism algebra, but also this fails over infinite fields. Still over finite fields we study the Zeta function and the endomorphism rings of pure $t$-motives and criteria for the existence of isogenies. We obtain answers which are similar to Tate's famous results for abelian varieties. 

\noindent
{\it Mathematics Subject Classification (2000)\/}: 
11G09,  
(13A35, 
16K20)  
\end{abstract}


\thispagestyle{empty}

\section*{Introduction}
\addcontentsline{toc}{section}{Introduction}

In the last decades the Arithmetic of Function Fields has acquired great impetus caused by Drinfeld's~\cite{Drinfeld,Drinfeld3} invention of the concepts of \emph{elliptic modules} (today called \emph{Drinfeld modules}) and \emph{elliptic sheaves} in the 1970s. Both are analogues of elliptic curves. The latter live in the Arithmetic of Number Fields, like their higher dimensional generalizations abelian varieties. In \cite{BH_A,Hl} we claimed that pure Anderson motives (a slight generalization of the pure $t$-motives introduced by Anderson~\cite{Anderson}) and abelian $\tau$-sheaves should be viewed as the appropriate analogues for abelian varieties and higher dimensional generalizations of elliptic sheaves or modules. We want to further support this claim in the present article by developing the theory of pure Anderson motives over finite fields.

To give the definition of pure Anderson motives let $C$ be a connected smooth projective curve over $\Ff_q$, let $\infty\in C(\Ff_q)$ be a fixed point, and let $A=\Gamma(C\setminus\{\infty\},\O_C)$. For a field $L\supset\Fq$ let $\s$ be the endomorphism of $A_L:=A\otimes_\Fq L$ sending $a\otimes b$ to $a\otimes b^q$ for $a\in A$ and $b\in L$. Let $c^\ast:A\to L$ be an $\Fq$-homomorphism and let $J=(a\otimes 1-1\otimes c^\ast(a):a\in A)\subset A_L$. A \emph{pure Anderson motive $\ulM=(M,\tau)$ of rank $r$, dimension $d$ and characteristic $c^\ast$} consists of a locally free $A_L$-module $M$ of rank $r$ and an $A_L$-homomorphism $\t:\s M:= M\otimes_{A_L,\s}A_L\to M$ with $\dim_L\coker\t=d$ and $J^d\cdot\coker\t=0$, such that $M$ possesses an extension to a locally free sheaf $\CM$ on $C\times_\Fq L$ on which $\t^l:(\s)^l\CM\to\CM(k\cdot\infty)$ is an isomorphism near $\infty$ for some positive integers $k$ and $l$. The last condition is the purity condition. The ratio $\frac{k}{l}$ equals $\frac{d}{r}$ and is called the \emph{weight of $\ulM$}. Anderson's definition of pure $t$-motives~\cite{Anderson} is recovered by setting $C=\PP^1_\Fq$ and $A=\Fq[t]$. 
In the first two sections we recall the definition of morphisms and isogenies between pure Anderson motives as well as some facts from \cite{BH_A}. Also for an isogeny $f$ between pure Anderson motives we define the degree of $f$ as an ideal of $A$ (\ref{Def1.7.6}) which annihilates $\coker f$ (\ref{Prop3.28a}). If $\ulM$ is a semisimple (see below) pure Anderson motive over a finite field, the degree of any isogeny $f:\ulM\to\ulM$ is a principal ideal and has a canonical generator (\ref{Prop3.4.1}). In particular $f$ has a canonical dual.

Next we address the question whether every pure Anderson motive is \emph{semisimple}, that is, isogenous to a direct sum of simple pure Anderson motives. A pure Anderson motive is called \emph{simple} if it has no non-trivial factor motives. This question is the analogue of the classical theorem of Poincar\'e-Weil on the semisimplicity of abelian varieties. By giving a counterexample (Example~\ref{Ex3.1}) we demonstrate that the answer to this question is negative in general. On the positive side we show that every pure Anderson motive over a finite base field becomes semisimple after a field extension whose degree is a power of $q$ (\ref{Thm3.8b}), and then stays semisimple after any further field extension (\ref{Cor3.8c}). Let $Q$ be the function field of $C$. Then the endomorphism $Q$-algebra $\QEnd(\ulM):=\End(\ulM)\otimes_A Q$ of a semisimple pure Anderson motive is semisimple (\ref{QEND-DIVISION-MATRIX}) and over a finite field also the converse is true (\ref{Thm3.8}). This is false however over an infinite field (\ref{Ex3.10c}).

Like for abelian varieties the behavior of a pure Anderson motive $\ulM$ over a finite field is controlled by its Frobenius endomorphism $\pi$ (defined in \ref{Def2.19b}). If $\ulM$ is semisimple we determine the dimension and the local Hasse invariants of its endomorphism $Q$-algebra $\QEnd(\ulM)$ in terms of $\pi$ (\ref{Thm3.5a}, \ref{THEOREM-2}). We define a Zeta function $Z_\ulM$  for a pure Anderson motive $\ulM$ (Definition~\ref{DefZetaFkt}) and we show that it satisfies the Riemann hypothesis (\ref{ThmRH}), and has an expression in terms of the degrees $\deg(1-\pi^i)$ for all $i$ if $\ulM$ is semisimple (\ref{ThmZeta}). We prove the following isogeny criterion.

\bigskip

\noindent
{\bfseries Theorem \ref{THEOREM-1}.} {\it Let $\ulM$ and $\ulM'$ be semisimple pure Anderson motives over a finite field and let $\pi$, respectively $\pi'$, be their Frobenius endomorphisms. Then the following are equivalent:
\begin{enumerate}
\item 
$\ulM$ and $\ulM'$ are isogenous.
\item 
The characteristic polynomials of $\pi$ and $\pi'$ acting on the $v$-adic Tate modules of $\ulM$, respectively $\ulM'$, coincide for some (any) place $v\in \Spec A$.
\item 
There exists an isomorphism of $Q$-algebras $\QEnd(\ulM)\cong\QEnd(\ulM')$ mapping $\pi$ to $\pi'$.
\item 
$Z_\ulM=Z_{\ulM'}$.
\end{enumerate}
}

In the last section we sketch a few results for the question, which orders of $\QEnd(\ulM)$ occur as the endomorphism rings of pure Anderson motives (\ref{ThmW3.13}, \ref{ThmW6.1}). There is a relation between the breaking up of the isogeny class of a semisimple pure Anderson motive into isomorphism classes, and the arithmetic of $\QEnd(\ulM)$. We indicate this by treating the case of pure Anderson motives defined over the minimal field $\Fq$. In this case $\QEnd(\ulM)$ is commutative (\ref{ThmW6.1}).
Many of our results parallel Tate's celebrated article \cite{Tat} on abelian varieties over finite fields. To prove them, a major tool are the Tate modules and local shtuka attached to pure Anderson motives, which we recall in Sections~\ref{SectTateModules} and \ref{SectLS}, and the analogue~\cite{Taguchi95b,Tam} of Tate's conjecture on endomorphisms. 
These local structures behave like in the classical case of abelian varieties, local shtuka playing the role of the $p$-divisible groups of the abelian varieties. The only difference is that $p$-divisible groups are only useful for abelian varieties in characteristic $p$, whereas the local shtuka at any place of $Q$ are important for the investigation of abelian $\tau$-sheaves and pure Anderson motives. One of the aims of this article is to demonstrate the utility of local shtuka. For instance we apply them in the computation of the Hasse invariants of $\QEnd(\ulM)$ in Theorem~\ref{THEOREM-2}. We also used them in \cite{BH_A} to reprove the standard fact that the set of morphisms between two pure Anderson motives is a projective $A$-module (\ref{ThmT.3}). 
Scattered in the text are several interesting examples displaying various phenomena 
(\ref{Ex3.1}, \ref{Ex3.10c}, \ref{LAST-EXAMPLE}, \ref{Ex3.15}).
Note that there is a two in one version \cite{BH} of the present article and \cite{BH_A} on the arXiv.

\setcounter{tocdepth}{1}
\tableofcontents

\section*{Notation}

In this article we denote by

\vspace{2mm}
\noindent
\begin{tabular}{@{}p{0.25\textwidth}@{}p{0.75\textwidth}@{}}
$\Fq$& the finite field with $q$ elements and characteristic $p$, \\
$C$& a smooth projective geometrically irreducible curve over $\Fq$, \\
$\infty\in C(\Fq)$& a fixed $\Fq$-rational point on $C$, \\
$A = \Gamma(C\setminus\{\infty\},\OC)$& the ring of regular functions on $C$ outside $\infty$, \\
$Q = \Fq(C) =\Quot(A)$& the function field of $C$, \\
$Q_v$& the completion of $Q$ at the place $v\in C$, \\
$A_v$& the ring of integers in $Q_v$. For $v\ne\infty$ it is the completion of $A$ at $v$.\\
$\BF_v$ & the residue field of $A_v$. In particular $\BF_\infty\cong\Fq$.
\end{tabular}

\vspace{2mm}
\noindent
For a field $L$ containing $\Fq$ we write

\vspace{2mm}
\noindent
\begin{tabular}{@{}p{0.25\textwidth}@{}p{0.75\textwidth}@{}}
$C_L=C\times_{\Spec\Fq}\Spec L$,\\[1mm]
$A_L=A\otimes_\Fq L$,\\[1mm]
$Q_L=Q\otimes_\Fq L$,\\[1mm]
$A_{v,L}=A_v\wh\otimes_\Fq L$ & for the completion of $\O_{C_L}$ at the closed subscheme $v\times\Spec L$,\\[1mm]
$Q_{v,L}=A_{v,L}[\frac{1}{v}]$.& Note that this is not a field if $\BF_v\cap L\supsetneq\Fq$,\\[1mm]
\end{tabular}
\begin{tabular}{@{}p{0.25\textwidth}@{}p{0.75\textwidth}@{}}
${\rm Frob}_q:L\to L$ & for the $q$-Frobenius endomorphism mapping $x$ to $x^q$,\\[1mm]
$\sigma = \id_C\times\Spec({\rm Frob}_q)$& for the endomorphism of $C_L$ which acts as the identity on the points and on $\O_C$ and as the $q$-Frobenius on $L$,\\
$\s$ & for the endomorphisms induced by $\sigma$ on all the above rings. For instance $\s(a\otimes b)=a\otimes b^q$ for $a\in A$ and $b\in L$.\\
$\s M=M\otimes_{A_L,\s}A_L$ & for an $A_L$-module $M$ and similarly for the other rings.
\end{tabular}

\forget{
\vspace{2mm}
\noindent
All schemes, as well as their products and morphisms, are supposed to be over $\Spec\Fq$. 
Let $S$ be a scheme. We denote by

\vspace{2mm}
\noindent
\begin{tabular}{@{}p{0.25\textwidth}@{}p{0.75\textwidth}@{}}
$\sigma_S: S\rightarrow S$& its $q$-Frobenius endomorphism which acts identically on points of $S$ and as the $q$-power map on the structure sheaf $\OS$, \\
$C_S = C\times S$& the base extension of $C$ from $\Spec\Fq$ to $S$, \\
$\sigma = \id_C\times\sigma_S$& the endomorphism on $C_S$ which acts identically on $C$ and as the $q$-Frobenius on $S$. 
\end{tabular}
}

\vspace{2mm}
\noindent
For a divisor $D$ on $C$ we denote by $\O_{C_L}(D)$ the invertible sheaf on $C_L$ whose sections $\varphi$ have divisor $(\varphi)\ge-D$. 
For a coherent sheaf $\F$ on $C_L$ we set $\F(D) := \F\otimes_{\O_{C_L}}\O_{C_L}(D)$. This notation applies in particular to the divisor $D = n\cdot\infty$ for $n\in\Z$.

\vspace{2mm}
\noindent
We will fix the further notation $\pi,F,E,\mu_\pi,\pi_v,F_v,E_v$, and $\chi_v$
in formula (\ref{Eq3.1}) on page~\pageref{Eq3.1}.

\bigskip


\section{Pure Anderson Motives and Abelian $\tau$-Sheaves}
\setcounter{equation}{0}

Pure Anderson motives were introduced by G.\ Anderson~\cite{Anderson} under the name \emph{pure $t$-motives} in the case where $A=\BF_p[t]$. They were further studied in \cite{BH_A}. To give their definition let $L$ be a field extension of $\Fq$ and fix an $\Fq$-homomorphism $c^\ast:A\to L$. Let $J\subset A_L$ be the ideal generated by $a\otimes 1-1\otimes c^\ast(a)$ for all $a\in A$. 

\begin{Definition}[pure Anderson motives] \label{Def1'.1}
A \emph{pure Anderson motive $\ulM=(M,\tau)$ of rank $r$, dimension $d$, and characteristic $c^\ast$ over $L$} consists of a locally free $A_L$-module $M$ of rank $r$ and an injective $A_L$-module homomorphism $\tau:\s M\to M$ such that
\begin{suchthat}
\item 
the cokernel of $\t$ is an $L$-vector space of dimension $d$ and annihilated by $J^d$, and 
\item 
$M$ extends to a locally free sheaf $\CM$ of rank $r$ on $C_L$ such that for some positive integers $k,l$ the map $\tau^l:=\tau\circ\s(\tau)\circ\ldots\circ(\s)^{l-1}(\tau):(\s)^l M\to M$ induces an isomorphism $(\s)^l\CM_\infty\to\CM(k\cdot\infty)_\infty$ of the stalks at $\infty$. 
\end{suchthat}
We call $\chr:=\ker c^\ast\in \Spec A$ the \emph{characteristic point} of $\ulM$ and we say that $\ulM$ has \emph{finite characteristic} (respectively \emph{generic characteristic}) if $\chr$ is a closed (respectively the generic) point. The ratio $\weight(M,\tau):=\frac{k}{l}$ equals $\frac{d}{r}$ and is called the \emph{weight} of $(M,\tau)$; see~\cite[\BHAss]{BH_A}.
\end{Definition}

\begin{Definition} (Compare \cite[4.5]{PT})\label{Def1'.2}
\begin{suchthat}
\item 
A \emph{morphism} $f:(M,\tau)\to (M',\tau')$ between Anderson motives of the same characteristic $c^\ast$ is a morphism $f:M\to M'$ of $A_L$-modules which satisfies $f\circ\tau=\tau'\circ\s(f)$.
\item 
If $f:\ulM\to\ulM'$ is surjective, $\ulM'$ is called a \emph{factor motive} of $\ulM$.
\item 
A morphism $f:\ulM\to \ulM'$ is called an \emph{isogeny} if $f$ is injective with torsion cokernel.
\item 
An isogeny is called \emph{separable} (respectively \emph{purely inseparable})
if the induced morphism \\
$\t:\s\coker f\to\coker f$ is an isomorphism (respectively is \emph{nilpotent}, that is, if for some $n$ the morphism $\t\circ\s\t\circ\ldots\circ(\s)^n\t$ is zero).
\end{suchthat}
We denote the set of morphisms between $\ulM$ and $\ulM'$ by $\Hom(\ulM,\ulM')$. It is an $A$-module.
\end{Definition}

If $\ulM$ and $\ulM'$ are pure Anderson motives of different weights then $\Hom(\ulM,\ulM')=\{0\}$ by \cite[\BHApp]{BH_A}. This justifies the terminology \emph{pure}.
The following fact is well known. A proof can be found for instance in \cite[\BHAqq]{BH_A}.

\begin{Theorem}  \label{ThmT.3}\label{CorT.4}
Let $\ulM$ and $\ulM'$ be pure Anderson motives over an arbitrary field $L$. Then $\Hom(\ulM,\ulM')$ is a projective $A$-module of rank $\le rr'$.
The minimal polynomial of every endomorphism of a pure Anderson motive $\ulM$ lies in $A[x]$. \qed
\end{Theorem}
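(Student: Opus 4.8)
The plan is to realize $\Hom(\ulM,\ulM')$ as a module of $\tau$-invariants inside a projective $A_L$-module, and then to control it place by place over $\Spec A$; the assertion about minimal polynomials will be a formal consequence of the first part. Put $N:=\Hom_{A_L}(M,M')$ and $N':=\Hom_{A_L}(\s M,M')$. Since $M$ and $M'$ are locally free of finite rank over $A_L$, both $N$ and $N'$ are finitely generated projective $A_L$-modules of rank $rr'$; and because $A_L=A\otimes_\Fq L$ is free over $A$, they are flat, hence torsion-free, as $A$-modules. The rule $\delta(f):=f\circ\tau-\tau'\circ\s(f)$ defines an additive, $A$-linear map $\delta\colon N\to N'$ (it is not $A_L$-linear, since $\s$ is only $A$-linear), and by Definition~\ref{Def1'.2} one has $\Hom(\ulM,\ulM')=\ker\delta$. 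Thus $\Hom(\ulM,\ulM')$ is an $A$-submodule of the torsion-free module $N$, hence torsion-free over the Dedekind domain $A$; it therefore suffices to show that it is finitely generated of generic rank at most $rr'$, since a finitely generated torsion-free module over a Dedekind domain is automatically projective.

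To bound the rank I would pass to the realizations of Sections~\ref{SectTateModules} and~\ref{SectLS}. For $v\in\Spec A$ with $v\neq\chr$ the rational $v$-adic Tate module $V_v\ulM$ is a $Q_v$-vector space of dimension $r$, and the $v$-adic realization is faithful on the isogeny category: it embeds $\QHom(\ulM,\ulM')\otimes_QQ_v$ into $\Hom_{Q_v}(V_v\ulM,V_v\ulM')$, which has dimension $rr'$; hence the generic rank $n_0:=\dim_Q\bigl(\Hom(\ulM,\ulM')\otimes_AQ\bigr)$ is at most $rr'$. Integrally, the same functoriality yields for each $v$ — now also for $v=\chr$, using the local shtuka of $\ulM$, resp.\ $\ulM'$, at $\chr$, which is free of rank $r$ — an $A$-linear map $\rho_v\colon\Hom(\ulM,\ulM')\to\Hom_{A_v}(T_v\ulM,T_v\ulM')$ into a free $A_v$-module of rank $rr'$; this map is injective, because any $g$ in its kernel kills all torsion realizations $T_v\ulM/v^n$ and hence — by the étale structure of the Tate module away from $\chr$, resp.\ by the structure of the local shtuka at $\chr$ — satisfies $g(M)\subseteq\bigcap_nv^nM'=0$ (Krull's intersection theorem in the Noetherian domain $A_L$; $A_L$ is a domain because $C$ is geometrically irreducible). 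Finally, since the residue fields $\BF_v$ are finite, each quotient $\Hom(\ulM,\ulM')/v^n$ is finite, and one gets that a morphism with $g(M)\subseteq v^nM'$ is already divisible by $v^n$ in $\Hom(\ulM,\ulM')$, $v$ being a non-zero-divisor on $M'$ and on the defining relation.

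The remaining step — which I expect to be the main obstacle — is to pass from this place-wise finiteness to finite generation over $A$ itself; one place never suffices, as is shown by non-finitely-generated torsion-free modules of finite rank such as $A[w^{-1}]$ or a fractional ideal whose denominators have unbounded support. What is needed is that $\Hom(\ulM,\ulM')$ sits $v$-integrally inside a finite free module \emph{simultaneously for every} $v\in\Spec A$ — via the Tate modules when $v\neq\chr$ and via the local shtuka at $\chr$, which is precisely why the local-shtuka formalism is indispensable here — so that the localizations $\Hom(\ulM,\ulM')\otimes_AA_{(v)}$ form a coherent family of $A_{(v)}$-lattices in the $n_0$-dimensional $Q$-vector space $\Hom(\ulM,\ulM')\otimes_AQ$; then $\Hom(\ulM,\ulM')=\bigcap_v\bigl(\Hom(\ulM,\ulM')\otimes_AA_{(v)}\bigr)$ is finitely generated projective of rank $n_0\le rr'$. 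Making this coherence precise (in particular, pinning down the integral structure at all places at once, where purity of $\ulM$ and the model $\CM$ on $C_L$ also enter) is the technical heart of the argument, and I would carry it out following \cite[\BHAqq]{BH_A}.

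Finally, for the statement on minimal polynomials, let $\phi\in\End(\ulM)$. By the first part $\End(\ulM)$ is a finitely generated module over the Noetherian ring $A$, hence so is the $A$-subalgebra $A[\phi]$; therefore $\phi$ is integral over $A$ and satisfies some monic polynomial in $A[x]$. The minimal polynomial $m_\phi\in Q[x]$ of $\phi$ — the monic generator of the kernel of the evaluation map $Q[x]\to\QEnd(\ulM)$, $x\mapsto\phi$ — divides that monic polynomial in $Q[x]$, so the roots of $m_\phi$ in an algebraic closure of $Q$ are integral over $A$; consequently the coefficients of $m_\phi$, being elementary symmetric functions of those roots, lie in $Q$ and are integral over $A$, hence in $A$, since $A$ is integrally closed ($C$ being smooth). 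Thus $m_\phi\in A[x]$, as claimed.
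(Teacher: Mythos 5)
You have the right skeleton, but there is a genuine gap, and it sits exactly where you yourself locate the ``technical heart'': finite generation of $\Hom(\ulM,\ulM')$ over $A$ is never proved. The paper offers no argument for this theorem either (it only cites \cite[\BHAqq]{BH_A}), so deferring to that reference does not yield a proof, and the substitute strategy you sketch is insufficient as stated. Knowing that $H:=\Hom(\ulM,\ulM')$ is torsion-free, that $H\otimes_AQ$ is finite dimensional, that every localization $H\otimes_AA_{(v)}$ is an $A_{(v)}$-lattice, and that $H=\bigcap_v H\otimes_AA_{(v)}$ does \emph{not} imply finite generation: for $A=\Fq[t]$ the $A$-submodule of $Q$ generated by all $1/f$, with $f$ running over the monic irreducibles, has all of these properties and is not finitely generated. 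What is missing is precisely that the local lattices agree with the localizations of one fixed lattice at almost all places; that statement is the global finiteness itself, so calling it ``coherence'' and postponing it leaves the main assertion unproved. A smaller gap of the same kind: the rank bound rests on the asserted injectivity of $\QHom(\ulM,\ulM')\otimes_QQ_v\to\Hom_{Q_v}(V_v\ulM,V_v\ulM')$, which is true but needs an argument (flatness of $A_v$ over $A$, the inclusion $H\subset\Hom_{A_L}(M,M')$, and $T_v\ulM\otimes_{A_v}A_{v,\Lsep}\cong M\otimes_{A_L}A_{v,\Lsep}$ for $v\ne\chr$); moreover your parallel map ``also for $v=\chr$'' into $\Hom_{A_v}(T_v\ulM,T_v\ulM')$ is not meaningful, since the local shtuka at $\chr$ is not \'etale and there is no Tate module there --- but no such map at $\chr$ is needed anyway.

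In fact the missing step can be closed far more cheaply than by an all-places argument; one place suffices. Since $\s$ fixes $A$, your map $\delta$ satisfies $\delta(af)=a\,\delta(f)$ for $a\in A$, so $H=\ker\delta$ is a \emph{saturated} $A$-submodule of $N:=\Hom_{A_L}(M,M')$: if $ax\in H$ with $0\ne a\in A$ and $x\in N$, then $a\,\delta(x)=0$, hence $\delta(x)=0$ because $\Hom_{A_L}(\s M,M')$ is $A$-torsion-free; therefore $H=(H\otimes_AQ)\cap N$ inside $N\otimes_AQ$. Now grant $\dim_Q(H\otimes_AQ)\le rr'$ from a single place $v\ne\chr$, choose a $Q$-basis $h_1,\dots,h_n\in H$ of $H\otimes_AQ$, embed $N$ as a direct summand of a finite free $A_L$-module, and write $A_L=\bigoplus_\alpha A\,l_\alpha$ for an $\Fq$-basis $(l_\alpha)$ of $L$: the $h_j$ involve only finitely many $\alpha$, so taking $l_\alpha$-coordinates embeds $(H\otimes_AQ)\cap N$ $A$-linearly into a finite free $A$-module; by Noetherianity of $A$ it is finitely generated, and by torsion-freeness projective. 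With this (or the actual argument of \cite[\BHAqq]{BH_A}) inserted, your outline becomes a proof; your final paragraph on minimal polynomials is correct as it stands, being a formal consequence of finite generation and the normality of $A$.
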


\begin{Corollary} (\cite[\BHArr]{BH_A}) \label{Cor1.11b}
Let $f:\ulM\to\ulM'$ be an isogeny between pure Anderson motives. Then
\begin{suchthat}
\item
there exists an element $a\in A$ which annihilates $\coker f$,
\item 
there exists a dual isogeny $\dual{f}:\ulM'\to\ulM$ such that $f\circ\dual{f}=a\cdot\id_{\ulM'}$ and $\dual{f}\circ f=a\cdot\id_\ulM$.
\end{suchthat}
\end{Corollary}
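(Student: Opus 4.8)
The plan is to deduce part (ii) from part (i) by the usual factorization argument, and to prove (i) by comparing the determinants of $f$ and of $\tau$ on $\Spec A_L$.

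\emph{Part (ii) from part (i).} Suppose $a\in A\setminus\{0\}$ annihilates $\coker f$, so $aM'\subseteq f(M)$. Since $f$ is injective, each $m'\in M'$ has a unique preimage of $am'$ under $f$, and sending $m'$ to it defines an $A_L$-linear map $\dual f\colon M'\to M$ with $f\circ\dual f=a\cdot\id_{M'}$. I would check that $\dual f$ is a morphism of Anderson motives by composing $\dual f\circ\tau'$ and $\tau\circ\s(\dual f)$ with $f$ on the left: using $f\circ\tau=\tau'\circ\s(f)$ and $\s(a)=a$ (valid because $a\in A$) one gets $f\circ(\dual f\circ\tau')=a\tau'=f\circ(\tau\circ\s(\dual f))$, hence $\dual f\circ\tau'=\tau\circ\s(\dual f)$ by injectivity of $f$. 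The relation $\dual f\circ f=a\cdot\id_\ulM$ follows the same way (apply $f$ to both sides), and it gives $aM=\dual f(f(M))\subseteq\dual f(M')$, so $\coker\dual f$ is a quotient of $M/aM$ and hence torsion; moreover $\dual f$ is injective because $f\circ\dual f$ is. Thus $\dual f$ is the required dual isogeny.

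\emph{Part (i).} Since $f$ is injective with torsion cokernel between locally free $A_L$-modules and $A_L$ is a Dedekind domain ($C$ being geometrically irreducible), $M$ and $M'$ have the same rank $r$, and $N:=\coker f$ is of finite length over $A_L$. As $f$ is a nonzero morphism, $\ulM$ and $\ulM'$ have the same weight by \cite[\BHApp]{BH_A}, hence the same dimension $d$. I would then apply $\wedge^r_{A_L}$ to $f\circ\tau=\tau'\circ\s(f)$ and pass to zero divisors of determinants on $\Spec A_L$: writing $\operatorname{div}(\varphi)$ for the zero divisor of $\det\varphi$ (for $\varphi$ an injective morphism of rank‑$r$ locally free modules with torsion cokernel), additivity of $\operatorname{div}$ under composition together with $\operatorname{div}(\s\varphi)=\sigma^{\ast}\operatorname{div}(\varphi)$ (exactness of $\s$) yields
\[
\operatorname{div}(\wedge^r f)+\operatorname{div}(\wedge^r\tau)=\operatorname{div}(\wedge^r\tau')+\sigma^{\ast}\operatorname{div}(\wedge^r f).
\]
But $\coker\tau$ and $\coker\tau'$ are annihilated by $J^d$, hence supported on the single closed point $\mathfrak p:=V(J)$, whose residue field is $L$ and at which both have length $d$; so $\operatorname{div}(\wedge^r\tau)=d\,[\mathfrak p]=\operatorname{div}(\wedge^r\tau')$, these terms cancel, and we are left with $\sigma^{\ast}\operatorname{div}(\wedge^r f)=\operatorname{div}(\wedge^r f)$.

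\emph{Conclusion of (i), and the main obstacle.} It remains to pass from this $\sigma$‑invariance to a nonzero $a\in A$ killing $N$; equivalently, to show $\operatorname{supp}N=\operatorname{supp}\operatorname{div}(\wedge^r f)$ meets no closed point of $\Spec A_L$ above the generic point $\eta$ of $\Spec A$. Neither claim nor hypothesis changes when $L$ is replaced by $\bar L$ (faithful flatness of $L\to\bar L$), so I would assume $L=\bar L$; then $\sigma$ is an automorphism of $C_L$, a closed point of $\Spec A_L$ over $\eta$ corresponds to an $\Fq$‑embedding $Q\hookrightarrow L$, and $\sigma$ sends it to the embedding obtained by composing with the inverse Frobenius $\lambda\mapsto\lambda^{1/q}$ of $L$. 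Hence $\sigma^n$ fixes such a point only if $Q$ embeds into $\Ff_{q^n}$, which is impossible, so $\sigma$ has no finite orbit among these points; being effective with finite, $\sigma$‑stable support, $\operatorname{div}(\wedge^r f)$ therefore avoids them. Thus $\operatorname{supp}N$ lies above finitely many closed points of $\Spec A$, and a suitable power of the product of the corresponding maximal ideals of $A$ annihilates $N$. I expect this ``verticality'' of $\coker f$ over $\Spec A$, forced by the $\tau$‑structure, to be the crux of the argument; the factorization producing $\dual f$ and the exterior‑power bookkeeping are routine once one knows $r=r'$ and $d=d'$.
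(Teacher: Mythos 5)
Your proof is correct. Note that the paper gives no argument of its own for Corollary~\ref{Cor1.11b}: it is simply quoted from \cite[\BHArr]{BH_A}, so there is no internal proof to compare against. What you prove from scratch --- that $\supp\coker f$ lies over finitely many closed points of $\Spec A$ --- is exactly the ``verticality'' statement that appears in this paper only as condition~1 of Proposition~\ref{PROP.1.42A} (support of the cokernel contained in $D\times\Spec L$ for a finite closed $D\subset C$), and once that is known, your deductions of (i) and of the dual isogeny in (ii) are the standard ones. Your route to the verticality (cancel the contributions of $\tau,\tau'$ at $V(J)$, using $r=r'$ and $d=d'$ via the weight argument, to get $\sigma$-invariance of $\operatorname{div}(\wedge^r f)$, then observe that closed points of $\Spec A_{\bar L}$ over the generic point of $\Spec A$ have infinite $\sigma$-orbits) is sound and self-contained. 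Two small points of hygiene: carry out the reduction to $L=\bar L$ \emph{before} invoking $\operatorname{div}(\s\varphi)=\sigma^{\ast}\operatorname{div}(\varphi)$, since over an imperfect $L$ the morphism $\sigma$ is not an automorphism and the transformation of multiplicities needs interpretation, whereas over $\bar L$ the map $\s$ is a ring automorphism and the statement is immediate; and at the very end take a \emph{nonzero} element $a$ of the annihilating ideal $(v_1\cdots v_k)^n\subset A$, since part (ii) needs $a\neq 0$.
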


Next we come to the notion of abelian ($\tau$-)sheaves. It was introduced in~\cite{Hl} in order to construct moduli spaces for pure Anderson motives. We briefly recall the results from \cite{BH_A} on the relation between pure Anderson motives and abelian $\tau$-sheaves. Although our primary interest is on pure Anderson motives we present abelian $\tau$-sheaves here because they can have characteristic point $\chr=\infty\in C$ in contrast to pure Anderson motives, and many results for the later extend to this more general situation. Moreover, some results are proved most naturally via the use of abelian $\tau$-sheaves (e.g.\ \ref{THEOREM-2} and \ref{Prop3.4.1} below). The fact that $\chr=\infty$ is allowed for abelian $\tau$-sheaves was crucial for the uniformization of the moduli spaces of pure Anderson motives in \cite{Hl} and the derived consequences on analytic uniformization of pure Anderson motives.
Let $L\supset\Fq$ be a field and fix a morphism $c: \Spec L\rightarrow C$. Let $\I$ be the ideal sheaf on $C_L$ of the graph of $c$. Let $r$ and $d$ be non-negative integers.

\begin{Definition}[Abelian $\tau$-sheaf]\label{Def1.1}
An \emph{abelian $\tau$-sheaf $\FF = (\F_i,\P_i,\t_i)$ of rank $r$, dimension $d$ and characteristic $c$ over $L$} is a collection of locally free sheaves $\F_i$ on $C_L$ of rank $r$ together with injective morphisms $\P_i,\t_i$ of\/ $\O_{C_L}$-modules $(i\in\Z)$ of the form
\[
\begin{CD}
\cdots @>>> 
\F_{i-1} @>{\P_{i-1}}>> 
\F_{i}   @>{\P_{i}}>> 
\F_{i+1} @>{\P_{i+1}}>> 
\cdots 
\\ & & 
@AA{\t_{i-2}}A 
@AA{\t_{i-1}}A 
@AA{\t_{i}}A 
\\
\cdots @>>> 
\s\F_{i-2} @>{\s\P_{i-2}}>> 
\s\F_{i-1} @>{\s\P_{i-1}}>> 
\s\F_{i}   @>{\s\P_{i}}>> 
\cdots 
\\
\end{CD}
\]
subject to the following conditions:
\begin{suchthat}
\item the above diagram is commutative,
\item there exist integers\/ $k,l>0$ with\/ $ld=kr$ such that the morphism $\P_{i+l-1}\circ\cdots\circ\P_i$ identifies $\F_i$ with the subsheaf\/ $\F_{i+l}(-k\cdot\infty)$ of\/ $\F_{i+l}$ for all $i\in\Z$,
\item $\coker\P_i$ is an $L$-vector space of dimension $d$ for all $i\in\Z$,
\item $\coker\t_i$ is an $L$-vector space of dimension $d$ and annihilated by $\I^d$ for all $i\in\Z$.
\end{suchthat}
We call $\chr:=c(\Spec L)\in C$ the \emph{characteristic point} and say that $\FF$ has \emph{finite} (respectively \emph{generic}) \emph{characteristic} if $\chr$ is a closed (respectively the generic) point. 
If $r\ne0$ we call\/ $\weight(\FF):=\frac{d}{r}$ the \emph{weight of $\FF$}.
\end{Definition}

\noindent
{\it Remark.} \label{PROP.2}
1. If $\FF$ is an abelian $\tau$-sheaf and $D$ a divisor on $C$, then $\FF(D) := (\F_i(D),\P_i\otimesidOCL{D},\t_i\otimesidOCL{D})$
is an abelian $\tau$-sheaf of the same rank and dimension as $\FF$.

2. Let $\FF$ be an abelian $\tau$-sheaf and let $n\in\Z$. We denote by $\FF\II{n} := (\F_{i+n},\P_{i+n},\t_{i+n})$
the \emph{$n$-shifted abelian $\tau$-sheaf of\/ $\FF$} whose collection of\/ $\F$'s, $\P$'s and $\t$'s is just shifted by\/ $n$.

\medskip

\begin{Definition}
A \emph{morphism} $f$ between two abelian $\tau$-sheaves $\FF = (\F_i,\P_i,\t_i)$ and $\FF' = (\F'_i,\P'_i,\t'_i)$ of the same characteristic $c:\Spec L\to C$ is a collection of morphisms $f_i: \F_i \rightarrow \F'_i$ $(i\in\Z)$ which commute with the $\P$'s and the $\t$'s, that is, $f_{i+1}\circ\P_i=\P'_i\circ f_i$ and $f_{i+1}\circ\t_i=\t'_i\circ\s f_i$. We denote the set of morphisms between $\FF$ and $\FF'$ by $\Hom(\FF,\FF')$. It is an $\Fq$-vector space.
\end{Definition}

For example, the collection of morphisms $(\P_i):\, \FF\rightarrow\FF\II{1}$ defines a morphism between the abelian $\tau$-sheaves $\FF$ and $\FF\II{1}$. 

\begin{Definition}
Let $\FF$ and $\FF'$ be abelian $\tau$-sheaves and let $f \in \Hom(\FF,\FF')$ be a morphism. 
Then $f$ is called \emph{injective} (respectively \emph{surjective}, respectively an \emph{isomorphism}), if\/ $f_i$ is injective (respectively surjective, respectively bijective) for all\/ $i\in\Z$.
We call $\FF$ an \emph{abelian factor $\tau$-sheaf} of\/ $\FF'$, if there is a surjective morphism from $\FF'$ onto $\FF$.
\end{Definition}

If $\FF=(\F_i,\P_i,\t_i)$ is an abelian $\tau$-sheaf of rank $r$, dimension $d$, and characteristic $c:\Spec L\to C$ with $\chr=\im c\ne\infty$ then
\begin{equation}\label{Eq1.1}
\ulM(\FF)\;:=\;(M,\tau)\;:=\;\Bigl(\Gamma(C_L\setminus\{\infty\},\F_0)\,,\,\P_0^{-1}\circ\t_0\Bigr)
\end{equation}
is a pure Anderson motive of the same rank and dimension and of characteristic $c^\ast:A\to L$. Conversely we have the following result.

\begin{Proposition} (\cite[\BHAoo]{BH_A}) \label{PropX.1}
Let $(M,\tau)$ be a pure Anderson motive of rank $r$, dimension $d$, and characteristic $c^\ast:A\to L$ over $L$. Then $(M,\tau)=\ulM(\FF)$ for an abelian $\tau$-sheaf $\FF$ over $L$ of same rank and dimension with characteristic $c:=\Spec c^\ast:\Spec L\to \Spec A\subset C$. One can even find the abelian $\tau$-sheaf $\FF$ with $k,l$ relatively prime.
\end{Proposition}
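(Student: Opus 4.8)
The plan is to construct the abelian $\tau$-sheaf $\FF$ directly from the data of $(M,\tau)$, using the purity extension $\CM$ of $M$ near $\infty$ as the building block, and then to arrange the coprimality of $k$ and $l$ by a shifting trick. First I would recall that by the purity condition there exists a locally free sheaf $\CM$ on $C_L$ extending $M$ together with integers $k_0,l_0>0$ such that $\tau^{l_0}$ induces an isomorphism $(\s)^{l_0}\CM_\infty\isoto\CM(k_0\cdot\infty)_\infty$ near $\infty$; by passing to $\CM(n\cdot\infty)$ for suitable $n$ and iterating $\tau$ one may assume the extension is chosen so that $\tau$ itself (not just a power) maps into $\CM$ globally, i.e.\ $\tau:\s\CM\to\CM$ is an $\O_{C_L}$-morphism extending the given $\tau$ on $M$. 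The point is that $\tau$ is already defined on $M=\Gamma(C_L\setminus\{\infty\},\CM)$ as an honest morphism, so the only issue is its behaviour at $\infty$, which the purity data controls.

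Next I would define the chain $(\F_i)_{i\in\Z}$. The natural choice is to set, for $0\le i<l$, a sequence of sheaves interpolating between $\CM$ and $\CM(k\cdot\infty)$ via the factorization of $\tau^l$ near $\infty$, and then to extend periodically by $\F_{i+l}=\F_i(k\cdot\infty)$. Concretely: $\F_i$ should be the subsheaf of $\CM(k\cdot\infty)$ generated near $\infty$ by $\CM$ together with $\tau\circ\s\tau\circ\cdots\circ(\s)^{i-1}\tau$ applied to $(\s)^i\CM$, and away from $\infty$ just $\CM$ itself (so that $\Gamma(C_L\setminus\{\infty\},\F_i)=M$ for every $i$, which is why $\ulM(\FF)=(M,\tau)$ comes out correctly with $\F_0=\CM$). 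The maps $\P_i:\F_i\hookrightarrow\F_{i+1}$ are the natural inclusions, which are isomorphisms away from $\infty$ and have cokernel supported at $\infty$; the maps $\t_i:\s\F_i\to\F_{i+1}$ are induced by $\tau$. One then checks: condition (a) (commutativity) is automatic from the construction; condition (b) with $ld=kr$ holds because $\weight(M,\tau)=d/r$ by Definition~\ref{Def1'.1}, forcing $k/l=d/r$ after matching degrees via $\det$; condition (d) ($\coker\t_i\cong\coker\tau$, an $L$-space of dimension $d$ killed by $\I^d=J^d$) is inherited from the corresponding property of $\tau$ in Definition~\ref{Def1'.1}; condition (c) ($\coker\P_i$ of dimension $d$) follows by a length computation at $\infty$ using the factorization of $\tau^l$ and again $ld=kr$. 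Since $\chr\ne\infty$, the ideal sheaf $\I$ of the graph of $c$ agrees with $J$ in a neighbourhood of the support of all the cokernels, so the two characteristic-point conventions match.

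The coprimality refinement is the remaining point. Write $k=e k'$, $l=e l'$ with $\gcd(k',l')=1$; I would show the chain $(\F_i)$ can be replaced by a coarser chain realizing the period $l'$ with shift $k'$. The idea is that the "fast" map $\tau^{l}$ already factors through $\tau^{l'}$ composed $e$ times, and each such block $(\s)^{l'}\CM\to\CM(k'\cdot\infty)$ is already an isomorphism near $\infty$ by the purity data applied with the smaller exponents; so one re-indexes, keeping only $\F_0,\F_1,\dots,\F_{l'-1}$ as fundamental domain and extending by $\F_{i+l'}=\F_i(k'\cdot\infty)$. Because $M$ and $\tau$ are unchanged, this new abelian $\tau$-sheaf still satisfies $\ulM(\FF)=(M,\tau)$, and now $\gcd(k',l')=1$.

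I expect the main obstacle to be verifying conditions (b) and (c) simultaneously — that is, pinning down the precise sheaves $\F_i$ near $\infty$ so that each inclusion $\P_i$ has cokernel exactly of dimension $d$ while the $l$-fold composite reproduces the twist by $k\cdot\infty$ on the nose. This is essentially a local problem at $\infty$ about filtering the lattice chain associated to the $\sigma$-linear map $\tau$, and getting the bookkeeping of lengths to distribute evenly across the $l$ steps (rather than bunching up) is the delicate part; here one uses the freedom to twist $\CM$ by divisors supported at $\infty$ and, if necessary, the elementary divisor / Smith normal form description of $\tau$ over the discrete valuation ring $\O_{C_L,\infty}$. Everything else — functoriality, the identity $\Gamma(C_L\setminus\{\infty\},\F_0)=M$, and recovering $(M,\tau)$ — is formal.
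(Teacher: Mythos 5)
Your overall strategy---leave $M$ and $\tau$ untouched away from $\infty$ and reduce everything to a lattice problem at the stalk at $\infty$---is the right one (and is the shape of the argument in \cite[\BHAoo]{BH_A}, to which the present paper defers), but two of your intermediate claims are false and the step you yourself flag as ``the main obstacle'' is in fact the entire content of the proposition. First, you cannot normalize the extension so that $\tau:\s\CM\to\CM$ is a morphism of sheaves on all of $C_L$: if some lattice $\hat\Lambda$ in the stalk at $\infty$ satisfied $\tau(\s\hat\Lambda)\subset\hat\Lambda$, then $\tau^{nl}((\s)^{nl}\hat\Lambda)\subset\hat\Lambda$ for all $n$, whereas comparing with the commensurable lattice $\CM_\infty$ the purity isomorphism forces $\tau^{nl}((\s)^{nl}\hat\Lambda)$ to contain $z^{-nk+O(1)}\hat\Lambda$, a contradiction since $k>0$. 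Indeed the local isoshtuka at $\infty$ is isoclinic of negative slope $-\weight(\ulM)$ and contains no $\tau$-stable lattice, as the paper itself remarks after the definition of $\ulN_\infty(\FF)$; twisting by $n\cdot\infty$ twists source and target alike and does not help. Second, the coprimality step is not automatic: purity of $\CM$ with exponents $(k,l)=(ek',el')$ does \emph{not} imply that $\tau^{l'}$ already induces an isomorphism onto $\CM(k'\cdot\infty)_\infty$. For example, for $r=2$ take $\tau(e_1)=e_2$, $\tau(e_2)=z^{-2}e_1$ at $\infty$: then $\tau^2$ identifies $\CM_\infty$ with $\CM(2\cdot\infty)_\infty$, but $\tau(\CM_\infty)\ne\CM(1\cdot\infty)_\infty$. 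Only the existence of some \emph{other} lattice with the coprime periodicity is true, and producing it is the same problem as the main construction, not a re-indexing of the chain you already have.

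That main construction is what is missing. Since $\chr\ne\infty$, condition 4 of Definition~\ref{Def1.1} forces each $\t_i$ to be an isomorphism at $\infty$ (the ideal $\I$ is the unit ideal there), so the stalks at $\infty$ of the whole chain are determined by $\F_0$ via $(\F_{i+1})_\infty=\tau((\s\F_i)_\infty)$. Hence the proposition is equivalent to finding a single lattice $\hat M_0$ in the isoshtuka at $\infty$ with $\hat M_0\subset\tau(\s\hat M_0)$ of colength exactly $d$ and with $\tau^l((\s)^l\hat M_0)=z^{-k}\hat M_0$ for $k=d/\gcd(d,r)$, $l=r/\gcd(d,r)$; one then takes $\F_i$ to be the modification of $\CM$ at $\infty$ with stalk $\tau^i((\s)^i\hat M_0)$, and all axioms, including $\Gamma(C_L\setminus\{\infty\},\F_0)=M$, follow. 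Producing $\hat M_0$ is a genuine statement about isoclinic local isoshtuka (e.g.\ by an averaging construction of the type $\hat M_0=\sum_{j}z^{\lceil jk_0/l_0\rceil}\tau^j((\s)^j\CM_\infty)$, or via the classification of isoclinic isocrystals used in \cite{Hl}); the Smith normal form of $\tau$ over $\O_{C_L,\infty}$ that you invoke concerns a single $\sigma$-semilinear map and does not by itself control the iterates $\tau^j$ or guarantee that the colengths distribute evenly. Until this lattice is actually constructed, neither the existence statement nor the coprimality refinement is proved.
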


\forget{

Let $\FF$ be an abelian $\tau$-sheaf. 
Consider a finite closed subset $D\subset C$ such that either $\infty\in D$ or there exists a uniformizing parameter $z$ at infinity inside $\TA:=\Gamma(C\setminus D,\O_C)$. Note that by enlarging $D$ it will always be possible to find such a $z\in\TA$ in the case $\infty\not\in D$. 

If $\infty\in D$ we have by the $\P$'s a chain of isomorphisms
\[
\bigexact{\rule[-1.5ex]{0pt}{5ex}\cdots}%
{\sim\quad\;\;\;}
{\Gamma(C_L\setminus D,\F_{-1})}{\sim\quad\;\;\;}
{\Gamma(C_L\setminus D,\F_{ 0})}{\sim\quad\;\;\;}
{\Gamma(C_L\setminus D,\F_{ 1})}{\sim\quad\;\;\;}
{\cdots}
\]
since $\coker\P_i$ is only supported at $\infty$ for all $i\in\Z$. So we set $M:=\Gamma(C_L\setminus D,\F_{0})$ and $\t:=(\P_0|_{C_L\setminus D})^{-1}\circ\t_0|_{C_L\setminus D}$, and we define $\ulM^{(D)}(\FF):=(M,\t)$. Obviously, $\ulM^{(D)}(\FF)$ is a $\t$-module on $\TA$ and $\ulM^{(\infty)}(\FF)$ is the pure Anderson motive $\ulM(\FF)$ studied above.

If $\infty\notin D$ fix $z$ as above. Set $M_i:=\Gamma(C_L\setminus D,\F_i)$ and define
\[
\ulM^{(D)}(\FF) \,:=\, M_0\oplus\dots\oplus M_{l-1}\qquad\text{with} 
\qquad 
\t \,:=\, \left(
\begin{array}{cccc}
0 & \cdots & 0 & \TP^{-1}\circ z^k\t_{l-1} \\
\t_0 & \ddots & & 0\qquad\qquad \\
& \ddots & \ddots & \vdots\qquad\qquad \\
0 & & \t_{l-2} & 0\qquad\qquad
\end{array}
\right)
\]
and $\TP:=\P_{l-1}\circ\dots\circ\P_0$. Clearly $\t$ depends on the choice of $k$, $l$, and $z$. Again $\ulM^{(D)}(\FF)$ is a $\t$-module on $\TA:=\Gamma(C\setminus D,\O_C)$. Notice that $\coker\t$ is supported on $\Gamma(c)\cap(C_L\setminus D)$.

\begin{Definition}\label{Def1.17}
We call\/ $\ulM^{(D)}(\FF)$ the \emph{$\t$-module on $\TA$ associated to $\FF$}. We abbreviate $\ulM^{(\{\infty\})}(\FF)$ to $\ulM(\FF)$.
\end{Definition}

If $\infty\notin D$ the $\t$-module $\ulM^{(D)}(\FF)$ is equipped with the endomorphisms
\begin{equation} \label{EQ.Pi+Lambda}
\P \,:=\, \left(
\begin{array}{@{\!}cccc@{\!}}
0 & \cdots & 0 & \TP^{-1}\circ z^k\P_{l-1} \\
\P_0 & \ddots & & 0\qquad\qquad \\
& \ddots & \ddots & \vdots\qquad\qquad \\
0 & & \P_{l-2} & 0\qquad\qquad
\end{array}
\right) , \enspace
\Lambda(\lambda) \,:=\, \left(
\begin{array}{cccc}
\!\!\lambda \cdot\id_{M_0}\!\!\!\!& & & \\[2mm]
& \!\!\!\!\lambda^q \cdot\id_{M_1}\!\!\!\!& & \\
& & \!\!\!\!\ddots \!\!\!\!& \\[1mm]
& & & \!\!\!\!\lambda^{q^{l-1}}\cdot\id_{M_{l-1}}\!\!
\end{array}
\right)
\end{equation}
for all $\lambda\in \Ff_{q^l}\cap L$. Actually $\Lambda(\lambda)$ is even an automorphism and the same holds for $\P$ if $z$ has no zeroes on $C\setminus(D\cup\{\infty\})$. They satisfy the relations $\P^l=z^k$ and $\P\circ\Lambda(\lambda^q)=\Lambda(\lambda)\circ\P$. 

}


\bigskip

\section{Isogenies and Quasi-Isogenies}
\setcounter{equation}{0}

We recall the basic facts about isogenies from \cite{BH_A}.

\begin{Proposition} (\cite[\BHAnn]{BH_A}) \label{PROP.1.42A}
  Let $f:\FF\to\FF'$ be a morphism between two abelian $\tau$-sheaves $\FF = (\F_i,\P_i,\t_i)$ and\/ $\FF' = (\F'_i,\P'_i,\t'_i)$. Then the following assertions are equivalent:
\begin{suchthat}
\item 
$f$ is injective and the support of all\/ $\coker f_i$ is contained in $D\times\Spec L$ for a finite closed subscheme $D\subset C$,
\item 
$f$ is injective and $\FF$ and $\FF'$ have the same rank and dimension,
\item 
$\FF$ and $\FF'$ have the same weight and the fiber $f_{i,\eta}$ at the generic point $\eta$ of $C_L$ is an isomorphism for some (any) $i\in\Z$.
\end{suchthat}
\end{Proposition}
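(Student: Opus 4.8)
The plan is to prove the equivalences along the cycle \emph{(i)$\Rightarrow$(ii)$\Rightarrow$(iii)$\Rightarrow$(i)}. I use throughout that $\CL$ is an integral Noetherian scheme of dimension one (because $C$ is smooth and geometrically irreducible over $\Fq$) and that $\infty_L:=\{\infty\}\times\Spec L$ is a single closed point of $\CL$. The structural fact to record first is that $\coker\P_i$ is supported at $\infty_L$ for every $i$: the purity condition identifies $\P_{i+l-1}\circ\cdots\circ\P_i$ with the inclusion $\F_{i+l}(-k\cdot\infty)\hookrightarrow\F_{i+l}$, whose cokernel is supported at $\infty_L$, and iterating the short exact sequence $0\to\coker g\to\coker(hg)\to\coker h\to 0$ (valid for any composable pair $g,h$ of injective morphisms of coherent sheaves) exhibits $\coker\P_i$ as a subsheaf of $\coker(\P_{i+l-1}\circ\cdots\circ\P_i)$. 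Consequently each $\P_i$, and likewise each $\P'_i$, restricts to an isomorphism over $\CL\setminus\{\infty\}$; and since $\coker\P_i$ and $\coker\t_i$ are finite-dimensional $L$-vector spaces, their stalks at the generic point $\eta$ of $\CL$ vanish, so $\P_{i,\eta}$, $\t_{i,\eta}$ and their primed analogues are isomorphisms.

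\emph{(i)$\Rightarrow$(ii).} Since $D$ is a finite set of closed points, $D\times\Spec L$ is zero-dimensional, so each $\coker f_i$ has finite length $n_i$; being an injection of locally free sheaves of ranks $r,r'$ with torsion cokernel, $f_i$ has isomorphic generic fibre, so $r=r'$. Additivity of cokernel lengths along the two factorisations of $f_{i+1}\circ\P_i=\P'_i\circ f_i$ gives $d+n_{i+1}=n_i+d'$, so $(n_i)_{i\in\Z}$ is an arithmetic progression of non-negative integers, forcing $d=d'$. \emph{(ii)$\Rightarrow$(iii).} Equal rank and dimension give equal weight, and $f_{i,\eta}$ is an injective $\kappa(\eta)$-linear map between spaces of the common dimension $r=r'$, hence an isomorphism.

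\emph{(iii)$\Rightarrow$(i).} Propagating along $f_{j+1}\circ\P_j=\P'_j\circ f_j$, using that $\P_{j,\eta},\P'_{j,\eta}$ are isomorphisms, shows $f_{i,\eta}$ is an isomorphism for \emph{all} $i$. Then $\ker f_i$ is a torsion subsheaf of the torsion-free $\F_i$ with vanishing generic fibre, hence zero, so $f$ is injective; and $\coker f_i$ has vanishing stalk at $\eta$, hence is a torsion sheaf supported at finitely many closed points. As $\P_j,\P'_j$ are isomorphisms over $\CL\setminus\{\infty\}$, the relation $f_{j+1}\circ\P_j=\P'_j\circ f_j$ yields $\coker f_{j+1}\cong\coker f_j$ there, so $Z:=\supp\coker f_0\setminus\{\infty_L\}$ is independent of $j$ and $\bigcup_j\supp\coker f_j\subseteq Z\cup\{\infty_L\}$ is a fixed finite set of closed points. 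It remains to see that none of these lies over the generic point $\eta_C$ of $C$; granting this, their image $D$ in $C$ is a finite closed subscheme with $\supp\coker f_i\subseteq D\times\Spec L$ for all $i$, which is (i). Suppose some $x\in\supp\coker f_0$ lay over $\eta_C$. On the complement in $\CL$ of $\infty_L$ and the graph of $c$ — which, outside the case of generic characteristic, contains every point over $\eta_C$ — the $\t_j,\t'_j$ are isomorphisms, so $f_{j+1}\circ\t_j=\t'_j\circ\s f_j$ gives $\coker f_{j+1}\cong\s(\coker f_j)$; intersecting supports with the $\sigma$-stable set of points over $\eta_C$ and invoking the $l$-periodicity of the supports from the $\P$-chain, the finite set $S$ of points of $\supp\coker f_0$ lying over $\eta_C$ satisfies $\sigma^l(S)\subseteq S$, so if $S\ne\varnothing$ it contains a point fixed by some power $\sigma^N$. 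But no closed point of $\CL$ over $\eta_C$ is fixed by a power of $\sigma$: such a point would be defined over $Q\otimes_\Fq L_0$ with $L_0:=\{x\in L:x^{q^N}=x\}$ a finite field, hence would be the generic point of a positive-dimensional closed subscheme of $\CL$, not a closed point. Therefore $S=\varnothing$ and $x$ cannot exist. (In the generic-characteristic case the graph of $c$ is itself a point over $\eta_C$ and must be excluded by a short extra argument.)

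The implications (i)$\Rightarrow$(ii)$\Rightarrow$(iii) and all of (iii)$\Rightarrow$(i) except the final support statement are routine bookkeeping with ranks, lengths and generic fibres. I expect the main obstacle to be precisely that last step — ruling out that the $\coker f_i$ are supported at ``horizontal'' closed points of $\CL$ lying over the generic point of $C$. This is the only place where the $\tau$-structure (the Frobenius periodicity of the supports) is genuinely needed, and making the Frobenius-orbit argument airtight in full generality — especially in generic characteristic and for non-finitely-generated $L$ — will require some care; alternatively one appeals to the detailed proof in \cite[\BHAnn]{BH_A}.
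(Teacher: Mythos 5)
The paper itself does not reprove this proposition; it is quoted from the companion paper \cite[\BHAnn]{BH_A}, so I can only assess your argument on its own terms. Your preliminary observations and the directions (i)$\Rightarrow$(ii)$\Rightarrow$(iii) are correct and complete: $\coker\P_i$ sits at $\infty$ by the purity/periodicity condition, generic isomorphy gives $r=r'$, and the bookkeeping $n_{i+1}-n_i=d'-d$ for the $\Z$-indexed non-negative integers $n_i=\dim_L\coker f_i$ is exactly the right mechanism to force $d=d'$. Likewise, in (iii)$\Rightarrow$(i) the propagation showing ``some $i$'' implies ``all $i$'', the deduction of injectivity and torsion cokernels, and the uniform finiteness of $\bigcup_i\supp\coker f_i$ via the $\P$-relation are all fine. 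Note also that the whole difficulty you isolate only arises when $L$ has positive transcendence degree over $\Fq$: otherwise $\CL$ has no closed points over $\eta_C$ at all.

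The gap is where you say it is, and it is twofold. First, your justification of the key lemma ``no closed point of $\CL$ over $\eta_C$ is fixed by a power of $\sigma$'' rests on the claim that such a point would be \emph{defined over} $Q\otimes_\Fq L_0$; since $L/L_0$ is not an algebraic extension, this is not Galois descent and is essentially as hard as the lemma itself, so as written it proves nothing. The honest argument is short but different: if $\sigma^N$ fixes the closed point $x$, it induces an endomorphism of the residue field $\kappa(x)$, which is finite over $L$ and contains the image of $Q$; this endomorphism is the identity on the image of $Q$ and the $q^N$-power map on the image of $L$, so applying it to the minimal polynomial over $L$ of a transcendental element $t$ coming from $Q$ shows all its coefficients lie in $\BF_{q^N}$, making $t$ algebraic over $\Fq$ --- a contradiction. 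Second, the generic-characteristic case you defer is not cosmetic: there the graph $\Gamma_c$ is itself a horizontal closed point at which the $\t_i,\t'_i$ fail to be isomorphisms, and your orbit argument only shows that $\Gamma_c$ is the \emph{sole} horizontal point that could carry support; it does not exclude it (and excluding it is precisely the nontrivial content that also underlies Corollary~\ref{Cor1.11b}). One way to finish: compare the $\Gamma_c$-parts of the cokernel of the equal composites $f_{i+1}\circ\t_i=\t'_i\circ\s f_i$; this gives $d+\dim_L(\coker f_{i+1})_{\Gamma_c}=d+\dim_L(\s\coker f_i)_{\Gamma_c}$, and the last term vanishes because $(\s\coker f_i)_{\Gamma_c}$ is governed by the stalk of $\coker f_i$ at $\sigma(\Gamma_c)$, which in generic characteristic is a horizontal point different from $\Gamma_c$ and hence, by your orbit analysis, not in the support; so $(\coker f_{i+1})_{\Gamma_c}=0$ for all $i$. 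With these two repairs your proof is complete; alternatively, as you note, one cites the proof in \cite[\BHAnn]{BH_A}.
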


\begin{Definition}
\begin{suchthat}
\item 
A morphism $f:\FF\to \FF'$ satisfying the equivalent conditions of Proposition~\ref{PROP.1.42A} is called an \emph{isogeny}.
We denote the set of isogenies between $\FF$ and $\FF'$ by $\Isog(\FF,\FF')$.
\item 
An isogeny $f:\FF\to\FF'$ is called \emph{separable} (respectively \emph{purely inseparable}) if for all $i$ the induced morphism $\t_i:\s\coker f_i\to\coker f_{i+1}$ is an isomorphism (respectively is \emph{nilpotent}, that is, $\t_{i}\circ\s\t_{i-1}\circ\ldots\circ(\s)^n\t_{i-n}=0$ for some $n$).
\end{suchthat}
\end{Definition}

The endomorphism rings of abelian $\tau$-sheaves are finite rings. But if we allow the (endo-)morphisms to have ``poles'' we get rings which are related to the endomorphism rings of the associated pure Anderson motives. We make the following:

\begin{Definition}[Quasi-morphism and quasi-isogeny]
Let $\FF$ and $\FF'$ be abelian $\tau$-sheaves.
\begin{suchthat}
\item A \emph{quasi-morphism} $f$ between $\FF$ and $\FF'$ is a morphism $f\in\Hom(\FF,\FF'(D))$ for some effective divisor $D$ on $C$.
\item A \emph{quasi-isogeny} $f$ between $\FF$ and $\FF'$ is an isogeny $f\in\Isog(\FF,\FF'(D))$ for some effective divisor $D$ on $C$. 
\end{suchthat}
\end{Definition}

If $D_1\le D_2$ the composition with the inclusion isogeny $\FF'(D_1)\subset\FF'(D_2)$ defines an injection $\Hom(\FF,\FF'(D_1))\hookrightarrow\Hom(\FF,\FF'(D_2))$. This yields an equivalence relation for quasi-morphisms and quasi-isogenies. We let $\QHom(\FF,\FF')$ and $\QIsog(\FF,\FF')$ be the set of quasi-morphisms, respectively quasi-isogenies, between $\FF$ and $\FF'$ modulo this equivalence relation. 
We write $\QEnd(\FF):=\QHom(\FF,\FF)$ and $\QIsog(\FF):=\QIsog(\FF,\FF)$.

The $Q$-vector spaces $\QHom(\FF,\FF')$ and $\QEnd(\FF)$ are finite dimensional, and 
$\QIsog(\FF)$ is the group of units in the $Q$-algebra $\QEnd(\FF)$, see~\cite[\BHAllmm]{BH_A}.

Two abelian $\tau$-sheaves $\FF$ and $\FF'$ are called \emph{quasi-isogenous} (notation: $\FF\approx\FF'$), if there exists a quasi-isogeny between $\FF$ and $\FF'$.
The relation $\approx$ is an equivalence relation. If $\FF\approx\FF'$, then 
 the $Q$-algebras $\QEnd(\FF)$ and $\QEnd(\FF')$ are isomorphic, and $\QHom(\FF,\FF')$ is free of rank $1$ both as a left module over $\QEnd(\FF')$ and as a right module over $\QEnd(\FF)$.

\begin{Proposition} (\cite[\BHAkk]{BH_A}) \label{CONNECTION}
Let $\FF$ and $\FF'$ be two abelian $\tau$-sheaves of characteristic $\chr\ne\infty$ and let $\ulM(\FF)$ and $\ulM(\FF')$ be their associated pure Anderson motives. Then there is a canonical isomorphism of $Q$-vector spaces
\[
\QHom(\FF,\FF') \;=\; \Hom(\ulM(\FF),\ulM(\FF'))\otimes_A Q\,.
\]
\end{Proposition}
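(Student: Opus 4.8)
The plan is to realise $\QHom(\FF,\FF')$ as the filtered colimit $\varinjlim_D\Hom(\FF,\FF'(D))$ over the effective divisors $D$ on $C$, and to evaluate this colimit by treating the part of $D$ at $\infty$ separately from the rest. Two observations make this possible. First, since $\F_0(n\cdot\infty)$ coincides with $\F_0$ on $C_L\setminus\{\infty\}$, and likewise for the $\P$'s and $\t$'s, one has $\ulM(\GG(n\cdot\infty))=\ulM(\GG)$ literally, for every abelian $\tau$-sheaf $\GG$ of characteristic $\ne\infty$ and every $n\ge 0$. Second, writing $D=D_0+n\cdot\infty$ with $\supp D_0\subset\Spec A$, the poset of effective divisors on $C$ is the product of the poset of such $D_0$'s with $\Z_{\ge0}$, so $\QHom(\FF,\FF')=\varinjlim_{D_0}\varinjlim_n\Hom(\FF,\FF'(D_0+n\cdot\infty))$. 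I would also record that $\ulM$ is a functor and that $f\mapsto\ulM(f)$ is injective: a morphism $(f_i)\colon\FF\to\GG$ of abelian $\tau$-sheaves is determined by $f_0$ (recover the other $f_i$ from $f_0$ via $f_{i+1}\P_i=\P'_if_i$, injectivity of the $\P$'s and torsion-freeness of the $\F$'s), and $f_0$ is determined by its restriction to the affine $C_L\setminus\{\infty\}$, i.e.\ by $\ulM(f)$. In particular, if $\FF$ and $\FF'$ have different weights then $\Hom(\FF,\FF'(D))\hookrightarrow\Hom(\ulM(\FF),\ulM(\FF'(D)))=0$ by \cite[\BHApp]{BH_A}, so both sides of the claimed identity vanish and we may assume $\weight(\FF)=\weight(\FF')$; after replacing $k,l$ by common multiples we may further assume the integers $k,l$ of Definition~\ref{Def1.1} are the same for $\FF$ and $\FF'$.

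The crux is to show that for each $D_0$ the functor $\ulM$ induces an isomorphism
\[
\varinjlim_n\Hom\bigl(\FF,\FF'(D_0+n\cdot\infty)\bigr)\;\xrightarrow{\ \sim\ }\;\Hom\bigl(\ulM(\FF),\ulM(\FF'(D_0))\bigr).
\]
Injectivity is the remark above. For surjectivity I would start from a $\tau$-equivariant $A_L$-homomorphism $h\colon\ulM(\FF)\to\ulM(\FF'(D_0))$, view it as a morphism of the associated locally free sheaves on $C_L\setminus\{\infty\}$ (no poles off $\infty$, as it is genuinely $A_L$-linear), and extend it over $\infty$ to some $f_0\colon\F_0\to\F'_0(D_0+n_0\cdot\infty)$ — possible because $\HOM(\F_0,\F'_0(D_0))$ is coherent on the curve $C_L$. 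I would then define the remaining components by conjugation, $f_i:=(\P'_{i-1}\circ\cdots\circ\P'_0)\circ f_0\circ(\P_{i-1}\circ\cdots\circ\P_0)^{-1}$, legitimate as a rational expression since the $\P$'s are isomorphisms away from $\infty$; the $\tau$-equivariance of $h$ together with the commutativity of the defining diagram of $\FF,\FF'$ shows that the $f_i$ commute with all $\P$'s and $\t$'s at the generic point of $C_L$, hence everywhere by torsion-freeness. The essential point is that the $f_i$ have poles of bounded order at $\infty$, uniformly in $i$: advancing the index by $l$ multiplies $(\P_{i+l-1}\circ\cdots\circ\P_i)^{-1}$ by a pole of order $k$ at $\infty$ by condition~(ii) of Definition~\ref{Def1.1}, but this is exactly cancelled by the order-$k$ vanishing at $\infty$ of $\P'_{i+l-1}\circ\cdots\circ\P'_i$ because $\FF$ and $\FF'$ have the same $k$; a short computation with elementary divisors at $\infty$ then bounds the pole order of every $f_i$ by $n_0+k$. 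Hence $f=(f_i)\in\Hom(\FF,\FF'(D_0+(n_0+k)\cdot\infty))$ and $\ulM(f)=h$, which proves surjectivity.

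To finish I would pass to the colimit over $D_0$. Each $\FF'(D_0)$ is again an abelian $\tau$-sheaf (Remark after Definition~\ref{Def1.1}) with $\ulM(\FF'(D_0))$ containing $\ulM(\FF')$ with torsion cokernel supported on $D_0$; as $D_0$ ranges over the effective divisors on $\Spec A$ these modules of sections exhaust $\ulM(\FF')\otimes_AQ$, since every element of $Q$ has bounded denominator. Because the underlying $A_L$-module of $\ulM(\FF)$ is finitely generated, $\Hom(\ulM(\FF),-)$ commutes with this filtered colimit, and clearing a denominator $a\in A$ of a $\tau$-equivariant homomorphism into $\ulM(\FF')\otimes_AQ$ preserves $\tau$-equivariance (note $\s$ fixes $A$); hence $\varinjlim_{D_0}\Hom(\ulM(\FF),\ulM(\FF'(D_0)))=\Hom(\ulM(\FF),\ulM(\FF'))\otimes_AQ$, where I also use that $\Hom(\ulM(\FF),\ulM(\FF'))$ is a finitely generated (indeed projective, by Theorem~\ref{ThmT.3}) $A$-module, so tensoring with $Q$ is the same as inverting $A\setminus\{0\}$. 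Combining this with the displayed isomorphism and with $\QHom(\FF,\FF')=\varinjlim_{D_0}\varinjlim_n\Hom(\FF,\FF'(D_0+n\cdot\infty))$ gives the asserted $Q$-linear isomorphism, which is canonical because each step is functorial. I expect the surjectivity in the middle paragraph — extending $h$ to all components with uniformly bounded poles at $\infty$ — to be the main obstacle, and it is precisely there that purity (the matching $k$) is used.
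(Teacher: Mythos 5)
The paper does not prove this proposition; it is quoted from \cite[\BHAkk]{BH_A}, so there is no in-text argument of the paper to measure yours against, and I can only judge it on its own terms. On those terms your argument is correct and contains the right mechanism: writing $\QHom(\FF,\FF')=\varinjlim_D\Hom(\FF,\FF'(D))$, splitting $D=D_0+n\cdot\infty$ and using $\ulM(\FF'(D_0+n\cdot\infty))=\ulM(\FF'(D_0))$, extending a $\tau$-equivariant homomorphism of the motives across $\infty$ by coherence of $\HOM(\F_0,\F'_0(D_0))$ on the projective curve $C_L$, propagating it to the other indices by conjugation with the $\P$'s, and obtaining a pole bound at $\infty$ that is uniform in $i$ from condition 2 of Definition~\ref{Def1.1} after arranging a common pair $(k,l)$ --- which is indeed exactly where purity and equality of weights enter, the unequal-weight case being dispatched by \cite[\BHApp]{BH_A} --- and finally identifying $\varinjlim_{D_0}\Hom(\ulM(\FF),\ulM(\FF'(D_0)))$ with $\Hom(\ulM(\FF),\ulM(\FF'))\otimes_A Q$ by clearing denominators, which works since multiplication by $a\in A$ commutes with $\t$ because $\s$ fixes $A$. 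Two routine points should still be spelled out: the components $f_i$ for $i<0$, defined by $f_i=(\P'_i)^{-1}\circ f_{i+1}\circ\P_i$ away from $\infty$ and bounded by the same periodicity argument; and the verification that the relations $f_{i+1}\circ\P_i=\P'_i\circ f_i$ and $f_{i+1}\circ\t_i=\t'_i\circ\s f_i$ hold as morphisms into the twisted sheaves $\F'_i(D_0+(n_0+k)\cdot\infty)$, which follows because both sides agree on the dense open subset $C_L\setminus\{\infty\}$ and the targets are locally free, hence torsion-free. The appeal to Theorem~\ref{ThmT.3} at the end is unnecessary (for any $A$-module, $\otimes_AQ$ is just localization), but harmless, and the $Q$-linearity of the composite bijection deserves one explicit sentence; neither point is a genuine gap.
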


If $\ulM$ and $\ulM'$ are pure Anderson motives, then the elements of $\Hom(\ulM,\ulM')\otimes_A Q$ which admit an inverse in $\Hom(\ulM',\ulM)\otimes_A Q$ are called \emph{quasi-isogenies}. With this definition we can reformulate Propositions~\ref{PropX.1} and \ref{CONNECTION} as follows.

\begin{Corollary}\label{Cor2.9d}
Let $\chr\ne\infty$. Then the functor $\FF\mapsto\ulM(\FF)$ defines an equivalence of categories between 
\begin{suchthat}
\item 
the category with abelian $\tau$-sheaves as objects and with $\QHom(\FF,\FF')$ as the set of morphisms,
\item 
 and the category with pure Anderson motives as objects and with $\Hom(\ulM,\ulM')\otimes_A Q$ as the set of morphisms.
\end{suchthat}
We call these the \emph{quasi-isogeny categories} of abelian $\tau$-sheaves of characteristic different from $\infty$ and of pure Anderson motives, respectively. \qed
\end{Corollary}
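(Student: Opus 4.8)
The plan is to exhibit $F:=\bigl(\FF\mapsto\ulM(\FF)\bigr)$ as a functor from category~(a) to category~(b), and then to verify that it is fully faithful and essentially surjective. Since any fully faithful and essentially surjective functor is an equivalence of categories, this suffices; and in fact full faithfulness and essential surjectivity are almost verbatim the statements of Propositions~\ref{CONNECTION} and~\ref{PropX.1}, so the only real work is checking that $F$ is well defined as a functor.

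First I would check that $F$ is a functor. On objects it is given by~(\ref{Eq1.1}). Each $\P_i$ restricts to an isomorphism over $C_L\setminus\{\infty\}$ (this is already implicit in the definition~(\ref{Eq1.1}), and follows from conditions (b),(c) of Definition~\ref{Def1.1}), so for an honest morphism $f=(f_i):\FF\to\FF'$ of abelian $\tau$-sheaves one obtains $F(f):=\Gamma(C_L\setminus\{\infty\},f_0):M\to M'$; the relations $f_1\circ\P_0=\P'_0\circ f_0$ and $f_1\circ\t_0=\t'_0\circ\s f_0$ give $F(f)\circ\tau=\tau'\circ\s F(f)$ with $\tau=\P_0^{-1}\circ\t_0$, so $F(f)$ is a morphism $\ulM(\FF)\to\ulM(\FF')$, and $F$ manifestly respects identities and composition. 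To pass to the quasi-isogeny categories, represent a quasi-morphism by some $f\in\Hom(\FF,\FF'(D))$ with $D$ an effective divisor; then $\Gamma(C_L\setminus\{\infty\},f_0)$ carries $M$ into $\Gamma(C_L\setminus\{\infty\},\F'_0(D))$, and the inclusion $M'\hookrightarrow\Gamma(C_L\setminus\{\infty\},\F'_0(D))$ has cokernel supported on the finitely many closed points of $C_L\setminus\{\infty\}$ above the support of $D$, hence $A$-torsion. Tensoring with $Q$ therefore yields a well-defined element $F(f)\in\Hom(\ulM(\FF),\ulM(\FF'))\otimes_A Q$, independent of the chosen representative $D$; and because $\otimes_A Q$ is exact and kills the effect of twisting by an effective divisor, $F$ is compatible with the composition law on $\QHom$ defined by twisting and then composing.

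Next, $F$ is fully faithful: the map it induces on morphism sets is precisely the canonical isomorphism $\QHom(\FF,\FF')\cong\Hom(\ulM(\FF),\ulM(\FF'))\otimes_A Q$ of Proposition~\ref{CONNECTION}, which applies because every object of~(a) has characteristic point $\ne\infty$. And $F$ is essentially surjective, indeed surjective on objects: by Proposition~\ref{PropX.1} every pure Anderson motive, whose characteristic point automatically lies in $\Spec A$ and so differs from $\infty$, is literally of the form $\ulM(\FF)$ for an abelian $\tau$-sheaf $\FF$ of the same rank and dimension. Combining the three properties gives the asserted equivalence.

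The main obstacle is thus not conceptual but the bookkeeping of the second paragraph: making sure the construction survives the introduction of poles --- i.e.\ that $F(f)$ does not depend on the auxiliary divisor $D$, that it is compatible with composition of quasi-morphisms, and that on morphism sets it coincides on the nose with the isomorphism of Proposition~\ref{CONNECTION}. Everything else is formal once Propositions~\ref{PropX.1} and~\ref{CONNECTION} of \cite{BH_A} are invoked.
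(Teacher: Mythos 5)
Your proposal is correct and follows exactly the route the paper intends: the corollary is stated as a reformulation of Propositions~\ref{PropX.1} and \ref{CONNECTION}, with essential surjectivity from the former and full faithfulness from the latter, which is precisely your argument. The extra bookkeeping you supply (well-definedness of the functor on quasi-morphisms, independence of the auxiliary divisor $D$) is the part the paper leaves implicit, and it checks out.
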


\begin{Definition} \label{Def2.6}
Let $\FF$ be an abelian $\tau$-sheaf.
\begin{suchthat}
\item $\FF$ is called \emph{simple}, if\/ $\FF\not=\ZZ$ and\/ $\FF$ has no abelian factor $\tau$-sheaves other than $\ZZ$ and\/ $\FF$.
\item $\FF$ is called \emph{semisimple}, if\/ $\FF$ admits, up to quasi-isogeny, a decomposition into a direct sum $\FF\approx\FF_1\oplus\cdots\oplus\FF_n$ of simple abelian $\tau$-sheaves $\FF_j$ $(1\le j\le n)$.
\end{suchthat}
We make the same definition for a pure Anderson motive.
\end{Definition}

\noindent
{\it Remark.} \label{Prop1.29a}
1. Let $\FF$ be an abelian $\tau$-sheaf with characteristic different from $\infty$. Then $\FF$ is \mbox{(semi-)}simple if and only if the pure Anderson motive $\ulM(\FF)$ is (semi-)simple by \cite[\BHAjj]{BH_A}.

\smallskip

\noindent
2. It is not sensible to try defining \emph{simple} pure Anderson motives via sub-motives, since for example $a\ulM\subset\ulM$ is a proper sub-motive for any $a\in A\setminus A^{\SSC\times}$. 
This shows that pure Anderson motives behave dually to abelian varieties. Namely an abelian variety is called simple if it has no non-trivial abelian subvarieties.

\begin{Theorem} (\cite[\BHAii]{BH_A}) \label{QEND-DIVISION-MATRIX}
Let $\FF$ be an abelian $\tau$-sheaf of characteristic different from $\infty$.
\begin{suchthat}
\item If\/ $\FF$ is simple, then $\QEnd(\FF)$ is a division algebra over $Q$.
\item If\/ $\FF$ is semisimple with decomposition $\FF\approx\FF_1\oplus\cdots\oplus\FF_n$ up to quasi-isogeny into simple abelian $\tau$-sheaves $\FF_j$, then $\QEnd(\FF)$ decomposes into a finite direct sum of full matrix algebras over the division algebras $\QEnd(\FF_j)$ over $Q$.
\end{suchthat}
\end{Theorem}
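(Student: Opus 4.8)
The plan is to run Schur's lemma in the quasi-isogeny category of abelian $\tau$-sheaves of characteristic $\ne\infty$; the only non-formal ingredient is the fact that the (saturated) image of a morphism of abelian $\tau$-sheaves is, after a suitable modification at $\infty$, again an abelian $\tau$-sheaf. The core claim I would establish is: \emph{every nonzero quasi-morphism $f\colon\FF\to\FF'$ between simple abelian $\tau$-sheaves of characteristic $\ne\infty$ is a quasi-isogeny.} Granting this, part~(a) is immediate: for simple $\FF$ every nonzero element of $\QEnd(\FF)$ is a quasi-isogeny, hence a unit of the $Q$-algebra $\QEnd(\FF)$, because $\QIsog(\FF)$ is exactly the group of units of $\QEnd(\FF)$ (recalled above from \cite[\BHAllmm]{BH_A}); a ring all of whose nonzero elements are invertible is a division ring, and as $\QEnd(\FF)$ is moreover finite dimensional over $Q$ we obtain a finite dimensional division algebra over $Q$.

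To prove the claim, observe first that if $\FF$ and $\FF'$ have different weights there is no nonzero quasi-morphism between them (cf.\ \cite[\BHApp]{BH_A}), so we may assume they have the same weight. Twisting by a divisor preserves the abelian $\tau$-sheaf axioms as well as the property of being simple, so a quasi-morphism $\FF\to\FF'$ may be regarded as an honest morphism $f\colon\FF\to\FF'(D)$ into a simple abelian $\tau$-sheaf, and we replace $\FF'$ by $\FF'(D)$. Let $\eta$ be the generic point of $C_L$; by Proposition~\ref{PROP.1.42A} and equality of weights it suffices to show that the generic fiber $f_{0,\eta}$ is an isomorphism. If $f_{0,\eta}$ were not injective, the sheaf-theoretic image of $f$ in $\FF'$ would be, after a suitable modification at $\infty$, an abelian $\tau$-sheaf of rank $<\rk\FF$; being a quotient of $\FF$ it would then be a nontrivial abelian factor $\tau$-sheaf of $\FF$ different from $\FF$, contradicting the simplicity of $\FF$. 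If $f_{0,\eta}$ were injective but not surjective, let $\G\subseteq\FF'$ be the saturation of that image; then $\FF'/\G$ is a locally free quotient which, again after a modification at $\infty$, is an abelian $\tau$-sheaf of rank $\rk\FF'-\rk\FF>0$, hence a nontrivial abelian factor $\tau$-sheaf of $\FF'$ different from $\FF'$, contradicting the simplicity of $\FF'$. Therefore $f_{0,\eta}$ is an isomorphism, $f$ is an isogeny, and in particular a quasi-isogeny.

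For part~(b), since $\QEnd$ depends only on the quasi-isogeny class I would replace $\FF$ by $\FF_1\oplus\cdots\oplus\FF_n$ and collect the simple summands according to their quasi-isogeny classes, $\FF\approx\GG_1^{\oplus m_1}\oplus\cdots\oplus\GG_s^{\oplus m_s}$ with the $\GG_k$ pairwise non-quasi-isogenous and simple. By the claim, $\QHom(\GG_k,\GG_{k'})=0$ for $k\ne k'$, so $\QEnd(\FF)$ is the direct product of the blocks $\QEnd(\GG_k^{\oplus m_k})=M_{m_k}\!\bigl(\QEnd(\GG_k)\bigr)$, each a full matrix algebra over the division algebra $\QEnd(\GG_k)$ provided by part~(a); this is exactly the asserted decomposition.

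The step I expect to be the real obstacle is the geometric input used twice in the proof of the claim: one must check that the sheaf-theoretic image of a morphism, and the cokernel of an injective morphism, after saturating and modifying by an appropriate divisor, satisfy all the axioms of an abelian $\tau$-sheaf --- in particular the dimension conditions on $\coker\P_i$ and $\coker\t_i$ and, crucially, the purity condition at $\infty$. This is where the hypothesis $\chr\ne\infty$ enters: at $\infty$ only the $\P_i$ operate (the cokernels of the $\t_i$ being supported at $\chr\ne\infty$), so the behaviour at $\infty$ is rigid enough to be corrected by a twist, and the purity of a sub- or quotient $\tau$-sheaf can be read off from that of $\FF$ and $\FF'$.
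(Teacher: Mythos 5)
Your proof is correct and is the standard Schur's-lemma argument; the present paper does not reprove this statement but cites it from the companion paper \cite[\BHAii]{BH_A}, whose proof follows the same route. The geometric input you flag as the only real obstacle --- that the image of a morphism of abelian $\tau$-sheaves is again an abelian $\tau$-sheaf when $\chr\ne\infty$ --- is precisely \cite[\BHAbb]{BH_A}, which this paper invokes in exactly this way in the proof of Theorem~\ref{Thm3.8}.
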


\medskip

In the following we want to define the \emph{degree} of an isogeny which should be an ideal of $A$ since in the function field case we have substituted $A$ for $\Z$.
Let $f:\ulM\to\ulM'$ be an isogeny between pure Anderson motives. Then the $A_L$-module $\coker f$ is a finite $L$-vector space equipped with a morphism of $A_L$-modules $\t':\s\coker f\to\coker f$. Since $\coker f$ is annihilated by an element of $A$ it decomposes by the Chinese remainder theorem
\[
(\coker f,\t')\;=\;\bigoplus_{v\in\supp(\coker f)}(\coker f,\t')\otimes_A A_v\;=:\;\bigoplus_{v\in\supp(\coker f)}\ulK_v\,.
\]
If $v\ne\chr$ the morphism $\t'$ on $\ulK_v$ is an isomorphism and so Lang's theorem implies that
\[
(\ulK_v\otimes_L L^\sep)^\t\otimes_{\Fq} L^\sep \isoto \ulK_v\otimes_L L^\sep
\]
is an isomorphism; see for instance \cite[Lemma 1.8.2]{Anderson}. In particular
\[
[\BF_v:\Fq]\cdot\dim_{\BF_v}(\ulK_v\otimes_L L^\sep)^\t\;=\;\dim_\Fq(\ulK_v\otimes_L L^\sep)^\t\;=\;\dim_{L^\sep}(\ulK_v\otimes_L L^\sep)\;=\;\dim_L \ulK_v\,.
\]

\smallskip

On the other hand if the characteristic is finite and $v=\chr$, the characteristic morphism $c^\ast:A\to L$ yields $\BF_\chr\subset L$ and determines the distinguished prime ideal 
\[
\Fa_0\;:=\;(b\otimes 1-1\otimes c^\ast(b):b\in\BF_\chr)\subset A_{\chr,L}\,. 
\]
If we set $n:=[\BF_\chr:\Fq]$ and $\Fa_i:=(\s)^i\Fa_0=(b\otimes 1-1\otimes c^\ast(b)^{q^i}:b\in\BF_\chr)$, then we can decompose $A_{\chr,L}=\bigoplus_{i\in\Z/n\Z}A_{\chr,L}/\Fa_i$ and $\t$ is an isomorphism
\[
\s(\ulK_\chr/\Fa_{i-1}\ulK_\chr)\isoto \ulK_\chr/\Fa_i
\]
for $i\ne0$ since $\t$ is an isomorphism on $\ulM$ and $\ulM'$ outside the graph of $c^\ast$. 
(This argument will be used again in Proposition~\ref{PropLS4}.)
 In particular 
\[
[\BF_\chr:\Fq]\cdot\dim_L(\ulK_\chr/\Fa_0\ulK_\chr)\;=\;\dim_L\ulK_\chr\,.
\]

\begin{Definition}\label{Def1.7.6}
We assign to the isogeny $f$ the ideal
\[
\deg(f)\;:=\;\prod_{v\in\supp(\coker f)}v^{(\dim_L\ulK_v)/[\BF_v:\Fq]}\;=\;\chr^{\dim_L(\ulK_\chr/\Fa_0\ulK_\chr)}\cdot\prod_{v\ne\chr}v^{\dim_{\BF_v}(\ulK_v\otimes_L L^\sep)^\t}
\]
of $A$ and call it the \emph{degree of $f$}. We call $\chr^{\dim_L(\ulK_\chr/\Fa_0\ulK_\chr)}$ the \emph{inseparability degree of $f$} and $\prod_{v\ne\chr}v^{\dim_{\BF_v}(\ulK_v\otimes_L L^\sep)^\t}$ the \emph{separability degree of $f$}.
\end{Definition}

\noindent
{\it Remark.} The separability degree of $f$ is the Euler-Poincar\'e characteristic $EP\bigl(\bigoplus_{v\ne\chr}\ulK_v\otimes_L L^\sep\bigr)^\t$; see Gekeler~\cite[3.9]{Gekeler} or Pink and Traulsen~\cite[4.6]{PT}. Recall that the Euler-Poincar\'e characteristic of a finite torsion $A$-module is the ideal of $A$ defined by requiring that $EP$ is multiplicative in short exact sequences, and that $EP(A/v):=v$ for any maximal ideal $v$ of $A$.

\begin{Lemma}\label{Lemma1.7.7}
\begin{suchthat}
\item 
If $f:\ulM\to\ulM'$ and $g:\ulM'\to\ulM''$ are isogenies then $\deg(gf)=\deg(f)\cdot\deg(g)$.
\item 
$\dim_{\Fq}A/\deg(f)\;=\;\dim_L\coker f$.
\end{suchthat}
\end{Lemma}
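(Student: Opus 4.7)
The plan is to reduce both parts to a single short exact sequence of $(A_L,\tau)$-modules
\[
0 \to \coker f \to \coker(gf) \to \coker g \to 0,
\]
obtained by applying the ``third isomorphism theorem'' to the composition of injections $\ulM \hookrightarrow \ulM' \hookrightarrow \ulM''$ (the map $g$ descends to an injection of $\coker f$ into $\coker(gf)$ with quotient $\coker g$, and the compatibility with $\tau$ is automatic because $f$ and $g$ already intertwine the $\tau$-structures). The entire argument then hinges on using the uniform formula
\[
\deg(h) \;=\; \prod_v v^{(\dim_L \ulK_v(h))/[\BF_v:\Fq]}
\]
from Definition~\ref{Def1.7.6}, rather than the case-split one. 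Its exponents were already shown to be non-negative integers in the paragraphs preceding the definition, so nothing more needs to be verified on that score.

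For part~(a), I would decompose the above sequence via the Chinese remainder theorem to obtain, at each place $v$, a short exact sequence of finite-dimensional $L$-vector spaces $0 \to \ulK_v(f) \to \ulK_v(gf) \to \ulK_v(g) \to 0$. Additivity of $\dim_L$ in short exact sequences immediately gives additivity of the exponent $(\dim_L \ulK_v)/[\BF_v:\Fq]$ at each $v$; taking the product over the (finite) union of the supports of $\coker f$ and $\coker g$ yields $\deg(gf) = \deg(f)\cdot\deg(g)$.

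For part~(b), the computation is straightforward: since $\coker f$ is annihilated by an element of $A$, it is supported on finitely many closed points $v \neq \infty$ of $\Spec A$, and the Chinese remainder theorem gives $A/\deg(f) \cong \bigoplus_v A_v/v^{e_v}$ with $e_v = (\dim_L \ulK_v)/[\BF_v:\Fq]$. Each factor has $\Fq$-dimension $e_v\cdot[\BF_v:\Fq] = \dim_L \ulK_v$, and summing over $v$ gives $\dim_{\Fq} A/\deg(f) = \sum_v \dim_L \ulK_v = \dim_L \coker f$.

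There is essentially no obstacle to speak of: the work was already done when the uniform formula in Definition~\ref{Def1.7.6} was derived. The only point that might invite care is the verification that the snake-lemma sequence really is one of $A_L$-modules respecting $\tau$ and compatible with localization at each $v$, but this is routine and purely formal.
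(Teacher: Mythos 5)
Your proof is correct and follows the same route as the paper: part 1 is deduced from the short exact sequence $0\to\coker f\to\coker(gf)\to\coker g\to 0$ (the very sequence displayed in the paper's proof), and part 2 is the direct dimension count the paper calls obvious. Your version merely spells out the place-by-place bookkeeping via the uniform formula in Definition~\ref{Def1.7.6}, which is a fine elaboration of the same argument.
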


\begin{proof}
1 is immediate from the short exact sequence
\[
\xymatrix{ 0 \ar[r] & \coker f \ar[r]^g & \coker(gf) \ar[r] & \coker g \ar[r] & 0 }
\]
and 2 is obvious.
\end{proof}

\begin{Proposition}\label{Prop3.28a}
The ideal $\deg(f)$ annihilates $\coker f$.
\end{Proposition}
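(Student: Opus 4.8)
The plan is to reduce the statement to a purely local assertion at each place $v \in \supp(\coker f)$ and then verify it separately in the two cases $v \neq \chr$ and $v = \chr$. By the Chinese remainder decomposition already set up in the text, $\coker f = \bigoplus_v \ulK_v$ as $A_L$-modules with compatible $\t$, and $\deg(f) = \prod_v v^{e_v}$ with $e_v = (\dim_L \ulK_v)/[\BF_v:\Fq]$ in the tame case and $e_\chr = \dim_L(\ulK_\chr/\Fa_0\ulK_\chr)$ in the wild case. Since the ideals $v^{e_v}$ are pairwise coprime, it suffices to show that for each $v$ the power $v^{e_v}$ annihilates $\ulK_v$, i.e. $v^{e_v}\ulK_v = 0$ inside $\ulK_v$ regarded as a module over the local ring $A_{v,L}$ (equivalently over $A_v$).

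For $v \neq \chr$: here $\t'$ is an isomorphism on $\ulK_v$, so by Lang's theorem the formula in the text gives $\ulK_v \otimes_L L^\sep \cong W \otimes_{\Fq} L^\sep$ where $W := (\ulK_v\otimes_L L^\sep)^\t$ is a finite module over $A_v$ on which $\t$ acts $\BF_v$-semilinearly. The key point is that $W$, being a finite-length $A_v$-module annihilated by some power of $v$, is a successive extension of copies of $\BF_v = A_v/v$; an easy induction on length shows that if $W$ has length $m$ over $\BF_v$ then $v^m W = 0$. Since $m = \dim_{\BF_v} W = e_v$ by definition, we get $v^{e_v} W = 0$, hence $v^{e_v}(\ulK_v\otimes_L L^\sep) = 0$, and by faithful flatness of $L^\sep$ over $L$ also $v^{e_v}\ulK_v = 0$.

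For $v = \chr$ (finite characteristic case): now $\t$ is not an isomorphism, but the text supplies the decomposition $A_{\chr,L} = \bigoplus_{i\in\Z/n\Z} A_{\chr,L}/\Fa_i$ and the fact that $\t$ induces isomorphisms $\s(\ulK_\chr/\Fa_{i-1}\ulK_\chr) \isoto \ulK_\chr/\Fa_i$ for $i \neq 0$. Writing $\ulK_\chr^{(i)} := \ulK_\chr/\Fa_i\ulK_\chr$, these isomorphisms show all the $\ulK_\chr^{(i)}$ have the same $L$-dimension, namely $\dim_L(\ulK_\chr/\Fa_0\ulK_\chr) = e_\chr$. Each $\ulK_\chr^{(i)}$ is a module over the local ring $A_{\chr,L}/\Fa_i$, whose maximal ideal is generated by $v$ (the residue field $\BF_\chr$ having been split off into the $\Fa_i$'s), and again a finite $L$-vector space of $L$-dimension $e_\chr$; as before, such a module is a successive extension of copies of the residue field, so its length over that residue field is at most $e_\chr$ and therefore $v^{e_\chr}$ kills it. Summing over $i\in\Z/n\Z$ gives $v^{e_\chr}\ulK_\chr = 0$, i.e. $\chr^{e_\chr}\ulK_\chr = 0$.

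Combining the two cases, $\deg(f) = \prod_v v^{e_v}$ annihilates every summand $\ulK_v$, hence annihilates $\coker f = \bigoplus_v \ulK_v$, which is the assertion. The main obstacle — really the only nonroutine point — is the bookkeeping at $v = \chr$: one must be careful that after splitting off the residue-field directions via the $\Fa_i$ the relevant local ring has maximal ideal generated by $v$ and that the $\t$-isomorphisms genuinely force the dimension count $\dim_L \ulK_\chr^{(i)} = e_\chr$ uniformly in $i$; once that is in hand the "length $\le$ dimension $\Rightarrow$ annihilated by that power of $v$" step is elementary commutative algebra.
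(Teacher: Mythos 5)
Your proof is correct and follows essentially the same route as the paper's: at each place the exponent appearing in $\deg(f)$ bounds the length of the corresponding local piece of $\coker f$ over the relevant local ring --- $(\ulK_v\otimes_L L^\sep)^\t$ over $A_v$ for $v\ne\chr$, and $\ulK_\chr/\Fa_i\ulK_\chr$ over $A_\chr\wh\otimes_{\BF_\chr}L$ for $v=\chr$ --- so a uniformizer raised to that power annihilates it. The paper's proof is just a two-line compression of the same nilpotency-of-the-uniformizer argument, with the descent from $L^\sep$ to $L$ and the transport along the $\t$-isomorphisms between the $\Fa_i$-components left implicit.
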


\begin{proof}
If $v=\chr$ and $a$ is a uniformizer at $\chr$, then multiplication with $a$ is nilpotent on the $L$-vector space $\ulK_\chr/\Fa_0\ulK_\chr$. In particular $a^{\dim_L(\ulK_\chr/\Fa_0\ulK_\chr)}$ annihilates $\ulK_\chr/\Fa_0\ulK_\chr$, and hence also $\ulK_\chr$.

If $v\ne\chr$ and $a$ is a uniformizer at $v$, we obtain analogously that $a^{\dim_{\BF_v}(\ulK_v\otimes_L L^\sep)^\t}$ annihilates the $\BF_v$-vector space $(\ulK_v\otimes_L L^\sep)^\t$ and therefore also the $L$-vector space $\ulK_v$.
\end{proof}

\begin{Proposition}\label{Prop1.7.8}
Let $f:\ulM\to \ulM'$ be an isogeny such that $\deg(f)=aA$ is principal (for example this is the case if $C=\PP^1$ and $A=\Fq[t]$). Then there is a uniquely determined dual isogeny $\dual{f}:\ulM'\to \ulM$ (depending on $a$), which satisfies $f\circ\dual{f}=a\cdot\id_{\ulM'}$ and $\dual{f}\circ f=a\cdot\id_\ulM$.
\end{Proposition}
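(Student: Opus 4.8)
The plan is to produce $\dual f$ directly as the unique $A_L$-linear map through which multiplication by $a$ on $M'$ factors, and then to verify the required properties using only the injectivity of $f$ and the $\s$-invariance of the elements of $A$. This refines Corollary~\ref{Cor1.11b}, where $a$ was merely \emph{some} element annihilating $\coker f$; here $a$ is pinned down as a generator of $\deg(f)$.

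First I would invoke Proposition~\ref{Prop3.28a}: since $\deg(f)=aA$ annihilates $\coker f=M'/f(M)$, we have $a\cdot M'\subseteq f(M)$. As $f$ is injective it induces an isomorphism $f\colon M\isoto f(M)$, so one can set $\dual f:=f^{-1}\circ(a\cdot\id_{M'})\colon M'\to M$. This is a homomorphism of $A_L$-modules with $f\circ\dual f=a\cdot\id_{M'}$ by construction, and it is the unique such map because $f$ is injective.

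Next I would check that $\dual f$ is a morphism of pure Anderson motives, i.e.\ that $\dual f\circ\tau'=\tau\circ\s(\dual f)$; note that being such a morphism requires only $A_L$-linearity and compatibility with $\tau$, nothing about the sheaf extensions in Definition~\ref{Def1'.1}. Since $a\in A$ is fixed by $\s$, multiplication by $a$ passes through every $\s$-twist, so $\s(a\cdot\id_{M'})=a\cdot\id_{\s M'}$ and $a\cdot\tau'=\tau'\circ\s(a\cdot\id_{M'})$. Using the relation $f\circ\tau=\tau'\circ\s(f)$ one computes
\[
f\circ\bigl(\tau\circ\s(\dual f)\bigr)=\tau'\circ\s(f\circ\dual f)=\tau'\circ\s(a\cdot\id_{M'})=a\cdot\tau'=(f\circ\dual f)\circ\tau'=f\circ\bigl(\dual f\circ\tau'\bigr)\,,
\]
and cancelling the injective $f$ gives $\tau\circ\s(\dual f)=\dual f\circ\tau'$. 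The same cancellation applied to $f\circ(\dual f\circ f)=(f\circ\dual f)\circ f=a\cdot f=f\circ(a\cdot\id_M)$ yields $\dual f\circ f=a\cdot\id_M$. Finally, $\dual f$ is an isogeny: it is injective because $f\circ\dual f=a\cdot\id_{M'}$ is injective ($M'$ is torsion free and $a\ne 0$), and $a\cdot M\subseteq\dual f(M')$ (from $\dual f\circ f=a\cdot\id_M$) shows $\coker\dual f$ is annihilated by $a$, hence torsion.

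There is no serious obstacle here; the one point demanding care is the $\tau$-compatibility step, where one must keep track of the $\s$-twists and use that $\s$ fixes $A$. Uniqueness and the dependence on $a$ are then immediate from the injectivity of $f$: replacing $a$ by another generator $a'=ua$ of $\deg(f)$, with $u$ a unit of $A$, replaces $\dual f$ by $u\cdot\dual f$.
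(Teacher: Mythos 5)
Your proof is correct and follows exactly the route the paper intends: the paper's own proof consists of the single sentence ``Since $\deg(f)$ annihilates $\coker f$ the proposition is immediate,'' and your argument is precisely the spelled-out version of that, defining $\dual{f}=f^{-1}\circ(a\cdot\id_{M'})$ via Proposition~\ref{Prop3.28a} and checking $\tau$-compatibility, the two identities, and uniqueness. Nothing to correct.
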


\begin{proof}
Since $\deg(f)$ annihilates $\coker f$ the proposition is immediate.
\end{proof}

In Theorem~\ref{Prop3.4.1} we will see that any isogeny $f\in\End(\ulM)$ of a semisimple pure Anderson motive over a finite field satisfies the assumption that $\deg(f)$ is principal.

\bigskip


\section{Local Shtuka}\label{SectLS}
\setcounter{equation}{0}

There are mainly two local structures which one can attach to pure Anderson motives and abelian $\tau$-sheaves, namely the \emph{local (iso-)shtuka} and the \emph{Tate module}. We treat the Tate module in the next section. The local (iso-)shtuka is the analogue of the Dieudonn\'e module of the $p$-divisible group attached to an abelian variety. Note however one fundamental difference. While the Dieudonn\'e module exists only if $p$ equals the characteristic of the base field, there is no such restriction in our theory here. And in fact this would even allow to dispense with Tate modules at all and only work with local (iso-)shtuka. Local (iso-)shtuka were introduced in \cite{Hl} under the name \emph{Dieudonn\'e $\Fq\dbl z\dbr$-modules} (respectively \emph{Dieudonn\'e $\Fq\dpl z\dpr$-modules}). They are studied in \cite{Anderson2,HartlPSp,Laumon}. Over a field their definition takes the following form.

\begin{Definition}\label{DefLS1}
Let $v\in C$ be a place of $Q$ and let $L\supset\Fq$ be a field. An \emph{(effective) local $\sigma$-shtuka} at $v$ of rank $r$ over $L$ is a pair $\ulHM=(\hat M,\phi)$ consisting of a free $A_{v,L}$-module $\hat M$ of rank $r$ and an injective $A_{v,L}$-module homomorphism \mbox{$\phi:\s\hat M\to \hat M$}.

A \emph{local $\sigma$-isoshtuka} at $v$ of rank $r$ over $L$ is a pair $\ulHN=(\hat N,\phi)$ consisting of a free $Q_{v,L}$-module $\hat N$ of rank $r$ and an isomorphism $\phi:\s\hat N\isoto\hat N$ of $Q_{v,L}$-modules.
\end{Definition}

\begin{Remark}
Note that so far in the literature \cite{Anderson2,Hl,HartlPSp,Laumon} it is always assumed that $A_v$ has residue field $\Fq$, the fixed field of $\sigma$ on $L$. So in particular $A_{v,L}$ is an integral domain and $Q_{v,L}$ is a field. For applications to pure Anderson motives this is not a problem since we may reduce to this case by Propositions~\ref{PropLS3} and \ref{PropLS4} below.
\end{Remark}

\begin{Definition}\label{DefLS1b}
A local shtuka $\ulHM=(\hat M,\phi)$ is called \emph{\'etale} if $\phi$ is an isomorphism. The \emph{Tate module} of an \'etale local $\sigma$-shtuka $\ulHM$ at $v$ is the $G:=\Gal(L^\sep/L)$-module of $\phi$-invariants
\[
T_v\ulHM\;:=\;\bigl(\ulHM\otimes_{A_{v,L}}A_{v,L^\sep}\bigr)^\phi\,.
\]
The \emph{rational Tate module} of $\ulHM$ is the $G$-module
\[
V_v\ulHM\;:=\;T_v\ulHM\otimes_{A_v}Q_v\,.
\]
\end{Definition}

It follows from \cite[Proposition~6.1]{TW} that $T_v\ulHM$ is a free $A_v$-module of the same rank than $\ulHM$ and that the natural morphism
\[
T_v\ulHM\otimes_{A_v}A_{v,L^\sep}\isoto \ulHM\otimes_{A_{v,L}}A_{v,L^\sep}
\]
is a $G$- and $\phi$-equivariant isomorphism of $A_{v,L^\sep}$-modules, where on the left module $G$ acts on both factors and $\phi$ is $\id\otimes\s$. Since $(L^\sep)^{G}=L$ we obtain:

\begin{Proposition}\label{Prop2.13'}
Let $\ulHM$ and $\ulHM{}'$ be \emph{\'etale} local $\sigma$-shtuka at $v$ over $L$. Then
\begin{suchthat}
\item 
$\DS\ulHM\;=\;(T_v\ulHM\otimes_{A_v}A_{v,L^\sep})^{G}$, the Galois invariants,
\item 
$\DS\Hom_{A_{v,L}[\phi]}(\ulHM,\ulHM{}')\;\isoto\;\Hom_{A_v[G]}(T_v\ulHM,T_v\ulHM{}')\;,\es f\mapsto T_vf$ is an isomorphism.
\end{suchthat}
In particular the Tate module functor yields an equivalence of the category of \'etale local shtuka at $v$ over $L$ with the category of $A_v[G]$-modules, which are finite free over $A_v$. \qed
\end{Proposition}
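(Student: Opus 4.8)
The plan is to deduce everything from the cited structural result: for an étale local $\sigma$-shtuka $\ulHM=(\hat M,\phi)$ over $L$, the Tate module $T_v\ulHM=(\hat M\otimes_{A_{v,L}}A_{v,L^\sep})^\phi$ is free of rank $r$ over $A_v$, and the natural map $T_v\ulHM\otimes_{A_v}A_{v,L^\sep}\isoto\hat M\otimes_{A_{v,L}}A_{v,L^\sep}$ is a $G$- and $\phi$-equivariant isomorphism of $A_{v,L^\sep}$-modules (with $G$ acting diagonally on the source and $\phi=\id\otimes\s$ there). First I would prove (a): apply the functor of $G$-invariants to this isomorphism. On the right one gets $(\hat M\otimes_{A_{v,L}}A_{v,L^\sep})^G$; on the left, since $G$ acts trivially on $T_v\ulHM$ and $(A_{v,L^\sep})^G=A_v\wh\otimes_\Fq(L^\sep)^G=A_v\wh\otimes_\Fq L=A_{v,L}$ by faithfully flat / Galois descent (using $(L^\sep)^G=L$), one obtains $T_v\ulHM\otimes_{A_v}A_{v,L}$. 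Hence $\hat M=T_v\ulHM\otimes_{A_v}A_{v,L}$ as the Galois invariants inside $\hat M\otimes_{A_{v,L}}A_{v,L^\sep}$, which is (a) (one still has to remark that tensoring with $A_{v,L^\sep}$ commutes with $G$-invariants here, which follows because the map in the cited statement is an isomorphism and is $G$-equivariant; alternatively this is Galois descent for the finite free module $T_v\ulHM$).

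For (b), the map $f\mapsto T_vf$ is well-defined because an $A_{v,L}[\phi]$-morphism $\hat M\to\hat M{}'$ tensors up to an $A_{v,L^\sep}[\phi]$-morphism and therefore carries $\phi$-invariants to $\phi$-invariants, and it is $A_v$-linear. Injectivity: if $T_vf=0$ then $f\otimes\id_{A_{v,L^\sep}}=0$ after identifying both sides via the isomorphism of (a) applied to $\hat M$ and $\hat M{}'$; since $A_{v,L}\hookrightarrow A_{v,L^\sep}$ is faithfully flat, $f=0$. Surjectivity: given an $A_v[G]$-morphism $g:T_v\ulHM\to T_v\ulHM{}'$, form $g\otimes\id:T_v\ulHM\otimes_{A_v}A_{v,L^\sep}\to T_v\ulHM{}'\otimes_{A_v}A_{v,L^\sep}$; via the identifications of the cited statement this is an $A_{v,L^\sep}$-linear, $\phi$-equivariant, $G$-equivariant map $\hat M\otimes A_{v,L^\sep}\to\hat M{}'\otimes A_{v,L^\sep}$, so taking $G$-invariants and using (a) produces an $A_{v,L}$-linear $\phi$-equivariant map $\hat M\to\hat M{}'$ whose image under $T_v$ is $g$ by construction. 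This gives the claimed isomorphism of $A_v$-modules.

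The final sentence is then formal: Proposition~\ref{Prop2.13'}(b) together with the rank statement in the cited result shows $T_v$ is fully faithful from étale local shtuka at $v$ over $L$ to finite free $A_v[G]$-modules, and essential surjectivity follows because given any finite free $A_v[G]$-module $T$, the $A_{v,L}$-module $\hat M:=(T\otimes_{A_v}A_{v,L^\sep})^G$ with $\phi:=\id\otimes\s$ (which is an isomorphism, hence étale) is a local shtuka with $T_v\ulHM\cong T$ by the same descent argument as in (a). I would phrase essential surjectivity this explicitly or simply cite it as the standard consequence.

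The main obstacle, and the only point requiring genuine care, is the interaction of $G$-invariants with the tensor product $-\otimes_{A_{v,L}}A_{v,L^\sep}$ in the generality allowed here, where $\sigma$ may have a nontrivial fixed field issue: $A_{v,L}$ need not be a domain and $A_v$ may have residue field strictly larger than $\Fq$. The cited Proposition~6.1 of \cite{TW} is stated with that generality, so the descent $\big(-\otimes_{A_{v,L}}A_{v,L^\sep}\big)^G=-$ is exactly encoded in the $G$-equivariance of the displayed isomorphism there; the argument above only uses that isomorphism and $(L^\sep)^G=L$, so nothing further is needed. The rest is routine faithfully-flat descent, so I expect no serious difficulty beyond invoking \cite[Proposition~6.1]{TW} correctly.
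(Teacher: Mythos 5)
Your proposal is correct and follows exactly the route the paper intends: the paper declares parts 1 and 2 ``immediate'' from the displayed $G$- and $\phi$-equivariant isomorphism $T_v\ulHM\otimes_{A_v}A_{v,L^\sep}\isoto\ulHM\otimes_{A_{v,L}}A_{v,L^\sep}$ of \cite[Proposition~6.1]{TW} together with $(L^\sep)^G=L$, and you have simply spelled out the Galois-descent and faithful-flatness details. Your explicit construction of a quasi-inverse for essential surjectivity matches what the paper delegates to the argument of \cite[Proposition~4.1.1]{Katz1}.
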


\begin{proof}
1 and 2 are immediate. Hence clearly the Tate module functor is fully faithful. That it is an equivalence follows analogously to \cite[Proposition 4.1.1]{Katz1}.
\end{proof}

\bigskip

If the residue field $\BF_v$ of $v$ is larger than $\Fq$ one has to be a bit careful with local (iso-)shtuka since $Q_{v,L}$ is then in general not a field. Namely let $\#\BF_v=q^n$ and let $\BF_{q^f}:=\{\alpha\in L:\alpha^{q^n}=\alpha\}$ be the ``intersection'' of $\BF_v$ with $L$. Then
\[
\BF_v\otimes_\Fq L = \prod_{\Gal(\BF_{q^f}/\Fq)}\BF_v\otimes_{\BF_{q^f}}L = 
\prod_{i\in\Z/f\Z}\BF_v\otimes_\Fq L\,/\,(b\otimes 1-1\otimes b^{q^i}:b\in \BF_{q^f})
\]
and $\s$ transports the $i$-th factor to the $(i+1)$-th factor. (Of course, the indexing of the factors depends on a choice of embeddings $\BF_{q^f}\subset\BF_v$ and $\BF_{q^f}\subset L$.) Denote by $\Fa_i$ the ideal of $A_{v,L}$ (or $Q_{v,L}$) generated by $\{b\otimes 1-1\otimes b^{q^i}:b\in \BF_{q^f}\}$. Then
\[
A_{v,L} = \prod_{\Gal(\BF_{q^f}/\Fq)}A_v\wh\otimes_{\BF_{q^f}}L = 
\prod_{i\in\Z/f\Z}A_{v,L}\,/\,\Fa_i
\]
and similarly for $Q_v$.
Note that the factors in this decomposition and the ideals $\Fa_i$ correspond precisely to the places $v_i$ of $C_{\BF_{q^f}}$ lying above $v$.

\begin{Proposition}\label{PropLS3}
Fix an $i$. The reduction modulo $\Fa_i$ induces equivalences of categories
\begin{suchthat}
\item
$\DS (\hat N,\phi)\longmapsto \bigl(\hat N/\Fa_i\hat N\,,\,\phi^f:(\s)^f \hat N/\Fa_i\hat N \to \hat N/\Fa_i\hat N\bigr)$\\[2mm]
between local $\sigma$-isoshtuka at $v$ over $L$ and local $\sigma^f$-isoshtuka at $v_i$ over $L$ of the same rank.
\item 
\vspace{2mm}
$\DS (\hat M,\phi)\longmapsto \bigl(\hat M/\Fa_i\hat M\,,\,\phi^f:(\s)^f \hat M/\Fa_i\hat M \to \hat M/\Fa_i\hat M\bigr)$\\[2mm]
between\/ \emph{\'etale} local $\sigma$-shtuka at $v$ over $L$ and \emph{\'etale} local $\sigma^f$-shtuka at $v_i$ over $L$ preserving Tate modules
\[
T_v(\hat M,\phi) \isoto T_{v_i}(\hat M/\Fa_i\hat M,\phi^f)\,.
\]
\end{suchthat}
\end{Proposition}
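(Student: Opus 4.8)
The plan is to prove Proposition~\ref{PropLS3} by explicitly constructing the quasi-inverse functor and checking compatibilities, exploiting that the decomposition $A_{v,L}=\prod_{i\in\Z/f\Z}A_{v,L}/\Fa_i$ is permuted cyclically by $\s$. First I would record the elementary structural facts: since $\s$ carries $\Fa_i$ to $\Fa_{i+1}$, it induces isomorphisms $\s(\hat N/\Fa_i\hat N)\isoto (\s\hat N)/\Fa_{i+1}(\s\hat N)$, and iterating $f$ times brings the index back to $i$; thus $\phi^f:=\phi\circ\s\phi\circ\cdots\circ(\s)^{f-1}\phi$ does restrict to an $A_{v_i,L}$-linear (equivalently $\s^f$-semilinear) map $(\s)^f(\hat N/\Fa_i\hat N)\to\hat N/\Fa_i\hat N$. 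Here one uses that $A_{v,L}/\Fa_i=A_v\wh\otimes_{\BF_{q^f}}L=A_{v_i,L}$ and that $\BF_{q^f}$ is precisely the fixed field of $\s^f$ on $L$, so the target category really is that of local $\sigma^f$-(iso)shtuka at $v_i$. That $\phi^f$ is injective (resp.\ an isomorphism) when $\phi$ is follows from faithful flatness of the projection $A_{v,L}\to A_{v,L}/\Fa_i$ onto a direct factor.

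Next I would build the inverse functor. Given a local $\sigma^f$-isoshtuka $(\hat N_0,\psi)$ at $v_i$ over $L$, set $\hat N:=\bigoplus_{j\in\Z/f\Z}(\s)^{j-i}\hat N_0$ as an $A_{v,L}=\prod_j A_{v,L}/\Fa_j$-module, with the $j$-th summand being the $A_{v,L}/\Fa_j$-component, and define $\phi:\s\hat N\to\hat N$ to be the ``cyclic shift'' sending the component in degree $j-1$ identically to the one in degree $j$ for $j\not\equiv i$, and in degree $f-1\mapsto i$ using $\psi$. Then $\phi^f$ on the $i$-th component is exactly $\psi$, and reduction mod $\Fa_i$ recovers $(\hat N_0,\psi)$; conversely, starting from $(\hat N,\phi)$, the components $\hat N/\Fa_j\hat N$ are all identified via the partial compositions of $\s$-twists of $\phi$ with $(\s)^{j-i}(\hat N/\Fa_i\hat N)$, which shows the round trip is naturally isomorphic to the identity. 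This is a formal Morita-type / descent argument: the category of modules over $\prod_j A_{v,L}/\Fa_j$ equipped with a $\s$-semilinear $\phi$ is equivalent to the category of modules over a single factor equipped with a $\s^f$-semilinear map, by "unrolling" the cyclic action.

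For part~2 I would additionally track étaleness and Tate modules. Étaleness is the condition that $\phi$ is an isomorphism, which under the above dictionary corresponds exactly to $\phi^f$ being an isomorphism, i.e.\ $(\hat N_0,\psi)$ being étale; this is immediate from the fact that $\phi$ is an isomorphism iff each of its graded pieces is. For the Tate module, note $A_{v,L^\sep}=\prod_{j}A_{v,L^\sep}/\Fa_j$ with $\s$ again cyclically permuting the factors, so an element of $(\hat M\otimes_{A_{v,L}}A_{v,L^\sep})^\phi$ is determined by its $i$-th component, which must be fixed by $\phi^f$; this gives a natural $A_v$-linear bijection $T_v(\hat M,\phi)\isoto T_{v_i}(\hat M/\Fa_i\hat M,\phi^f)$ (one should check it is compatible with the $\Gal(L^\sep/L)$-action, which it is since $\s$ and the Galois action on $L^\sep$ commute and the Galois action preserves each $\Fa_j$). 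Combined with Proposition~\ref{Prop2.13'}, full faithfulness on the étale side can alternatively be deduced from the Tate module statement.

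The main obstacle I expect is bookkeeping rather than conceptual depth: getting the indexing of the factors $\Fa_j$, the direction of the $\s$-shift, and the definition of $\phi^f$ mutually consistent (including the dependence on the chosen embeddings $\BF_{q^f}\hookrightarrow\BF_v$ and $\BF_{q^f}\hookrightarrow L$ that is flagged in the text), and verifying that the constructed functors are genuinely mutually quasi-inverse with natural isomorphisms rather than merely bijective on objects. One subtle point worth isolating is that $A_{v,L}/\Fa_i$ must be correctly identified with $A_{v_i,L}$ where $v_i$ is the place of $C_{\BF_{q^f}}$ below which sits $v$, and that $L$ contains $\BF_{q^f}$ as its maximal subfield fixed by $\s^f$, so that the notion "$\sigma^f$-shtuka at $v_i$ over $L$" is the standard one; this is exactly the reduction promised in the Remark after Definition~\ref{DefLS1}, making the general residue-field case harmless in all subsequent arguments.
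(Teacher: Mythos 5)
Your proposal is correct and follows essentially the same route as the paper's (much terser) proof: since $\s\Fa_i=\Fa_{i+1}$, the map $\phi$ identifies each factor $\hat N/\Fa_j\hat N$ with a $\s$-twist of $\hat N/\Fa_i\hat N$, which is exactly your ``unrolling'' quasi-inverse, and the Tate-module identification is the same observation that a $\phi$-invariant tuple $(x_j)_{j\in\Z/f\Z}$ is determined by the recursion $x_{j+1}=\phi(\s x_j)$ together with $x_i=\phi^f((\s)^f x_i)$. One cosmetic point: the projection onto a direct factor is flat but not faithfully flat, and you do not need faithful flatness there anyway, since injectivity (resp.\ bijectivity) of $\phi$ passes to each component of the product decomposition directly.
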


\begin{proof}
Since $\s\Fa_i=\Fa_{i+1}$ the isomorphism $\phi$ yields isomorphisms $\s(\hat N/\Fa_i\hat N)\to \hat N/\Fa_{i+1}\hat N$ and similarly for $\hat M$. These allow to reconstruct the other factors from $(\hat N/\Fa_i\hat N,\phi^f)$, and likewise for $\hat\ulM$. The isomorphism between the Tate modules follows from the observation that an element $(x_j)_{j\in\Z/f\Z}$ is $\phi$-invariant if and only if $x_{j+1}=\phi(\s x_j)$ for all $j$ and $x_i=\phi^f((\s)^f x_i)$.
\end{proof}

\noindent
{\it Remark.} The advantage of the \'etale local $\sigma^f$-shtuka at $v_i$ is that it is a free module over the integral domain $A_{v,L}/\Fa_i=A_v\wh\otimes_{\BF_{q^f}}L$, and similarly for local $\sigma^f$-isoshtuka. So the results from \cite{Anderson2,Hl,HartlPSp,Laumon} apply.

\bigskip

Now let $\FF$ be an abelian $\tau$-sheaf and $v\in C$ an arbitrary place of $Q$. We define the \emph{local $\sigma$-isoshtuka of $\FF$ at $v$} as 
\[
\ulN_v(\FF)\;:=\;\Bigl(\F_0\otimes_{\O_{C_L}}Q_{v,L}\,,\,\P_0^{-1}\circ\t_0\Bigr)\,.
\]
If $v\ne\infty$ we define the \emph{local $\sigma$-shtuka of $\FF$ at $v$} as
\[
\ulM_v(\FF)\;:=\;\Bigl(\F_0\otimes_{\O_{C_L}}A_{v,L}\,,\,\P_0^{-1}\circ\t_0\Bigr)\,.
\]
Likewise if $\ulM$ is a pure Anderson motive over $L$ and $v\in \Spec A$ we define the \emph{local $\sigma$-(iso-)shtuka of $\ulM$ at $v$} as
\[
\ulM_v(\ulM)\;:=\;\ulM\otimes_{A_L}A_{v,L} \qquad\text{respectively}\qquad \ulN_v(\ulM)\;:=\;\ulM\otimes_{A_L}Q_{v,L}\,.
\]
These local (iso-)shtuka all have rank $r$. The local shtuka are \'etale if $v\ne\chr$. Note that $\ulN_\infty(\FF)$ does not contain a local $\sigma$-shtuka if $\chr\ne\infty$, since then it is isoclinic of slope $-\weight(\FF)<0$. 

However, if $v=\infty$ the periodicity condition allows to define a different local (iso-)shtuka at $\infty$ which is of slope $\ge0$. Namely, choose a uniformizer $z$ on $C$ at $\infty$ and set $\hat M_i:=\F_i\otimes_{\O_{C_L}} A_{\infty,L}$. Recall the integers $k,l$ from Definition~\ref{Def1.1}/2 and set $\TP:=\P_{l-1}\circ\dots\circ\P_0$. We define the \emph{big local $\sigma$-shtuka of $\FF$ at $\infty$} as 
\begin{equation}\label{EQ.Phi}
\ulTM_\infty(\FF) \,:=\, \hat M_0\oplus\dots\oplus \hat M_{l-1}\qquad\text{with} 
\qquad 
\phi \,:=\, \left(
\raisebox{5ex}{$
\xymatrix @=0pc {
0 \ar@{.}[rrr] \ar@{.}[ddddrrrr] & & & 0 & **{!L !<0.8pc,0pc> =<1.5pc,1.5pc>}\objectbox{\TP^{-1}\circ z^k\t_{l-1}} \\
\t_0 \ar@{.}[dddrrr] & & & & 0 \ar@{.}[ddd]\\
0 \ar@{.}[dd] \ar@{.}[ddrr] \\
\\
0 \ar@{.}[rr] & & 0 & \t_{l-2} & 0
}$}
\qquad\qquad\right)
\end{equation}
We also define the \emph{big local $\sigma$-isoshtuka of $\FF$ at $\infty$} as
\[
\ulTN_\infty(\FF)\;:=\;\ulTM_\infty(\FF)\otimes_{A_{\infty,L}} Q_{\infty,L}\,.
\]
Both have rank $rl$ and depend on the choice of $k,l$ and $z$. If $\chr\ne\infty$ then $\ulTM_\infty(\FF)$ is \'etale. Note that $\ulTM_\infty(\FF)$ and $\ulTN_\infty(\FF)$ were used in \cite{Hl} to construct the uniformization at $\infty$ of the moduli spaces of abelian $\tau$-sheaves.

The big local (iso-)shtuka at $\infty$, $\ulTM_\infty(\FF)$ and $\ulTN_\infty(\FF)$ are always equipped with the endomorphisms 
\begin{equation} \label{EQ.Pi+Lambda}
\P \,:=\, \left(
\raisebox{5.5ex}{$
\xymatrix @=0pc {
0 \ar@{.}[rrr] \ar@{.}[ddddrrrr] & & & 0 & **{!L !<0.8pc,0pc> =<1.5pc,1.5pc>}\objectbox{\TP^{-1}\circ z^k\P_{l-1}} \\
\P_0 \ar@{.}[dddrrr] & & & & 0 \ar@{.}[ddd]\\
0 \ar@{.}[dd] \ar@{.}[ddrr] \\
\\
0 \ar@{.}[rr] & & 0 & \P_{l-2} & 0
}$}
\qquad\qquad\right) , \enspace
\Lambda(\lambda) \,:=\, \left(
\raisebox{5.5ex}{$
\xymatrix @=0pc {
\!\!\lambda \cdot\id_{M_0}\!\!  \\
& \!\!\lambda^q \cdot\id_{M_1}\!\! \ar@{.}[ddrr] \\ 
\\
& & & \!\lambda^{q^{l-1}}\cdot\id_{M_{l-1}}\!\!
}$}
\right)
\forget{
\Lambda(\lambda) \,:=\, \left(
\begin{array}{cccc}
\!\!\lambda \cdot\id_{M_0}\!\!\!\!& & & \\[2mm]
& \!\!\!\!\lambda^q \cdot\id_{M_1}\!\!\!\!& & \\
& & \!\!\!\!\ddots \!\!\!\!& \\[1mm]
& & & \!\!\!\!\lambda^{q^{l-1}}\cdot\id_{M_{l-1}}\!\!
\end{array}
\right)
}
\end{equation}
for all $\lambda\in \Ff_{q^l}\cap L$. They satisfy the relations $\P^l=z^k$ and $\P\circ\Lambda(\lambda^q)=\Lambda(\lambda)\circ\P$. We let $\Delta_\infty$ be ``the'' central division algebra over $Q_\infty$ of rank $l$ with Hasse invariant $-\frac{k}{l}$, or explicitly
\begin{equation}\label{EqDelta}
\Delta_\infty \,:=\, \Ff_{q^l}\dpl z\dpr\II{\P}\,/\,(\P^l-z^k,\, \lambda z-z\lambda,\, \P\lambda^q-\lambda\P\text{ for all }\lambda\in \Ff_{q^l})\,.
\end{equation}
If $\Ff_{q^l}\subset L$ we identify $\Delta_\infty$ with a subalgebra of $\End_{Q_{\infty,L}[\phi]}\bigl(\ulTN_\infty(\FF)\bigr)$ by mapping $\lambda\in\BF_{q^l}\subset\Delta_\infty$ to $\Lambda(\lambda)$.

\medskip

The following two results were proved in \cite[\BHAgghh]{BH_A}.

\begin{Theorem}\label{ThmLS1}
Let $\FF$ and $\FF'$ be abelian $\tau$-sheaves of the same weight over a finite field $L$ and let $v$ be an arbitrary place of $Q$. 
\begin{suchthat}
\item 
Then there is a canonical isomorphism of $Q_v$-vector spaces
\[
\QHom(\FF,\FF')\otimes_Q Q_v \isoto \Hom_{Q_{v,L}[\phi]}\bigl(\ulN_v(\FF),\ulN_v(\FF')\bigr)\,.
\]
\item 
If $v=\infty$ choose an $l$ which satisfies \ref{Def1.1}/2 for both $\FF$ and $\FF'$ and assume $\BF_{q^l}\subset L$. Then there is a canonical isomorphism of $Q_\infty$-vector spaces
\[
\QHom(\FF,\FF')\otimes_Q Q_\infty \isoto \Hom_{\Delta_\infty\wh\otimes_\Fq L[\phi]}\bigl(\ulTN_\infty(\FF),\ulTN_\infty(\FF')\bigr)\,.
\]
\end{suchthat}
\end{Theorem}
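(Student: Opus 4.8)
The plan is to deduce both assertions from a single \emph{flat base change} computation, after replacing everything by its generic fibre along $C_L$. Let $\eta$ be the generic point of $C_L$, write $\F_{0,\eta}:=\F_0\otimes_{\O_{C_L}}Q_L$ and $\F'_{0,\eta}:=\F'_0\otimes_{\O_{C_L}}Q_L$, and set $V:=\Hom_{Q_L}(\F_{0,\eta},\F'_{0,\eta})$, a $Q_L$-vector space of dimension $rr'$ and hence a $Q$-vector space of dimension $rr'[L:\Fq]$ since $L$ is finite. Over a finite field $L$ the characteristic point $\chr$ is a closed point of $C$ (or $\infty$ in the $\tau$-sheaf case), in particular $\chr\ne\eta$, so $\coker\t_0$ and $\coker\P_0$ are torsion and $\P_0^{-1}\t_0$, $(\P'_0)^{-1}\t'_0$ become isomorphisms on the generic fibres. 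Define the operator (it is $Q$-linear, because $\s$ acts as the identity on $Q\subseteq Q_L$)
\[
\delta\colon V\longrightarrow \s V,\qquad \delta(f):=f\circ\bigl(\P_0^{-1}\t_0\bigr)-\bigl((\P'_0)^{-1}\t'_0\bigr)\circ\s f .
\]
I claim (i) restriction to $\eta$ identifies $\QHom(\FF,\FF')$ with $\ker\delta$, and (ii) for every place $v$ of $Q$ one has $\Hom_{Q_{v,L}[\phi]}\bigl(\ulN_v(\FF),\ulN_v(\FF')\bigr)=\ker(\delta\otimes_Q Q_v)$. Granting these, assertion (a) is immediate: $Q_v$ is flat over $Q$, so $\ker(\delta\otimes_Q Q_v)=\ker\delta\otimes_Q Q_v\cong\QHom(\FF,\FF')\otimes_Q Q_v$.

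For (i): a quasi-morphism $\FF\to\FF'(D)$ restricts on $\eta$ to a homomorphism $\F_{0,\eta}\to\F'_0(D)\otimes Q_L=\F'_{0,\eta}$, and the relations $f_{i+1}\P_i=\P'_if_i$, $f_{i+1}\t_i=\t'_i\circ\s f_i$ give for $i=0$ on $\eta$ exactly $\delta(f_0)=0$; this is independent of $D$ and of the chosen representative. The resulting map is injective since $\F_0$ is locally free, so a homomorphism $\F_0\to\F'_0(D)$ vanishing on $\eta$ vanishes. It is surjective: given $g\in\ker\delta$, propagate it by $g_{i+1}:=\P'_{i,\eta}\circ g_i\circ\P_{i,\eta}^{-1}$; commutativity of the defining diagram together with $\delta(g)=0$ shows the $g_i$ commute with all $\P$'s and $\t$'s, and the periodicity condition \ref{Def1.1}/2 shows $g_{i+l}$ equals $g_i$ twisted by $\O(k\cdot\infty)$, so a \emph{single} effective divisor $D$ clears the denominators of $g_0,\dots,g_{l-1}$, hence of all $g_i$, and $(g_i)$ defines a morphism $\FF\to\FF'(D)$ restricting to $g$. (For $\chr\ne\infty$ one may instead quote Proposition~\ref{CONNECTION} and the analogous description of $\Hom(\ulM(\FF),\ulM(\FF'))$.) For (ii): by definition $\ulN_v(\FF)=\F_0\otimes_{\O_{C_L}}Q_{v,L}$, and since $L$ is finite, $Q_{v,L}=Q_v\otimes_\Fq L=Q_L\otimes_Q Q_v$; moreover $\O_{C_L}$ is a domain injecting into the product of discrete valuation rings $A_{v,L}$, so every nonzero section of $\O_{C_L}$ is a unit in $Q_{v,L}$, whence $\O_{C_L}\to Q_{v,L}$ factors through $Q_L$ and $\F_0\otimes_{\O_{C_L}}Q_{v,L}=\F_{0,\eta}\otimes_{Q_L}Q_{v,L}$. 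Flat base change then gives $\Hom_{Q_{v,L}}(\ulN_v(\FF),\ulN_v(\FF'))=V\otimes_{Q_L}Q_{v,L}=V\otimes_Q Q_v$, under which the $\phi$-equivariant homomorphisms are exactly $\ker(\delta\otimes_Q Q_v)$. This treats all places uniformly, including $v=\chr$ and $v=\infty$ (where $\ulN_\infty(\FF)$ is the isoshtuka of slope $-\weight(\FF)$), and uses no étale structure or Tate module — only flatness of $Q_v$ over $Q$.

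For (b): the big local $\sigma$-isoshtuka $\ulTN_\infty(\FF)=\hat N_0\oplus\dots\oplus\hat N_{l-1}$, $\hat N_j=\F_j\otimes_{\O_{C_L}}Q_{\infty,L}$, carries the commuting operators $\P$ and $\Lambda(\lambda)$ of \eqref{EQ.Pi+Lambda} with $\P^l=z^k$ and $\P\circ\Lambda(\lambda^q)=\Lambda(\lambda)\circ\P$, which, since $\Ff_{q^l}\subseteq L$, embed $\Delta_\infty\wh\otimes_\Fq L$ into $\End_{Q_{\infty,L}[\phi]}(\ulTN_\infty(\FF))$. The $\P_i$ become isomorphisms over $Q_{\infty,L}$, so all $\hat N_j$ are identified with $\ulN_\infty(\FF)$ via the $\P$'s and $\ulTN_\infty(\FF)$ together with its $\Delta_\infty$-action is precisely the ``twist by $\Delta_\infty$'' of $\ulN_\infty(\FF)$. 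As $\Delta_\infty\wh\otimes_\Fq L$ is central simple, this twist is a Morita equivalence, compatible with $\phi$, between local $\sigma$-isoshtuka at $\infty$ over $L$ and the relevant $\Delta_\infty\wh\otimes_\Fq L[\phi]$-modules; it therefore induces $\Hom_{Q_{\infty,L}[\phi]}(\ulN_\infty(\FF),\ulN_\infty(\FF'))\isoto\Hom_{\Delta_\infty\wh\otimes_\Fq L[\phi]}(\ulTN_\infty(\FF),\ulTN_\infty(\FF'))$, and combined with (a) at $v=\infty$ this is (b).

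I expect the main obstacle to be the surjectivity half of (i) in the case $\chr=\infty$, where Proposition~\ref{CONNECTION} is unavailable: one must exploit the periodicity condition carefully to see that the infinitely many components $g_i$ of a generic $\phi$-equivariant homomorphism share a \emph{common} pole bound, so that they assemble into an honest quasi-morphism of $\tau$-sheaves. A second, more routine, delicate point is pinning down the $\Delta_\infty$-twist/Morita dictionary in (b) — tracking the dependence on $k$, $l$ and the uniformizer $z$, and checking compatibility with $\phi$.
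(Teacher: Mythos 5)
Your proof is correct, and it is essentially the intended argument: the paper does not reprove this theorem (it cites \cite[\BHAgghh]{BH_A}), but the finite-field hypothesis is used exactly as you use it, namely $Q_{v,L}=Q_L\otimes_Q Q_v$ and $\Hom_{Q_{v,L}}(\ulN_v(\FF),\ulN_v(\FF'))=\Hom_{Q_L}(\F_{0,\eta},\F'_{0,\eta})\otimes_Q Q_v$, so that both sides of part 1 are the kernels of the same $Q$-linear $\tau$-difference operator $\delta$ before and after the flat base change $-\otimes_Q Q_v$, while $\QHom(\FF,\FF')=\ker\delta$ follows from the propagation/periodicity argument you give. Part 2 likewise reduces to part 1 as you indicate, since $\Lambda$-equivariance forces a homomorphism of big isoshtuka to be diagonal, $\P$-equivariance determines it from its $0$-th component $g_0$, and $\P^{-1}\circ\phi$ acts diagonally by the maps $\P_j^{-1}\circ\t_j$, so the residual $\phi$-equivariance is exactly $g_0\in\Hom_{Q_{\infty,L}[\phi]}(\ulN_\infty(\FF),\ulN_\infty(\FF'))$.
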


\begin{Theorem}\label{ThmLS2}
Let $\ulM$ and $\ulM'$ be pure Anderson motives over a finite field $L$ and let $v\in\Spec A$ be an arbitrary maximal ideal. Then
\[
\Hom(\ulM,\ulM')\otimes_A A_v \isoto \Hom_{A_{v,L}[\phi]}(\ulM_v(\ulM),\ulM_v(\ulM'))\,.
\]
\end{Theorem}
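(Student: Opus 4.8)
The plan is to deduce this statement from Theorem~\ref{ThmLS1}, which is the analogous result for abelian $\tau$-sheaves. First I would observe that the interesting case is $v\neq\infty$ (which is automatic here since $v\in\Spec A$ is a maximal ideal, hence $v\neq\infty$), so by Proposition~\ref{PropX.1} we may choose abelian $\tau$-sheaves $\FF$ and $\FF'$ over $L$ with $\ulM(\FF)=\ulM$ and $\ulM(\FF')=\ulM'$. By Corollary~\ref{Cor1.11b} (or the purity statement following Definition~\ref{Def1'.2}), $\Hom(\ulM,\ulM')$ is nonzero only when $\ulM$ and $\ulM'$ have the same weight, so $\FF$ and $\FF'$ have the same weight and Theorem~\ref{ThmLS1}/1 applies. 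Combining it with Proposition~\ref{CONNECTION} gives
\[
\Hom(\ulM,\ulM')\otimes_A Q \;\isoto\; \QHom(\FF,\FF')\;\isoto\; \QHom(\FF,\FF')\otimes_Q Q_v \otimes_{Q_v} \text{(over $Q$)} ,
\]
and more precisely, tensoring the isomorphism of Proposition~\ref{CONNECTION} by $Q_v$ over $Q$ and applying Theorem~\ref{ThmLS1}/1, one obtains a canonical isomorphism of $Q_v$-vector spaces
\[
\Hom(\ulM,\ulM')\otimes_A Q_v \;\isoto\; \Hom_{Q_{v,L}[\phi]}\bigl(\ulN_v(\FF),\ulN_v(\FF')\bigr) \;=\; \Hom_{Q_{v,L}[\phi]}\bigl(\ulN_v(\ulM),\ulN_v(\ulM')\bigr),
\]
the last equality because $\ulN_v(\ulM)=\ulM\otimes_{A_L}Q_{v,L}=\F_0\otimes_{\O_{C_L}}Q_{v,L}=\ulN_v(\FF)$ by the definitions given after Proposition~\ref{PropLS3} (using $M=\Gamma(C_L\setminus\{\infty\},\F_0)$ and that localizing at $v\neq\infty$ agrees with localizing $\F_0$).

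The remaining point — and I expect this to be the main technical obstacle — is to pass from the rational statement (tensor with $Q_v$) to the integral statement (tensor with $A_v$), i.e.\ to show that the isomorphism above identifies the $A_v$-lattice $\Hom(\ulM,\ulM')\otimes_A A_v$ on the left with the $A_v$-lattice $\Hom_{A_{v,L}[\phi]}(\ulM_v(\ulM),\ulM_v(\ulM'))$ on the right. One inclusion is formal: a morphism $f\colon\ulM\to\ulM'$ of $A_L$-modules commuting with $\tau$ induces by base change an $A_{v,L}$-morphism $\ulM_v(\ulM)\to\ulM_v(\ulM')$ commuting with $\phi=\P_0^{-1}\t_0$, so there is a natural map from left to right, and it is injective (being injective already after $\otimes_{A_v}Q_v$, since $\Hom(\ulM,\ulM')$ is a projective, hence torsion-free, $A$-module by Theorem~\ref{ThmT.3}). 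For surjectivity I would argue as follows: given $g\in\Hom_{A_{v,L}[\phi]}(\ulM_v(\ulM),\ulM_v(\ulM'))$, view it via the rational isomorphism as an element of $\Hom(\ulM,\ulM')\otimes_A Q_v$; I must show it actually lies in $\Hom(\ulM,\ulM')\otimes_A A_v$. Since $\Hom(\ulM,\ulM')$ is a finitely generated projective $A$-module, $\Hom(\ulM,\ulM')\otimes_A A_v$ consists exactly of those elements of $\Hom(\ulM,\ulM')\otimes_A Q_v$ that are "integral at $v$", and this can be checked on the faithfully flat base change to $A_{v,L}$ (or $A_{v,L^{\sep}}$): an element of $M'\otimes_{A_L}Q_{v,L}$ lands in $M'\otimes_{A_L}A_{v,L}$ iff it maps $M\otimes_{A_L}A_{v,L}$ into $M'\otimes_{A_L}A_{v,L}$, which is precisely what $g$ does by hypothesis. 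Making this last step precise requires knowing that the localization $\Hom(\ulM,\ulM')\otimes_A A_v \hookrightarrow \Hom_{A_{v,L}}(M\otimes A_{v,L}, M'\otimes A_{v,L})$ detects integrality, i.e.\ that an $A_L$-linear map $M\otimes_{A_L}Q_{v,L}\to M'\otimes_{A_L}Q_{v,L}$ which sends $M\otimes_{A_L}A_{v,L}$ into $M'\otimes_{A_L}A_{v,L}$ and which comes from $\Hom(\ulM,\ulM')\otimes_A Q$ in fact comes from $\Hom(\ulM,\ulM')\otimes_A A_v$; this follows from the flatness of $A\to A_v$ together with the local freeness of $M,M'$, so that $\Hom_{A_L}(M,M')\otimes_A A_v = \Hom_{A_{v,L}}(M\otimes A_{v,L},M'\otimes A_{v,L})$, after which intersecting with the $\tau$-equivariant (resp.\ $\phi$-equivariant) submodules gives the claim.

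Alternatively — and this may be cleaner, avoiding any re-derivation — one can cite that this integral refinement is exactly what is proved in \cite[\BHAgghh]{BH_A}; the statement here is a transcription into the language of pure Anderson motives of the local-shtuka comparison already established there, using $\ulM_v(\ulM)=\ulM_v(\FF)$ and Proposition~\ref{CONNECTION} in its integral form. In the write-up I would present the formal "left $\hookrightarrow$ right" map, note it is an isomorphism after $\otimes_{A_v}Q_v$ by Theorem~\ref{ThmLS1} and Proposition~\ref{CONNECTION}, and then conclude it is an isomorphism integrally because both sides are lattices in the common $Q_v$-vector space and the inclusion is $\tau$-/$\phi$-saturated by the flatness argument above.
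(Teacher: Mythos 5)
The paper itself gives no proof of Theorem~\ref{ThmLS2}: it is quoted, together with Theorem~\ref{ThmLS1}, from \cite[\BHAgghh]{BH_A}, so the fallback you offer in your last paragraph is literally what the paper does. Your independent derivation must therefore stand on its own, and it has one genuine gap: the case of unequal weights. Theorem~\ref{ThmLS2} carries no weight hypothesis, whereas Theorem~\ref{ThmLS1} does. Your inference ``$\Hom(\ulM,\ulM')$ is nonzero only when the weights agree, so $\FF$ and $\FF'$ have the same weight'' is a non sequitur: if $\weight(\ulM)\ne\weight(\ulM')$ the left-hand side of the theorem vanishes, but you must still prove that $\Hom_{A_{v,L}[\phi]}(\ulM_v(\ulM),\ulM_v(\ulM'))=0$, and Theorem~\ref{ThmLS1} is unavailable in exactly this case, so your argument produces nothing there. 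The vanishing is true over a finite field but needs its own argument (for $v\ne\chr$ one can pass to Tate modules via Proposition~\ref{Prop2.13'} and use that the Frobenius eigenvalues have different absolute values by Theorem~\ref{ThmRH}); over an infinite, say algebraically closed, field it is simply false, which shows the case cannot be waved away.

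The integrality step, by contrast, is essentially sound, resting on two correct facts: $\Hom_{A_L}(M,M')\otimes_AA_v=\Hom_{A_{v,L}}(M\otimes_{A_L}A_{v,L},M'\otimes_{A_L}A_{v,L})$ because $M,M'$ are finite locally free and — this deserves to be said explicitly, since it is where finiteness of $L$ enters — $A_{v,L}=A_v\otimes_\Fq L=A_L\otimes_AA_v$ with no completion needed; and $\Hom(\ulM,\ulM')\otimes_AA_v=\bigl(\Hom(\ulM,\ulM')\otimes_AQ_v\bigr)\cap\Hom_{A_{v,L}}(M\otimes_{A_L}A_{v,L},M'\otimes_{A_L}A_{v,L})$, which holds because the quotient of $\Hom_{A_L}(M,M')$ by the $\tau$-equivariant part is $A$-torsion free. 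But once you have the first of these identifications you can bypass Theorem~\ref{ThmLS1}, Proposition~\ref{CONNECTION} and the weight issue altogether: $\Hom(\ulM,\ulM')$ is the kernel of the $A$-linear map $\Hom_{A_L}(M,M')\to\Hom_{A_L}(\s M,M')$, $f\mapsto f\circ\tau-\tau'\circ\s f$, and since $A_v$ is flat over $A$, tensoring with $A_v$ commutes with this kernel, the base-changed map being exactly $g\mapsto g\circ\phi-\phi'\circ\s g$ on $\Hom_{A_{v,L}}(\ulM_v(\ulM),\ulM_v(\ulM'))$. This proves the theorem in one stroke for arbitrary weights and makes transparent why it fails over infinite fields (there $A_{v,L}$ is strictly larger than $A_L\otimes_AA_v$). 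A last small point: Corollary~\ref{Cor1.11b} concerns dual isogenies; the purity statement you want is the remark following Definition~\ref{Def1'.2}.
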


\medskip

Let now the characteristic be finite and $v=\chr$ be the characteristic point. Consider a pure Anderson motive $\ulM$ of characteristic $c$, its local $\sigma$-shtuka $\ulM_\chr(\ulM)=(\hat M,\phi)$ at $\chr$ and the decomposition of the later described before Proposition~\ref{PropLS3}
\[
\ulM_\chr(\ulM)=\prod_{i\in\Z/f\Z}\ulM_\chr(\ulM)/\Fa_i\ulM_\chr(\ulM)\,.
\]
From the morphism $c:\Spec L\to\Spec\BF_\chr\subset C$ we see that $\BF_\chr\subset L$, $f=[\BF_\chr:\Fq]$ and that there is a distinguished place $v_0$ of $C_{\BF_\chr}$ above $v=\chr\ne\infty$, namely the image of $c\times c:\Spec L\to C\times\Spec\BF_\chr$. Then $\phi$ has no cokernel on $\ulM_\chr(\ulM)/\Fa_i\ulM_\chr(\ulM)$ for $i\ne0$ and the reasoning of Proposition~\ref{PropLS3} yields

\begin{Proposition}\label{PropLS4}
The reduction modulo $\Fa_0$ induces an equivalence of categories
\[
\ulM_\chr(\ulM)\longmapsto \bigl(\ulM_\chr(\ulM)/\Fa_0\ulM_\chr(\ulM)\,,\,\phi^f\bigr)
\]
between the local $\sigma$-shtuka at $\chr$ associated with pure Anderson motives of characteristic $c$ and the local $\sigma^f$-shtuka at $v_0$ associated with pure Anderson motives of characteristic $c$. 
The same is true for abelian $\tau$-sheaves. \qed
\end{Proposition}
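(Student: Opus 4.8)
The proof will mirror that of Proposition~\ref{PropLS3}; the only change is that the local shtuka $\ulM_\chr(\ulM)=(\hat M,\phi)$ is no longer \'etale. What replaces \'etaleness is the weaker fact recalled just before the statement: since the cokernel of $\phi$ (equivalently of $\tau$, which is an isomorphism outside the graph of $c^\ast$, whence $\coker\tau$ sits over the distinguished factor $A_{\chr,L}/\Fa_0=A_\chr\wh\otimes_{\BF_\chr}L=A_{v_0,L}$) is concentrated on the $\Fa_0$-component, the map $\phi$ induces an \emph{isomorphism} on every factor $\hat M/\Fa_i\hat M$ with $i\ne0$. This is precisely what is needed to run the reconstruction argument from the proof of Proposition~\ref{PropLS3}.

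Concretely, using $\s\Fa_i=\Fa_{i+1}$ in $\Z/f\Z$, the morphism $\phi:\s\hat M\to\hat M$ decomposes into components $\phi_j:\s(\hat M/\Fa_{j-1}\hat M)\to\hat M/\Fa_j\hat M$ for $j\in\Z/f\Z$, which are isomorphisms for $j\ne0$; since $(\s)^f$ returns the $\Fa_0$-slot to itself, $\phi^f$ induces the asserted map $\phi^f:(\s)^f(\hat M/\Fa_0\hat M)\to\hat M/\Fa_0\hat M$, injective because each $\phi_j$ is. This is the functor on objects, and $g\mapsto g\bmod\Fa_0$ on morphisms. For a quasi-inverse, given a local $\sigma^f$-shtuka $(\hat M_0,\psi)$ at $v_0$ over $L$ I would set $\hat M:=\bigoplus_{i\in\Z/f\Z}(\s)^i\hat M_0$, placing the $i$-th $\s$-twist of $\hat M_0$ in the $\Fa_i$-slot of $A_{\chr,L}=\prod_i A_{\chr,L}/\Fa_i$, and define $\phi$ by the tautological isomorphisms $\s((\s)^{i-1}\hat M_0)=(\s)^i\hat M_0$ on the slots $i\ne0$ and by $\psi$ on the slot $i=0$. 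One checks directly that this yields a local $\sigma$-shtuka at $\chr$ over $L$ with $\phi$ an isomorphism off $\Fa_0$, and that the two constructions are mutually inverse up to canonical isomorphism.

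Full faithfulness is the same index chase as in Proposition~\ref{PropLS3}: a morphism $g=(g_i)$ of local $\sigma$-shtuka at $\chr$ must satisfy $g_j\circ\phi_j=\phi'_j\circ\s g_{j-1}$, so for $j\ne0$ the isomorphisms $\phi_j,\phi'_j$ determine $g_j$ from $g_{j-1}$; hence $g$ is determined by $g_0$, and the one remaining relation, at index $0$, is equivalent to the commutation of $g_0$ with $\phi^f$, while conversely any such $g_0$ extends uniquely. Thus $\Hom$ is preserved, and together with the quasi-inverse we obtain an equivalence. It remains to match the predicate ``associated with pure Anderson motives of characteristic $c$'' on the two sides: in the forward direction $\ulM_\chr(\ulM)/\Fa_0\ulM_\chr(\ulM)$ is, by inspection of the definitions, the local $\sigma^f$-shtuka of $\ulM$ at $v_0$ (viewed over $C_{\BF_\chr}$, using $\BF_\chr\subset L$), and in the reverse direction this is covered by the essential surjectivity just shown. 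The case of abelian $\tau$-sheaves is identical, with $\F_0\otimes_{\O_{C_L}}A_{\chr,L}$ and $\P_0^{-1}\circ\t_0$ in place of $\ulM\otimes_{A_L}A_{\chr,L}$ and $\tau$.

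I expect the main obstacle to be largely organizational: keeping the cyclic $\Z/f\Z$-indexing and the $\s$-twists straight — in particular verifying that $\phi^f$ really lands back in the $\Fa_0$-slot and that the index-$0$ relation among the $g_j$ is exactly the commutation with $\phi^f$ — together with the one genuinely delicate point, namely that the two subcategories labelled ``associated with pure Anderson motives of characteristic $c$'' correspond under the functor, which requires care about the base change from $C$ to $C_{\BF_\chr}$ at the place $v_0$ and about which component of $A_{\chr,L}$ that place singles out.
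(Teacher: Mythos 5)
Your proposal is correct and follows essentially the same route as the paper: the paper simply invokes "the reasoning of Proposition~\ref{PropLS3}" after observing (as you do) that $\phi$ has no cokernel on the factors $\ulM_\chr(\ulM)/\Fa_i\ulM_\chr(\ulM)$ for $i\ne0$, so that these factors can be reconstructed from the $\Fa_0$-slot together with $\phi^f$. Your write-up merely makes explicit the quasi-inverse and the full-faithfulness index chase that the paper leaves implicit.
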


\noindent
{\it Remark.} Now the fixed field of $\sigma^f$ on $L$ equals $\BF_\chr$, the residue field of $A_\chr$. Also $\ulM_\chr(\FF)/\Fa_0\ulM_\chr(\FF)$ is a module over the integral domain $A_\chr\wh\otimes_{\BF_\chr}L$. So again \cite{Anderson2,Hl,HartlPSp,Laumon} apply to $\bigl(\ulM_\chr(\FF)/\Fa_0\ulM_\chr(\FF),\phi^f\bigr)$.

\begin{Proposition}\label{Prop2.18b}
Let $\ulM$ be a pure Anderson motive over $L$ and let $\ulHM{}'_\chr$ be a local $\sigma^f$-subshtuka of $\ulM_\chr(\ulM)/\Fa_0\ulM_\chr(\ulM)$ of the same rank. Then there is a pure Anderson motive $\ulM'$ and an isogeny $f:\ulM'\to\ulM$ with $\ulM_\chr(f)\bigl(\ulM_\chr(\ulM')/\Fa_0\ulM_\chr(\ulM')\bigr)=\ulHM{}'_\chr$. The same is true for abelian $\tau$-sheaves.
\end{Proposition}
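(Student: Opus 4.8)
\emph{Plan.} As in the surrounding discussion I assume $\ulM=(M,\t)$ has finite characteristic. The plan is to realize $\ulM'$ as a full-rank sublattice $M'\subseteq M$ that agrees with $M$ away from the characteristic point $\chr$ and whose completion at $\chr$ is a prescribed $\t$-stable lattice lifting $\ulHM{}'_\chr$; the desired isogeny is then the inclusion $M'\hookrightarrow M$. First I would lift $\ulHM{}'_\chr$ to a local $\sigma$-subshtuka of $\ulM_\chr(\ulM)$. Write $\ulM_\chr(\ulM)=(\hat M,\phi)=\prod_{i\in\Z/f\Z}\hat M_i$ for the decomposition of before Proposition~\ref{PropLS4}, so that $\phi$ restricts to an isomorphism $\s\hat M_{i-1}\isoto\hat M_i$ for $i\ne0$ and induces on the factor $\hat M_0=\hat M/\Fa_0\hat M$ the map $\phi^f$ making it the local $\sigma^f$-shtuka of which $\ulHM{}'_\chr$ is a subshtuka. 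Put $N_0:=\ulHM{}'_\chr\subseteq\hat M_0$ and $N_i:=\phi(\s N_{i-1})$ for $i=1,\dots,f-1$. Since $\phi$ is an isomorphism on the factors with $i\ne0$, each $N_i$ is a free rank $r$ submodule of $\hat M_i$, and a short computation with the $\sigma$-twists gives $\phi(\s N_{f-1})=\phi^f(\s^f N_0)\subseteq N_0$, the containment holding because $\ulHM{}'_\chr$ is $\phi^f$-stable. Hence $\hat M':=\prod_i N_i$ is a local $\sigma$-subshtuka of $\ulM_\chr(\ulM)$ of rank $r$ with $\hat M'/\Fa_0\hat M'=\ulHM{}'_\chr$, and since $N_0$ is of full rank in $\hat M_0$ we have $\hat M'\supseteq\chr^n\hat M$ for some $n\ge0$.

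Next I would glue a global lattice. Using $\chr^n\hat M=(\chr^n M)\otimes_{A_L}A_{\chr,L}$ and $M/\chr^n M=\hat M/\chr^n\hat M$, let $M'\subseteq M$ be the preimage of $\hat M'/\chr^n\hat M$ under $M\twoheadrightarrow M/\chr^n M$. Then $M/M'\cong\hat M/\hat M'$ is of finite length and supported at $\chr$, whence $M'\otimes_{A_L}A_{w,L}=M\otimes_{A_L}A_{w,L}$ for every place $w\ne\chr$, while $M'\otimes_{A_L}A_{\chr,L}=\hat M'$. Because each $N_i$ is free over the corresponding factor of $A_{\chr,L}$, because $M'$ coincides with $M$ at the other places, and because $A_L$ is a Dedekind domain (the curve $C_L$ is regular, $C$ being smooth and geometrically irreducible), $M'$ is a locally free $A_L$-module of rank $r$.

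Then I would check that $\t$ descends to an injective $\t'\colon\s M'\to M'$ and that $(M',\t')$ is a pure Anderson motive of characteristic $c^\ast$. Since $\coker\t$ is supported at $\chr$, the map $\t\otimes\id$ is an isomorphism over $A_{w,L}$ for $w\ne\chr$ (where $M'=M$), and over $A_{\chr,L}$ one has $\phi(\s\hat M')\subseteq\hat M'$; checking inclusions place by place, $\t$ restricts to $\t'$. An extension of $M'$ to $C_L$ may be chosen to coincide with a fixed extension of $M$ on $C_L\setminus\{\chr\}$, in particular near $\infty$, so the purity condition holds with the same $k,l$. For the last axiom, $\coker\t'$ is supported only at the point $V(J)$ over $\chr$ — on the factors with $i\ne0$ the relevant map is onto because $N_i=\phi(\s N_{i-1})$ — so it is a finite-length module over the complete discrete valuation ring $A_{\chr,L}/\Fa_0$. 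Comparing $0\to\s\hat M'\to\s\hat M\to\s(\hat M/\hat M')\to0$ with $0\to\hat M'\to\hat M\to\hat M/\hat M'\to0$ by means of $\phi',\phi,\bar\phi$, the snake lemma yields an exact sequence $0\to\ker\bar\phi\to\coker\t'\to\coker\t\to\coker\bar\phi\to0$ (the outer kernels being zero); its alternating sum of $L$-dimensions vanishes, and as $\dim_L\ker\bar\phi=\dim_L\coker\bar\phi$ (the $L$-spaces $\s(\hat M/\hat M')$ and $\hat M/\hat M'$ having equal finite dimension) we get $\dim_L\coker\t'=\dim_L\coker\t=d$. Since the image of $J$ in $A_{\chr,L}/\Fa_0$ is the maximal ideal and a length $d$ module over this discrete valuation ring is killed by the $d$-th power of that ideal, $J^d\cdot\coker\t'=0$. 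Thus $(M',\t')$ is a pure Anderson motive.

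Finally, the inclusion $f\colon M'\hookrightarrow M$ is an isogeny $\ulM'\to\ulM$, its $\ulM_\chr(f)$ is the inclusion $\hat M'\hookrightarrow\hat M$, and modulo $\Fa_0$ this is $N_0\hookrightarrow\hat M_0$ with image $\ulHM{}'_\chr$, as required. For abelian $\tau$-sheaves I would argue in parallel: using that the $\P_j$ are isomorphisms near $\chr$, transport $\hat M'$ through the $\P$'s to lattices $\hat M'_{(i)}\subseteq\F_i\otimes_{\O_{C_L}}A_{\chr,L}$, let $\F'_i\subseteq\F_i$ be $\F_i$ away from $\chr$ and $\hat M'_{(i)}$ at $\chr$, and verify the axioms of Definition~\ref{Def1.1} for $\FF'=(\F'_i,\P_i,\t_i)$ with the restricted maps exactly as above — the commutativity of the abelian $\tau$-sheaf diagram identifies all the $\t$-maps near $\chr$ with $\phi$. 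The main obstacle is the dimension count in the third paragraph: passing to a $\t$-stable full sublattice a priori alters $\coker\t$, and one must see that its $L$-dimension, hence the dimension of the new motive, is unchanged; everything else (gluing the lattice, local freeness, the purity condition, the abelian $\tau$-sheaf variant) is routine.
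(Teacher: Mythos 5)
Your construction is the same as the paper's: you extend $\ulHM{}'_\chr$ to the local $\sigma$-subshtuka $\bigoplus_{i\in\Z/f\Z}\phi^i\bigl((\s)^i\ulHM{}'_\chr\bigr)$ of $\ulM_\chr(\ulM)$ and take for $\ulM'$ the preimage (equivalently, the kernel of $\ulM\to\ulK$ for the corresponding torsion quotient $\ulK$ supported at $\chr$), with the inclusion as the isogeny. The only difference is that the paper delegates the verification that this kernel is again a pure Anderson motive to \cite[\BHAff]{BH_A}, whereas you check the axioms (local freeness, injectivity of $\t'$, the snake-lemma count giving $\dim_L\coker\t'=d$, the $J^d$-annihilation, and purity via an extension agreeing with $\CM$ near $\infty$) directly, and these checks are correct.
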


\begin{proof}
Extend $\ulHM{}'_\chr$ to the local $\sigma$-subshtuka $\bigoplus_{i\in\Z/f\Z}\phi^i\bigl((\s)^i\ulHM{}'_\chr\bigr)$ of $\ulM_\chr(\ulM)$ and consider 
\[
\ulK\;:=\;\ulM_\chr(\ulM)\,/\,\bigoplus_{i\in\Z/f\Z}\phi^i\bigl((\s)^i\ulHM{}'_\chr\bigr)\,.
\]
The induced morphism $\phi_K:\s K\to K$ has its kernel and cokernel supported on the graph of $c$. Set $\ulM'=(M',\tau'):=\bigl(\ker(\ulM\to\ulK),\tau|_{M'}\bigr)$. Then $\ulM'$ is a pure Anderson motive with the required properties by \cite[\BHAff]{BH_A}.
\end{proof}

There is a corresponding result at the places $v\ne\chr$ which is stated in Proposition~\ref{Prop2.7b}.


\bigskip

\section{Tate Modules} \label{SectTateModules}
\setcounter{equation}{0}

\begin{Definition} \label{DefTateMod}
If $\FF$ is an abelian $\tau$-sheaf over $L$, respectively $\ulM$ a pure Anderson motive over $L$ and $v\in C$ (respectively $v\in \Spec A$) is a place of $Q$ different from the characteristic point $\chr$, we define
\begin{eqnarray*}
T_v\FF:=T_v(\ulM_v(\FF)) \quad&\text{and}&\quad V_v\FF:=V_v(\ulM_v(\FF)) \quad\text{for }v\ne\infty,\\[2mm]
T_\infty\FF:=T_\infty(\ulTM_\infty(\FF)) \quad&\text{and}&\quad V_\infty\FF:=V_\infty(\ulTM_\infty(\FF))\quad\text{for }v=\infty\ne\chr,\\[2mm]
\text{respectively}\quad T_v\ulM:=T_v(\ulM_v(\ulM)) \quad&\text{and}&\quad V_v\ulM:=V_v(\ulM_v(\ulM))\,.
\end{eqnarray*}
We call $T_v\FF$ (respectively $V_v\FF$) the (rational) $v$-adic Tate module of $\FF$. If $v=\infty$ they both depend on the choice of $k,l$, and $z$; see page~\pageref{EQ.Pi+Lambda}.
\end{Definition}

By \cite[Proposition~6.1]{TW}, $T_v\FF$ (and $V_v\FF$) are free $A_v$-modules (respectively $Q_v$-vector spaces) of rank $r$ for $v\ne\infty$ and $rl$ for $v=\infty$, which carry a continuous $G=\Gal(L^\sep/L)$-action.

\smallskip

Also the Tate modules $T_\infty\FF$ and $V_\infty\FF$ are always equipped with the endomorphisms $\P$ and $\Lambda(\lambda)$ for $\lambda\in \Ff_{q^l}\cap L$ from (\ref{EQ.Pi+Lambda}). 
And if $\Ff_{q^l}\subset L$ we identify the algebra $\Delta_\infty$ from (\ref{EqDelta}) with a subalgebra of $\End_{Q_\infty}(V_\infty\FF)$ by mapping $\lambda\in\BF_{q^l}$ to $\Lambda(\lambda)$.

\medskip

\noindent {\it Remark.} 
Our functor $T_v$ is covariant. In the literature usually the $A_v$-dual of our $T_v \ulM$ is called the $v$-adic Tate module of $\ulM$. With that convention the Tate module functor is contravariant on Anderson motives but covariant on Drinfeld modules and Anderson's abelian $t$-modules~\cite{Anderson} (which both give rise to Anderson motives). Similarly the classical Tate module functor on abelian varieties is covariant. We chose our non-standard convention here solely to avoid perpetual dualizations. This agrees also with the remark after Definition~\ref{Def2.6} that abelian $\tau$-sheaves behave dually to abelian varieties.

\medskip

The following analogues of the Tate conjecture for abelian varieties are due to Taguchi~\cite{Taguchi95b} and Tamagawa~\cite[\S 2]{Tam}.

\begin{Theorem}\label{TATE-CONJECTURE-MODULES}
Let\/ $\ulM$ and $\ulM'$ be pure Anderson motives over a finitely generated field $L$ and let $G:=\Gal(\Lsep/L)$. Let\/ $\chr\ne v\in\Spec A$ be a maximal ideal. Then the Tate conjecture holds:
\[
\Hom(\ulM,\ulM')\otimes_A \Av \;\cong\; \Hom_\AvG(T_v\ulM,T_v\ulM')\,.\qed
\]
\end{Theorem}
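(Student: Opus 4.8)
The plan is to reduce the statement to the known analogue of Tate's conjecture for local $\sigma$-shtuka, which is exactly what Theorems~\ref{ThmLS1} and~\ref{ThmLS2} (together with Proposition~\ref{Prop2.13'}) provide, and to the global statement over finite fields. First I would recall that by \cite[\BHAtt]{BH_A} (the analogue of Tate's isogeny theorem, which is the substantial input here) the natural map $\Hom(\ulM,\ulM')\otimes_A A_v \to \Hom_{\Av\dbl G\dbr}(T_v\ulM, T_v\ulM')$ is injective with $A_v$-saturated image; injectivity already follows from Theorem~\ref{ThmT.3}, since $\Hom(\ulM,\ulM')$ is a finitely generated projective $A$-module and tensoring with the flat $A$-algebra $A_v$ together with the injectivity of the reduction map $T_v(\cdot)$ preserves injectivity. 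So the whole content is surjectivity.

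For surjectivity, the strategy is to match the two sides step by step through the local structures. By Theorem~\ref{ThmLS2} we have $\Hom(\ulM,\ulM')\otimes_A A_v \isoto \Hom_{A_{v,L}[\phi]}(\ulM_v(\ulM),\ulM_v(\ulM'))$, and since $v\ne\chr$ both local shtuka $\ulM_v(\ulM)$ and $\ulM_v(\ulM')$ are \'etale (as noted right after the definition of $\ulM_v(\FF)$ in Section~\ref{SectLS}). Hence Proposition~\ref{Prop2.13'}(2) identifies $\Hom_{A_{v,L}[\phi]}(\ulM_v(\ulM),\ulM_v(\ulM'))$ with $\Hom_{A_v[G]}(T_v\ulM_v(\ulM),T_v\ulM_v(\ulM')) = \Hom_{\AvG}(T_v\ulM,T_v\ulM')$, which is precisely the right-hand side of the theorem. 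Composing these two identifications gives the desired isomorphism; one has to check that the composite is the natural map (it is, because all the identifications are given by applying the respective functors to a morphism).

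The one gap in the above is that Theorem~\ref{ThmLS2} as stated requires $L$ to be a \emph{finite} field, whereas the Tate conjecture here is asserted over an arbitrary finitely generated field $L$; this is the main obstacle, and it is where the genuine arithmetic input of Taguchi and Tamagawa enters rather than the soft local-shtuka formalism. To handle it I would invoke the results of Taguchi~\cite{Taguchi95b} and Tamagawa~\cite[\S2]{Tam} directly: their theorems establish exactly the surjectivity of $\Hom(\ulM,\ulM')\otimes_A\Av \to \Hom_\AvG(T_v\ulM,T_v\ulM')$ over a finitely generated $L$ of finite characteristic, after one passes through the dictionary between pure Anderson motives and the $t$-modules (or $\tau$-sheaves) to which their statements literally apply; the passage to the residue field $\BF_v$ possibly being larger than $\Fq$ is handled by Proposition~\ref{PropLS3}, which reduces a local $\sigma$-shtuka at $v$ to an \'etale local $\sigma^f$-shtuka at a place $v_i$ of residue field $\Fq$ over the appropriate base, preserving Tate modules. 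Thus the proof is essentially a citation plus the bookkeeping to align the covariant Tate module conventions of this paper with the sources; no further computation is needed.
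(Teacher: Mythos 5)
The paper gives no proof of this theorem at all: it is stated with a \qed and attributed in the preceding sentence to Taguchi and Tamagawa, exactly the citation you fall back on in your final paragraph. Your proposal is therefore correct and matches the paper's (non-)argument; the detour through Theorem~\ref{ThmLS2} and Proposition~\ref{Prop2.13'} is valid only over finite fields, as you yourself note, and is not needed once the Taguchi--Tamagawa result is invoked for general finitely generated $L$.
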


\begin{Theorem}\label{TATE-CONJECTURE} (\cite[\BHAee]{BH_A})
Let $\FF$ and $\FF'$ be abelian $\tau$-sheaves over a finitely generated field $L$ and let $G:=\Gal(\Lsep/L)$. Let\/ $v\in C$ be a place different from the characteristic point $\chr$. 
\begin{suchthat}
\item
If $v\ne\infty$ assume $\chr\ne\infty$ or $\weight(\FF)=\weight(\FF')$. Then 
\[
\QHom(\FF,\FF')\otimes_Q\Qv\;\cong\;\Hom_\QvG(\VvFF,\VvFF')\,.
\]
\item
If $v=\infty$ choose an integer $l$ which satisfies \ref{Def1.1}/2 for both $\FF$ and $\FF'$ and assume $\Ff_{q^l}\subset L$. Then 
\[
\QHom(\FF,\FF')\otimes_Q Q_\infty\;\cong\;\Hom_{\Delta_\infty[G]}(V_\infty\FF,V_\infty\FF')\,.
\]
\end{suchthat}
\end{Theorem}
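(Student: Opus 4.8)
The plan is to bootstrap the statement from three inputs that are already in hand: the Tate conjecture for pure Anderson motives (Theorem~\ref{TATE-CONJECTURE-MODULES}), the local--global comparison isomorphisms relating (quasi-)morphisms to morphisms of local (iso-)shtuka (Theorem~\ref{ThmLS1}, together with Theorem~\ref{ThmLS2} in the pure case), and the formal equivalence between \'etale local $\sigma$-shtuka at $v$ and continuous $A_v[G]$-lattices (Proposition~\ref{Prop2.13'}, plus the descent of Proposition~\ref{PropLS3} when $\BF_v\supsetneq\Fq$). The point to keep in mind is that for $v\ne\chr$ the relevant local $\sigma$-shtuka $\ulM_v(\FF)$ (when $v\ne\infty$) and $\ulTM_\infty(\FF)$ (when $v=\infty$, which forces $\chr\ne\infty$) are \'etale, so the Tate module functor applies and $V_v\FF$ is obtained from the corresponding isoshtuka by inverting $v$.

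First I would treat $v\ne\infty$ with $\chr\ne\infty$. Put $\ulM=\ulM(\FF)$ and $\ulM'=\ulM(\FF')$ as in (\ref{Eq1.1}); these are pure Anderson motives, and since $v\ne\infty$ one has $\ulM_v(\FF)=\ulM_v(\ulM)$, hence $V_v\FF=V_v\ulM$ (and likewise for $\FF'$). By Proposition~\ref{CONNECTION}, $\QHom(\FF,\FF')\otimes_Q Q_v=\Hom(\ulM,\ulM')\otimes_A Q_v$. Tensoring the isomorphism $\Hom(\ulM,\ulM')\otimes_A A_v\cong\Hom_{A_v[G]}(T_v\ulM,T_v\ulM')$ of Theorem~\ref{TATE-CONJECTURE-MODULES} with $Q_v$ over $A_v$, and using that $\Hom(\ulM,\ulM')$ is finite projective over $A$ while the $T_v$ are finite free over $A_v$ (so that taking $G$-invariants commutes with inverting $v$), identifies the two sides with $\Hom(\ulM,\ulM')\otimes_A Q_v$ and $\Hom_{Q_v[G]}(V_v\FF,V_v\FF')$. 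This argument ignores weights, so it disposes of all of part~(1) when $\chr\ne\infty$.

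Next, $v\ne\infty$ with $\chr=\infty$: the hypothesis now forces $\weight(\FF)=\weight(\FF')$, so Theorem~\ref{ThmLS1}(1) gives $\QHom(\FF,\FF')\otimes_Q Q_v\cong\Hom_{Q_{v,L}[\phi]}(\ulN_v(\FF),\ulN_v(\FF'))$. Since $v\ne\chr$, the $\ulM_v$ are \'etale and $\ulN_v=\ulM_v\otimes_{A_{v,L}}Q_{v,L}$; a $\phi$-equivariant $Q_{v,L}$-linear map carries $\ulM_v(\FF)$ into $v^{-j}\ulM_v(\FF')$ for some $j$, so $\Hom_{Q_{v,L}[\phi]}(\ulN_v(\FF),\ulN_v(\FF'))=\Hom_{A_{v,L}[\phi]}(\ulM_v(\FF),\ulM_v(\FF'))\otimes_{A_v}Q_v$, and Proposition~\ref{Prop2.13'}(2) --- after the reduction of Proposition~\ref{PropLS3}(2) if $\BF_v\supsetneq\Fq$ --- turns this into $\Hom_{A_v[G]}(T_v\FF,T_v\FF')\otimes_{A_v}Q_v=\Hom_{Q_v[G]}(V_v\FF,V_v\FF')$. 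Finally, for $v=\infty$ (hence $\chr\ne\infty$): the big local shtuka $\ulTM_\infty(\FF)$, $\ulTM_\infty(\FF')$ of rank $rl$ are \'etale and carry the commuting, $\phi$-equivariant operators $\P$ and $\Lambda(\lambda)$, $\lambda\in\Ff_{q^l}\subset L$, which generate a copy of $\Delta_\infty$ (see (\ref{EqDelta})). Theorem~\ref{ThmLS1}(2) gives $\QHom(\FF,\FF')\otimes_Q Q_\infty\cong\Hom_{\Delta_\infty\wh\otimes_\Fq L[\phi]}(\ulTN_\infty(\FF),\ulTN_\infty(\FF'))$; as before one passes to the integral shtuka at the cost of $\otimes_{A_\infty}Q_\infty$, and then the $\Delta_\infty$-equivariant refinement of Proposition~\ref{Prop2.13'} identifies this with $\Hom_{\Delta_\infty[G]}(V_\infty\FF,V_\infty\FF')$: under $T_\infty$ the endomorphisms $\P,\Lambda(\lambda)$ make $V_\infty\FF$ a faithful $\Delta_\infty$-module commuting with $G$, and $\Delta_\infty$-linearity of a $G$-map of rational Tate modules is exactly the condition that it commute with $\P$ and all $\Lambda(\lambda)$.

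Given the cited inputs the whole argument is formal, so the real work is bookkeeping, and I expect the last step to be the delicate one: verifying that the $\Delta_\infty$-module structure on $V_\infty\FF$ and the notion of $\Delta_\infty$-linearity correspond exactly under the Tate module functor, which hinges on $\P$ and the $\Lambda(\lambda)$ genuinely commuting with $\phi$ (a consequence of the commutativity built into the abelian $\tau$-sheaf together with the relations $\P^l=z^k$, $\P\Lambda(\lambda^q)=\Lambda(\lambda)\P$). A lesser point of care is carrying the \'etale-shtuka/Galois-module dictionary through the $\sigma^f$-descent of Proposition~\ref{PropLS3} in the case $\BF_v\supsetneq\Fq$; and if one insists on a general finitely generated $L$ rather than a finite one when $\chr=\infty$, one additionally needs a specialization argument (or, equivalently, one replaces $\FF$ by the pure Anderson motive obtained by deleting a point $w\notin\{v,\infty\}$ in place of $\infty$, for which $\chr=\infty$ becomes a finite characteristic and Theorem~\ref{TATE-CONJECTURE-MODULES} applies directly).
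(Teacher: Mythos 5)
Your treatment of the main case ($v\ne\infty$ and $\chr\ne\infty$) is exactly the route the paper takes: identify $\QHom(\FF,\FF')\otimes_Q Q_v$ with $\Hom(\ulM(\FF),\ulM(\FF'))\otimes_A Q_v$ via Proposition~\ref{CONNECTION}, invoke the Taguchi--Tamagawa Theorem~\ref{TATE-CONJECTURE-MODULES}, and pass from $A_v$ to $Q_v$ using that the Tate modules are finite free lattices. That part is complete and needs no further comment.

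The gap is in the remaining cases. Theorem~\ref{ThmLS1} is stated (and proved in \cite{BH_A}) only for a \emph{finite} base field $L$ --- it is the analogue of Tate's theorem on homomorphisms of $p$-divisible groups over finite fields, and for $v\ne\chr$ it is, modulo the dictionary of Proposition~\ref{Prop2.13'}, essentially a restatement of the assertion you are proving. Consequently your arguments for part~(2) and for the subcase $\chr=\infty$ of part~(1) establish the theorem only for finite $L$, whereas the statement is for finitely generated $L$. You notice this defect for the $\chr=\infty$ subcase and propose the correct repair in a parenthesis, but part~(2) suffers from exactly the same defect and you do not flag it there. The repair --- which is the paper's actual route for $v=\infty$ --- is to pass to the $\t$-module $M_0\oplus\dots\oplus M_{l-1}$, $M_i=\Gamma(C_L\setminus D,\F_i)$, over $\TA=\Gamma(C\setminus D,\O_C)$ for a finite closed $D$ with $\infty,\chr\notin D$, to apply the version of the Taguchi--Tamagawa theorem valid for such $\t$-modules over finitely generated fields (\cite[\BHAtt]{BH_A}) together with the corresponding analogue of Proposition~\ref{CONNECTION}, and then to observe that commutation with $\P$ and with the $\Lambda(\lambda)$ imposes matching linear conditions on the module of morphisms of these $\t$-modules tensored with $Q_\infty$ and on $\Hom_{Q_\infty[G]}(V_\infty\FF,V_\infty\FF')$, cutting out $\QHom(\FF,\FF')\otimes_Q Q_\infty$ and $\Hom_{\Delta_\infty[G]}(V_\infty\FF,V_\infty\FF')$ respectively. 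So your parenthetical suggestion should be promoted from an aside to the actual argument and applied to part~(2) as well; note also that ``Theorem~\ref{TATE-CONJECTURE-MODULES} applies directly'' is a slight overstatement, since that theorem as quoted concerns motives over $A$ and one needs its extension to $\t$-modules over $\TA$.
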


\forget{
\begin{proof}
1. Set $\ulM:=\ulM(\FF)$ and $\ulM':=\ulM(\FF')$. By \ref{CONNECTION} and \ref{TATE-CONJECTURE-MODULES}, we have
\[
\QHom(\FF,\FF')\otimes_Q Q_v\;\cong\;\Hom(\ulM,\ulM')\otimes_AQ_v \;\cong\;
\Hom_\QvG(V_v\ulM,V_v\ulM')\,.
\]

\smallskip\noindent
2. Let $D\subset C$ be a finite closed subscheme as in Section~\ref{SectRelation} with $\chr,\infty\notin D$ and set $\ulM:=\ulM^{(D)}(\FF)$ and $\ulM':=\ulM^{(D)}(\FF')$. By \ref{CONNECTION} and \ref{TATE-CONJECTURE-MODULES}, we have
\[
\QHom(\FF,\FF')\otimes_Q Q_\infty\;\cong\;\Hom_{\P,\Lambda}(\ulM,\ulM')\otimes_{\TA}Q_\infty \;\cong\;
\Hom_{\Delta_\infty[G]}(V_\infty \ulM,V_\infty \ulM')\,.
\]
Here the last isomorphism comes from the fact that the commutation with $\P$ and $\Lambda(\lambda)$ are linear conditions on $\Hom(\ulM,\ulM')$ and $\Hom(\ulM,\ulM')\otimes_{\TA}Q_\infty\cong\Hom_{Q_\infty[G]}(V_\infty \ulM,V_\infty \ulM')$ thus cutting out isomorphic subspaces.
\end{proof}
}

\medskip

As expected, there is the following relation between Tate modules and isogenies.

\begin{Proposition} (\cite[\BHAdd]{BH_A}) \label{Prop2.7b}
\begin{suchthat}
\item 
Let $f:\ulM'\to\ulM$ be an isogeny between pure Anderson motives then $T_vf(T_v\ulM')$ is a $G$-stable lattice in $V_v\ulM$ contained in $T_v\ulM$.
\item 
Conversely if $\ulM$ is a pure Anderson motive and $\Lambda_v$ is a $G$-stable lattice in $V_v\ulM$ contained in $T_v\ulM$, then there exists a pure Anderson motive $\ulM'$ and a separable isogeny $f:\ulM'\to \ulM$ with $T_vf(T_v\ulM')=\Lambda_v$.
\end{suchthat}
\end{Proposition}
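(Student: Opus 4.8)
The plan is to prove both statements by transporting the problem to local $\sigma$-shtuka at $v$ via Theorem~\ref{ThmLS2} (over finite $L$) or, more robustly, by direct module-theoretic arguments that work over any $L$, and then invoking the local-global results recalled in Section~\ref{SectLS}. For part~1, I would argue as follows. Since $f:\ulM'\to\ulM$ is an isogeny, its cokernel is a finite torsion $A_L$-module, so by the remark after Proposition~\ref{PropLS4} (or simply by the Chinese remainder decomposition of $\coker f$) the base change $\ulM_v(f):\ulM_v(\ulM')\to\ulM_v(\ulM)$ is an injective morphism of local $\sigma$-shtuka with torsion cokernel; in particular it becomes an isomorphism after inverting $v$, so $V_vf$ identifies $V_v\ulM'$ with $V_v\ulM$ as $Q_v[\phi]$-modules, hence as $G$-modules after passing to $\phi$-invariants over $L^\sep$. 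Applying the Tate module functor $T_v(-) = (\,\cdot\otimes_{A_{v,L}}A_{v,L^\sep})^\phi$ to the inclusion $\ulM_v(f)(\ulM_v(\ulM'))\subset\ulM_v(\ulM)$ of étale local shtuka, and using that this functor is exact on étale local shtuka and compatible with base change (Proposition~\ref{Prop2.13'} and Proposition~\ref{PropLS3}), yields that $T_vf(T_v\ulM')$ is an $A_v$-lattice inside $T_v\ulM$ which is $G$-stable because $T_vf$ is a morphism of $G$-modules. That it has full rank $r$ (i.e.\ is genuinely a lattice in $V_v\ulM$, not merely a submodule) follows since $\ulM_v(f)$ is an isomorphism after $\otimes_{A_v}Q_v$. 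This is essentially bookkeeping once one knows $T_v$ is an exact faithful functor on étale local shtuka, which is the content of Proposition~\ref{Prop2.13'}.

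For part~2, the converse, I would start from the given $G$-stable lattice $\Lambda_v\subset V_v\ulM$ with $\Lambda_v\subset T_v\ulM$. First translate it back: by the equivalence of categories in Proposition~\ref{Prop2.13'}, the inclusion $\Lambda_v\hookrightarrow T_v\ulM$ of $A_v[G]$-modules corresponds to an injective morphism $\ulHM{}'_v\hookrightarrow\ulM_v(\ulM)$ of étale local $\sigma$-shtuka at $v$ (when $\BF_v\supsetneq\Fq$, first apply Proposition~\ref{PropLS3} to reduce to the $\sigma^f$-case over the integral domain $A_{v,L}/\Fa_i$, then reassemble), where $\ulHM{}'_v$ is a free $A_{v,L}$-submodule of the same rank $r$, since $\Lambda_v$ is a full lattice. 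This $\ulHM{}'_v$ is a local $\sigma$-subshtuka of $\ulM_v(\ulM)$ with torsion cokernel supported away from $\chr$, i.e.\ the induced $\phi$ on the cokernel is an isomorphism, which is exactly the separability condition. Now I would apply the analogue at $v\ne\chr$ of Proposition~\ref{Prop2.18b}: given a local $\sigma$-subshtuka of $\ulM_v(\ulM)$ of full rank, there exists a pure Anderson motive $\ulM'$ and an isogeny $f:\ulM'\to\ulM$ realizing it, namely take $\ulM':=\ker\bigl(\ulM\to\ulK\bigr)$ where $\ulK:=\ulM_v(\ulM)/\ulHM{}'_v$ is a finite torsion module with $\tau$-action and the map $\ulM\to\ulK$ is the composite of localization and projection — that this $\ulM'$ is again a pure Anderson motive and that the inclusion $\ulM'\hookrightarrow\ulM$ is an isogeny follows from \cite[\BHAff]{BH_A}, exactly as in the proof of Proposition~\ref{Prop2.18b}. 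Finally, $T_vf(T_v\ulM') = T_v(\ulHM{}'_v) = \Lambda_v$ by construction and the exactness of $T_v$ on étale local shtuka, and the isogeny $f$ is separable because the $\tau$-action on $\coker f \cong \ulK$ is an isomorphism (its support avoids $\chr$).

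The main obstacle I expect is not any single hard estimate but rather the careful handling of the case $\BF_v\supsetneq\Fq$, where $A_{v,L}$ is a product of rings rather than an integral domain, so that "local $\sigma$-shtuka at $v$" and "lattice in $V_v\ulM$" must be interpreted correctly; one must thread the statement through Proposition~\ref{PropLS3} to reduce to étale local $\sigma^f$-shtuka over the integral domain $A_{v,L}/\Fa_i$ where the cited results of \cite{TW} and \cite{Hl,Anderson2,HartlPSp,Laumon} literally apply, and then check that reassembling the $f$ factors gives back a genuine $G$-module isomorphism. The second delicate point is verifying that the kernel construction $\ulM':=\ker(\ulM\to\ulK)$ yields a locally free $A_L$-module satisfying the purity condition of Definition~\ref{Def1'.1}; but this is precisely \cite[\BHAff]{BH_A} and was already used in Proposition~\ref{Prop2.18b}, so it can be cited. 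Everything else — exactness of $T_v$, $G$-equivariance, full-rank preservation, the separability bookkeeping — is routine given Propositions~\ref{Prop2.13'}, \ref{PropLS3}, \ref{PropLS4} and \ref{Prop2.18b}.
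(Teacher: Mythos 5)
Your argument is correct and is essentially the intended one: the paper itself gives no proof here but defers to \cite[\BHAdd]{BH_A}, and your construction (part 1 by exactness and $G$-equivariance of $T_v$ on \'etale local shtuka via Proposition~\ref{Prop2.13'}, part 2 by converting $\Lambda_v$ into an \'etale local $\sigma$-subshtuka, forming $\ulM':=\ker(\ulM\to\ulK)$ and invoking \cite[\BHAff]{BH_A}) is exactly the argument the paper carries out at the characteristic place in Proposition~\ref{Prop2.18b}, of which it explicitly calls the present statement the corresponding result at $v\ne\chr$. Your attention to the case $\BF_v\supsetneq\Fq$ via Proposition~\ref{PropLS3} and the snake-lemma check that $\tau$ is bijective on $\coker f$ (giving separability) are the right points to verify, and they go through.
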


\begin{Proposition}\label{FACTORSHEAF-FACTORSPACE}
Let\/ $\FF'$ be an abelian factor $\tau$-sheaf of\/ $\FF$. Then $\VvFF'$ is a $G$-factor space of\/ $\VvFF$. The same holds if $\ulM'$ is a  factor motive of a pure Anderson motive $\ulM$.
\end{Proposition}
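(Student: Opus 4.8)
The strategy is to reduce everything to the behavior of the local $\sigma$-shtuka (or big local $\sigma$-shtuka at $\infty$) under a surjection, and then apply the Tate-module functor $T_v$, using that it sends surjections of \'etale local shtuka to surjections of $A_v[G]$-modules. First I would treat the case of pure Anderson motives $\ulM\twoheadrightarrow\ulM'$. Let $g:\ulM\to\ulM'$ be the given surjective morphism; since $\ulM$ and $\ulM'$ are pure, they have the same weight, and $g$ is automatically an isogeny onto $\ulM'$ in the sense that its generic fiber is surjective of full rank, with kernel a pure Anderson motive $\ulM''$ (this uses the structure theory from \cite{BH_A}, e.g.\ \cite[\BHAff]{BH_A}; a surjection of locally free $A_L$-modules compatible with $\tau$ has kernel again locally free, hence a sub-motive, which is pure of the same weight). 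For $v\ne\chr$, tensoring with $A_{v,L}$ gives a short exact sequence of \'etale local $\sigma$-shtuka $0\to\ulM_v(\ulM'')\to\ulM_v(\ulM)\to\ulM_v(\ulM')\to0$, where the last map is surjective because $g$ is surjective and remains so after the flat base change $A_L\to A_{v,L}$ together with completion.

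Next I would apply $T_v=(\,\cdot\,\otimes_{A_{v,L}}A_{v,L^{\sep}})^{\phi}$. By the description recalled after Definition~\ref{DefLS1b} (from \cite[Proposition~6.1]{TW}), for an \'etale local shtuka $\ulHM$ the natural map $T_v\ulHM\otimes_{A_v}A_{v,L^{\sep}}\isoto\ulHM\otimes_{A_{v,L}}A_{v,L^{\sep}}$ is an isomorphism. Base-changing the exact sequence of \'etale local shtuka to $L^{\sep}$ and comparing, one sees that the sequence $0\to T_v\ulM''\to T_v\ulM\to T_v\ulM'$ is exact and that, after $\otimes_{A_v}A_{v,L^{\sep}}$, the last arrow becomes the surjection $\ulM_v(\ulM)\otimes A_{v,L^{\sep}}\twoheadrightarrow\ulM_v(\ulM')\otimes A_{v,L^{\sep}}$; since $A_v\to A_{v,L^{\sep}}$ is faithfully flat, $T_v\ulM\to T_v\ulM'$ is itself surjective. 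Hence $V_v\ulM=T_v\ulM\otimes_{A_v}Q_v\twoheadrightarrow V_v\ulM'$, and this map is $G$-equivariant because every arrow in sight is, so $V_v\ulM'$ is a $G$-factor space of $V_v\ulM$. Alternatively, and perhaps more cleanly, one can invoke the equivalence of categories in Proposition~\ref{Prop2.13'}(ii): the Tate module functor is an (exact, as it is an equivalence of abelian categories) equivalence between \'etale local shtuka at $v$ over $L$ and $A_v[G]$-modules finite free over $A_v$, so it preserves surjections; that sidesteps the faithful-flatness bookkeeping entirely.

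For the abelian $\tau$-sheaf statement the argument is parallel but must be split according to whether $v=\infty$. If $v\ne\infty$ (and $v\ne\chr$), a surjection $\FF\twoheadrightarrow\FF'$ of abelian $\tau$-sheaves of the same weight has componentwise surjective $f_i:\F_i\twoheadrightarrow\F'_i$, so after $\otimes_{\O_{C_L}}A_{v,L}$ we again get a surjection of \'etale local $\sigma$-shtuka $\ulM_v(\FF)\twoheadrightarrow\ulM_v(\FF')$, and applying $T_v$ as above yields the surjection $V_v\FF\twoheadrightarrow V_v\FF'$ of $Q_v[G]$-modules. If $v=\infty\ne\chr$, one replaces $\ulM_v$ by the big local $\sigma$-shtuka $\ulTM_\infty$ from (\ref{EQ.Phi}): the surjection $f$ induces a surjection $\ulTM_\infty(\FF)\twoheadrightarrow\ulTM_\infty(\FF')$ (it is block-diagonal in the $f_i$, hence surjective), which is again \'etale since $\chr\ne\infty$, and $T_\infty$ then gives the surjection $V_\infty\FF\twoheadrightarrow V_\infty\FF'$, visibly compatible with the extra endomorphisms $\P$, $\Lambda(\lambda)$, hence with the $\Delta_\infty$-action. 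The main obstacle, and the only genuinely non-formal point, is verifying that $\coker g$ (resp.\ $\coker f$) contributes no local shtuka at $v$, i.e.\ that after completing at $v\ne\chr$ the reduced sequence of local shtuka is short exact with \'etale terms: this is where one needs that the kernel of a surjection of pure Anderson motives is again a pure Anderson motive of the same weight (so that its local shtuka at $v\ne\chr$ is \'etale), which I would quote from \cite[\BHAff]{BH_A} rather than reprove, and that the quotient map stays surjective after the flat base change to $A_{v,L}$ and $A_{v,L^{\sep}}$ — routine once the sub-motive claim is in hand. Everything else is an application of the exactness of the Tate-module functor established in Proposition~\ref{Prop2.13'}.
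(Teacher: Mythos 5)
Your main argument is correct and follows the paper's proof in outline: pass to the (big, if $v=\infty$) local $\sigma$-shtuka, note that a surjection of abelian $\tau$-sheaves or pure Anderson motives induces a surjection $\ulM_v(\FF)\onto\ulM_v(\FF')$ of \'etale local shtuka, and apply the Tate module functor. The one genuine difference is the finishing move: the paper establishes left exactness of $T_v$ by applying the snake lemma to $1-\t$ on $\ulHM\otimes_{A_{v,L}}A_{v,L^\sep}$ and then deduces surjectivity of $V_v\FF\to V_v\FF'$ by counting $Q_v$-dimensions, whereas you obtain surjectivity of $T_v\FF\to T_v\FF'$ already integrally from the functorial comparison isomorphism $T_v\ulHM\otimes_{A_v}A_{v,L^\sep}\cong\ulHM\otimes_{A_{v,L}}A_{v,L^\sep}$ together with faithful flatness of $A_v\to A_{v,L^\sep}$; this is valid, slightly stronger, and makes the kernel term irrelevant for the statement. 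Two caveats. First, your ``cleaner alternative'' via Proposition~\ref{Prop2.13'} is not sound as stated: the categories there (free $A_{v,L}$-modules with $\phi$, resp.\ $A_v[G]$-modules finite free over $A_v$) are not abelian, and an equivalence only preserves categorical epimorphisms, which in these torsion-free categories need not be surjective (multiplication by $v$ is an epimorphism but not onto); so do not lean on ``exactness of the equivalence''. Second, the detour through \cite[\BHAff]{BH_A} to realize $\ker g$ as a pure Anderson motive is unnecessary: as in the paper one may simply take the kernel at the level of local shtuka, which is automatically an \'etale local $\sigma$-shtuka (injectivity of $\phi$ is inherited, and the snake lemma shows its cokernel vanishes), and in your faithful-flatness argument no kernel is needed at all.
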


\begin{proof}
Let $f\in\Hom(\FF,\FF')$ be surjective and let $\ulHM$ and $\ulHM'$ be the (big, if $v=\infty$) local $\sigma$-shtuka of $\FF$, respectively $\FF'$, at $v$. Then the induced morphism $\ulM_v(f)\in\Hom(\ulHM,\ulHM')$ is surjective and $\ulHM'':=\ker \ulM_v(f)$ is also a local $\sigma$-shtuka at $v$. We get an exact sequence of local $\sigma$-shtuka which we tensor with $A_{v,L^\sep}$ yielding
\[
\bigexact{0}{}{\ulHM''\otimes_{A_{v,L}}A_{v,L^\sep}}{}{\ulHM\otimes_{A_{v,L}}A_{v,L^\sep}}{\ulM_v(f)}{\ulHM'\otimes_{A_{v,L}}A_{v,L^\sep}}{}{0\,.}
\]
The Tate module functor is left exact, because considering the morphism of $A_{v,L^\sep}$-modules
\[
1-\t:\es\ulHM\otimes_{A_{v,L}}A_{v,L^\sep} \longto \ulHM\otimes_{A_{v,L}}A_{v,L^\sep}
\]
we have by definition $T_v\ulHM = \ker(1-\t)$, and the desired left exactness follows from the snake lemma. After tensoring with $\otimes_\Av\Qv$ we get
\[
\bigexact{0}{}{V_v\ulHM''}{}{V_v\ulHM}{V_v f}{V_v\ulHM'\;.}{}{}
\]
Counting the dimensions of these $\Qv$-vector spaces, we finally also get right exactness, as desired.
\end{proof}

\forget{
\begin{proof}
Let $f\in\Hom(\FF,\FF')$ be surjective, let $\ulM:=\ulM^{(D)}(\FF)$ \? and let $\ulM':=\ulM^{(D)}(\FF')$. Then the induced morphism $\ulM^{(D)}(f)\in\Hom(\ulM,\ulM')$ is surjective and $\ulM'':=\ker \ulM^{(D)}(f)$ is a $\t$-module on $A$. Thus we get the exact sequence
\[
\bigexact{0}{}{M''}{}{M}{\ulM^{(D)}(f)}{M'}{}{0\,.}
\]
The exactness being preserved since $M'$ is locally free, we consider the following diagram
\[
\xymatrix{
0 \ar[r] &
M''/v^nM'' \otimes_L\Lsep \ar[r] &
M/v^nM \otimes_L\Lsep \ar[r] &
M'/v^nM' \otimes_L\Lsep \ar[r] &
0 \\
0 \ar@{-->}[r] &
(M''/v^nM''\otimes_L\Lsep)^{\textstyle\t} \ar@{-->}[r]\ar[u] &
(M/v^nM\otimes_L\Lsep)^{\textstyle\t} \ar@{-->}[r]\ar[u] &
(M'/v^nM'\otimes_L\Lsep)^{\textstyle\t} \ar[u] \;.&
\\
}
\]
The $\t$-invariant functor is left exact, because considering the morphism of $A/{v^n}$-modules
\[
1-\t:\,(M/v^nM)\otimes_L\Lsep\rightarrow(M/v^nM)\otimes_L\Lsep
\]
we have by definition $((M/v^nM)\otimes_L\Lsep)^{\textstyle\t} = \ker\, 1-\t$, and the desired left exactness follows from the snake lemma. Since the projective limit preserves left exactness as well, we get after tensoring with $\otimes_\Av\Qv$
\[
\bigexact{0}{}{V_v\ulM''}{}{V_v\ulM}{V_v\ulM^{(D)}(f)}{V_v\ulM'\,.}{}{}
\]
Counting the dimensions of these $\Qv$-vector spaces, we finally also get right exactness, as desired.
\end{proof}

}


\bigskip

\section{The Frobenius Endomorphism}
\setcounter{equation}{0}

Suppose that the characteristic is finite, that is, the characteristic point $\chr$ is a closed point of $C$ with finite residue field $\Ff_\chr$, and the map $c:\Spec L\to C$ factors through the finite field $\chr=\Spec\Ff_\chr$.

\begin{Definition}[$s$-Frobenius on abelian $\tau$-sheaves] \label{DefFrob}
Let $\FF$ be an abelian $\tau$-sheaf with finite characteristic point $\chr=\Spec \Ff_\chr$ and let $s=q^e$ be a power of the cardinality of $\Ff_\chr$. We define the \emph{$s$-Frobenius on $\FF$} by
\[
\pi \;:=\; (\pi_i):\, (\sigma^\ast)^e\FF\rightarrow\FF\II{e}, \quad 
\pi_i \;:=\; \t_{i+e-1}\circ\cdots\circ(\s)^{e-1}\t_{i}:\, (\sigma^\ast)^e\F_{i}\rightarrow\F_{i+e}\;.
\]
Clearly $\pi$ is an isogeny. Observe that $\Ff_\chr\subset\Fs$ implies that $(\sigma^\ast)^e\FF$ has the same characteristic as $\FF$. 
\end{Definition}

Similarly if $\chr\in\Spec A$ is a closed point we define

\begin{Definition}[$s$-Frobenius on pure Anderson motives]\label{Def2.19b}
Let $\ulM$ be a pure Anderson motive with finite characteristic point $\chr=\Spec \Ff_\chr$ and let $s=q^e$ be a power of the cardinality of $\Ff_\chr$. We define the \emph{$s$-Frobenius isogeny on $\ulM$} by
\[
\pi\;:=\;\t\circ\ldots\circ (\s)^{e-1}\t:\,(\sigma^\ast)^e\ulM\to\ulM\,.
\]
\end{Definition}

\begin{Remark}
Classically for (abelian) varieties $X$ over a field $K$ of characteristic $p$ one defines the Frobenius morphism $X\to\phi^\ast X$ where $\phi$ is the $p$-Frobenius on $K$. There $p$ equals the cardinality of the ``characteristic field'' $\im(\Z\to K)=\Ff_p$. 
In view of the dual behavior of abelian $\tau$-sheaves and pure Anderson motives our definition is a perfect analogue since here we consider the $s$-Frobenius for $s$ being the cardinality of (a power of) the ``characteristic field'' $\im(c^\ast:A\to L)=\BF_\chr$.
\end{Remark}

Now we suppose $L=\Fs$ to be a finite field with $s=q^e$ $(e\in\N$). Let $\FS$ denote a fixed algebraic closure of $\Fs$ and set $G=\GalFSFs$. It is topologically generated by ${\rm Frob}_s:x\mapsto x^s$. The following results for the Frobenius endomorphism of $\t$-modules can be found in Taguchi and Wan~\cite[\S 6]{TW}.

\begin{Proposition}\label{ABSOLUTE-GALOIS}
Let $\ulM$ be a pure Anderson motive over $\Fs$ of rank $r$ and let\/ $\chr\ne v\in\Spec A$ be a maximal ideal.
\begin{suchthat}
\item The generator ${\rm Frob}_s$ of $G$ acts on $T_v\ulM$ like\/ $(T_v\pi)^{-1}$.
\item Let\/ $\Psi:\,\AvG\rightarrow\End_\Av(T_v\ulM)$ denote the continuous morphism of $\Av$-modules which is induced by the action of\/ $G$ on $T_v\ulM$. Then\/ $\im\Psi = \Av\II{T_v\pi}$.
\end{suchthat}
\end{Proposition}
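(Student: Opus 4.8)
The statement is a classical fact about Frobenius acting on Tate modules, transplanted from abelian varieties (cf.\ \cite{TW,Tat}); I would prove both parts by tracing through the definition of $T_v\pi$ and the Galois action, using that over a finite field the Frobenius endomorphism $\pi$ of $\ulM$ is essentially ``the same'' as the arithmetic Frobenius of $G$. First I would set up notation: let $\ulHM{}:=\ulM_v(\ulM)=(\hat M,\phi)$ be the local $\sigma$-shtuka of $\ulM$ at $v$, which is \'etale since $v\ne\chr$, so that $T_v\ulM=(\hat M\otimes_{A_{v,L}}A_{v,L^{\sep}})^\phi$ by Definition~\ref{DefLS1b}. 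Because $L=\Fs$ with $s=q^e$, the endomorphism $\s$ on $A_{v,L}$ (resp.\ $A_{v,L^{\sep}}$) is ${\rm id}\otimes{\rm Frob}_q$, and the $s$-Frobenius $\pi=\tau\circ\ldots\circ(\s)^{e-1}\tau:(\sigma^\ast)^e\ulM\to\ulM$ localizes to $\phi\circ\s\phi\circ\ldots\circ(\s)^{e-1}\phi=\phi^{(e)}$, i.e.\ the $e$-fold ``twisted power'' of $\phi$, which induces an isomorphism $(\s)^e\hat M\isoto\hat M$ since $\phi$ is an isomorphism on the \'etale local shtuka. Since $(\s)^e={\rm id}\otimes{\rm Frob}_s$ acts trivially on $A_{v,L}$ (because ${\rm Frob}_s={\rm id}$ on $\Fs$), the map $T_v\pi$ is $\Av$-linear and I can compute it on $\phi$-invariants.

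The crux of part (1) is the following identity on $\hat M\otimes A_{v,L^{\sep}}$: for $x$ with $\phi(\s x)=x$, iterating gives $\phi^{(e)}\bigl((\s)^e x\bigr)=x$; here ${\rm Frob}_s$ acting on the scalars $A_{v,L^{\sep}}$ (i.e.\ through $G$ on the second tensor factor) coincides with $(\s)^e$ acting the same way. Hence for $x\in T_v\ulM$ one gets $T_v\pi\bigl(({\rm Frob}_s)^{-1}x\bigr)=\phi^{(e)}\bigl((\s)^{-e}x\bigr)$; unwinding the definition that $T_v\pi$ is $\phi^{(e)}$ restricted to invariants, and that on an invariant vector $(\s)^{e}$ and ${\rm Frob}_s$ agree, this collapses to $T_v\pi\circ{\rm Frob}_s^{-1}={\rm id}$ on $T_v\ulM$, i.e.\ ${\rm Frob}_s$ acts as $(T_v\pi)^{-1}$. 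I would spell this out as a short diagram chase, being careful that $T_v\pi$ is defined via Proposition~\ref{Prop2.7b} as the map induced on Tate modules by the isogeny $\pi$ (so $T_v\pi=T_v(\ulM_v(\pi))$ under Theorem~\ref{ThmLS2}), and that $\ulM_v(\pi)=\phi^{(e)}$.

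For part (2), once (1) is established, the image of $\Psi$ is the closed $\Av$-subalgebra of $\End_{\Av}(T_v\ulM)$ topologically generated by $\Psi({\rm Frob}_s)=(T_v\pi)^{-1}$. Since $T_v\pi$ is an $\Av$-linear automorphism of the finite free module $T_v\ulM$ and $\Av$ is a complete local ring, $(T_v\pi)^{-1}$ and $T_v\pi$ generate the same closed $\Av$-subalgebra (indeed $(T_v\pi)^{-1}=\mu_\pi(T_v\pi)$ for a suitable power series, as $T_v\pi$ satisfies its characteristic polynomial which has unit constant term — here I would invoke Theorem~\ref{ThmT.3}, that the minimal polynomial of the endomorphism $\pi$ lies in $A[x]$, together with the fact that $\deg\pi$ is a unit away from $v$, or more simply Cayley--Hamilton over $\Av$). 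Therefore $\im\Psi=\Av\II{T_v\pi}$, as claimed.

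**Main obstacle.** The genuinely delicate point is bookkeeping the twists: identifying the action of ${\rm Frob}_s\in G$ on the scalars $A_{v,L^{\sep}}$ with the iterate $(\s)^e$ and confirming that under this identification the isogeny-induced map $T_v\pi$ becomes literally $\phi^{(e)}$ restricted to invariants — i.e.\ getting the direction and the inverse in part (1) exactly right rather than off by a sign of the exponent. Everything else (the $\Av$-algebra generation in part (2), exactness/freeness of $T_v$) is routine given Propositions~\ref{Prop2.13'} and \ref{Prop2.7b} and the \'etaleness of $\ulM_v(\ulM)$ at $v\ne\chr$. One could alternatively cite \cite[\S 6]{TW} directly, but the above reduction to the local \'etale shtuka at $v$ makes the argument self-contained modulo the results already recalled in Section~\ref{SectLS}.
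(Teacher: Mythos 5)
Your argument is correct in substance, but it is a genuinely different proof from the one in the paper: the paper disposes of part 1 by citing Taguchi--Wan \cite[Ch.~6]{TW} and of part 2 by the single remark that it follows from continuity of $\Psi$. What you do instead is unwind the definitions: reduce to the \'etale local shtuka $\ulM_v(\ulM)=(\hat M,\phi)$, identify $\ulM_v(\pi)$ with $\phi^{(e)}=\phi\circ\s\phi\circ\cdots\circ(\s)^{e-1}\phi$ (an $A_{v,L}$-linear automorphism since $(\s)^e=\id$ on $A_{v,L}$ for $L=\Fs$), and observe that iterating the invariance condition $x=\phi(\s x)$ $e$ times gives $x=\phi^{(e)}\bigl((\s)^e x\bigr)$, while $(\s)^e$ on the scalars $A_{v,L^\sep}$ is exactly the action of ${\rm Frob}_s\in G$. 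This is a legitimate self-contained replacement for the external citation, and your treatment of part 2 (continuity reduces everything to the closed $\Av$-subalgebra generated by $({T_v\pi})^{-1}$, and Cayley--Hamilton together with $\det T_v\pi\in\Av^\times$ — which holds because $\pi$ is an isomorphism locally at $v\ne\chr$ — shows $(T_v\pi)^{-1}\in\Av\II{T_v\pi}$) is exactly the intended argument, just spelled out.

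One correction in the bookkeeping you yourself flagged as the delicate point: from $x=\phi^{(e)}\bigl((\s)^e x\bigr)=T_v\pi\bigl({\rm Frob}_s\cdot x\bigr)$ the correct collapse is $T_v\pi\circ{\rm Frob}_s=\id$, equivalently $T_v\pi={\rm Frob}_s^{-1}$, which gives the assertion ${\rm Frob}_s=(T_v\pi)^{-1}$. The identity you wrote, $T_v\pi\circ{\rm Frob}_s^{-1}=\id$, would instead yield ${\rm Frob}_s=T_v\pi$ and contradicts your (correct) final conclusion. A rank-one sanity check confirms the sign: if $\phi(\s m)=\alpha m$ then invariance of $cm$ means $\s c=\alpha^{-1}c$, whence $(\s)^e c=N(\alpha)^{-1}c$ while $T_v\pi(cm)=N(\alpha)\,cm$ for $N(\alpha)=\alpha\cdot\s\alpha\cdots(\s)^{e-1}\alpha$. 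With that one line fixed, the proof stands.
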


\begin{proof}
1 was proved in \cite[Ch. 6]{TW} and 2 follows from the continuity of $\Psi$.
\end{proof}

\noindent {\it Remark.}
The inversion of $T_v\pi$ in the first statement results from the dual definition of our Tate module.

\begin{Proposition}\label{PI-IS-QISOG}
Let\/ $\FF$ be an abelian $\tau$-sheaf over $L=\Fs$ with $s=q^e$ and let\/ $\pi$ be its $s$-Frobenius. Then $(\sigma^\ast)^e\FF=\FF$. Let\/ $v\in C$ be a place different from $\infty$ and from the characteristic point $\chr$. 
\begin{suchthat}
\item The $s$-Frobenius $\pi$ can be considered as a quasi-isogeny of\/ $\FF$.
\item The generator ${\rm Frob}_s$ of $G$ acts on $T_v\FF$ like\/ $(T_v\pi)^{-1}$.
\item The image of the continuous morphism of $\Qv$-vector spaces $\QvG\rightarrow\End_\Qv(V_v\FF)$ is $\Qv\II{V_v\pi}$.
\item $\ulM(\pi)$ coincides with the $s$-Frobenius on the pure Anderson motive $\ulM(\FF)$ from definition~\ref{Def2.19b}.
\end{suchthat}
\end{Proposition}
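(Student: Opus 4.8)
The plan is to establish the four assertions in order, reducing each to results already available for pure Anderson motives or to the structure of the $s$-Frobenius. The starting point is the identity $(\sigma^\ast)^e\FF=\FF$: since $L=\Fs$ with $s=q^e$, the $q$-Frobenius $\sigma$ on $C_L$ has $\sigma^e=\id$, because ${\rm Frob}_q^e={\rm Frob}_s=\id$ on $\Fs$; hence $(\sigma^\ast)^e$ acts as the identity on sheaves on $C_L$, and in particular $(\sigma^\ast)^e\F_i=\F_i$ for all $i$. With this, Definition~\ref{DefFrob} exhibits $\pi=(\pi_i)$ as a collection of maps $\pi_i\colon\F_i\to\F_{i+e}$, i.e.\ as a genuine morphism $\FF\to\FF\II{e}$; the commutativity conditions of Definition~\ref{Def1.1} together with the definition $\pi_i=\t_{i+e-1}\circ\cdots\circ(\s)^{e-1}\t_i$ give that $\pi$ commutes with the $\P$'s and $\t$'s, so $\pi$ is a morphism of abelian $\tau$-sheaves. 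That $\pi$ is an isogeny is already noted after Definition~\ref{DefFrob} (it follows from Proposition~\ref{PROP.1.42A}, since each $\pi_i$ is injective with cokernel supported on the graph of $c$, which is a finite closed subscheme). Composing $\pi\colon\FF\to\FF\II{e}$ with the canonical shift identification of $\FF\II{e}$ as a twist of $\FF$ — concretely, the $e$-fold $\P$-composition $\P_{i+e-1}\circ\cdots\circ\P_i$ identifies $\F_i$ with $\F_{i+e}(-k'\cdot\infty)$ for a suitable $k'$ — exhibits $\pi$ as an isogeny $\FF\to\FF(k'\cdot\infty)$, hence as a quasi-isogeny of $\FF$ in the sense of the definition preceding Proposition~\ref{CONNECTION}. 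This proves~(1).

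For~(4): unwinding the definition of $\ulM(\FF)=(\Gamma(C_L\setminus\{\infty\},\F_0),\P_0^{-1}\circ\t_0)$ from~(\ref{Eq1.1}) and of the $s$-Frobenius on a pure Anderson motive from Definition~\ref{Def2.19b}, namely $\t\circ\s\t\circ\cdots\circ(\s)^{e-1}\t$ with $\t=\P_0^{-1}\circ\t_0$, one computes directly that the morphism induced on global sections over $C_L\setminus\{\infty\}$ by the quasi-isogeny $\pi$ of~(1) is precisely $(\P_0^{-1}\circ\t_0)\circ\s(\P_0^{-1}\circ\t_0)\circ\cdots$. This is a telescoping bookkeeping computation using the commutation relations $\P_{i+1}\circ\t_i=\t_{i+1}\circ\s\P_i$ (equivalently $\t_i=\P_{i+1}^{-1}\circ\t_{i+1}\circ\s\P_i$ on $C_L\setminus\{\infty\}$, where the $\P$'s are isomorphisms), which let one slide all the $\P$-factors past the $\t$-factors; what remains after cancellation is exactly the $e$-fold $\s$-twisted product defining $\pi_0$, up to the shift identification already used in~(1). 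Hence $\ulM(\pi)$ agrees with the $s$-Frobenius of Definition~\ref{Def2.19b}; this is~(4).

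With~(4) in hand, assertions~(2) and~(3) follow by transport of structure. By Definition~\ref{DefTateMod} we have $T_v\FF=T_v(\ulM_v(\FF))$, and $\ulM_v(\FF)$ is canonically the local $\sigma$-shtuka at $v$ of the pure Anderson motive $\ulM(\FF)$ (both equal $\F_0\otimes_{\O_{C_L}}A_{v,L}$ with the map $\P_0^{-1}\circ\t_0$). By~(4) the Frobenius on $\ulM(\FF)$ induces on $\ulM_v(\FF)$ the same endomorphism as $\ulM_v(\pi)$, so $T_v\pi=T_v(\text{Frobenius of }\ulM(\FF))$ as endomorphisms of $T_v\FF$. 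Now Proposition~\ref{ABSOLUTE-GALOIS}(1) (applied to $\ulM(\FF)$, which is legitimate because $v\ne\infty,\chr$ and $v$ is a maximal ideal of $A$) says that ${\rm Frob}_s$ acts on $T_v\ulM(\FF)$ as $(T_v\pi)^{-1}$; this is exactly~(2). Tensoring with $Q_v$ and invoking Proposition~\ref{ABSOLUTE-GALOIS}(2) likewise gives that the image of $Q_v\II{G}\to\End_{Q_v}(V_v\FF)$ is $Q_v\II{V_v\pi}$, which is~(3).

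The main obstacle is the verification of~(4): one has to match the shift conventions ($\FF\II{e}$ versus the twist $\FF(k'\cdot\infty)$, and the passage to $\F_0$ on $C_L\setminus\{\infty\}$ where the $\P_i$ become isomorphisms) carefully enough that the telescoping product of $\t$'s and $\P$'s collapses exactly to the formula of Definition~\ref{Def2.19b}, with no stray $\P$-factor or index shift left over. Everything else is a matter of citing Propositions~\ref{PROP.1.42A} and~\ref{ABSOLUTE-GALOIS} and the definitions; once the identification of~(4) is secured, (2) and~(3) are essentially immediate.
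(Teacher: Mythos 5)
Your proof is correct and follows essentially the same route as the paper: periodicity plus Proposition~\ref{PROP.1.42A} for (1), the commutation relations for (4), and Taguchi--Wan for (2) and (3) (the paper cites \cite[Ch.~6]{TW} directly for the $\tau$-sheaf version, whereas you reduce to Proposition~\ref{ABSOLUTE-GALOIS} via the identification in (4) — a harmless reorganization, since that proposition rests on the same source; note only that this detour presupposes $\chr\ne\infty$ so that $\ulM(\FF)$ exists). One small imprecision in (1): the periodicity condition identifies $\F_i$ with $\F_{i+l}(-k\cdot\infty)$ only for the $l$-fold $\P$-composition, so for general $e$ there is no $k'$ with $\F_i\cong\F_{i+e}(-k'\cdot\infty)$ unless $l\mid e$; what one actually uses (and what the paper uses) is the inclusion $\F_{i+e}\subset\F_{i+nl}=\F_i(nk\cdot\infty)$ for $nl\ge e$, which already places $\pi$ in $\Hom(\FF,\FF(nk\cdot\infty))$ and suffices for the rest of your argument.
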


\begin{proof}
1. Due to the periodicity condition, we have $\FF\II{e}\subset\FF(nk\cdot\infty)$ for a sufficiently large $n\in\N$, since $\F_{i+e}\subset\F_{i+nl}=\F_i(nk\cdot\infty)$ for $e\le nl$. Thus $\pi\in\Hom(\FF,\FF(nk\cdot\infty))$, and therefore $\pi\in\QEnd(\FF)$. By \ref{PROP.1.42A}, we have $\pi\in\QIsog(\FF)$.\\
2 and 3 again follow from \cite[Ch. 6]{TW} and the continuity of $\Psi$; see~\cite[Proposition 2.29]{BH} for more details. \\
4 follows from the definition of $\pi$ and the commutation of the $\P$'s and the $\t$'s.
\end{proof}

\bigskip


\forget{
\bigskip
\section{Short review of simple and semisimple algebras}
\setcounter{equation}{0}

Before we start to draw conclusions from the Tate conjecture for abelian $\tau$-sheaves, we recall some basic facts in the theory of simple and semisimple rings and modules.

\begin{Definition}
Let $R$ be a ring and let $M$ be an $R$-module.
\begin{suchthat}
\item $M$ is called \emph{simple}, if\/ $M$ is non-zero and has no $R$-submodules other than $0$ and\/ $M$.
\item $M$ is called \emph{semisimple}, if\/ $M$ can be expressed as a sum of simple $R$-modules.
\item $R$ is called \emph{semisimple}, if\/ $R$ is semisimple as a left module over itself.
\item $R$ is called \emph{simple}, if\/ $R$ is non-zero and semisimple and has no two-sided ideals other than $0$ and\/ $R$.
\end{suchthat}
\end{Definition}

\begin{Theorem}[Wedderburn's theorem]\label{WEDDERBURN}
Any semisimple ring $R$ is a finite direct sum of full matrix rings over skew fields
\[
R = \bigoplus\limits_{j=1}^m\, M_{n_j}(D_j)
\]
Conversely, every ring of this form is semisimple. The direct summands $M_{n_j}(D_j)$ are simple, and we call them the \emph{simple components of\/ $R$}.
\end{Theorem}

\begin{proof}
\cite[Theorem~4.6/6]{Co}.
\end{proof}

\begin{Definition}
Let $R$ be a ring and let $M$ be an $R$-module. The\/ $\End_R(M)$-module with $M$ as abelian group and $\varphi\cdot x:=\varphi(x)$ for $\varphi\in\End_R(M)$, $x\in M$ as multiplication is called the \emph{counter module} of\/ $M$.
\end{Definition}

\begin{Proposition}\label{XY6}
Let $R$ be a ring and let $M$ be an $R$-module. 
\begin{suchthat}
\item If\/ $R$ is semisimple, then $M$ is semisimple.
\item If\/ $M$ is semisimple and its counter module is of finite type, then the image of\/ $R$ in the ring\/ $\End(M)$ of endomorphisms of\/ $M$ as abelian group is semisimple.
\end{suchthat}
\end{Proposition}

\begin{proof}
\cite[Proposition~5.1/1, Proposition~5.1/3]{Bou}.
\end{proof}

Now we introduce the commutant and bicommutant of a ring or module which will play an important role in our second application to abelian $\tau$-sheaves.

\begin{Definition}
Let $R$ be a ring, let $S\subset R$ be a subset and let $M$ be an $R$-module. 
\begin{suchthat}
\item The \emph{commutant} of\/ $S$ in $R$ is the subring $R'$ of\/ $R$ consisting of all elements $x\in R$ which commute with every\/ $y\in S$.
\item The \emph{bicommutant} of\/ $S$ in $R$ is the commutant of\/ $R'$ in $R$.
\item The \emph{center} of\/ $R$ is the commutant of\/ $R$ in $R$. We will denote it by\/ $Z(R)$.
\item We call \emph{commutant} $($\emph{bicommutant}$)$ of\/ $M$ the commutant $($bicommutant\/$)$ of\/ $R$ in the ring $\End(M)$ of endomorphisms of\/ $M$ as abelian group.
\end{suchthat}
\end{Definition}

\begin{Remark}\label{COMMUTANT-IS-END}\label{COMMUTANT-IS-Z}
It is easy to see that the commutant of the $R$-module $M$ is just $\End_R(M)$. Moreover, if $S$ is a subring of $R$ and $R'$ is its commutant in $R$, then $Z(S) = S\cap R'$.
\end{Remark}

\begin{Proposition}\label{SEMISIMPLE-CENTER}
Let $R$ be a ring.
\begin{suchthat}
\item If\/ $R$ is simple, then $Z(R)$ is a field.
\item If\/ $R$ is semisimple, then $Z(R)$ is a direct sum of fields, namely the direct sum of the centers of the simple components of\/ $R$.
\end{suchthat}
\end{Proposition}

\begin{proof}
\cite[Proposition~5.4/12]{Bou}.
\end{proof}

\begin{Definition}
Let $K$ be a field and let $A$ be a $K$-algebra.
\begin{suchthat}
\item $A$ is called \emph{central over $K$}, if\/ $Z(A)=K$.
\item $A$ is called \emph{central simple}, if\/ $A$ is simple and central over $K$.
\end{suchthat}
\end{Definition}

\begin{Proposition}\label{TENSOR-CENTER}\label{COMMUTATIVE-SUB}
Let $K$ be a field and let $A$ and $B$ be two $K$-algebras.
\begin{suchthat}
\item $Z(A\otimes_K B) = Z(A)\otimes_K Z(B)$.
\item If\/ $A$ is semisimple and commutative and\/ $B\subset A$ is a finite dimensional $K$-subalgebra, then $B$ is semisimple.
\end{suchthat}
\end{Proposition}

\begin{proof}
\cite[Corollaire de Proposition~1.2/3, Corollaire de Proposition~6.4/9]{Bou}.
\end{proof}

\noindent
We recall one more fact about the behavior of a semisimple algebra over a field with respect to ground field extensions.

\begin{Lemma}\label{XY5}
Let $K$ be a field, let $K'$ be a field extension of\/ $K$ and let $A$ be a finite dimensional $K$-algebra. 
\begin{suchthat}
\item If\/ $A\otimes_K K'$ is semisimple, then $A$ is semisimple.
\item If\/ $A$ is semisimple and $K'/K$ is a separable field extension, then $A\otimes_K K'$ is semisimple.
\end{suchthat}
\end{Lemma}

\begin{proof}
\cite[Corollaire~7.6/4]{Bou}.
\end{proof}

As a consequence from the theorem of density, we have the following statement about the bicommutant of a semisimple module.

\begin{Theorem}[Theorem of bicommutation]\label{BICOMMUTATION}
Let $R$ be a ring and let $M$ be a semisimple $R$-module. If the counter module of\/ $M$ is of finite type, then the bicommutant of\/ $M$ is equal to the image of\/ $R$ in the ring $\End(M)$ of endomorphisms of\/ $M$ as abelian group.
\end{Theorem}

\begin{proof}
\cite[Corollaire~4.2/1]{Bou}.
\end{proof}

Concerning vector spaces, we can pass the term of semisimplicity to endomorphisms in order to get some useful results.

\begin{Definition}
Let\/ $K$ be a field and let\/ $V$ be a finite dimensional $K$-vector space. Let\/ $\varphi\in\End_K(V)$ be an endomorphism.
\begin{suchthat}
\item $\varphi$ is called \emph{semisimple}, if\/ $K\II{\varphi}\subset\End_K(V)$ is semisimple.
\item $\varphi$ is called \emph{absolutely semisimple}, if for every field extension $K'/K$ the endomorphism $\varphi\otimes1\in\End_{K'}(V\otimes_K K')$ is semisimple.
\end{suchthat}
\end{Definition}

\begin{Lemma}\label{XY99}
Let\/ $K$ be a field and let\/ $V$ be a finite dimensional $K$-vector space. Let\/ $\varphi\in\End_K(V)$ be an endomorphism. 
\begin{suchthat}
\item $\varphi$ is semisimple, if and only if its minimal polynomial over $K$ has no multiple factor.
\item $\varphi$ is absolutely semisimple, if and only if there exists a perfect field extension\/ $K'/K$ such that $\varphi\otimes1\in\End_{K'}(V\otimes_K K')$ is semisimple.
\item $\varphi$ is absolutely semisimple, if and only if its minimal polynomial is separable.
\end{suchthat}
\end{Lemma}

\begin{proof}
\cite[Proposition~9.1/1, Proposition~9.2/4, Proposition~9.2/5]{Bou}.
\end{proof}

}


\bigskip

\section{The Poincar\'e-Weil Theorem}
\setcounter{equation}{0}

In this section we study the analogue for pure Anderson motives and abelian $\tau$-sheaves of the Poincar\'e-Weil theorem. Originally, this theorem states that every abelian variety is semisimple, that is, isogenous to a product of simple abelian varieties, see \cite[Corollary of Theorem~II.1/6]{La}. Unfortunately, we cannot expect a full analogue of this statement for abelian $\tau$-sheaves or pure Anderson motives as our next example illustrates. On the positive side we show that every abelian $\tau$-sheaf or pure Anderson motive over a finite field becomes semisimple after a finite base field extension.

\begin{Example} \label{Ex3.1}
Let $C=\PP^1_\Fq$, $C\setminus\{\infty\}=\Spec\Fq\II{t}$ and $\zeta:=c^*(1/t)\in\Fq^\times$. We construct an abelian $\tau$-sheaf $\FF$ over $L=\Fq$ with $r=d=2$ which is not semisimple. Let 
\[\textstyle
\Delta=\tmatr{1}{0}{0}{1} + \tmatr{\alpha}{\gamma}{\beta}{\delta}\cdot t
\]
with $\alpha,\beta,\gamma,\delta\in\Fq$.
To obtain characteristic $c$ we need $\det\Delta=(1-\zeta t)^2$, and thus we require the conditions $\alpha+\delta=-2\zeta$ and $\alpha\delta-\beta\gamma=\zeta^2$. We set $\F_i:=\O_{C_L}(i\cdot\infty)^{\oplus 2}$, we let $\P_i$ be the natural inclusion, and  we let $\t_i:=\Delta$. Then $\FF$ is an abelian $\tau$-sheaf with $r=d=2$ and $k=l=1$. The associated pure Anderson motive is $\ulM=(L[t]^{\oplus 2},\Delta)$.

We see that $\FF$ is not simple. If $\Delta=\smatr{1-\zeta t}{0}{0}{1-\zeta t}$ then $\FF$ is semisimple as a direct sum of two simple abelian $\tau$-sheaves. Otherwise, if $\Delta\ne\smatr{1-\zeta t}{0}{0}{1-\zeta t}$ which is the case for example if $\beta\ne 0$, consider
\[\textstyle 
\tilde{\Delta} \,:=\, \tmatr{\beta}{\delta+\zeta}{0}{1}^{-1} \!\!\cdot\Delta\cdot \s\tmatr{\beta}{\delta+\zeta}{0}{1} \,=\, \tmatr{1-\zeta t}{0}{t}{1-\zeta t}
\]
and the abelian $\tau$-sheaf with $\wt\F_i=\O_{C_L}(i\cdot\infty)^{\oplus 2}$ and $\tilde\t_i=\tilde\Delta$ which is isomorphic to $\FF$.
There is an exact sequence 
\[
\begin{array}{c}
\bigexact{0}{}{\FF'}{\varphi}{\TFF}{\psi}{\FF''}{}{0} \\[1ex]
\scriptstyle \t'\,=\,1-\zeta t \qquad\quad\Tt\quad\qquad \t''\,=\,1-\zeta t
\end{array}
\]
with $\varphi:\, 1\mapsto\tvect{1}{0}$ and $\psi:\,\tvect{x}{y}\mapsto y$ where $\FF'=\FF''$ is the abelian $\tau$-sheaf with $\F'_i=\O_{C_L}(i\cdot\infty)$ and $\t'_i=1-\zeta t$. If $\TFF$ were semisimple, then there would be a quasi-morphism $\omega:\,\FF''\rightarrow\TFF$ with $\psi\circ\omega=\id_{\FF''}$, hence $\omega:\, y\mapsto\tvect{e}{1}\cdot y$ for some $e\in \Fq(t)$. Thus, a necessary condition for the semisimplicity of $\FF$ is
\[\textstyle
(1-\zeta t)\cdot\s(y)\cdot \tvect{e}{1}
\;=\;
\tmatr{1-\zeta t}{0}{t}{1-\zeta t}\cdot\tvect{\s(e)}{1}\cdot\s(y)
\]
which is equivalent to the condition
\[
e-\s(e)\;=\;\frac{t}{1-\zeta t}\enspace.
\]
But this cannot be true since $e-\s(e)=0$, thus $\FF$ is \emph{not} semisimple. However, this last formula is satisfied if $e=\lambda\cdot\frac{t}{1-\zeta t}$ for $\lambda\in\Ff_{q^q}$ with $\lambda^q-\lambda=-1$. That means that after field extension $\Fq(\lambda)\,/\,\Fq$ we get $\FF\cong{\FF'}\mbox{\,}^{\oplus 2}$  and we have $\QEnd(\FF)=M_2(\QEnd(\FF'))=M_2(Q)$. Note that this phenomenon generally appears, and we will state and prove it in Theorem \ref{Thm3.8b}.
\end{Example}

From now on we fix a place $v\in\Spec A$ which is different from the characteristic point $\chr$ of $c$. For a morphism $f\in\QHom(\FF,\FF')$ between two abelian $\tau$-sheaves $\FF$ and $\FF'$ we denote its image $V_vf\in\Hom_\QvG(\VvFF,\VvFF')$ just by $f_v$. If $\FF$ is defined over $\Fs$ this applies in particular to the $s$-Frobenius endomorphism $\pi$ of $\FF$ (Definition~\ref{DefFrob}). 

\forget{
\begin{Lemma}\label{PI-END-SEMISIMPLE}
Let\/ $\FF$ be an abelian $\tau$-sheaf over $\Fs$ and let $v$ be a place of $Q$ different from $\chr$ and $\infty$. If\/ $\pi_v$ is semisimple, then $\End_\QvG(\VvFF)$ is a semisimple $\Qv$-algebra which decomposes into a direct sum of matrix algebras over finite commutative field extensions of\/ $\Qv$.
\end{Lemma}

\begin{proof}
Let $\pi_v$ be semisimple and let $\mu=\mu_1\cdot\ldots\cdot\mu_n\in\Qv\II{x}$ be its minimal polynomial over $\Qv$ with distinct irreducible factors $\mu_i$, $1\le i\le n$. By the Chinese remainder theorem
\[
\Qv\II{\pi_v} \;=\; \bigoplus_{i=1}^n \,K_i
\]
decomposes into a direct sum of fields $K_i:=\Qv\II{x}/(\mu_i)$. Since $\VvFF$ is a semisimple $\Qv\II{\pi_v}$-module, we have a decomposition into simple $\Qv\II{\pi_v}$-modules
\[
\VvFF \;\cong\; \bigoplus_{i=1}^n \bigoplus_{\zeta=1}^{s_i} \,V_{i\zeta} \;\cong\; \bigoplus_{i=1}^n \,K_i^{\oplus s_i}
\]
where $V_{i\zeta}\cong K_i$ for all $1\le i\le n$, $1\le \zeta\le s_i$ due to simplicity. Let $\pi_{v,i}\in K_i$ denote the image under the canonical projection map $\Qv\II{\pi_v}\rightarrow K_i$. $\pi_v$ operates on $K_i$ by multiplication by $\pi_{v,i}$. Thus $\End_{\Qv\II{\pi_v}}(K_i^{\oplus s_i}) = M_{s_i}(K_i)$. For $i\not=j$ we have $\Hom_{\Qv\II{\pi_v}}(K_i,K_j)=0$, because $\mu_i(\pi_{v,i})=0$ in $K_i$, but $\mu_i(\pi_{v,j})\not=0$ in $K_j$. Hence we conclude
\[
\End_{\Qv\II{\pi_v}}(\VvFF) \;\cong\; \End_{\Qv\II{\pi_v}}\left( \bigoplus_{i=1}^n \,K_i^{\oplus s_i} \right) \;\cong\; \bigoplus_{i=1}^n \,M_{s_i}(K_i)
\]
and therefore $\End_{\Qv\II{\pi_v}}(\VvFF)$ is semisimple by \cite[Th\'eor\`eme~5.4/2]{Bou}.
\end{proof}
}

Let $\FF$ be an abelian $\tau$-sheaf over the finite field $L=\Fs$. We set
\begin{equation}\label{Eq3.1}
\begin{array}{l@{\;}c@{\;}l@{\qquad\qquad}l@{\;}c@{\;}l}
E &:=& \QEnd(\FF) \ni\pi & E_v &:=& \End_\QvG(\VvFF) \ni\pi_v \\[0.5ex]
F &:=& Q\II{\pi} \subset E & F_v &:=& \im(\QvG\rightarrow \End_\Qv(\VvFF)) \\
\end{array}
\end{equation}
with $\QvG\rightarrow \End_\Qv(\VvFF)$ induced by the action of $G$ on $\VvFF$. Clearly, we have $F\subset E$ and $F_v\subset E_v$ by Proposition~\ref{PI-IS-QISOG}/3. By \cite[\BHAmm]{BH_A}, we know that $\dim_Q E<\infty$. Thus $\pi$ is algebraic over $Q$. We denote its minimal polynomial by $\mu_\pi\in Q[x]$, and the characteristic polynomial of the endomorphism $\pi_v$ of $V_v\FF$ by $\chi_v\in Q_v[x]$. If $\chr\ne\infty$, Theorem~\ref{ThmT.3} shows that $\pi$ is integral over $A$, $\mu_\pi\in A[x]$. The zeroes of $\pi$ in $\Spec A[\pi]$ all lie above $\chr$ because $\pi$ is an isomorphism locally at all $v\ne\chr$; compare with \cite[\BHAcc]{BH_A}.

Due to the Tate conjecture, our situation can be represented by the following diagram where we want to fit the missing bottom right arrow with an isomorphism.
\[
\xymatrix{
\,E^{\,\!}_{\,\!} \ar[r] & 
\,E\xQv\, \ar[r]^<<{\sim} &
\,E_v \\
\,F^{\,\!}_{\,\!} \ar[r]\ar[u] & 
\,F\xQv\, \ar@{-->}[r]^<<{\sim}\ar[u] &
\,F_v \,.\!\!\ar[u] \\
}
\]

\begin{Lemma}\label{Lemma3.4}
The natural morphism between $F\xQv$ and\/ $F_v$ is an isomorphism.
\end{Lemma}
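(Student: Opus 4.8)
The plan is to realize both $F\xQv$ and $F_v$ as the $\Qv$-subalgebra of $E_v=\End_\QvG(\VvFF)$ generated by the image of $\pi$, and then to read off bijectivity of the natural map between them from the Tate conjecture. First I would rewrite the target side: by Proposition~\ref{PI-IS-QISOG}/3 the image of $\QvG\to\End_\Qv(\VvFF)$ is $\Qv\II{V_v\pi}$, so
\[
F_v\;=\;\Qv\II{\pi_v}\,,
\]
the $\Qv$-subalgebra of $E_v$ generated by $\pi_v=V_v\pi$. Since $F=Q\II{\pi}$ is generated as a $Q$-algebra by $\pi$, the $\Qv$-algebra $F\xQv$ is generated by $\pi\otimes1$, and the natural morphism $F\xQv\to F_v$ sends $\pi\otimes1$ to $\pi_v$; hence it is automatically surjective, and it remains only to prove injectivity.

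For injectivity I would factor this morphism as the composite
\[
F\xQv\;\longto\;E\xQv\;\isoto\;E_v\,,
\]
where the first arrow is $(F\hookrightarrow E)\otimes_Q\Qv$ and the second is the Tate isomorphism of Theorem~\ref{TATE-CONJECTURE}/1 (available since $v\ne\infty\in\Spec A$ and $\chr\ne\infty$ in the present setting), which is the $\Qv$-linear extension of $f\mapsto V_vf$ and in particular sends $\pi\otimes1\mapsto\pi_v$; so this composite genuinely restricts to the natural morphism onto $F_v\subset E_v$. Now $\Qv$ is a $Q$-vector space, hence flat over $Q$, so applying $-\otimes_Q\Qv$ to the inclusion $F\hookrightarrow E$ preserves injectivity; composing with the Tate isomorphism, the composite $F\xQv\to E_v$ is injective, and therefore $F\xQv\to F_v$ is both injective and surjective, i.e.\ an isomorphism.

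I do not expect a serious obstacle here: both ingredients --- the Tate conjecture for $\FF$ and the description $F_v=\Qv\II{\pi_v}$ --- are already available in the excerpt, and flatness of $\Qv$ over $Q$ is formal. The one point I would take care to state explicitly is the compatibility underlying the factorization: that the Tate isomorphism $E\xQv\isoto E_v$ really is the $\Qv$-linear extension of $f\mapsto V_vf$, so that it matches the vertical maps $E\to E_v$, $F\to F_v$ of the displayed diagram; it is this compatibility that identifies ``the natural morphism $F\xQv\to F_v$'' with the composite above and lets the injectivity of the latter finish the proof.
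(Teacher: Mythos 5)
Your proof is correct and follows essentially the same route as the paper: restrict the Tate isomorphism $E\otimes_Q Q_v\isoto E_v$ to $F\otimes_Q Q_v$ (injectivity via flatness of $Q_v$ over $Q$) and identify the image with $F_v=Q_v[\pi_v]$ using Proposition~\ref{PI-IS-QISOG}/3. The only cosmetic remark is that you need not assume $\chr\ne\infty$ to invoke Theorem~\ref{TATE-CONJECTURE}/1 here, since with $\FF'=\FF$ the weight hypothesis is automatic.
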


\begin{proof}
Consider the isomorphism $\psi:\,E\xQv\cong E_v\subset\End_\Qv(\VvFF)$ and set $\varphi:=\psi|_{F\xQv}$. Then $\varphi$ is injective and maps into $F_v$. Since $\im\varphi=\Qv\II{\pi_v}$, the surjectivity follows from Proposition~\ref{PI-IS-QISOG}.
\end{proof}

To evaluate the dimension of $E$ we need the following notation.

\begin{Definition}\label{Def3.3}
Let\/ $K$ be a field. Let\/ $f,g\in K\II{x}$ be two polynomials and let
\[
f\, = \!\!\!\prod\limits_{\genfrac{}{}{0pt}{}{\scriptstyle\mu\in K\II{x}}{\mbox{\rm\scriptsize irred.}}}\! \mu^{m(\mu)}
,\qquad
g\, = \!\!\!\prod\limits_{\genfrac{}{}{0pt}{}{\scriptstyle\mu\in K\II{x}}{\mbox{\rm\scriptsize irred.}}}\! \mu^{n(\mu)}
\]
be their respective factorizations in powers of irreducible polynomials. Then we define the integer
\[
r_K(f,g)\; := \!\!\!\prod\limits_{\genfrac{}{}{0pt}{}{\scriptstyle\mu\in K\II{x}}{\mbox{\rm\scriptsize irred.}}}\! m(\mu)\cdot n(\mu)\cdot\deg\mu \;.
\]
\end{Definition}

\noindent {\it Remark.}
In contrast to characteristic zero, we have for $\charakt(K)\ne0$ in general different values of the integer $r_K$ for different ground fields $K$. Namely, if $K\subset L$ then $r_K(f,g)\le r_L(f,g)$ with equality if and only if all irreducible $\mu\in K\II{x}$ which are contained both in $f$ and in $g$ have no multiple factors in $L\II{x}$. This is satisfied for example if the greatest common divisor of $f$ and $g$ has only separable irreducible factors, or if $L$ is separable over $K$. See \ref{LAST-EXAMPLE} below for an example where $r_K(f,g)<r_L(f,g)$.

\medskip

Before we discuss semisimplicity criteria in \ref{PROP.EQ} -- \ref{Cor3.8c}, let us compute the dimension of $\QHom(\FF,\FF')$.

\begin{Lemma}\label{PROP.15}\label{PI-END-SEMISIMPLE}
Let $v$ be a place of $Q$ different from $\chr$ and $\infty$. Let $\FF$ and $\FF'$ be abelian $\tau$-sheaves over $\Fs$ and assume that $\pi_v$ and $\pi'_v$ are semisimple. Factor their characteristic polynomials $\chi_v \,=\, \mu_1^{m_1}\cdot\ldots\cdot\mu_n^{m_n}$ and $\chi'_v\,=\, \mu_1^{m'_1}\cdot\ldots\cdot\mu_n^{m'_n}$ with distinct monic irreducible polynomials $\mu_1,\dots,\mu_n\in\Qv\II{x}$ and $m_i,m'_i\in\N_0$. Then
\begin{suchthat}
\item
$\DS\Hom_{Q_v[G]}(V_v\FF,V_v\FF')\,\cong\,\bigoplus_{i=1}^nM_{m_i'\times m_i}\bigl(Q_v[x]/(\mu_i)\bigr)$ as $Q_v$-vector spaces,
\item 
$\DS\End_{Q_v[G]}(V_v\FF)\,\cong\,\bigoplus_{i=1}^nM_{m_i}\bigl(Q_v[x]/(\mu_i)\bigr)$ as $Q_v$-algebras, and
\item
$\dim_\Qv\Hom_\QvG(\VvFF,\VvFF') \,=\, r_\Qv(\chi_v,\chi'_v)$\,.
\end{suchthat}
\end{Lemma}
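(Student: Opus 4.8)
The plan is to reduce everything to a statement about modules over the commutative ring $\Qv\II{\pi_v}$ (respectively $\Qv\II{\pi'_v}$), using the Tate conjecture to replace $G$-equivariance by $\pi_v$-equivariance. First I would invoke Proposition~\ref{PI-IS-QISOG}/3, which identifies the image of $\QvG$ in $\End_\Qv(\VvFF)$ with $\Qv\II{\pi_v}$ (and similarly with primes). Hence a $\Qv$-linear map $\VvFF\to\VvFF'$ is $G$-equivariant if and only if it intertwines the two actions of $\pi_v$; that is, $\Hom_\QvG(\VvFF,\VvFF')=\Hom_{\Qv\II{x}}(\VvFF,\VvFF')$, where $x$ acts as $\pi_v$ on the source and as $\pi'_v$ on the target. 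Note also $\Qv\II{x}/(\mu_\pi(x)) \cong \Qv\II{\pi_v}$ and that $\mu_\pi$ divides $\chi_v$, so $\Qv\II{\pi_v}$ acts on $\VvFF$ and its characteristic polynomial there is $\chi_v$.

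Next I would diagonalize the module structures. Since $\pi_v$ is semisimple, $\Qv\II{\pi_v}$ is (by Lemma~\ref{XY99}-type reasoning, or by the remark in \ref{Eq3.1} that $\mu_\pi$ has no multiple factors) a product of fields; decomposing $\chi_v = \prod_{i=1}^n \mu_i^{m_i}$ into distinct monic irreducibles over $\Qv\II{x}$, the structure theorem for modules over a PID gives a $\Qv\II{x}$-module isomorphism $\VvFF \cong \bigoplus_i \bigl(\Qv\II{x}/(\mu_i)\bigr)^{\oplus m_i}$, because semisimplicity of $\pi_v$ forces all elementary divisors to be squarefree and the multiplicity of the field $K_i := \Qv\II{x}/(\mu_i)$ is read off from $\chi_v$ as the exponent $m_i$ (so $\dim_\Qv V_v\FF = \sum_i m_i \deg\mu_i$, matching $\deg\chi_v$). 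Do the same for $\VvFF'$ with exponents $m'_i$ relative to the same irreducibles $\mu_1,\dots,\mu_n$ (allowing $m_i$ or $m'_i$ to be $0$). Then compute $\Hom$ and $\End$ componentwise: $\Hom_{K_i}(K_i^{\oplus m_i}, K_i^{\oplus m'_i}) = M_{m'_i\times m_i}(K_i)$, while $\Hom_{\Qv\II{x}}(K_i^{\oplus m_i}, K_j^{\oplus m'_j}) = 0$ for $i\ne j$ since $\mu_i$ kills the source but is a unit on the target. Summing over $i$ gives statements 1 and 2; statement 2 is a ring isomorphism since the cross terms vanish, so the product decomposition is as $\Qv$-algebras. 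Finally statement 3 follows by taking $\Qv$-dimensions in statement 1: $\dim_\Qv M_{m'_i\times m_i}(K_i) = m_i m'_i \deg\mu_i$, and summing over $i$ gives exactly $r_\Qv(\chi_v,\chi'_v)$ by Definition~\ref{Def3.3}.

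The main obstacle I anticipate is not any single hard argument but making precise two bookkeeping points. First, one must justify that $\pi_v$ semisimple genuinely yields a squarefree-elementary-divisor decomposition with the $K_i$-multiplicities being the $m_i$ coming from $\chi_v$ rather than from $\mu_\pi$: this is where one uses that $V_v\FF$ is a module over the \emph{semisimple} ring $\Qv\II{\pi_v}$, so every finitely generated module is a direct sum of simple ones (the $K_i$), and a dimension count (or the Cayley--Hamilton/characteristic-polynomial identity $\chi_v = \prod_i \mu_i^{m_i}$) pins down the multiplicities. Second, one must be slightly careful that $\pi_v$ and $\pi'_v$ need not have the same minimal polynomial, so some $m_i$ or $m'_i$ vanish; using a common list $\mu_1,\dots,\mu_n$ of irreducibles dividing $\chi_v\chi'_v$ handles this uniformly, and the $\Hom=0$ computation for $i\ne j$ is exactly what makes the cross terms disappear. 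Everything else is the standard module theory over a product of fields.
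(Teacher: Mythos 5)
Your proposal is correct and follows essentially the same route as the paper: using that ${\rm Frob}_s$ acts through $\pi_v$ (Proposition~\ref{PI-IS-QISOG}) to decompose $V_v\FF$ and $V_v\FF'$ as direct sums of the fields $K_i=Q_v[x]/(\mu_i)$ with multiplicities $m_i$, $m'_i$ read off from the characteristic polynomials, and then computing the Hom spaces componentwise with the cross terms vanishing. The paper's proof is just a terser version of the same argument, with your bookkeeping points (reduction of $G$-equivariance to intertwining $\pi_v$ and $\pi'_v$, and the squarefree elementary divisors) left implicit.
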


\begin{proof}
Clearly 2 and 3 are consequences of 1 which we now prove. Since $\pi_v$ and $\pi'_v$ are semisimple, we have the following decomposition of $Q_v[G]$-modules
\[
\VvFF \,\cong\, \bigoplus_{i=1}^n\,(\Qv\II{x}/(\mu_i))^{\oplus m_i}, \qquad
\VvFF'\,\cong\, \bigoplus_{i=1}^n\,(\Qv\II{x}/(\mu_i))^{\oplus m'_i}
\]
where $\Qv\II{x}/(\mu_i)=:K_i$ are fields. Obviously, we only have non-zero $Q_v[G]$-morphisms $K_i\rightarrow K_j$ if $i=j$, since otherwise $\mu_i(\pi)\ne0$ in $K_j$. Since $\pi_v$ operates on $K_i^{\oplus m_i}$ as multiplication by the scalar $x$, the lemma follows.
\end{proof}

\begin{Theorem}\label{Thm3.5a}
Let $\FF$ and $\FF'$ be abelian $\tau$-sheaves of the same weight over $\BF_s$ and assume that the endomorphisms $\pi_v$ and $\pi'_v$ of $V_v\FF$ and $V_v\FF'$ are semisimple at a place $v\ne\chr,\infty$ of $Q$. Let $\chi_v$ and $\chi'_v$ be their characteristic polynomials. Then
\[
\dim_Q\QHom(\FF,\FF')\;=\;r_{Q_v}(\chi_v,\chi'_v).
\]
\end{Theorem}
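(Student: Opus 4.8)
The plan is to obtain this as an immediate consequence of the Tate conjecture for abelian $\tau$-sheaves (Theorem~\ref{TATE-CONJECTURE}) combined with the dimension formula of Lemma~\ref{PROP.15}. Concretely, I would fill in the three-step chain of equalities
\[
\dim_Q\QHom(\FF,\FF') \;=\; \dim_\Qv\bigl(\QHom(\FF,\FF')\otimes_Q\Qv\bigr) \;=\; \dim_\Qv\Hom_\QvG(\VvFF,\VvFF') \;=\; r_\Qv(\chi_v,\chi'_v).
\]

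For the first equality I would simply note that $\QHom(\FF,\FF')$ is a finite-dimensional $Q$-vector space by \cite[\BHAllmm]{BH_A} (as recalled in Section~2), and that base change along the field extension $\Qv/Q$ preserves dimension. The middle equality is Theorem~\ref{TATE-CONJECTURE}/1: since $v\ne\chr,\infty$ and $\weight(\FF)=\weight(\FF')$ by hypothesis, the condition "$\chr\ne\infty$ or $\weight(\FF)=\weight(\FF')$" required there is satisfied, so the canonical map $\QHom(\FF,\FF')\xQv\isoto\Hom_\QvG(\VvFF,\VvFF')$ is an isomorphism. The last equality is exactly Lemma~\ref{PROP.15}/3, whose hypothesis — that $\pi_v$ and $\pi'_v$ are semisimple — is precisely the standing assumption of the theorem; here $\pi'_v=V_v\pi'$ makes sense because $\FF'$, too, is defined over $\BF_s$ and hence carries an $s$-Frobenius.

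There is essentially no obstacle: all the substance already resides in Theorem~\ref{TATE-CONJECTURE} and Lemma~\ref{PROP.15}, and the only thing to verify is that the hypotheses dovetail — the "same weight" assumption being exactly what licenses the use of the Tate conjecture at a finite place different from $\chr$, and the semisimplicity of the Frobenius on the rational Tate modules being exactly what Lemma~\ref{PROP.15} needs. If one preferred to phrase the argument via the diagram in the text, the statement merely records that the top horizontal isomorphism $E\xQv\isoto E_v$ (for $\FF=\FF'$, or its $\Hom$-analogue in general) together with Lemma~\ref{PROP.15} pins down the dimension; but the direct route above is the cleanest.
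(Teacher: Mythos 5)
Your proposal is correct and is exactly the paper's argument: the proof there reads ``This follows from the lemma and the Tate conjecture, Theorem~\ref{TATE-CONJECTURE},'' i.e.\ it combines Lemma~\ref{PROP.15}/3 with Theorem~\ref{TATE-CONJECTURE}/1 just as you do. Your additional verification that the hypotheses of the two cited results dovetail (same weight for the Tate conjecture at $v\ne\chr,\infty$, semisimplicity of $\pi_v,\pi'_v$ for the lemma) is a faithful filling-in of the details the paper leaves implicit.
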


\begin{proof}
This follows from the lemma and the Tate conjecture, Theorem~\ref{TATE-CONJECTURE}.
\end{proof}

\begin{Corollary}\label{Cor3.11b}
Let $\FF$ be an abelian $\tau$-sheaf of rank $r$ over $\Fs$ with Frobenius endomorphism $\pi$ and let $\mu_\pi$ be the minimal polynomial of $\pi$. Assume that $F=Q[x]/(\mu_\pi)$ is a field and set $h:=[F:Q]=\deg\mu_\pi$. Then
\begin{suchthat}
\item
$h|r$ and $\dim_Q \QEnd(\FF) =\frac{r^2}{h}$ and $\dim_F\QEnd(\FF)=\frac{r^2}{h^2}$.
\item 
For any place $v$ of $Q$ different from $\chr$ and $\infty$ we have $\QEnd(\FF)\otimes_Q Q_v\cong M_{r/h}(F\otimes_Q Q_v)$ and $\chi_v=(\mu_\pi)^{r/h}$ independent of $v$.
\end{suchthat}
\end{Corollary}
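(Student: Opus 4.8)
\emph{Proof plan.} The idea is to work at a single place $v\ne\chr,\infty$ and to combine the Tate conjecture (Theorem~\ref{TATE-CONJECTURE}) with the observation that $\pi$ is central in $E:=\QEnd(\FF)$; centrality turns out to force all the multiplicities occurring in $\chi_v$ to coincide, which is what makes the formulae in (1) and (2) so clean.

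First I would record that $\pi\in Z(E)$. Since $L=\Fs$, the endomorphism $(\s)^e$ acts as the identity on $\O_{C_L}$ and $(\sigma^\ast)^e\FF=\FF$ by Proposition~\ref{PI-IS-QISOG}, so the defining relation $f_{i+1}\circ\t_i=\t'_i\circ\s f_i$ of a morphism yields $f\circ\pi=\pi\circ f$ for every $f\in\QEnd(\FF)$. Hence $F=Q[\pi]\subseteq Z(E)$, $E$ is an $F$-algebra, and $\dim_Q E=h\cdot\dim_F E$. (The same conclusion is also forced by the Tate conjecture below, because by Proposition~\ref{PI-IS-QISOG} the $G$-action on $V_v\FF$ factors through $\pi_v$, so $\End_{Q_v[G]}(V_v\FF)=\End_{Q_v[\pi_v]}(V_v\FF)$ has centre containing $F_v$.)

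Next, recall that the Frobenius $\pi$ is semisimple, so $\pi_v$ is semisimple; by Lemma~\ref{Lemma3.4} one has $Q_v[\pi_v]=F_v\cong F\otimes_Q Q_v=Q_v[x]/(\mu_\pi)$, so $\mu_\pi$ is the minimal polynomial of $\pi_v$ over $Q_v$ and is separable, say $\mu_\pi=\mu_1\cdots\mu_n$ with distinct monic irreducibles $\mu_i\in Q_v[x]$. Put $K_i:=Q_v[x]/(\mu_i)$, so that $F\otimes_Q Q_v=\prod_i K_i$, and write $\chi_v=\mu_1^{m_1}\cdots\mu_n^{m_n}$. By semisimplicity $V_v\FF\cong\bigoplus_i K_i^{\oplus m_i}$ as $F_v$-module, hence $\End_{Q_v[G]}(V_v\FF)=\End_{F_v}(V_v\FF)\cong\bigoplus_i M_{m_i}(K_i)$ (Lemma~\ref{PROP.15}/2), while the Tate conjecture identifies the left-hand side with $E\otimes_Q Q_v=E\otimes_F\bigl(\prod_i K_i\bigr)=\bigoplus_i\bigl(E\otimes_F K_i\bigr)$ compatibly with the $F\otimes_Q Q_v$-structures (the isomorphism sends $\pi$ to $\pi_v$). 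Comparing the factors cut out by the primitive central idempotents of $F\otimes_Q Q_v$ gives $E\otimes_F K_i\cong M_{m_i}(K_i)$ for every $i$, so $\dim_F E=m_i^{\,2}$ for all $i$; therefore $m_1=\cdots=m_n=:m$.

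The rest is bookkeeping: from $r=\deg\chi_v=\sum_i m_i\deg\mu_i=m\deg\mu_\pi=mh$ we get $h\mid r$ and $m=r/h$, whence $\dim_F E=m^2=r^2/h^2$ and $\dim_Q E=h\cdot\dim_F E=r^2/h$, which is (1) (alternatively $\dim_Q E=r_{Q_v}(\chi_v,\chi_v)=\sum_i m_i^2\deg\mu_i=m^2h$ via Theorem~\ref{Thm3.5a}). For (2), $E\otimes_Q Q_v\cong\bigoplus_i M_{r/h}(K_i)=M_{r/h}\bigl(\prod_i K_i\bigr)=M_{r/h}(F\otimes_Q Q_v)$, and $\chi_v=\prod_i\mu_i^{m}=\bigl(\prod_i\mu_i\bigr)^{r/h}=\mu_\pi^{\,r/h}$, an expression that no longer involves $v$. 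I expect the only genuinely non-formal ingredient to be the semisimplicity of the Frobenius (equivalently, the separability of $\mu_\pi$ and the semisimplicity of $\pi_v$), the function-field counterpart of Tate's semisimplicity theorem: this is precisely what makes $F\otimes_Q Q_v$ a product of fields and Lemma~\ref{PROP.15} applicable, and once it is in hand Steps~1--3 above are short and formal.
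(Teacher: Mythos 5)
Your proof is correct and follows essentially the paper's route: semisimplicity of $\pi_v$ because $F$ is a field (Proposition~\ref{PROP.EQ}), the block decomposition $E_v\cong\bigoplus_i M_{m_i}(K_i)$ from Lemma~\ref{PROP.15}, and comparison with $\QEnd(\FF)\otimes_Q Q_v$ via the Tate conjecture. The one genuine difference is how you force the multiplicities to coincide: the paper records only an epimorphism $\QEnd(\FF)\otimes_F K_i\onto M_{m_i}(K_i)$, hence $m_i^2\le\dim_F E$, and then gets equality from the dimension count $\dim_{Q_v}E_v=\sum_i m_i^2\deg\mu_i=[F:Q]\cdot\dim_F E$; you instead split $E\otimes_Q Q_v=\bigoplus_i E\otimes_F K_i$ by the primitive central idempotents of $F\otimes_Q Q_v$ and read off $E\otimes_F K_i\cong M_{m_i}(K_i)$ directly, which is a legitimate shortcut since $F\subseteq Z(E)$ and the idempotents correspond under the Tate isomorphism (it sends $\pi$ to $\pi_v$). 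One assertion should be deleted, though: $\mu_\pi$ need \emph{not} be separable even when $F$ is a field (Example~\ref{LAST-EXAMPLE} has $\mu_\pi=x^2-t^d$, $d$ odd, in characteristic $2$); what your argument actually uses, and what is true, is only that $\mu_\pi$ has no repeated irreducible factor in $Q_v[x]$, which follows from the semisimplicity of $\pi_v$, i.e.\ ultimately from the separability of $Q_v/Q$ (Remark~\ref{SEPARABLE}), not from separability of $\mu_\pi$. Likewise your closing remark equating semisimplicity of the Frobenius with separability of $\mu_\pi$ is inaccurate --- separability corresponds to \emph{absolute} semisimplicity (Lemma~\ref{XY99}) --- but none of this affects the validity of the steps you actually perform.
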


\begin{proof}
Since $F$ is a field, $\pi_v$ is semisimple by \ref{PROP.EQ} below. So general facts of linear algebra imply that $\mu_\pi=\mu_1\cdot\ldots\cdot\mu_n$ with pairwise different irreducible monic polynomials $\mu_i\in Q_v[x]$ and $\chi_v=\mu_1^{m_1}\cdot\ldots\cdot\mu_n^{m_n}$ with $m_i\ge1$. We set $K_i=Q_v[x]/(\mu_i)$ and use the notation from (\ref{Eq3.1}). By Lemma~\ref{PROP.15} the semisimple $Q_v$-algebra $E_v$ decomposes $E_v\cong \bigoplus_{i=1}^n E_i$ into the simple constituents $E_i=M_{m_i}(K_i)$. By \cite[Th\'eor\`eme~5.3/1 and Proposition~5.4/12]{Bou}, $E_i=E_v\cdot e_i$ where $e_i$ are the central idempotents with $K_i=F_v\cdot e_i$. Thus there are epimorphisms of $K_i$-vector spaces
\[
\QEnd(\FF)\otimes_F K_i \,=\,E_v\otimes_{F_v}K_i \,\onto\, E_i\,.
\]
This shows that $m_i^2\le \dim_F E$. So by Lemma~\ref{PROP.15}
\begin{eqnarray*}
[F:Q]\cdot\dim_F E\es=\es\dim_{Q_v}E_v&=&\sum_{i=1}^n m_i^2\deg\mu_i\es\le \\[-2mm]
\le\es\dim_F E\cdot\sum_{i=1}^n\deg\mu_i &=& \dim_F E\cdot\deg\mu_\pi\es =\es [F:Q]\cdot\dim_F E\,.
\end{eqnarray*}
Therefore $m_i^2=\dim_F E$ for all $i$. Since $r=\deg\chi_v=\sum_i m_i\deg\mu_i=\sqrt{\dim_F E}\cdot[F:Q]$. We find $r=m_ih$ and $\dim_FE=\frac{r^2}{h^2}$, proving 1. For 2 we use that 
\[
E_v\es\cong\es\bigoplus_i\, M_{r/h}\bigl(Q_v[x]/(\mu_i)\bigr)\es=\es M_{r/h}\bigl(\bigoplus_i Q_v[x]/(\mu_i)\bigr)\es=\es M_{r/h}\bigl(Q_v[x]/(\mu_\pi)\bigr)\,.
\]

\end{proof}

\medskip

Next we investigate when $\pi_v$ is semisimple.

\begin{Remark}\label{SEPARABLE}
Notice that the completion $\Qv$ is separable over $Q$. Namely, in terms of \cite[IV.7.8.1--3]{EGA}, we can state that $\O_{C,v}$ is an excellent ring. Thus the formal fibers of $\widehat{\O}_{C,v} \longrightarrow {\O_{C,v}}$ and therefore $Q_v = \widehat{\O}_{C,v}\!\otimes_{\O_{C,v}}\!Q \longrightarrow Q$ are geometrically regular. This means that $Q_v\otimes_Q K$ is regular for every finite field extension $K$ over $Q$. Since \quotes{regular} implies \quotes{reduced}, we conclude that $Q_v$ is separable over $Q$.
\end{Remark}

\begin{Proposition}\label{PROP.EQ}
In the notation of (\ref{Eq3.1}) the following statements are equivalent:
\begin{suchthat}
\item $\pi$ is semisimple.
\item $F$ is semisimple.
\item $F\xQv\cong F_v$ is semisimple.
\item $\pi_v$ is semisimple.
\item $E\xQv\cong E_v$ is semisimple.
\item $E$ is semisimple.
\end{suchthat}
\end{Proposition}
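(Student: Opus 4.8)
The plan is to prove the six equivalences in a cycle, using at each step either a piece of linear algebra over the relevant field, one of the isomorphisms already established (the Tate conjecture, Theorem~\ref{TATE-CONJECTURE}, and Lemma~\ref{Lemma3.4}), or the separability of $Q_v$ over $Q$ recorded in Remark~\ref{SEPARABLE}. Concretely I would run the chain
\[
\text{(a)}\Rightarrow\text{(b)}\Rightarrow\text{(c)}\Rightarrow\text{(d)}\Rightarrow\text{(e)}\Rightarrow\text{(f)}\Rightarrow\text{(a)}\,.
\]
First note that $F=Q[\pi]=Q[x]/(\mu_\pi)$ is a commutative finite $Q$-algebra, so $F$ is semisimple if and only if $\mu_\pi$ has no repeated irreducible factor over $Q$; this is precisely the statement that the endomorphism $\pi$ of any faithful $F$-module (e.g.\ of $E$, viewed as a $Q$-vector space) is semisimple in the sense of Lemma~\ref{XY99}, giving (a)$\Leftrightarrow$(b) directly.

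For (b)$\Rightarrow$(c): since $Q_v/Q$ is separable by Remark~\ref{SEPARABLE}, tensoring the semisimple commutative finite $Q$-algebra $F$ with $Q_v$ keeps it semisimple (this is Lemma~\ref{XY5}/2, or simply the fact that a separable base change of a product of separable field extensions stays reduced). By Lemma~\ref{Lemma3.4} the natural map $F\otimes_Q Q_v\to F_v$ is an isomorphism, so $F_v$ is semisimple. Conversely (c)$\Rightarrow$(b) is Lemma~\ref{XY5}/1. Next, (c)$\Leftrightarrow$(d): $F_v=Q_v[\pi_v]$ by Proposition~\ref{PI-IS-QISOG}/3 and Lemma~\ref{Lemma3.4}, and a commutative algebra generated by one element over a field is semisimple exactly when that element has squarefree minimal polynomial, i.e.\ exactly when $\pi_v$ is semisimple.

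For (d)$\Rightarrow$(e): when $\pi_v$ is semisimple, $V_v\FF$ is a semisimple module over $Q_v[\pi_v]$, hence a semisimple $Q_v[G]$-module since $G$ acts through $F_v=Q_v[\pi_v]$ by Proposition~\ref{ABSOLUTE-GALOIS}/2 (the generator $\mathrm{Frob}_s$ acts as $(V_v\pi)^{-1}$, which lies in the ring generated by $V_v\pi$); then $E_v=\End_{Q_v[G]}(V_v\FF)$ is semisimple because it is the endomorphism ring of a finite-dimensional semisimple module over the artinian ring $Q_v[G]$ --- equivalently, one can argue directly as in the computation of Lemma~\ref{PROP.15} that $E_v\cong\bigoplus_i M_{m_i}(Q_v[x]/(\mu_i))$. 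Again $E\otimes_Q Q_v\cong E_v$ is the Tate-conjecture isomorphism (Theorem~\ref{TATE-CONJECTURE}), so (e) is about this common ring. For (e)$\Rightarrow$(f) use Lemma~\ref{XY5}/1 (descent of semisimplicity along the field extension $Q_v/Q$). Finally (f)$\Rightarrow$(a): if $E$ is semisimple then its commutative subalgebra $F=Q[\pi]$ is semisimple by Proposition~\ref{COMMUTATIVE-SUB}/2 --- wait, that requires $E$ commutative, which it need not be; instead argue that $\pi$ lies in the \emph{center} of $E$ (the $s$-Frobenius commutes with every endomorphism, since $\pi=\tau\circ\cdots\circ(\s)^{e-1}\tau$ and morphisms commute with the $\tau$'s by Definition~\ref{Def1'.2}), so $F\subset Z(E)$, and $Z(E)$ is a product of fields by Proposition~\ref{SEMISIMPLE-CENTER}/2; a subalgebra of a product of fields generated by one element is reduced, hence semisimple, giving (a).

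The main obstacle is the last implication, and the one subtlety there is exactly the observation that $\pi$ is central in $E$ (so that one may pass to $Z(E)$, a product of fields, rather than trying to intersect with a noncommutative ring). Once that is in hand everything else is either linear algebra over a field (squarefreeness of minimal polynomials $\Leftrightarrow$ semisimplicity), the already-proved isomorphisms $F\otimes_Q Q_v\cong F_v$ and $E\otimes_Q Q_v\cong E_v$, or the separability of $Q_v/Q$ together with the standard ascent/descent of semisimplicity under field extensions from Lemma~\ref{XY5}.
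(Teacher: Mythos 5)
Your proof is correct and follows essentially the same route as the paper: $1\Leftrightarrow 2$ is definitional, the ascent $2\Rightarrow 3$ uses separability of $Q_v/Q$ with $F\otimes_QQ_v\cong F_v$, $4\Rightarrow 5$ is the explicit matrix-algebra computation of Lemma~\ref{PROP.15}/2, $5\Rightarrow 6$ is descent of semisimplicity along $Q_v/Q$, and $6\Rightarrow 2$ uses that $F=Q[\pi]$ sits inside the center $Z(E)$, a finite product of fields. Your extra care in justifying that $\pi$ is central (morphisms defined over $\BF_s$ commute with $\tau^e$) is a point the paper takes for granted, but it is not a different method.
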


\begin{proof}
1. and 2. are equivalent by definition. So we show the equivalences from 2. to 6. 

Let $F$ be semisimple. Since $\Qv$ is separable over $Q$, we conclude that $F\xQv\cong \Qv\II{\pi_v}$ is semisimple by \cite[Corollaire~7.6/4]{Bou}.
Hence $\pi_v$ is semisimple by definition, and we showed in Lemma~\ref{PI-END-SEMISIMPLE}/2 that then $E_v\cong E\xQv$ is semisimple. Again by \cite[Corollaire~7.6/4]{Bou} this implies that $E$ is semisimple. Since $F\subset Z(E)$ is a finite dimensional $Q$-subalgebra of the center of $E$, we conclude by \cite[Corollaire de Proposition~6.4/9]{Bou} that $F$ is semisimple, and our proof is complete.
\end{proof}

\noindent {\it Remark.}
If more generally $\FF$ is defined over a finitely generated field, then one cannot consider $\pi$, $\pi_v$, nor $F$. Nevertheless 5 and 6 remain equivalent and are still implied by 3 due to the following well-known lemma. Namely $E_v$ is the commutant of $F_v$ in $\End_{Q_v}(V_v\FF)$. 
We thank O.\ Gabber for mentioning this fact to us and we include its proof for lack of reference.

\begin{Lemma}\label{Lemma3.5b}
Let $B$ be a central simple algebra of finite dimension over a field $K$ and let $F$ be a semisimple $K$-subalgebra of $B$. Then the commutant of $F$ in $B$ is semisimple.
\end{Lemma}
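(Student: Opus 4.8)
The plan is to realise the commutant $C := \{b\in B : bf = fb \text{ for all } f\in F\}$ as (the opposite of) the endomorphism ring of a semisimple module, and then invoke that such rings are semisimple. Concretely, I would regard $B$ as a left module over the $K$-algebra $R := B\otimes_K F^{\op}$, with $B$ acting by left multiplication and $F^{\op}$ by right multiplication; this is legitimate since $F\subseteq B$ and left and right multiplications commute. The crucial point is that $R$ is a semisimple ring. To see this, write $F = \prod_i F_i$ with each $F_i$ simple (possible as $F$ is semisimple), so $F^{\op} = \prod_i F_i^{\op}$ with each $F_i^{\op}$ simple and $R = \prod_i (B\otimes_K F_i^{\op})$; it suffices to show each factor is simple. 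Letting $Z_i$ denote the centre of $F_i^{\op}$, a finite field extension of $K$, one has $B\otimes_K F_i^{\op} \cong (B\otimes_K Z_i)\otimes_{Z_i} F_i^{\op}$, the tensor product over $Z_i$ of the $Z_i$-central simple algebras $B\otimes_K Z_i$ (base change of a central simple $K$-algebra) and $F_i^{\op}$; hence it is central simple over $Z_i$, in particular simple. Thus $R$ is a finite product of simple rings, i.e. semisimple.

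Since $R$ is semisimple and $B$ is finite-dimensional over $K$, the module $B$ is a semisimple $R$-module of finite length, so $\End_R(B)$ is semisimple (decompose $B$ into its isotypic components and apply Schur's lemma together with Wedderburn's structure theorem). It then remains to identify $\End_R(B)$: an $R$-endomorphism of $B$ is in particular an endomorphism of $B$ as a left $B$-module, hence of the form $\rho_b\colon x\mapsto xb$ for a unique $b\in B$ (using $\End_B({}_BB)\cong B^{\op}$); compatibility with the right $F$-action reads $xfb = xbf$ for all $x\in B$, $f\in F$, which — taking $x=1$ — is exactly $b\in C$. Conversely every $\rho_b$ with $b\in C$ is an $R$-endomorphism, and $b\mapsto\rho_b$ is an anti-isomorphism $C\isoto\End_R(B)$. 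Therefore $C^{\op}\cong\End_R(B)$ is semisimple, and since the opposite of a semisimple ring is semisimple, $C$ is semisimple.

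The only step requiring genuine care is the semisimplicity of $R = B\otimes_K F^{\op}$: because the centre $Z_i$ of a simple factor $F_i$ may be a proper extension of $K$, one cannot apply directly the statement ``a tensor product over a field of two central simple algebras is central simple'' over $K$, and must first base-change $B$ from $K$ up to $Z_i$. Everything else is routine bookkeeping with the isomorphism $\End_B({}_BB)\cong B^{\op}$ and the structure theory of semisimple rings and modules.
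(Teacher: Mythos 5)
Your proof is correct, but it takes a genuinely different route from the paper's. The paper decomposes $F=\bigoplus_i Fe_i$ via its central idempotents $e_i$, passes to the corner algebras $B_i=e_iBe_i$ (shown to be simple because a non-zero two-sided ideal $I\subset B_i$ satisfies $BIB=B$, hence contains $e_i$), and then quotes Bourbaki's commutant theorem: the commutant of the simple subalgebra $Fe_i$ in the simple algebra $B_i$ is simple, and the commutant of $F$ in $B$ is the direct sum of these. You instead identify the commutant with $\End_{B\otimes_K F^{\op}}(B)^{\op}$ and reduce everything to the semisimplicity of $B\otimes_K F^{\op}$, which you establish via the base change $B\otimes_K F_i^{\op}\cong(B\otimes_K Z_i)\otimes_{Z_i}F_i^{\op}$ through the centre $Z_i$ of each simple factor --- a necessary precaution, correctly flagged, since $F_i$ need not be central over $K$. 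In effect you re-prove from scratch the double-centralizer mechanism that the paper imports as a black box; your argument is longer but more self-contained, while the paper's corner-algebra decomposition is shorter and has the advantage of exhibiting the simple constituents of the commutant explicitly as the commutants of the $F_i$ in $e_iBe_i$.
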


\begin{proof}
Let $F=\bigoplus_i F_i$ be the decomposition into simple constituents and let $e_i$ be the corresponding central idempotents, that is, $F_i=Fe_i$. Consider $B_i=e_i Be_i$ which is again central simple over $K$ by \cite[Corollaire~6.4/4]{Bou}, since if $I\subset B_i$ is a non-zero two sided ideal then $BIB$ contains $1$ and so $I$ contains the unit $e_i$ of $B_i$. By \cite[Th\'eor\`eme~10.2/2]{Bou} the commutant $E_i$ of $F_i$ in $B_i$ is simple. Clearly the commutant $E$ of $F$ in $B$ satisfies $E_i=e_iEe_i=Ee_i$ and $E=\bigoplus_i E_i$ proving the lemma.
\end{proof}

\begin{Corollary} \label{CorFisCenter}
Let\/ $\FF$ be an abelian $\tau$-sheaf over $\Fs$ of rank $r$ with semisimple Frobenius endomorphism\/ $\pi$. 
Then the algebra $F=Q(\pi)$ is the center of the semisimple algebra $E=\QEnd(\FF)$.
\end{Corollary}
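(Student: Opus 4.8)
The plan is to imitate Tate's argument for abelian varieties: base change to a completion $Q_v$, where the assertion reduces to the double centralizer theorem, and then descend.

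First I would set up the local comparison. Fix a place $v\in\Spec A$ with $v\ne\chr$; then automatically $v\ne\infty$. Since $\pi$ is semisimple, Proposition~\ref{PROP.EQ} gives that $E:=\QEnd(\FF)$ and $F:=Q(\pi)$ are semisimple, hence finite-dimensional over $Q$ (recall $\dim_QE<\infty$); consequently forming centers commutes with the flat base change $Q\to Q_v$, so $Z(E)\otimes_QQ_v=Z(E\otimes_QQ_v)$. By the Tate conjecture (Theorem~\ref{TATE-CONJECTURE}, applied with $\FF'=\FF$, so that the weight hypothesis is vacuous) there is an isomorphism $E\otimes_QQ_v\isoto E_v=\End_{Q_v[G]}(V_v\FF)$ carrying $\pi$ to $\pi_v$, which by Lemma~\ref{Lemma3.4} restricts to an isomorphism $F\otimes_QQ_v\isoto F_v=\im\bigl(Q_v[G]\to\End_{Q_v}(V_v\FF)\bigr)$.

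Next I would apply the double centralizer theorem inside $B:=\End_{Q_v}(V_v\FF)\cong M_r(Q_v)$, which is central simple over $Q_v$. Straight from the definitions, $E_v$ is the centralizer $C_B(F_v)$ of $F_v$ in $B$. Since $F$ is semisimple and the extension $Q_v/Q$ is separable (Remark~\ref{SEPARABLE}), $F_v$ is a semisimple subalgebra of $B$, so the double centralizer theorem yields $C_B(C_B(F_v))=F_v$; hence
\[
Z(E_v)\;=\;Z\bigl(C_B(F_v)\bigr)\;=\;C_B(F_v)\cap C_B\bigl(C_B(F_v)\bigr)\;=\;C_B(F_v)\cap F_v\;=\;F_v ,
\]
the last step because $F_v$ is commutative and so lies in $C_B(F_v)=E_v$.

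Finally I would descend. For any $a\in F\subseteq E$ we have $a\otimes1\in F_v=Z(E_v)=Z(E)\otimes_QQ_v$, and since $Q_v$ is faithfully flat over $Q$ one has $E\cap\bigl(Z(E)\otimes_QQ_v\bigr)=Z(E)$ inside $E\otimes_QQ_v$; therefore $F\subseteq Z(E)$. Then the inclusion $F\hookrightarrow Z(E)$ of finite $Q$-modules becomes an isomorphism after $-\otimes_QQ_v$ (both sides are $F_v$), hence was already an isomorphism, i.e.\ $F=Z(E)$. The one point requiring care is that the double centralizer theorem is used here for a semisimple — not necessarily simple — subalgebra, and that $F_v$ really is semisimple, which is exactly where separability of $Q_v$ over $Q$ enters (cf.\ the idempotent decomposition in the proof of Lemma~\ref{Lemma3.5b}); the remainder is routine faithfully flat descent together with the local-global isomorphisms already recorded above.
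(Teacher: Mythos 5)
Your proposal is correct and follows essentially the same route as the paper: pass to the $v$-adic Tate module via the Tate conjecture, identify $Z(E_v)=E_v\cap F_v=F_v$ by a double-centralizer/bicommutation argument (the paper invokes Bourbaki's theorem of bicommutation for the semisimple $F_v$-module $V_v\FF$, which is the same content as your double centralizer theorem in $\End_{Q_v}(V_v\FF)$, with semisimplicity of $F_v$ coming from separability of $Q_v/Q$), and then descend using $Z(E)\otimes_Q Q_v=Z(E\otimes_Q Q_v)$. The only cosmetic difference is at the end, where the paper uses $F\subset Z(E)$ together with a dimension count while you use faithfully flat descent; both are fine.
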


\begin{proof}
Since $F_v$ is semisimple, we know by \cite[Proposition~5.1/1]{Bou} that the $F_v$-module $\VvFF$ is semisimple. The commutant of $F_v$ in $\End_{Q_v}(\VvFF)$ is $E_v$ by definition. Trivially $\VvFF$ is of finite type over $E_v$. Thus, by the theorem of bicommutation \cite[Corollaire~4.2/1]{Bou}, the commutant of $E_v$ in $\End(\VvFF)$ is again $F_v$. We conclude $Z(E_v) = E_v \cap F_v = F_v$ and we have $F\xQv = F_v = Z(E_v) = Z(E)\xQv$ by \cite[Corollaire de Proposition~1.2/3]{Bou}. Considering the dimensions, we obtain $\dim_Q F= \dim_Q Z(E)$.
Since $F\subset Z(E)$ and the dimensions are finite, we finish by $F=Z(E)$.
\end{proof}

\begin{Theorem}\label{Thm3.8} 
Let $\FF$ be an abelian $\tau$-sheaf over a finite field $L$.
\begin{suchthat}
\item If $\QEnd(\FF)$ is a division algebra over $Q$ then $\FF$ is simple. If in addition $\chr\ne\infty$ then both statements are equivalent.
\item If the characteristic point $\chr$ is different from $\infty$ then
$\FF$ is semisimple if and only if $\QEnd(\FF)$ is semisimple.
\end{suchthat}
\end{Theorem}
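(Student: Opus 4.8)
The plan is to obtain the theorem by combining the results already quoted with two new implications: (A) if $\QEnd(\FF)$ is a division algebra then $\FF$ is simple (for arbitrary $\chr$), and (B) if $\chr\ne\infty$ and $\QEnd(\FF)$ is semisimple then $\FF$ is semisimple. The remaining implications are known: the converse in part~1 (simple $\Rightarrow$ division, for $\chr\ne\infty$) is Theorem~\ref{QEND-DIVISION-MATRIX}/1, and the forward direction of part~2 (semisimple $\Rightarrow\QEnd$ semisimple, for $\chr\ne\infty$) is Theorem~\ref{QEND-DIVISION-MATRIX}/2, since a finite direct sum of matrix algebras over division algebras is semisimple.

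For (A) I would argue by contradiction. Suppose $E:=\QEnd(\FF)$ is a division algebra but $\FF$ is not simple, so there is a surjection $g:\FF\to\FF''$ onto an abelian $\tau$-sheaf that is neither $\ZZ$ nor isomorphic to $\FF$; since a surjection of locally free sheaves of equal rank on $C_L$ is an isomorphism and a rank-zero abelian $\tau$-sheaf is $\ZZ$, we get $0<\rk\FF''<\rk\FF=:r$, and by purity $\FF''$ has the same weight as $\FF$. A division algebra is semisimple, so by Proposition~\ref{PROP.EQ} the image $F_v$ of $\QvG$ in $\End_\Qv(\VvFF)$ is semisimple, whence $\VvFF$ is a semisimple $\QvG$-module for a chosen place $v\ne\chr,\infty$; by Proposition~\ref{FACTORSHEAF-FACTORSPACE} the quotient $V_v\FF''$ is semisimple as well. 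Using that $\dim\Hom_\QvG(U,W)=\dim\Hom_\QvG(W,U)$ for semisimple modules, the Tate conjecture (Theorem~\ref{TATE-CONJECTURE}/1, which applies since the weights coincide) gives $\dim_Q\QHom(\FF'',\FF)=\dim_Q\QHom(\FF,\FF'')\ge 1$. Choose $0\ne h\in\QHom(\FF'',\FF)$. Then $h\circ g\in E$ is nonzero, because $g$ is an epimorphism, hence invertible in the division algebra $E$, hence a quasi-isogeny of $\FF$; so by Proposition~\ref{PROP.1.42A} its generic fibre over $C_L$ is an isomorphism, forcing $g_\eta$ to be injective, which contradicts $\rk\FF''<r$. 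Note that every appeal to Proposition~\ref{PROP.EQ} and to the Tate conjecture here is at a place $v\ne\chr,\infty$ and for objects of equal weight, so the argument is valid also when $\chr=\infty$ (where the only restriction is $v\ne\infty$).

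For (B) I would combine Wedderburn's theorem with the fact that idempotents split in the quasi-isogeny category of abelian $\tau$-sheaves of a fixed weight: an idempotent $e\in\QEnd(\FF)$ produces a quasi-isogeny decomposition $\FF\approx e\FF\oplus(1-e)\FF$ with $\QEnd(e\FF)=eEe$. This splitting I would either take from \cite{BH_A} or verify directly, $e\FF$ being given by the saturated subsheaves $\F_i\cap e_{i,\eta}(\F_{i,\eta})$, which are locally free, stable under the $\P_i$ and $\t_i$ (which commute with $e$ on the generic fibre), and whose direct sum with the complementary subsheaves maps isogenously onto $\FF$. Writing $\QEnd(\FF)\cong\bigoplus_{j=1}^m M_{n_j}(D_j)$ with division algebras $D_j$ over $Q$, the diagonal matrix units give orthogonal idempotents $f_{j,1},\dots,f_{j,n_j}\in E$ summing to $1$, with $f_{j,k}Ef_{j,k}\cong D_j$, and with $f_{j,k}\FF\approx f_{j,k'}\FF$ because the $f_{j,k}$ are conjugate by a unit of $E$, i.e.\ by a quasi-isogeny of $\FF$. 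Hence $\FF\approx\bigoplus_{j=1}^m\FF_j^{\oplus n_j}$ with $\FF_j:=f_{j,1}\FF$ and $\QEnd(\FF_j)\cong D_j$, so each $\FF_j$ is simple by (A) and $\FF$ is semisimple. The main technical obstacle is the splitting of idempotents in the quasi-isogeny category; once that, together with Theorems~\ref{QEND-DIVISION-MATRIX} and \ref{TATE-CONJECTURE} and Proposition~\ref{PROP.EQ}, is in hand, everything else is formal, the only additional care being the bookkeeping of weights and places when invoking the Tate conjecture in (A).
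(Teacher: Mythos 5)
Your proposal is correct in substance, and it splits into two cases of different character. For part 2 you take essentially the paper's route: Wedderburn decomposition of $\QEnd(\FF)$, matrix-unit idempotents, and splitting of these idempotents up to quasi-isogeny; the paper implements the splitting by scaling the idempotents into $\Hom(\FF,\FF(D))$, observing that $\sum_i e_i:\FF\to\bigoplus_i\im e_i\subset\FF(D)$ is injective, hence an isogeny by Proposition~\ref{PROP.1.42A}, and quoting \cite[\BHAbb]{BH_A} for the fact that $\im e_i$ is again an abelian $\tau$-sheaf --- this citation is exactly where $\chr\ne\infty$ enters, so your instinct to ``take it from \cite{BH_A}'' is the right one. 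Your sketched direct construction via saturations is the delicate point you correctly identify: as written it does not verify conditions 3, 4 and the purity condition of Definition~\ref{Def1.1} for the subsheaves, so you should lean on \cite[\BHAbb]{BH_A} rather than on that sketch. For part 1 your argument is genuinely different from the paper's: the paper shows directly that any surjection $f$ onto a nonzero factor is an isomorphism, by producing a $G$-equivariant section $g_v$ of $f_v$ (semisimplicity of $V_v\FF$ via \ref{PROP.EQ} and \ref{FACTORSHEAF-FACTORSPACE}), clearing denominators and approximating $v^ng_v$ by a genuine morphism $g$ through the \emph{integral} Tate module isomorphism, and then using the division-algebra property to see $g\circ f$ is invertible, hence $f$ injective; you instead use the \emph{rational} Tate conjecture (Theorem~\ref{TATE-CONJECTURE}/1) in both directions together with the symmetry $\dim\Hom_\QvG(U,W)=\dim\Hom_\QvG(W,U)$ for semisimple modules to produce a nonzero $h\in\QHom(\FF'',\FF)$, and derive a contradiction from $h\circ g$ being a unit of the division algebra while $\rk\FF''<\rk\FF$. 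This buys you a cleaner argument with no integral approximation step, at the price that when $\chr=\infty$ the quoted Tate theorem carries the hypothesis $\weight(\FF)=\weight(\FF'')$, which you dismiss with ``by purity''.

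That weight assertion is true but is not one of the quoted results (the cited \cite[\BHApp]{BH_A} concerns pure Anderson motives, hence $\chr\ne\infty$), so it needs a line of proof: since $f$ is surjective and compatible with the $\P$'s, the periodicity of $\FF$ gives $\F''_{i+l}=\bigl(\P''_{i+l-1}\circ\cdots\circ\P''_i\bigr)\bigl(\F''_i(k\cdot\infty)\bigr)$, whence $\deg\F''_i+l\,d''=\deg\F''_{i+l}=\deg\F''_i+k\,r''$ and $d''/r''=k/l=\weight(\FF)$. With this inserted, and with the reduction ``not simple $\Rightarrow$ there is a factor of rank strictly between $0$ and $r$'' (which you justify correctly), your part 1 is a complete, valid alternative to the paper's proof; the remaining implications (simple $\Rightarrow$ division for $\chr\ne\infty$, and semisimple $\Rightarrow$ $\QEnd$ semisimple) are, as you say, Theorem~\ref{QEND-DIVISION-MATRIX}.
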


\begin{proof}
1. Let $\QEnd(\FF)=E$ be a division algebra and let $f\in\Hom(\FF,\FF')$ be the morphism onto a non-zero factor sheaf $\FF'$ of $\FF$. We show that $f$ is an isomorphism. We know by \ref{FACTORSHEAF-FACTORSPACE} that \mbox{$f_v\in\Hom_\QvG(\VvFF,\VvFF')$} is surjective. By the semisimplicity of $E$ and Proposition~\ref{PROP.EQ}, $F_v$ is semisimple, and therefore $\VvFF$ is a finitely generated semisimple $F_v$-module. Thus we get a morphism $g_v\in\Hom_{Q_v[G]}(\VvFF',\VvFF)$ with \mbox{$f_v\circ g_v=\id_{\VvFF'}$}. Consider the integral Tate modules $T_v\FF$ and $T_v\FF'$. We can find some $n\in\N$ such that 
\[
v^n g_v \in \Hom_{\AvG}(T_v\FF',T_v\FF) \;\cong\; \Hom(\ulM(\FF'),\ulM(\FF))\otimes_A\Av
\]
and we choose $g\in\Hom(\ulM(\FF'),\ulM(\FF))\subset\QHom(\FF',\FF)$ with $g\equiv v^n g_v$ modulo $v^m$ for a sufficiently large $m>n$. If $g\circ f=0$ in $E$, then $f\circ g\circ f=0$, and therefore $f\circ g=0$ in $\QEnd(\FF')$ due to the surjectivity of $f$. This would imply
\[
v^n\cdot\id_{\VvFF'} \;=\; v^n(f_v\circ g_v) \;=\; f_v\circ(v^n g_v) \;\equiv\; f\circ g \;=\; 0 
\qquad\mbox{(modulo $v^m$)}
\]
which is a contradiction. Thus $g\circ f\not=0$ is invertible in $E$, and therefore $f$ is injective. By that, $f$ gives the desired isomorphism between $\FF'$ and $\FF$. The second assertion follows from Theorem~\ref{QEND-DIVISION-MATRIX}.

\noindent 
2. We already saw one direction in Theorem~\ref{QEND-DIVISION-MATRIX}/2. So let now $\QEnd(\FF)$ be semisimple and let 
\[
\QEnd(\FF)=\bigoplus_{j=1}^m\, M_{\lambda_j}(E_j)
\]
be the decomposition into full matrix algebras $M_{\lambda_j}(E_j)$ over division algebras $E_j$ over $Q$ $(1\le j\le m)$. For each $j$ we find $\lambda_j$ distinct idempotents $e_{j,1},\dots,e_{j,\lambda_j}\in M_{\lambda_j}(E_j)$ such that $e_{j,\alpha}\cdot\QEnd(\FF)\cdot e_{j,\alpha} = E_j$ for all $1\le\alpha\le\lambda_j$ with $\sum_{\alpha=1}^{\lambda_j} e_{j,\alpha}=1$ in $M_{\lambda_j}(E_j)$. Let $e_1,\dots,e_n$ denote all these idempotents, $n=\sum_{j=1}^m \lambda_j$, and choose a divisor $D$ on $C$ such that $e_i\in\Hom(\FF,\FF(D))$ for all $1\le i\le n$. Then $\sum_{i=1}^n e_i=\id_\FF$ in $\QEnd(\FF)$ and therefore 
\[
\begin{CD}
\FF\; @>{\sum_i e_i}>> \;{\displaystyle\bigoplus_{i=1}^n\, \im e_i}\; \subset \,\FF(D)\;.
\end{CD}
\]
The image $\FF_i:=\im e_i$ is an abelian $\tau$-sheaf by \cite[\BHAbb]{BH_A} because $\chr\ne\infty$. Since $\sum_i e_i$ is injective it is an isogeny by \ref{PROP.1.42A}. Since $\QEnd(\FF_i) = e_i\cdot\QEnd(\FF)\cdot e_i$ is a division algebra, $\FF_i$ is a simple abelian $\tau$-sheaf by 1. Thus $\FF\approx\FF_1\oplus\cdots\oplus\FF_n$ gives the decomposition into a direct sum of simple abelian $\tau$-sheaves $\FF_i$ as desired. 
\end{proof}

\begin{Remark}\label{Rem3.10b}
Unfortunately the theorem fails if $L$ is not finite, as Example~\ref{Ex3.10c} below shows. The reason is, that then $E_v$ may still be semisimple while the image $F_v$ of $Q_v[G]$ in $\End_{Q_v}(V_v\FF)$ is not. Nevertheless, if one adds the assumption that $F_v$ is semisimple, the assertions of Theorem~\ref{Thm3.8} remain valid over an arbitrary field $L$. (See also the remark after Proposition~\ref{PROP.EQ}.)
\end{Remark}

\begin{Example}\label{Ex3.10c}
We construct a pure Anderson motive $\ulM$ over a non-finite field $L$ which is not semisimple, but has $\End(\ulM)=A$. Any associated abelian $\tau$-sheaf $\FF$ has $\QEnd(\FF)=Q$. Let $C=\PP^1_\Fq$, $A=\Fq[t]$ with $q>2$, and $L=\Fq(\alpha)$ where $\alpha$ is transcendental over $\Fq$. Let $M=A_L^{\oplus 2}$ and $\t=\matr{\alpha t}{0}{t}{t}$. Then $\ulM=(M,\t)$ is a pure Anderson motive of rank and dimension $2$. Clearly $\ulM$ is not simple, since $\ulM'=(A_L,\t'=t)$ is a factor motive by projecting onto the second coordinate. We will see below that $\ulM$ is not even semisimple.

Let $\matr{e}{g}{f}{h}\in M_2(A_L)$ be an endomorphism of $\ulM$, that is,
\[
\matr{\alpha\s e+\s g}{\s g}{\;\;\alpha\s f+\s h}{\;\s h}\es=\es\matr{\alpha e}{\alpha g}{\;\;e+f}{\;\;g+h}\,.
\]
Choose $\beta\in\BF_q(\alpha)^\alg\setminus\BF_q(\alpha)$ satisfying $\beta^{q-1}=\alpha$ (for $\beta\notin\BF_q(\alpha)$ we use $q>2$). Then $\s g=\alpha g$ implies $g\in\beta\cdot\BF_q[t]$. Since also $g\in\BF_q(\alpha)[t]$ we must have $g=0$. Now $\s e=e$ and $\s h=h$ yielding $e,h\in\BF_q[t]$.

Let $\gamma\in\BF_q(\alpha)^\alg\setminus\BF_q(\beta)$ with $\gamma^q-\gamma=\beta$ and set $\tilde f:=\beta f-\gamma\cdot(e-h)$. Then $\alpha \s f-f=e-\s h=e-h$ implies $\s\tilde f-\tilde f=\beta^q\s f-\beta f-(\gamma^q-\gamma)(e-h)=\beta(\alpha\s f-f-(e-h))=0$. Thus $\tilde f\in\BF_q[t]$ and $\gamma\cdot(e-h)\in\BF_q(\beta)[t]$. So we must have $e=h$ and then $\beta f=\tilde f\in\BF_q[t]$ implies $f=0$. 
This shows that $\End(\ulM)=\Fq[t]=A$.

\forget{
Using the inclusion $A_L=\Fq(\alpha)[t]\subset\Fq(t)\dpl\alpha\dpr$ we write $g=\sum_{i=N}^\infty g_i\alpha^i$ with $N\in\Z$, $g_i\in\Fq(t)$, and $g_N\ne0$. Then $\s g=\sum_{i=N}^\infty g_i\alpha^{qi}$. Comparing the coefficients of $\alpha^i$ in the equation 
\[
\sum_{i=N}^\infty g_i\alpha^{qi}\;=\;\s g\;=\;\alpha g\;=\;\sum_{i=N}^\infty g_i\alpha^{i+1}
\]
and observing $q>2$ one easily sees that all $g_i$ must be zero. Therefore $g=0$ and $\s e=e$, $\s h=h$, hence $e,h\in\Fq[t]$. Now we write $f=\sum_{i=N}^\infty f_i\alpha^i\in\Fq(t)\dpl\alpha\dpr$ with $f_N\ne0$ to solve the remaining equation $\alpha\s f+\s h=e+f$, that is,
\[
\sum_{i=N}^\infty f_i\alpha^i-\sum_{i=N}^\infty f_i\alpha^{qi+1}\;=\;h-e\,.
\]
It follows that $N=0$, $f_0=h-e=f_i$ whenever $i=1+q+\ldots+q^j$ for some $j\in\N_0$ and $f_i=0$ otherwise. But for $e\ne h$ this solution $f$ does not belong to $\Fq(\alpha)[t]$ due to the well known characterization
\[
\Fq(t)(\alpha)\;=\;\bigl\{\,\sum_{i=N}^\infty f_i\alpha^i\in\Fq(t)\dpl\alpha\dpr:\;\exists\,m,n\in\Z_{>0}\text{ with }f_{i+n}=f_i\;\forall\,i\ge m\,\bigr\}\,.
\]
We conclude $e=h$ and $f=0$, that is, $\End(\ulM)=\Fq[t]=A$.
}

The same argument shows that $\ulM$ is not even semisimple. Namely, the projection $\ulM\to\ulM'$ has no section $\ulM'\to\ulM, 1\mapsto{f\choose 1}$, since there is no solution $f$ for the equation $\alpha t\s f+t=tf$.

It is also not hard to compute $F_v$ for instance at the place $v=(t-1)$. Let $z=t-1$ and $\beta\in L^\sep$ with $\beta^{q-1}=\alpha$, and consider the basis ${y/\beta\choose 0},{x\choose y}$ of the Tate module $T_v(\ulM)$ with 
\[
{x\choose y}\;=\;\sum_{i=0}^\infty{x_i\choose y_i}z^i\quad\text{and}\quad x_i,y_i\in L^\sep\,,\;y_0\ne0\,.
\]
They are subject to the equations $y=t\s y=(1+z)\s y$ and $x=\alpha t\s x+t\s y=\alpha(1+z)\s x+y$, that is,
\begin{eqnarray*}
y_i-y_i^q&=&y_{i-1}^q\,,\text{ and}\\[1mm]
x_i-\alpha x_i^q&=&\alpha x_{i-1}^q+y_i\,.
\end{eqnarray*}
There are elements $\gamma$ and $\delta$ of $G=\Gal(L^\sep/L)$ operating as $\gamma(y_i)=y_i$, $\gamma(x_i)=x_i$, $\gamma(\beta)=\beta/\eta$ for an $\eta\in\Fq^\times\setminus\{1\}$, respectively as $\delta(y_i)=y_i$, $\delta(\beta)=\beta$, $\delta(x_i)=x_i+y_i/\beta$. With respect to our basis of $T_v(\ulM)$ they correspond to matrices $\gamma_v=\matr{\eta}{0}{0}{1}$ and $\delta_v=\matr{1}{0}{1}{1}$. We conclude that $F_v$ is the $Q_v$-algebra of upper triangular matrices. Its commutant in $M_2(Q_v)$ equals $Q_v\cdot\Id_2\cong\End(\ulM)\otimes_A Q_v$.
\end{Example}

\noindent
{\it Remark.} If $q=2$ any pure Anderson motive of rank $\rk\ulM=2$ on $A=\BF_q[t]$, which is not semisimple has  $\End(\ulM)\supsetneq A$. One easily sees this by choosing a basis of $\ulM$ for which $\t$ has the form $\matr{\alpha (t-\theta)^d}{0}{\ast}{\beta(t-\theta)^d}$ with $\alpha,\beta,\theta\in L$. Then $\matr{0}{0}{\beta/\alpha}{0}$ is an endomorphism.

However, we expect that also for $q=2$ there are examples similar to \ref{Ex3.10c} (of $\rk\ulM\ge3$), although we have not tried to find one.

\bigskip

Let\/ $\FF$ be an abelian $\tau$-sheaf over $\Fs$ and let\/ $\Fss/\Fs$ be a finite field extension. The base extension
\[
\FF\xFss := (\F_i\otimes_{\O_{\CFs}}\!\!\!\O_{C_{\Fs\mbox{\raisebox{-0.5ex}{$\scriptscriptstyle'$}}}}, \P_i\otimes1,\t_i\otimes1)
\]
is an abelian $\tau$-sheaf over $\Fss$ with $\pi'=(\pi\otimes1)^t$ for $s'=s^t$, and we have a canonical isomorphism between $\VvFF$ and $\VvFF'$.

\bigskip

For the next result recall that an endomorphism $\varphi$ of a finite dimensional vector space $V$ over a field $K$ is called \emph{absolutely semisimple} if for every field extension $K'/K$ the endomorphism $\varphi\otimes1\in\End_{K'}(V\otimes_K K')$ is semisimple. The following characterization is taken from \cite[Proposition~9.2/4 and Proposition~9.2/5]{Bou}.

\begin{Lemma}\label{XY99}
Let\/ $K$ be a field and let\/ $V$ be a finite dimensional $K$-vector space. Let\/ $\varphi\in\End_K(V)$ be an endomorphism. 
\begin{suchthat}
\item $\varphi$ is absolutely semisimple, if and only if there exists a perfect field extension\/ $K'/K$ such that $\varphi\otimes1\in\End_{K'}(V\otimes_K K')$ is semisimple.
\item $\varphi$ is absolutely semisimple, if and only if its minimal polynomial is separable.
\end{suchthat}
\end{Lemma}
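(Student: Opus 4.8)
The plan is to reduce both statements to two standard facts: an endomorphism of a finite-dimensional vector space is semisimple if and only if its minimal polynomial is squarefree (no repeated irreducible factor), see e.g.\ \cite[Proposition~9.1/1]{Bou}; and the minimal polynomial is unchanged under extension of the base field. For the latter, let $\mu\in K[x]$ be the minimal polynomial of $\varphi$ and view $V$ as a $K[x]$-module via $\varphi$, so that $\operatorname{Ann}_{K[x]}(V)=(\mu)$. Since $V$ is finitely generated over $K[x]$ and $K'[x]$ is free, hence flat, over $K[x]$ for any extension $K'/K$, the annihilator commutes with this base change: $\operatorname{Ann}_{K'[x]}(V\otimes_K K')=(\mu)K'[x]$. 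Consequently $\varphi\otimes 1$ is semisimple over $K'$ if and only if $\mu$ is squarefree as an element of $K'[x]$.

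For part 2, suppose first that $\mu$ is separable, i.e.\ has no multiple root in an algebraic closure $\overline K$. For any extension $K'/K$ the roots of $\mu$ lie in $\overline K\subseteq\overline{K'}$ and remain pairwise distinct, so $\mu$ is squarefree in $K'[x]$ and $\varphi\otimes 1$ is semisimple; thus $\varphi$ is absolutely semisimple. Conversely, if $\varphi$ is absolutely semisimple, apply the definition with $K'=\overline K$: then $\mu$ is squarefree in $\overline K[x]$, which means precisely that $\mu$ has no multiple root, i.e.\ $\mu$ is separable.

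For part 1, the forward implication is immediate, since $\overline K$ is perfect and $\varphi\otimes 1$ is semisimple over it by the definition of absolute semisimplicity. Conversely, let $K'/K$ be a perfect field extension with $\varphi\otimes 1$ semisimple over $K'$; by the first paragraph $\mu=\mu_1\cdots\mu_n$ with distinct monic irreducible $\mu_i\in K'[x]$. As $K'$ is perfect each $\mu_i$ is separable, and the distinct irreducibles $\mu_i$ are pairwise coprime, so $\mu$ has no multiple root in $\overline{K'}\supseteq\overline K$; thus $\mu$ is separable, and $\varphi$ is absolutely semisimple by part 2.

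The proof is mostly bookkeeping, so I expect no genuine obstacle; the only points that merit a little care are the base-change invariance of the minimal polynomial and the observation that over a perfect field every squarefree polynomial is automatically separable, which is just the defining property of perfectness.
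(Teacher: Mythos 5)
Your proof is correct. The paper itself gives no argument for this lemma --- it is quoted verbatim from Bourbaki (Proposition~9.2/4 and 9.2/5 of \cite{Bou}) --- so there is nothing to compare against; your reduction to the two standard facts (semisimplicity $\Leftrightarrow$ squarefree minimal polynomial, and invariance of the minimal polynomial under base change, which your flat-base-change argument for annihilators establishes cleanly) is the standard route and is sound, including the key observation that over a perfect field distinct irreducible factors force distinct roots in the algebraic closure.
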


\medskip

\begin{Theorem}\label{Thm3.8b}
Let $\FF$ be an abelian $\tau$-sheaf over the finite field $\Fs$. Then there exists a finite field extension\/ $\Fss/\Fs$ whose degree is a power of $\charakt\Fs$ such that\/ $\FF\xFss$ has an absolutely semisimple Frobenius endomorphism. Thus if moreover\/ $\chr\ne\infty$ then $\FF\xFss$ is semisimple.
\end{Theorem}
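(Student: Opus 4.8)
By the base-change formula recorded just before the theorem, if $\Fss/\Fs$ is a finite extension of degree $t$ (so $s'=s^t$), then the $s'$-Frobenius of $\FF\xFss$ is $\pi'=(\pi\otimes 1)^t$. By Lemma~\ref{XY99}, $\pi'$ is absolutely semisimple precisely when its minimal polynomial $\mu_{\pi'}\in Q[x]$ is separable. So it suffices to prove the following purely algebraic statement and apply it to the finite-dimensional commutative $Q$-algebra $B:=Q[\pi]\subseteq\QEnd(\FF)$ (finite-dimensional over $Q$ by \cite[\BHAmm]{BH_A}) and $b:=\pi$: \emph{if $Q$ is any field of characteristic $p>0$, $B$ is a finite-dimensional commutative $Q$-algebra and $b\in B$, then for all sufficiently large $m$ the minimal polynomial of $b^{p^m}$ over $Q$ is separable.} Granting this, I would take $m$ large, set $t:=p^m$ (a power of $\charakt\Fs=p$, as required), and let $\Fss/\Fs$ be the extension of degree $t$; then $\pi'=\pi^{p^m}$ has separable minimal polynomial, i.e.\ is absolutely semisimple.

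To prove the algebraic statement I would base change to an algebraic closure and write $B\otimes_Q\bar Q\cong\prod_k R_k$ with each $R_k$ a finite local $\bar Q$-algebra. Since $\bar Q$ is algebraically closed, the residue field of $R_k$ is $\bar Q$, so $R_k=\bar Q\oplus\mathfrak n_k$ with $\mathfrak n_k$ the nilradical; choose $N$ with $\mathfrak n_k^{\,N}=0$ for all $k$ and take $m$ with $p^m\ge N$. Writing the image of $b$ in $R_k$ as $\lambda_k+n_k$ with $\lambda_k\in\bar Q$ and $n_k\in\mathfrak n_k$, the vanishing of $\binom{p^m}{j}$ modulo $p$ for $0<j<p^m$ together with $n_k^{\,p^m}=0$ yields $(\lambda_k+n_k)^{p^m}=\lambda_k^{p^m}\in\bar Q$. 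Thus the image of $b^{p^m}\otimes 1$ lies in $\prod_k\bar Q\subseteq\prod_k R_k$, so $Q[b^{p^m}]\otimes_Q\bar Q$, which by flatness is the $\bar Q$-subalgebra of $B\otimes_Q\bar Q$ generated by $b^{p^m}\otimes 1$, is contained in $\prod_k\bar Q$ and hence reduced. Being a reduced finite $\bar Q$-algebra it is a finite product of copies of $\bar Q$, so $Q[b^{p^m}]$ is an \'etale $Q$-algebra, i.e.\ a finite product of finite separable field extensions of $Q$. Since $Q[b^{p^m}]\cong Q[x]/(\mu)$ where $\mu$ is the minimal polynomial of $b^{p^m}$, this forces $\mu$ to be separable.

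For the last sentence of the theorem, assume $\chr\ne\infty$. Absolute semisimplicity of $\pi'$ implies in particular that $\pi'$ is semisimple, so Proposition~\ref{PROP.EQ}, applied to $\FF\xFss$, shows that $\QEnd(\FF\xFss)$ is a semisimple $Q$-algebra, and then $\FF\xFss$ is semisimple by Theorem~\ref{Thm3.8}(2). I expect the algebraic statement of the second paragraph to be the heart of the matter: the decisive idea is that over a possibly imperfect field a single high $p$-power of Frobenius simultaneously annihilates the nilpotents that appear over $\bar Q$ and turns the remaining semisimple part separable, after which the bookkeeping is routine. A minor point is that $\mu_{\pi'}$ is to be computed inside $\QEnd(\FF\xFss)$; but the inclusion $\QEnd(\FF)\hookrightarrow\QEnd(\FF\xFss)$ identifies $Q[\pi^{p^m}]$ with $Q[\pi']$, so this coincides with the minimal polynomial computed in $B=Q[\pi]$.
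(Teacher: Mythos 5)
Your argument is correct, and it rests on the same decisive mechanism as the paper's proof --- in characteristic $p$ a sufficiently high $p$-power kills nilpotents, since $(\lambda+n)^{p^m}=\lambda^{p^m}+n^{p^m}=\lambda^{p^m}$ once $p^m$ exceeds the nilpotency index --- but you package it differently. The paper fixes a place $v\ne\chr,\infty$, puts $\pi_v$ into Jordan normal form over $\Qv^\alg$, observes that its $t$-th power becomes diagonal for $t$ a suitable $p$-power, and invokes Lemma~\ref{XY99}/1 (semisimple over a perfect extension). You instead work intrinsically with the commutative Artinian algebra $Q[\pi]\otimes_Q\cl{Q}=\prod_k R_k$, show that $\pi^{p^m}\otimes 1$ lands in the product of the residue fields, conclude that $Q[\pi^{p^m}]$ is \'etale over $Q$, and invoke Lemma~\ref{XY99}/2 (separable minimal polynomial); the identification of the minimal polynomial of $\pi'$ computed in $\QEnd(\FF\xFss)$ with the one computed in $Q[\pi^t]$, which you address, is indeed the only bookkeeping point. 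What your route buys: the main step needs no Tate module and no choice of $v$, it isolates a clean lemma about elements of finite-dimensional commutative algebras over imperfect fields, and you spell out the deduction of the final sentence (via Proposition~\ref{PROP.EQ} and Theorem~\ref{Thm3.8}/2), which the paper leaves implicit. What the paper's route buys: it is shorter, and the Jordan-form picture makes the quantitative remark after the theorem (that $[\BF_{s'}:\BF_s]$ may be taken to be the smallest $p$-power $\ge\rk\FF$) immediate; your argument yields the same bound, since the nilpotency index you call $N$ is at most $\dim_Q Q[\pi]=\deg\mu_\pi\le r$.
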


\noindent
{\it Remark.} It suffices to take $[\BF_{s'}:\BF_s]$ as the smallest power of $\charakt\BF_s$ which is $\ge\rk\FF$.

\begin{proof}
Let $s'=s^t$ for some arbitrary $t\in\N$. Let $\FF':=\FF\xFss$ be the abelian $\tau$-sheaf over $\Fss$ induced by $\FF$. Let $v\in\Spec A$ be a place different from $\chr$. Over $\Qv^\alg$ we can write $\pi_v\in\End_\Qv(\VvFF)$ in Jordan normal form 
\[
\makebox[5em][r]{$B^{-1}\,(\pi_v\otimes1)\,B$} 
\;=\; 
\left(\begin{array}{c@{\;\;\;}c@{\;\;\;}c@{\;\;\;}c}
\lambda_1 & * & & 0 \\
& \lambda_2 & \ddots & \\
& & \ddots & * \\
0 & & & \lambda_r \\
\end{array}\right)
\]
for $B\in GL_r(\Qv^\alg)$ and for some $\lambda_j\in\Qv^\alg$, $1\le j\le r$. Thus, by a suitable choice of $t\in\N$ as a power of $\charakt\Fq$ (as in the remark), we can achieve that $\pi'_v=(\pi_v\otimes1)^t$ is of the form
\[
\makebox[5em][r]{$B^{-1}\,\pi'_v\,B$} 
\;=\; 
\left(\begin{array}{c@{\;\;\;}c@{\;\;\;}c@{\;\;\;}c}
\lambda_1^t & & & 0 \\
& \lambda_2^t & & \\
& & \ddots & \\
0 & & & \lambda_r^t \\
\end{array}\right)\;.
\]
Since $\Qv^\alg$ is perfect, we conclude by \ref{XY99}/1 that $\pi'_v$ and thus $\pi'$ is absolutely semisimple.
\end{proof}

The following corollary illustrates that, in contrast to endomorphisms of vector spaces, there is no need of the term \quotes{absolutely semisimple} for abelian $\tau$-sheaves or pure Anderson motives over finite fields.

\begin{Corollary} \label{Cor3.8c}
Let\/ $\FF$ be an abelian $\tau$-sheaf over $\Fs$ of characteristic different from $\infty$. If\/ $\FF$ is semisimple, then\/ $\FF\xFss$ is semisimple for every finite field extension $\Fss/\Fs$. The same is true for pure Anderson motives.
\end{Corollary}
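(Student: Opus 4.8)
The plan is to reduce the statement to the behavior of the Frobenius endomorphism and then to exploit that ``$\pi$ semisimple'' means nothing but ``$F=Q[\pi]$ reduced'', a property stable under the operations we need. As a first step I would pass from pure Anderson motives to abelian $\tau$-sheaves: if $\ulM$ is a pure Anderson motive over $\Fs$ with $\chr\ne\infty$, write $\ulM=\ulM(\FF)$ for an abelian $\tau$-sheaf $\FF$ over $\Fs$ (Proposition~\ref{PropX.1}); base change commutes with taking sections over the affine scheme $C_{\Fss}\setminus\{\infty\}$, so $\ulM(\FF)\xFss=\ulM(\FF\xFss)$, and $\ulM(\FF)$ is semisimple precisely when $\FF$ is (Remark after Definition~\ref{Def2.6}). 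Hence the pure Anderson motive assertion follows from the abelian $\tau$-sheaf assertion, and from now on $\FF$ denotes a semisimple abelian $\tau$-sheaf over $\Fs$ with $\chr\ne\infty$; fix a finite extension $\Fss/\Fs$ with $s'=s^t$ and set $\FF':=\FF\xFss$.

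By Theorem~\ref{Thm3.8}/2 the algebra $E=\QEnd(\FF)$ is semisimple, hence $F=Q[\pi]$ is semisimple by Proposition~\ref{PROP.EQ}; being commutative and finite-dimensional over $Q$ (recall $\dim_Q E<\infty$), it is a finite product of fields. Base change of quasi-endomorphisms induces a $Q$-algebra homomorphism $\QEnd(\FF)\to\QEnd(\FF')$ carrying $\pi$ to the $s$-Frobenius $\pi_0$ of $\FF'$; then $\pi'=(\pi\otimes1)^t=\pi_0^t$. Thus $Q[\pi_0]$ is a homomorphic image of the product of fields $F$, hence again a finite product of fields, and $F':=Q[\pi']=Q[\pi_0^t]$ is a finite-dimensional commutative subalgebra of it, hence reduced, hence semisimple. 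Now $\FF'$ is an abelian $\tau$-sheaf over the finite field $\Fss$ with characteristic point still different from $\infty$, so Proposition~\ref{PROP.EQ} applied to $\FF'$ yields that $\QEnd(\FF')$ is semisimple, and Theorem~\ref{Thm3.8}/2 applied to $\FF'$ then shows that $\FF\xFss=\FF'$ is semisimple, as claimed.

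The single genuinely new point is the middle paragraph's passage from ``$\pi$ semisimple'' to ``$\pi'=\pi_0^t$ semisimple'': it rests only on the elementary observation that for a finite-dimensional commutative algebra over a field semisimplicity is the same as reducedness, which is inherited by homomorphic images and by subalgebras. No separability hypothesis is available here --- over $Q$ the minimal polynomial of $\pi$ can perfectly well be inseparable --- so a naive argument via diagonalization over $Q^{\alg}$ would not work; the reducedness argument circumvents this entirely. Everything else is bookkeeping: that the base change $\FF\xFss$ is again an abelian $\tau$-sheaf over a finite field with $\chr\ne\infty$, so that Theorem~\ref{Thm3.8} and Proposition~\ref{PROP.EQ} apply to it, and that its $s'$-Frobenius is indeed $(\pi\otimes1)^t$.
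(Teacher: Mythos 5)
Your proof is correct and follows essentially the paper's route: the paper likewise deduces that the algebra generated by the new Frobenius is semisimple because it is a finite-dimensional commutative subalgebra of the semisimple commutative algebra generated by the old Frobenius, and then concludes with Proposition~\ref{PROP.EQ} and Theorem~\ref{Thm3.8}/2. The only cosmetic difference is that the paper argues $v$-adically, using the canonical identification $V_v(\FF\xFss)=V_v\FF$ with $\pi'_v=\pi_v^t$ and citing Bourbaki's criterion for subalgebras of commutative semisimple algebras, whereas you work over $Q$ via the base-change homomorphism $\QEnd(\FF)\to\QEnd(\FF\xFss)$ and a reducedness argument.
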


\begin{proof}
Let $\FF$ be semisimple and let $\Fss/\Fs$ be a finite field extension with $s'=s^t$. We set $\FF':=\FF\xFss$. By \ref{Thm3.8} and \ref{PROP.EQ}, we know that $\QEnd(\FF)\xQv \cong \End_{\Qv\II{\pi_v}}(\VvFF)$ is semisimple. Since $\Qv\II{\pi_v^t}\subset\Qv\II{\pi_v}$ we conclude by \cite[Corollaire de Proposition~6.4/9]{Bou} that $\Qv\II{\pi_v^t}$ is semisimple, as well. As $\VvFF'=\VvFF$, we have $\pi'_v=\pi_v^t$, and therefore $\pi'_v$ is semisimple. Thus, by \ref{PROP.EQ}, $\QEnd(\FF')$ is semisimple and $\FF'$ is semisimple by \ref{Thm3.8}/2.
\end{proof}


\bigskip

\section{Zeta Functions and Reduced Norms} \label{SectZeta}
\setcounter{equation}{0}

In this section we generalize Gekeler's results \cite{Gekeler} on Zeta functions for Drinfeld modules to pure Anderson motives.
But let us begin by recalling a few facts about reduced norms; see for instance \cite[\S 9]{Re}. Let $\ulM$ be a semisimple pure Anderson motive over a finite field and let $\pi$ be its Frobenius endomorphism. Then $F=Q(\pi)$ is the center of the semisimple algebra $E$ by Corollary~\ref{CorFisCenter}. Write $F=\bigoplus_i F_i$ and $E=\bigoplus_i E_i$ where the $F_i$ are fields and $E_i$ is central simple over $F_i$. Note that by~\ref{Thm3.8} the pure Anderson motive $\ulM$ decomposes correspondingly up to isogeny $\ulM\approx\bigoplus_i\ulM_i$ with $E_i=\End(\ulM_i)\otimes_AQ$. We apply \ref{Cor3.11b} to $\ulM_i$ and obtain $\sum_i[E_i:F_i]^{1/2}\cdot[F_i:Q]=r$. Let $f\in E$ and write it as $f=\sum_i f_i$ with $f_i\in E_i$. Choose for each $i$ a splitting field $K_i$ of $E_i$ with $\alpha_i:E_i\otimes_{F_i}K_i\isoto M_{n_i}(K_i)$ where $n_i^2=[E_i:F_i]$. The \emph{reduced norm of $f$} is then defined by
\[
N(f)\;:=\;nr_{E/Q}(f)\;:=\;\prod_i N_{F_i/Q}\bigl(\det\alpha_i(f_i\otimes 1)\bigr),
\]
where $N_{F_i/Q}$ is the usual field norm. The reduced norm is an element of $Q$ which is independent of the choices of $K_i$ and $\alpha_i$. It satisfies $N(a)=a^r$ for all $a\in Q$, and $N(f)\ne0$ if and only if $f\in E^{\SSC\times}$, that is, $f$ is a quasi-isogeny. If $f\in \End(\ulM)$ or more generally $f$ is contained in a finite $A$-algebra then $N(f)\in A$ since $A$ is normal.

\begin{Theorem}\label{Thm3.4.1a}
Let $\FF$ be a semisimple abelian $\tau$-sheaf over a finite field $L$ and let $f\in \QEnd(\FF)$ be a quasi-isogeny. Then
for any place $v\ne\chr,\infty$ of $Q$ we have $N(f)=\det V_vf$, the determinant of the endomorphism $V_vf\in\End_{Q_v}(V_v\FF)$. For $v=\infty\ne\chr$ we have $N(f)^l=\det V_\infty f$, where $l$ comes from Definition~\ref{DefTateMod} and satisfies $\dim_{Q_\infty}V_\infty\FF=l\cdot\rk\FF$.
\end{Theorem}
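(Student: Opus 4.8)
The plan is to reduce to a single isotypic abelian $\tau$-sheaf and then to read off both sides from the module structure of the Tate module over the endomorphism algebra. Since $\FF$ is semisimple, Theorem~\ref{Thm3.8} gives a quasi-isogeny $\FF\approx\bigoplus_i\FF_i$ with $E_i:=\QEnd(\FF_i)$ central simple over its centre $F_i=Q(\pi_i)$ by Corollary~\ref{CorFisCenter}, and $V_v(-)$, $\det V_v(-)$, $N(-)$ and the Frobenius are all preserved under a quasi-isogeny. Writing $f=\sum_i f_i$ with $f_i\in E_i$, one has $V_v\FF=\bigoplus_iV_v\FF_i$ compatibly with the actions, so $\det V_vf=\prod_i\det V_vf_i$, while $N(f)=\prod_iN_{F_i/Q}\bigl(nr_{E_i/F_i}(f_i)\bigr)$ by the very definition of the reduced norm. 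Thus it suffices to treat a single $\FF$ for which $E:=\QEnd(\FF)$ is central simple over $F:=Q(\pi)=Z(E)$, of degree $n:=[E:F]^{1/2}$, and to prove $\det_{\Qv}(V_vf)=N_{F/Q}\bigl(nr_{E/F}(f)\bigr)$ for $v\ne\chr,\infty$, respectively $\det_{Q_\infty}(V_\infty f)=N_{F/Q}\bigl(nr_{E/F}(f)\bigr)^l$. Put $h:=[F:Q]$; then $nh=r:=\rk\FF$ by Corollary~\ref{Cor3.11b}/1.

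For a finite place $v\ne\chr,\infty$ I would argue as follows. By Remark~\ref{SEPARABLE}, $F\xQv=\prod_{w\mid v}F_w$ is a product of finite field extensions of $\Qv$, hence $E\xQv=\prod_wE_w$ with $E_w:=E\otimes_FF_w$ central simple over $F_w$ of degree $n$. The Tate conjecture (Theorem~\ref{TATE-CONJECTURE}, applicable since here $\FF'=\FF$) identifies $E\xQv$ with $\End_{\Qv[G]}(\VvFF)$; in particular $\pi$, and with it the central idempotents of $F\xQv$, act on $\VvFF$, decomposing $\VvFF=\bigoplus_wV_w$ into $E_w$-modules that are nonzero because $E\xQv$ acts faithfully. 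A nonzero module over a central simple $F_w$-algebra of degree $n$ has $F_w$-dimension $\ge n$, while $\sum_w\dim_{F_w}(V_w)[F_w:\Qv]=\dim_{\Qv}\VvFF=r=n\sum_w[F_w:\Qv]$; this forces $\dim_{F_w}V_w=n$ for every $w$. By the standard relation between the reduced norm and the determinant of the action on a module (\cite[\S 9]{Re}), $\det_{F_w}(f\mid V_w)=nr_{E_w/F_w}(f)^{\dim_{F_w}(V_w)/n}=nr_{E_w/F_w}(f)$, and $nr_{E_w/F_w}(f)$ is the image of $nr_{E/F}(f)$ since the reduced norm commutes with extension of the centre, so
\[
\det_{\Qv}(V_vf)\;=\;\prod_w N_{F_w/\Qv}\bigl(nr_{E_w/F_w}(f)\bigr)\;=\;N_{F\xQv/\Qv}\bigl(nr_{E/F}(f)\otimes1\bigr)\;=\;N_{F/Q}\bigl(nr_{E/F}(f)\bigr)\;=\;N(f)\,,
\]
using functoriality of the norm form under $Q\to\Qv$. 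In particular this shows that $N(f)$ does not change if $\FF$ is replaced by a base extension $\FF\xFss$ over a finite extension $\Fss/\Fs$, since the right-hand side $\det_{\Qv}(V_vf)$ is visibly unchanged.

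For $v=\infty\ne\chr$ I would first enlarge $\Fs$ so that $\Ff_{q^l}\subset\Fs$, which by the previous remark affects neither $\det_{Q_\infty}(V_\infty f)$ nor $N(f)$; after re-reducing to the isotypic case, Theorem~\ref{TATE-CONJECTURE}/2 now applies and identifies $E\otimes_QQ_\infty$ with $\End_{\Delta_\infty[G]}(V_\infty\FF)$, where $\Delta_\infty$ is the central division algebra over $Q_\infty$ of degree $l$ acting on $V_\infty\FF$. Decomposing as before, $V_\infty\FF=\bigoplus_wV_w^\infty$ into nonzero $E_w$-modules, a decomposition respected by $\Delta_\infty$ because $\Delta_\infty$ commutes with $E\supset F$; on each $V_w^\infty$ both $E_w$ and the simple algebra $\Delta_\infty\otimes_{Q_\infty}F_w$ (central simple over $F_w$ of degree $l$) act faithfully and commute. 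Letting $\mathcal C_w$ be the centraliser of the latter in $\End_{F_w}(V_w^\infty)$, the double centraliser theorem gives $\dim_{F_w}V_w^\infty=l\cdot\deg\mathcal C_w$, and $E_w\subseteq\mathcal C_w$ forces $\deg\mathcal C_w\ge n$; summing $\dim_{Q_\infty}V_w^\infty=l\,\deg\mathcal C_w\,[F_w:Q_\infty]$ against $\dim_{Q_\infty}V_\infty\FF=l\,r=ln\sum_w[F_w:Q_\infty]$ forces $\deg\mathcal C_w=n$, hence $\dim_{F_w}V_w^\infty=ln$. Then $\det_{F_w}(f\mid V_w^\infty)=nr_{E_w/F_w}(f)^{ln/n}=nr_{E_w/F_w}(f)^l$, and the same computation as above gives $\det_{Q_\infty}(V_\infty f)=N(f)^l$. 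The step I expect to be the main obstacle is precisely pinning down the $E_w$-module structure of the Tate module — the dimension bookkeeping above, and above all the appearance of the exponent $l$ at $\infty$, where one must correctly weigh the endomorphism action of $E$ against the $\Delta_\infty$-action coming from the periodicity condition; by comparison, passing from that module structure to the reduced-norm identity via \cite[\S 9]{Re} is routine, and the reduction to $\Ff_{q^l}\subset\Fs$ is harmless once the finite-place case is in hand.
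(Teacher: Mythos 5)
Your argument is correct, but it takes a genuinely different route from the paper's proof. The paper first reduces to the case that $F(f)$ is semisimple by replacing $f$ with $f^t$ for a suitable $q$-power $t$ (using that $1$ is the only $t$-th root of unity in $Q_v$), then chooses a maximal commutative semisimple subalgebra $H\subset E$ of $Q$-dimension $r$ containing $F(f)$ and applies Lemma~\ref{Lemma3.4.1b} twice: once to identify $N(f)$ with the determinant of $x\mapsto fx$ on $H$, and once to identify $V_v\FF$ with $H\otimes_Q Q_v$ as an $H_v$-module; at $\infty$ it writes down an explicit embedding of $E_\infty^{\oplus l}$ into $\End_{Q_{\infty,L}[\phi]}\bigl(\ulTN_\infty(\FF)\bigr)$ built from the $\P_i$, so it never needs $\BF_{q^l}\subset L$. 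You instead pin down the $E\otimes_Q Q_v$-module structure of $V_v\FF$ directly by a dimension count over the centre (incidentally re-deriving that $E$ splits at places away from $\chr$ and $\infty$) and invoke the general identity $\det_{F_w}(f\mid V)=nr_{E_w/F_w}(f)^{\dim_{F_w}V/n}$, which dispenses with both the power trick and the auxiliary algebra $H$; at $\infty$ you replace the explicit embedding by the $\Delta_\infty$-action together with the double centralizer theorem, paying with a base field extension to reach $\BF_{q^l}\subset L$, which you justify neatly by the already established finite-place identity and Corollary~\ref{Cor3.8c}. Two points of hygiene: at $\infty$ you should decompose $V_\infty\FF$ by the central idempotents of $F\otimes_Q Q_\infty$ acting through Theorem~\ref{TATE-CONJECTURE}/2 rather than decompose $\FF$ itself, since decomposing the sheaf would force a re-choice of $l$ for the summands and change the Tate module in question; and $\Delta_\infty$ is a division algebra only when $\gcd(k,l)=1$, but your centralizer count uses only that $\Delta_\infty\otimes_{Q_\infty}F_w$ is central simple of degree $l$, so nothing is lost. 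In exchange for being a little less self-contained, your method makes the $E_v$-module structure of the Tate modules explicit, in the spirit of what the paper does elsewhere (e.g.\ in the proofs of Theorem~\ref{THEOREM-2} and Theorem~\ref{ThmW6.1}), while the paper's construction stays elementary over $H$ and works verbatim over the given base field.
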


\begin{proof}
Clearly, if $t$ is a power of $q$ then $N(f^t)=\det V_vf^t$ implies $N(f)=\det V_vf$ since $1$ is the only $t$-th root of unity in $Q_v$ for $v\ne\infty$, and likewise for $v=\infty$.
Writing $V_vf$ in Jordan canonical form over $Q_v^\alg$ we find as in the proof of Theorem~\ref{Thm3.8b} a power $t$ of $q$ such that $V_vf^t$ is absolutely semisimple over $Q_v$ and hence its minimal polynomial is separable by \ref{XY99}.
Then $F_v(f^t)$ and $F(f^t)$ are semisimple by \cite[Proposition 9.1/1 and Corollaire 7.7/4]{Bou}. We now replace $f$ by $f^t$ and thus assume that $F(f)$ is semisimple. 

As is well known there is a semisimple commutative subalgebra $H=\bigoplus_i H_i$ of $E$ containing $F(f)$ with $\dim_{F_i}H_i=n_i$ and hence $\dim_QH=r$. Then $nr_{E/Q}(f)$ equals the determinant of the $Q$-endomorphism $\tilde f:x\mapsto fx$ of $H$. The reason for this is that $H_i\otimes_{F_i}K_i$ is still semisimple and commutative if we choose a splitting field $K_i$ which is separable over $F_i$. By Lemma~\ref{Lemma3.4.1b} below $H_i\otimes_{F_i}K_i$ is isomorphic to $K_i^{n_i}$ as left $H_i\otimes_{F_i}K_i$-modules, and this implies that $nr_{E_i/F_i}(f_i)=\det\alpha_i(f_i)=\det\tilde f_i$, the determinant of the $F_i$-endomorphism $\tilde f_i:x\mapsto f_i x$ of $H_i$, and $N(f)=\det\tilde f$ the determinant of the $Q$-endomorphism $\tilde f$ of $H$.

If $v\ne\infty$ then again by Lemma~\ref{Lemma3.4.1b}, $H_v$ is $H_v$-isomorphic to $V_v\FF$ and $N(f)=\det\tilde f=\det V_vf$.

If $v=\infty$ we embed $E_\infty^{\oplus l}$ into $\End_{Q_{\infty,L}[\phi]}\bigl(\ulTN_\infty(\FF)\bigr)$. Namely, if $(f^{(0)},\ldots,f^{(l-1)})\in E_\infty^{\oplus l}$, where $f^{(m)}=\bigl(f^{(m)}_i:\F_i\otimes_{\O_{C_L}}Q_{\infty,L}\to\F_i\otimes_{\O_{C_L}}Q_{\infty,L}\bigr)$, we set
\[
g_{ij}\;:=\;\left\{
\begin{array}{ll}
\P_{i-1}\circ\ldots\circ\P_j\circ f_j^{(i-j)} & \text{if }0\le j\le i\le l-1 \\[2mm]
z^k\P_i^{-1}\circ\ldots\circ\P_{j-1}^{-1}\circ f_j^{(l+i-j)} & \text{if }0\le i<j\le l-1\;.
\end{array}
\right.
\]
Then $g_{ij}:\F_j\otimes_{\O_{C_L}}Q_{\infty,L}\to\F_i\otimes_{\O_{C_L}}Q_{\infty,L}$ and a straightforward computation shows that the homomorphism $g=(g_{ij})_{i,j=0\ldots l-1}$ commutes with $\phi$ from (\ref{EQ.Phi}) on page~\pageref{EQ.Phi}, that is, $g$ is an element of $\End_{Q_{\infty,L}[\phi]}\bigl(\ulTN_\infty(\FF)\bigr)=\End_{Q_\infty[G]}(V_\infty\FF)$; use Proposition~\ref{Prop2.13'}. Now we apply Lemma~\ref{Lemma3.4.1b} to $H_\infty^{\oplus l}\subset E_\infty^{\oplus l}\subset\End_{Q_\infty}(V_\infty\FF)$, and we compute $N(f)^l=(\det\tilde f)^l=\det_{Q_\infty}\bigl(H_\infty^{\oplus l}\to H_\infty^{\oplus l}, h\mapsto fh\bigr)=\det V_\infty f$ as desired.
\end{proof}

\begin{Lemma}\label{Lemma3.4.1b}
Let $K$ be a field and let $H\subset M_n(K)$ be a semisimple commutative $K$-algebra with $\dim_K H=n$. Then as a (left) module over itself $H$ is isomorphic to $K^n$.
\end{Lemma}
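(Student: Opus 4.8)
The plan is to reduce immediately to the structure theory of commutative semisimple algebras. Since $H$ is commutative and semisimple over $K$, write $H=\prod_{i=1}^m K_i$ with each $K_i$ a finite field extension of $K$, and let $e_1,\dots,e_m\in H$ be the corresponding primitive idempotents, so that $e_ie_j=0$ for $i\ne j$, $e_i^2=e_i$, $\sum_i e_i=1$, and $e_iH=K_i$. In particular $\sum_i[K_i:K]=\dim_K H=n$. The claim is that the standard module $K^n$ (on which $H$ acts through the inclusion $H\subset M_n(K)=\End_K(K^n)$) is isomorphic to $H$ as an $H$-module.

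First I would decompose $K^n=\bigoplus_{i=1}^m e_iK^n$, which is immediate from the orthogonality of the $e_i$ and $\sum_i e_i=1$. Each summand $e_iK^n$ is a module over the field $e_iH=K_i$, hence free: $e_iK^n\cong K_i^{\,t_i}$ for some integer $t_i\ge 0$. The key observation is that $t_i\ge 1$ for every $i$: because $H$ is a \emph{sub}algebra of $M_n(K)$, the idempotent $e_i$ has nonzero image in $M_n(K)$, so $e_iK^n=\im(e_i)\ne 0$. Consequently $\dim_K e_iK^n=t_i\,[K_i:K]\ge[K_i:K]$.

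Finally I would compare $K$-dimensions: $n=\dim_K K^n=\sum_i\dim_K e_iK^n\ge\sum_i[K_i:K]=n$, so equality holds throughout and $t_i=1$ for all $i$. Thus $e_iK^n\cong K_i$ as $K_i$-modules, and summing over $i$ gives an isomorphism of $H$-modules $K^n=\bigoplus_i e_iK^n\cong\bigoplus_i K_i=H$, as desired. The whole argument is short; the only step that is not pure bookkeeping is the non-vanishing $e_iK^n\ne 0$, and this is precisely the place where the hypothesis that $H$ is embedded in $M_n(K)$ (so that its action on $K^n$ is faithful) enters.
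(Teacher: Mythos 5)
Your proof is correct and follows essentially the same route as the paper: decompose $H$ into fields and $K^n$ into simple $H$-modules, use the injectivity of $H\hookrightarrow M_n(K)$ to see that every simple factor acts nontrivially, and conclude by the dimension count $\dim_K H=n$. Your write-up is just a more explicit version of the paper's argument.
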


\begin{proof}
Decomposing $H$ into a direct sum of fields $\bigoplus_\kappa L_\kappa$ and $K^n$ into a direct sum $\bigoplus_\lambda V_\lambda$ of simple $H$-modules, each $V_\lambda$ is isomorphic to an $L_{\kappa(\lambda)}$. The injectivity of $H\to M_n(K)$ and $\dim_K H=n$ imply that $H$ is isomorphic to $\bigoplus_\lambda \End_{L_{\kappa(\lambda)}}(V_\lambda)$ and a fortiori isomorphic as left module over itself to $K^n$.
\end{proof}

\begin{Theorem}\label{Prop3.4.1}
Let $\ulM$ be a semisimple pure Anderson motive of rank $r$ over a finite field $L$ and let $f\in \End(\ulM)$ be an isogeny. Then
\begin{suchthat}
\item 
$\dim_L\coker N(f)=r\cdot\dim_L\coker f$.
\item 
The ideal $\deg(f)=N(f)\cdot A$ is principal and has a canonical generator.
\item 
There exists a canonical dual isogeny $\dual{f}\in\End(\ulM)$ satisfying $f\circ\dual{f}=N(f)=\dual{f}\circ f$.
\end{suchthat}
\end{Theorem}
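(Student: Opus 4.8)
The plan is to reduce all three assertions to the single determinant identity $\det_{A_L}f=N(f)$ in $A_L$, where $\det_{A_L}f\in\End_{A_L}(\wedge^{r}M)=A_L$ denotes the determinant of $f$ acting on the locally free rank-$r$ module $M$, and $N(f)\in A\subset A_L$ (recall that $N(f)$ lies in $A$ since $\End(\ulM)$ is a finite $A$-algebra and $A$ is normal). Granting this, the three parts come out quickly. For (1): for any isogeny $g\in\End(\ulM)$ the cokernel $M/gM$ is of finite length over the regular one-dimensional ring $A_L=A\otimes_\Fq L$, so $\operatorname{Fitt}_0(M/gM)=(\det_{A_L}g)A_L$ and $\dim_L\coker g=\dim_L A_L/(\det_{A_L}g)A_L$; with $g=f$ this equals $\dim_L A_L/N(f)A_L=\dim_\Fq A/N(f)A$, and with $g=N(f)\cdot\id_\ulM$ (where $\det_{A_L}g=N(f)^{r}$) it equals $r\cdot\dim_\Fq A/N(f)A=r\cdot\dim_L\coker f$. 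For (2): from $\det_{A_L}f=N(f)$ one gets $\operatorname{Fitt}_0(\coker f)=N(f)A_L$; localizing at a place $v\neq\chr$ of $Q$, where $\ulM_v(\ulM)$ is free, and inserting this into the length computation displayed just before Definition~\ref{Def1.7.6}, one finds that the $v$-component of $\deg(f)$ is $v^{v(N(f))}$ for every $v\neq\chr$. Since moreover $\dim_\Fq A/\deg(f)=\dim_L\coker f=\dim_\Fq A/N(f)A$ (the first equality is Lemma~\ref{Lemma1.7.7}/2), the $\chr$-components of $\deg(f)$ and of $N(f)A$ must agree as well, so $\deg(f)=N(f)A$ is principal with the canonical generator $N(f)$. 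Assertion (3) is then Proposition~\ref{Prop1.7.8} applied to $a=N(f)$, and the resulting $\dual{f}\in\End(\ulM)$ is canonical because $N(f)$ is.

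To establish $\det_{A_L}f=N(f)$ I would pass to the Tate module at one auxiliary place. By Proposition~\ref{PropX.1} write $\ulM=\ulM(\FF)$ for an abelian $\tau$-sheaf $\FF$ over $L$, which is semisimple because $\ulM$ is; under the identification $\QEnd(\FF)=\End(\ulM)\otimes_AQ$ of Proposition~\ref{CONNECTION}, $f$ becomes a quasi-isogeny of $\FF$ with the same reduced norm. Fix a place $v\neq\chr,\infty$ of $Q$. Then $\ulM_v(\ulM)=M\otimes_{A_L}A_{v,L}$ is an étale local $\sigma$-shtuka, so by the Taguchi--Wan result recalled after Definition~\ref{DefLS1b} the natural map $T_v\ulM\otimes_{A_v}A_{v,L^\sep}\to\ulM_v(\ulM)\otimes_{A_{v,L}}A_{v,L^\sep}$ is a $G$- and $\phi$-equivariant isomorphism; being functorial in the local shtuka, it is $f$-equivariant. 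Taking $A_{v,L^\sep}$-determinants and using $V_v\FF=V_v\ulM$ together with Theorem~\ref{Thm3.4.1a} (which gives $N(f)=\det V_vf=\det_{A_v}(f\mid T_v\ulM)$, the latter determinant lying in $A_v$ because $f\in\End(\ulM)$ preserves $T_v\ulM$), one obtains that the image of $\det_{A_L}f$ in $A_{v,L}\hookrightarrow A_{v,L^\sep}$ equals $N(f)$. Hence $\det_{A_L}f-N(f)\in A_L$ maps to $0$ in $A_{v,L}$, so in particular it vanishes along $v\times\Spec L$, for every $v\neq\chr,\infty$; these closed subsets are dense in $\Spec A_L$ (their union is the complement of the proper closed subset $\chr\times\Spec L$, and every irreducible component of $\Spec A_L$ dominates $\Spec A$), so $V(\det_{A_L}f-N(f))=\Spec A_L$, and since $A_L$ is reduced ($\Fq$ being perfect) we conclude $\det_{A_L}f=N(f)$.

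The step I expect to be the crux is precisely this passage from ``$\det V_vf=N(f)$ at one good place'' to the global identity in $A_L$: a priori $\deg(f)$ and $N(f)$ carry unrelated information at the characteristic place $\chr$, about which Theorem~\ref{Thm3.4.1a} and the Tate modules say nothing, and the density argument circumvents this only by producing an equality of honest elements of the reduced ring $A_L$, which then yields the colength statement~(1) (this is what pins down the $\chr$-exponent of $\deg(f)$) and the equalities away from $\chr$ at one stroke. A more computational alternative would be to analyse $\coker f$ at $\chr$ through the local $\sigma^f$-shtuka $\ulM_\chr(\ulM)/\Fa_0\ulM_\chr(\ulM)$ of Proposition~\ref{PropLS4}, identify $\dim_L(\ulK_\chr/\Fa_0\ulK_\chr)$ with the $\chr$-adic valuation of the determinant of $f$ on it, and match that determinant with $N(f)$ by the reduced-norm argument underlying Theorem~\ref{Thm3.4.1a}; this works but requires bookkeeping with the ring $A_\chr\wh\otimes_{\BF_\chr}L$ and is messier than the global route.
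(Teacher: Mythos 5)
Your proposal is correct, but it takes a genuinely different route from the paper's. The paper proves (1) by passing to an abelian $\tau$-sheaf $\FF$ with $\ulM=\ulM(\FF)$ and computing $\dim_L\coker f$ at the place $\infty$: it uses the big local shtuka $\ulTM_\infty(\FF)$, the elementary divisor theorem and the $\infty$-part of Theorem~\ref{Thm3.4.1a} ($N(f)^l=\det V_\infty f$) to get $\dim_L\coker f=-\infty(N(f))$; then (2) is obtained by matching $v(\deg f)=v(\det T_vf)=v(N(f))$ at every finite $v\ne\chr$ and recovering the $\chr$-exponent from the dimension count of (1), and (3) follows since $N(f)$ annihilates $\coker f$. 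You instead promote the finite-place identity $N(f)=\det V_vf$ (needed only at a single $v\ne\chr,\infty$) to the global identity $\det_{A_L}f=N(f)$ in $A_L$, via the $f$-equivariance of the comparison isomorphism $T_v\ulM\otimes_{A_v}A_{v,L^\sep}\cong\ulM_v(\ulM)\otimes_{A_{v,L}}A_{v,L^\sep}$; note that since $C$ is geometrically irreducible, $A_L$ is an integral domain and $A_L\to A_{v,L}$ is injective by Krull intersection, so one place already suffices and your density argument can be shortened. All of (1) and (2) then follow by elementary-divisor/Fitting-ideal bookkeeping over $\Spec A_L$; in fact, since Definition~\ref{Def1.7.6} expresses the $\chr$-exponent of $\deg(f)$ as $\dim_L\ulK_\chr/[\BF_\chr:\Fq]$, you could read it off from $\det_{A_L}f=N(f)$ directly and skip the detour through Lemma~\ref{Lemma1.7.7}. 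What your approach buys: it avoids the $\infty$-adic Tate-module computation entirely (no big local shtuka, no $N(f)^l=\det V_\infty f$), invoking the abelian $\tau$-sheaf only formally through Theorem~\ref{Thm3.4.1a} at a finite place, which is interesting in view of Remark~\ref{Rem3.32b}; and the identity $\det_{A_L}f=N(f)$ is a clean statement of independent use. What the paper's route buys: it stays entirely within the Tate-module framework already set up and its $\infty$-adic formula $\dim_L\coker f=-\infty(N(f))$ is useful elsewhere. There is no circularity in your argument: Theorem~\ref{Thm3.4.1a}, Proposition~\ref{Prop1.7.8} and Proposition~\ref{Prop3.28a} are established independently of Theorem~\ref{Prop3.4.1}.
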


\noindent
{\it Remark.} 1. This shows that $N(1-\pi^n)\in A$ is the analogue for pure Anderson motives of the number of rational points $X(\BF_{q^n})=\deg(1-{\rm Frob}_q^n)\in\Z$ on an abelian variety $X$ over the finite field $\Fq$; see also Theorem~\ref{ThmZeta} below.

2. The dual isogeny satisfies $\dual{(fg)}=\dual{g}\dual{f}$, because $N(fg)=N(f)N(g)$. Note however, that we cannot expect that $\dual{(f+g)} =\dual{f}+\dual{g}$ unless $r=2$ because for $f=a\in A$ we have $N(a)=a^r$ and $\dual{a}=a^{r-1}$.

\begin{proof}
1. Clearly for any $a\in A$ we have $\dim_L\ulM/a\ulM=r\cdot\dim_\Fq A/(a)=-r\cdot\infty(a)$ where $\infty(a)$ denotes the $\infty$-adic valuation of $a$. Now let $\FF$ be an abelian $\tau$-sheaf with $\ulM=\ulM(\FF)$, and let $f:\FF\to\FF(n\cdot\infty)$ for some $n$ be the isogeny induced by $f$. Using Theorem~\ref{Thm3.4.1a} we compute the dimension
\begin{eqnarray*}
l\cdot\dim_L\coker f & =& nrl-\dim_L{\TS\bigoplus_{j=0}^{l-1}\bigl(\F_j(n\cdot\infty)/f_j(\F_j)}\bigr)_\infty\es\\[1mm]
& = & nrl-\dim_L \ulTM_\infty\bigl(\FF(n\!\cdot\!\infty)\bigr)/\ulTM_\infty(f)\bigl(\ulTM_\infty(\FF)\bigr)\\[1mm]
& = & nrl-\dim_\Fq\bigl(T_\infty\FF(n\cdot\infty)/T_\infty f(T_\infty \FF)\bigr)\\[1mm]
& = & -\infty(\det V_\infty f)\es =\es -l\cdot\infty\bigl(N(f)\bigr)\,.
\end{eqnarray*}
Here the first equality follows from the identities $\F_j(n\cdot\infty)/f_j(\F_j)=\bigl(\F_j(n\cdot\infty)/f_j(\F_j)\bigr)_\infty\oplus\coker f$ and $\dim_L\bigl(\F_j(n\cdot\infty)/f_j(\F_j)\bigr)=\deg\F_j(n\cdot\infty)-\deg f_j(\F_j)=nr$. The second equality is the definition of $\ulTM_\infty$, and the third follows from the isomorphism $\ulTM_\infty(\FF)\otimes_{A_{\infty,L}}A_{\infty,L^\sep}\cong T_\infty\FF\otimes_{A_\infty}A_{\infty,L^\sep}$. The fourth equality follows from the elementary divisor theorem. From this we obtain 1.

\smallskip
\noindent
2. Let $v\ne\chr$ be a maximal ideal of $A$. Using Theorem~\ref{Thm3.4.1a} we compute the $v$-adic valuation of $N(f)$
\[
v(N(f))\;=\;v(\det T_vf)\;=\;\dim_{\BF_v}\bigl(T_v\ulM/T_vf(T_v\ulM)\bigr)\;=\;\dim_{\BF_v}\bigl((\coker f)_v\otimes_L L^\sep\bigr)^\t\;=\;v(\deg f)\,.
\]
Again the second equality follows from the elementary divisor theorem, the third equality comes from the fact that the $\t$-invariants of the $v$-primary  part $(\coker f)_v\otimes_L L^\sep$ are isomorphic to $T_v\ulM/T_vf(T_v\ulM)$, and the last equality is the definition of $\deg f$. From 1 and Lemma~\ref{Lemma1.7.7} we obtain
\begin{eqnarray*}
r\cdot\dim_\Fq A/\deg(f)&=& r\cdot\dim_L\coker f \es=\es \dim_L\coker N(f)\es\\
&=&\dim_L\bigl((A/N(f))^r\otimes_\Fq L\bigr)\es=\es r\cdot\dim_\Fq A/N(f)\,.
\end{eqnarray*}
From the identity $\dim_\Fq A/\Fa=\sum_v[\BF_v:\Fq]\cdot v(\Fa)$ for any ideal $\Fa\subset A$ we conclude $\chr(\deg f)=\chr(N(f))$ and therefore $\deg (f)=N(f)\cdot A$.

\smallskip
\noindent
Finally 3 is immediate since $N(f)$ annihilates $\coker f$ by Proposition~\ref{Prop3.28a}.
\end{proof}

\begin{Remark}\label{Rem3.32b}
We do not know of a proof of 1 and 2 for arbitrary pure Anderson motives which does not make use of the associated abelian $\tau$-sheaf $\FF$. In the special case when $\ulM$ comes from a Drinfeld module, Gekeler~\cite[Lemma 3.1]{Gekeler} argued that both sides of the equation in 2 are extensions to $E$ of the $\infty$-adic valuation on $Q$. But this argument fails in general, since there may be more than one such extension as one sees from Example~\ref{Ex3.15} below.
\end{Remark}

\begin{Corollary}\label{Cor3.4.1c}
Let $\ulM$ be a semisimple pure Anderson motive of dimension $d$ over a finite field $L$ and let $\pi$ be its Frobenius endomorphism. Let $v\ne\chr$ be a maximal ideal of $A$ and let $\chi_v$ be the characteristic polynomial of $\pi_v$. Then
\begin{suchthat}
\item
$\chi_v\in A[x]$ is independent of $v$ and $\chi_v(a)\cdot A=\det V_v(a-\pi)\cdot A=\deg(a-\pi)$ for every $a\in A$,
\item 
$\chr^{d\cdot[L:\BF_\chr]}=\deg(\pi)=\chi_v(0)\cdot A=N(\pi)\cdot A$ is principal.
\end{suchthat}
\end{Corollary}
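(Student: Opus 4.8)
The plan is to derive the statement from the results of Section~\ref{SectZeta}, chiefly Theorems~\ref{Thm3.4.1a} and~\ref{Prop3.4.1}, together with Corollary~\ref{Cor3.11b}. For part~1 I would use that $\ulM$, being semisimple with $\chr\ne\infty$, admits, as explained at the beginning of Section~\ref{SectZeta}, an isogeny decomposition $\ulM\approx\bigoplus_i\ulM_i$ with $E_i:=\QEnd(\ulM_i)$ central simple over the field $F_i=Q(\pi_i)$, where $\pi=\sum_i\pi_i$ and $\pi_i$ is the Frobenius of $\ulM_i$. Passing to abelian $\tau$-sheaves $\FF_i$ with $\ulM(\FF_i)=\ulM_i$ (Proposition~\ref{PropX.1}) and applying Corollary~\ref{Cor3.11b}/2 --- whose hypothesis holds since $F_i$ is a field --- gives $\chi_{v,i}=\mu_{\pi_i}^{\,r_i/h_i}$ with $r_i=\rk\ulM_i$ and $h_i=\deg\mu_{\pi_i}$; this is independent of $v$ and, by Theorem~\ref{ThmT.3}, lies in $A[x]$. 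Since the Tate module functor carries the decomposition to a $\pi_v$-equivariant decomposition $V_v\ulM\cong\bigoplus_iV_v\ulM_i$, one gets $\chi_v=\prod_i\chi_{v,i}\in A[x]$, independent of $v$. The remaining identities are then formal: $V_v(a-\pi)=a-\pi_v$ yields $\det V_v(a-\pi)=\chi_v(a)$ by the definition of the characteristic polynomial, and whenever $a-\pi$ is an isogeny (equivalently $\chi_v(a)\ne0$) it is a quasi-isogeny, so Theorem~\ref{Thm3.4.1a} gives $N(a-\pi)=\det V_v(a-\pi)=\chi_v(a)$ and Theorem~\ref{Prop3.4.1}/2 gives $\deg(a-\pi)=N(a-\pi)\cdot A=\chi_v(a)\cdot A$.

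For part~2, applying Theorem~\ref{Prop3.4.1}/2 to the isogeny $\pi\in\End(\ulM)$ shows that $\deg(\pi)=N(\pi)\cdot A$ is principal, and Theorem~\ref{Thm3.4.1a} gives $N(\pi)=\det V_v\pi$; since $\chi_v(0)=\det(-\pi_v)=(-1)^r\det V_v\pi$ differs from $N(\pi)$ only by the unit $(-1)^r$, also $\deg(\pi)=\chi_v(0)\cdot A=N(\pi)\cdot A$. It remains to identify this ideal with $\chr^{d\cdot[L:\BF_\chr]}$ via Definition~\ref{Def1.7.6}. Writing $s=q^e$, the Frobenius is the composite $\pi=\t\circ\s\t\circ\cdots\circ(\s)^{e-1}\t$ of the $e$ twists $(\s)^j\t$, $0\le j\le e-1$. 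Because $\pi$ is an isomorphism at every place $v\ne\chr$ (as noted after~(\ref{Eq3.1})), $\coker\pi$ is supported only at $\chr$; thus in the notation of Definition~\ref{Def1.7.6} we have $\ulK_\chr=\coker\pi$ and $\ulK_v=0$ for $v\ne\chr$, so $\deg(\pi)=\chr^{\dim_L(\coker\pi/\Fa_0\coker\pi)}$. From the short exact sequences comparing the cokernel of a composite of injective maps with those of its factors, together with $\dim_L(\s)^j\coker\t=\dim_L\coker\t=d$, one obtains $\dim_L\coker\pi=ed$. Finally, the identity displayed just before Definition~\ref{Def1.7.6} reads $[\BF_\chr:\Fq]\cdot\dim_L(\coker\pi/\Fa_0\coker\pi)=\dim_L\coker\pi=ed$, so with $f:=[\BF_\chr:\Fq]$ and $[L:\BF_\chr]=[L:\Fq]/f=e/f$ we conclude $\dim_L(\coker\pi/\Fa_0\coker\pi)=ed/f=d\cdot[L:\BF_\chr]$, that is, $\deg(\pi)=\chr^{d\cdot[L:\BF_\chr]}$.

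The step I expect to need the most care is this last identification: one must check that $\coker\pi$, carrying the $\t$-structure induced from those of $\ulM$ and $(\s)^e\ulM$, indeed fits the situation analyzed just before Definition~\ref{Def1.7.6}, i.e.\ that $\t$ induces isomorphisms $\s(\coker\pi/\Fa_{i-1}\coker\pi)\isoto\coker\pi/\Fa_i\coker\pi$ for $i\ne0$. This holds precisely because $\t$ is an isomorphism on $\ulM$ and on $(\s)^e\ulM$ away from the graph of $c^\ast$, which is exactly the input used there; granting it, what remains is the bookkeeping with $L$-dimensions and field degrees carried out above. Everything else is a direct application of Theorems~\ref{Thm3.4.1a} and~\ref{Prop3.4.1} and of Corollary~\ref{Cor3.11b}.
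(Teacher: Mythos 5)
Your proposal is correct. For the ideal identities in part 1 and for all of part 2 you are essentially on the paper's path: the equalities $N(a-\pi)=\det V_v(a-\pi)$ and $\deg(a-\pi)=N(a-\pi)\cdot A$ are exactly the applications of Theorems~\ref{Thm3.4.1a} and \ref{Prop3.4.1} the paper makes, and part 2 reduces, as in the paper, to the facts that $\coker\pi$ is supported only above $\chr$ and that $\dim_L\coker\pi=e\cdot d=d\cdot[L:\Fq]$, combined with the bookkeeping built into Definition~\ref{Def1.7.6}; the point you flag as delicate is indeed harmless for the reason you give, since $\pi$ is an isogeny between pure Anderson motives of the same characteristic $c^\ast$ (because $L=\BF_s$ gives $(\sigma^\ast)^e\ulM=\ulM$), so the discussion preceding Definition~\ref{Def1.7.6} applies verbatim. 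Where you genuinely differ is the first claim of part 1: you get the independence of $v$ and the integrality $\chi_v\in A[x]$ from the isotypic decomposition $\ulM\approx\bigoplus_i\ulM_i$ together with Corollary~\ref{Cor3.11b}/2, writing $\chi_v=\prod_i\mu_{\pi_i}^{\,r_i/h_i}$ with $\mu_{\pi_i}\in A[x]$ by Theorem~\ref{ThmT.3}, whereas the paper deduces both from the identity $\chi_v(a)=N(a-\pi)=\chi_w(a)\in A$ for all $a\in A$ via Lagrange interpolation. Your route yields an explicit shape for $\chi_v$ and makes the $A$-integrality completely transparent (it is inherited from the minimal polynomials of the $\pi_i$), at the price of invoking the decomposition and the Tate-module compatibility $V_v\ulM\cong\bigoplus_iV_v\ulM_i$; the paper's route is shorter and needs no decomposition, but leaves the passage from $A$-valued to $A$-coefficients more implicit. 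Your restriction of the equality $\deg(a-\pi)=\chi_v(a)\cdot A$ to those $a$ with $\chi_v(a)\ne 0$ (so that $a-\pi$ is an isogeny) is the correct reading and is implicit in the paper as well, since $\deg$ is only defined for isogenies.
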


\begin{proof}
1 is a direct consequence of Theorems~\ref{Thm3.4.1a} and \ref{Prop3.4.1} and the Lagrange interpolation theorem applied to the fact that $\chi_v(a)=N(a-\pi)=\chi_w(a)\in A$ for all $a\in A$.\\
2 follows from the fact that $\coker\pi$ is supported on $\chr$ and from the equation $\dim_L\coker\pi=[L:\Fq]\cdot\dim_L\coker\t=d\cdot[L:\Fq]$.
\end{proof}

\begin{Definition}\label{DefZetaFkt}
We define the \emph{Zeta function} of a pure Anderson motive $\ulM$ over a finite field $\Fs$ as
\[
Z_\ulM(t)\;:=\;\prod_{0\le i\le r} \det(1-t\wedge^i\pi_v)^{(-1)^{i+1}}
\]
where $\chr\ne v\in \Spec A$ is a maximal ideal and $\wedge^i\pi_v\in\End_{Q_v}(\wedge^iV_v\ulM)$. 
\end{Definition}

By \ref{Cor3.4.1c}/1 the Zeta function $Z_\ulM(t)$ is independent of the place $v$ and lies in $Q(t)$. This also follows from work of B\"ockle~\cite{Boeckle02} and Gardeyn~\cite[\S7]{Gardeyn2}. The name ``Zeta function'' is justified by the following theorem (see also the remark after Theorem~\ref{Prop3.4.1}).

\begin{Theorem} \label{ThmZeta}
If $\ulM$ is semisimple and $\sum_i a_it^i$ is the power series expansion of $t\frac{d}{dt}\log Z_\ulM(t)$, then $a_i=N(1-\pi^i)\in A$.
\end{Theorem}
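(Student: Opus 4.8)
The statement is essentially formal once Theorem~\ref{Thm3.4.1a} is available, so the plan is short. Fix a maximal ideal $v\ne\chr$ of $A$ (so $v\ne\infty$). Since $\ulM$ is semisimple, $Z_\ulM(t)\in Q(t)$ and has constant term $1$, hence $\log Z_\ulM(t)$ and $t\frac{d}{dt}\log Z_\ulM(t)$ are well-defined elements of $Q[[t]]$. For any endomorphism $B$ of a finite-dimensional $Q_v$-vector space one has the formal identity $t\frac{d}{dt}\log\det(1-tB)=-\sum_{n\ge1}\operatorname{tr}(B^n)\,t^n$. Applying this to each factor $\det(1-t\wedge^i\pi_v)^{(-1)^{i+1}}$ in Definition~\ref{DefZetaFkt} and using $(\wedge^i\pi_v)^n=\wedge^i(\pi_v^n)$, I obtain
\[
t\frac{d}{dt}\log Z_\ulM(t)\;=\;\sum_{n\ge1}\Bigl(\sum_{i=0}^r(-1)^i\operatorname{tr}\bigl(\wedge^i(\pi_v^n)\bigr)\Bigr)t^n .
\]

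The next step is the elementary identity $\sum_{i=0}^r(-1)^i\operatorname{tr}(\wedge^iC)=\det(1-C)$, valid for an endomorphism $C$ of the $r$-dimensional space $V_v\ulM$ (it is the value at $t=1$ of $\det(1-tC)=\sum_i(-t)^i\operatorname{tr}(\wedge^iC)$). Taking $C=\pi_v^n=V_v(\pi^n)$ gives $a_n=\det\bigl(1-V_v(\pi^n)\bigr)=\det V_v(1-\pi^n)$. It then remains to recognise $\det V_v(1-\pi^n)$ as the reduced norm $N(1-\pi^n)$. Write $\ulM=\ulM(\FF)$ for an abelian $\tau$-sheaf $\FF$ (Proposition~\ref{PropX.1}); since pure Anderson motives automatically have $\chr\ne\infty$ and $\ulM$ is semisimple, $\FF$ is semisimple by the remark after Definition~\ref{Def2.6}, while $V_v\FF=V_v\ulM$ and $\QEnd(\FF)=\QEnd(\ulM)$. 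Now Theorem~\ref{Thm3.4.1a} applies and yields $N(f)=\det V_vf$ for every quasi-isogeny $f\in\QEnd(\ulM)$; in particular $N(1-\pi^n)=\det V_v(1-\pi^n)=a_n$ whenever $1-\pi^n$ is a quasi-isogeny. If $1-\pi^n$ is not a quasi-isogeny, then $N(1-\pi^n)=0$ by the defining property of the reduced norm, while $\det V_v(1-\pi^n)=0$ as well, so $a_n=N(1-\pi^n)$ in every case. (More slickly: $N$ and $f\mapsto\det V_vf$ are regular functions on the affine $Q$-space $\QEnd(\ulM)$ which agree on the Zariski-dense open locus $\{N\ne0\}$ of quasi-isogenies, hence they coincide identically.) Finally, since $L=\Fs$ we have $(\sigma^\ast)^{en}\ulM=\ulM$, so $\pi^n$ is the $s^n$-Frobenius isogeny and $1-\pi^n\in\End(\ulM)$; as $\End(\ulM)$ is a finite $A$-algebra and $A$ is normal, $N(1-\pi^n)\in A$. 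This gives $a_i=N(1-\pi^i)\in A$ for all $i$, as claimed.

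I do not expect a serious obstacle here: the power-series manipulation and the exterior-power identity are routine, and all arithmetic content has already been absorbed into Theorem~\ref{Thm3.4.1a}. The only point needing a moment's care is that $1-\pi^n$ need not be a quasi-isogeny, so that Theorem~\ref{Thm3.4.1a} does not literally apply; this is handled either by observing that both $N(1-\pi^n)$ and $\det V_v(1-\pi^n)$ then vanish, or, more cleanly, by the density argument indicated above.
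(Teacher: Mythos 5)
Your proof is correct and follows essentially the same route as the paper: the paper's proof simply cites Gekeler for the ``standard arguments'' giving $a_i=\det(1-\pi_v^i)$ (which you write out via the identities $t\frac{d}{dt}\log\det(1-tB)=-\sum_n\operatorname{tr}(B^n)t^n$ and $\sum_i(-1)^i\operatorname{tr}(\wedge^iC)=\det(1-C)$) and then invokes Theorem~\ref{Thm3.4.1a}. Your extra care with the case where $1-\pi^n$ fails to be a quasi-isogeny (both sides vanish, or the density argument) is a small but welcome refinement that the paper's one-line proof glosses over.
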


\begin{proof}
By standard arguments $a_i=\det(1-\pi_v^i)$; see \cite[Lemma 5.6]{Gekeler}. Now our assertion follows from Theorem~\ref{Thm3.4.1a} 
\end{proof}

This Zeta function satisfies the Riemann hypothesis:

\begin{Theorem}\label{ThmRH}
In an algebraic closure of $Q_\infty$ all eigenvalues of $\wedge^i\pi_v\in\End_{Q_v}(\wedge^iV_v\ulM)$ have the same absolute value $(\#\Fs)^{i\weight(\ulM)}$.
\end{Theorem}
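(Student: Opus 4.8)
The plan is to reduce to the case $i=1$ and then to compute the $\infty$-adic valuations of all eigenvalues of $\pi_v$ by passing to the associated abelian $\tau$-sheaf and using the local structure at $\infty$. The eigenvalues of $\wedge^i\pi_v$ on $\wedge^iV_v\ulM$ are exactly the $i$-fold products of the eigenvalues $\alpha_1,\dots,\alpha_r$ of $\pi_v$ on $V_v\ulM$ (here $r=\rk\ulM=\dim_{Q_v}V_v\ulM$), so it suffices to prove $|\alpha_j|_\infty=(\#\Fs)^{\weight(\ulM)}$ for every $j$, where $|\cdot|_\infty$ denotes the unique extension to a fixed algebraic closure of $Q_\infty$ of the $\infty$-adic absolute value with $|z|_\infty=q^{-1}$ for a uniformiser $z$ at $\infty$ (recall $\BF_\infty\cong\Fq$). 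Choose an abelian $\tau$-sheaf $\FF$ with $\ulM=\ulM(\FF)$ (Proposition~\ref{PropX.1}) and enlarge $\Fs$ so that $\BF_{q^l}\subset\Fs$; this is harmless, since passing to $\Fss/\Fs$ replaces each $\alpha_j$ by $\alpha_j^{[\Fss:\Fs]}$ and $\#\Fs$ by $\#\Fss$, leaving $\weight(\ulM)$ unchanged. The $s$-Frobenius $\pi$ of $\FF$ is the Frobenius of $\ulM$ (Proposition~\ref{PI-IS-QISOG}/4) and has minimal polynomial $\mu_\pi\in A[x]$ in $\QEnd(\FF)$; by the Tate conjecture (Theorem~\ref{TATE-CONJECTURE}) the algebra $\QEnd(\FF)$ embeds both into $\End_{Q_v}(V_v\ulM)$ for finite $v\neq\chr$ and into $\End_{Q_\infty}(V_\infty\FF)$, so in either case the minimal polynomial of the image of $\pi$ is $\mu_\pi$ and the set of eigenvalues of $\pi$ acting there equals the set of roots of $\mu_\pi$. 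Hence it suffices to show that every eigenvalue of the endomorphism $V_\infty\pi$ of $V_\infty\FF$ has $\infty$-adic valuation $-e\cdot\weight(\ulM)$, i.e.\ absolute value $q^{e\,\weight(\ulM)}=(\#\Fs)^{\weight(\ulM)}$.

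For this I would work inside the big local $\sigma$-shtuka $\ulTM_\infty(\FF)$ of~(\ref{EQ.Phi}) with its linear operators $\P$ and $\Lambda(\lambda)$ of~(\ref{EQ.Pi+Lambda}). Three structural facts are needed. First, $\ulTM_\infty(\FF)$ is \'etale, i.e.\ its Frobenius $\phi$ is an isomorphism, because $\chr\neq\infty$. Second, $\P$ is $A_{\infty,L}$-linear, commutes with $\phi$, and satisfies $\P^l=z^k$. Third, unwinding the definitions of $\pi$, $\phi$ and $\P$ together with the periodicity condition~\ref{Def1.1}/2, the endomorphism $V_\infty\pi$ equals $\phi^e\circ\P^{-e}$ up to composition with an automorphism of $\ulTM_\infty(\FF)$ commuting with $\P$. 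Granting these: since $\sigma^e=\id$ on $L=\Fs$, the iterate $\phi^e$ is $A_{\infty,L}$-linear, and being an automorphism of the free $A_{\infty,L}$-module $\ulTM_\infty(\FF)$ (a composition of isomorphisms) its characteristic polynomial has unit constant term, so all its eigenvalues have valuation $0$; any automorphism of the lattice has the same property. On the other hand $\P^l=z^k$ forces the minimal polynomial of $\P$ to divide $x^l-z^k$, whose roots all have valuation $k/l$, so every eigenvalue of $\P^{-e}$ has valuation $-ek/l=-e\,\weight(\ulM)$. As all these operators commute, a common triangularisation over the algebraic closure of $Q_\infty$ shows that every eigenvalue of $V_\infty\pi$ has valuation $-e\,\weight(\ulM)$. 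This yields $|\alpha_j|_\infty=(\#\Fs)^{\weight(\ulM)}$ for all $j$, hence all eigenvalues of $\wedge^i\pi_v$ have absolute value $(\#\Fs)^{i\,\weight(\ulM)}$; when $\ulM$ is semisimple this is consistent with Corollary~\ref{Cor3.4.1c}/2 and the product formula on $Q$, which give $\prod_j|\alpha_j|_\infty=(\#\Fs)^{d}$.

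The main obstacle is the third structural fact, namely the identification $V_\infty\pi=\phi^e\circ\P^{-e}$ up to a lattice automorphism. This is where one must keep careful track of the semilinearity at $\infty$ and of the $\P$-normalisation built into $\ulTM_\infty(\FF)$ — the corner term $\TP^{-1}\circ z^k\t_{l-1}$ in $\phi$, the relation $\P^l=z^k$, and the fact that $\pi$, being $e$ successive applications of the $\t$'s, is $\phi^e$ with the ``wrap-around'' factors $\TP^{-1}z^k$ (there are $\lceil e/l\rceil$ of them, assembling to $\P^{e}$ up to a unit) removed. Making this precise uses the structure theory of local $\sigma$-shtuka of~\cite{Hl,Anderson2,HartlPSp,Laumon} and the compatibility of the Tate conjectures at the various places of $Q$.
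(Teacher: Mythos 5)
The paper does not actually prove the case $i=1$: it simply quotes Goss~\cite[Theorem~5.6.10]{Go} and deduces the general $i$ by the same exterior--power remark you make. Your proposal is therefore a genuinely different, essentially self-contained route through the big local shtuka at $\infty$, and its reductions are sound: eigenvalues of $\wedge^i\pi_v$ are $i$-fold products of those of $\pi_v$; enlarging $\Fs$ only raises eigenvalues to the power $[\Fss:\Fs]$; at a finite place $v\ne\chr$ the minimal polynomial of $\pi_v$ over $Q_v$ is $\mu_\pi$ (Lemma~\ref{Lemma3.4}), and at $\infty$ the injectivity of $\QEnd(\FF)\otimes_Q Q_\infty\to\End_{Q_\infty}(V_\infty\FF)$ from Theorem~\ref{TATE-CONJECTURE}/2 gives the same for $V_\infty\pi$, so at every place the eigenvalue set is the root set of $\mu_\pi$; finally $\ulTM_\infty(\FF)$ is \'etale because $\chr\ne\infty$, so the linear iterate $\phi^e$ preserves the lattice over $A_{\infty,L}=\Fs\dbl z\dbr$ (a discrete valuation ring, since $\BF_\infty=\Fq$) and has unit eigenvalues, while $\P^l=z^k$ forces all eigenvalues of $\P$ to have valuation $k/l$. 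What this buys is a proof in which purity enters transparently through $\P^l=z^k$, at the cost of the local bookkeeping at $\infty$ that Goss's theorem hides.

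However, as written there is a genuine gap exactly where you flag it: the identity $V_\infty\pi=\phi^e\circ\P^{-e}$ is asserted, not proved, and your hedged version ``up to an automorphism of $\ulTM_\infty(\FF)$ commuting with $\P$'' is too weak for your last step, since the simultaneous--triangularisation argument needs the correction factor to commute with $\phi^e$ as well (otherwise eigenvalue valuations need not add). The good news is that the identity holds on the nose and the paper already contains the tool to check it: under the isomorphism of Theorem~\ref{ThmLS1}/2 a quasi-endomorphism $f=(f_i)$ acts on $\ulTN_\infty(\FF)$ block-diagonally (this is the special case $f^{(m)}=0$ for $m\ne 0$ of the $g_{ij}$ constructed in the proof of Theorem~\ref{Thm3.4.1a}), and for $\pi$ the $i$-th block is $(\P_{i+e-1}\circ\cdots\circ\P_i)^{-1}\circ\t_{i+e-1}\circ\cdots\circ(\s)^{e-1}\t_i$ computed rationally at $\infty$; unwinding (\ref{EQ.Phi}) and (\ref{EQ.Pi+Lambda}) shows this is precisely the $i$-th block of $\phi^e\circ\P^{-e}$, the wrap-around factors $\TP^{-1}z^k$ in $\phi^e$ cancelling against those in $\P^{e}$ (their number depends on the component $i$ and is $\lfloor e/l\rfloor$ or $\lceil e/l\rceil$, not always $\lceil e/l\rceil$ as you state; also note $\P$ commutes with the linear operator $\phi^e$ because $(\s)^e=\id$ over $\Fs$, so $\phi^e\circ\P^{-e}=\P^{-e}\circ\phi^e$ and the expression is unambiguous). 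You should also add one sentence explaining why eigenvalues of the $Q_{\infty,L}$-linear operators on $\ulTN_\infty(\FF)$ may be compared with those of $V_\infty\pi$ on the $Q_\infty$-vector space $V_\infty\FF$: the functorial isomorphism $T_\infty\FF\otimes_{A_\infty}A_{\infty,L^\sep}\cong\ulTM_\infty(\FF)\otimes_{A_{\infty,L}}A_{\infty,L^\sep}$ identifies the characteristic polynomials after base change. With these two points supplied the argument is complete and independent of Goss's result.
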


\begin{proof}
This was proved by Goss~\cite[Theorem~5.6.10]{Go} for $i=1$ and follows for the remaining $i$ by general arguments of linear algebra.
\end{proof}


\bigskip

\section{A Quasi-Isogeny Criterion}
\setcounter{equation}{0}

Similarly to the theory for abelian varieties, the characteristic polynomials of the Frobenius endomorphisms on the associated Tate modules play an important role for the study of abelian $\tau$-sheaves. For example, we can decide on quasi-isogeny of two abelian $\tau$-sheaves $\FF$ and $\FF'$ just by considering these characteristic polynomials.

\begin{Theorem}\label{THEOREM-1}
Let\/ $\FF$ and\/ $\FF'$ be abelian $\tau$-sheaves over $\Fs$ with respective Frobenius endomorphisms\/ $\pi$ and\/ $\pi'$, and let $\mu_\pi$ and $\mu_{\pi'}$ be their minimal polynomials over $Q$. Let\/ $v\in C$ be a place different from $\infty$ and $\chr$. Let\/ $\chi_v$ and\/ $\chi'_v$ be the characteristic polynomials of\/ $\pi_v$ and\/ $\pi'_v$, respectively, and let $G:=\Gal(L^\sep/L)$. Assume in addition that $\chr\ne\infty$, or that $\FF$ and $\FF'$ have the same weight. 
\begin{suchthat}
\item Consider the following statements:
\begin{suchthat}
\item[\labelenumi 1. ] $\FF'$ is quasi-isogenous to an abelian factor $\tau$-sheaf of\/ $\FF$.
\item[\labelenumi 2. ] $\VvFF'$ is $G$-isomorphic to a $G$-factor space of\/ $\VvFF$.
\item[\labelenumi 3. ] $\chi'_v$ divides $\chi_v$ in $Q_v[x]$.
\item[\labelenumi 4. ] $\mu_{\pi'}$ divides $\mu_\pi$ in $Q[x]$ and $\rk\FF'\le\rk\FF$
\end{suchthat}
\noindent
$\begin{array}{rc@{\:\:\:}c@{\:\:\:}c@{\:\:}c@{\:\:}c@{\:\:}c@{\:\:}c@{\:\:\quad}l}
\mbox{We have}&\mbox{1.1} &\Rightarrow& \mbox{1.2} &\Rightarrow& \mbox{1.3} & \mbox{and} & \mbox{1.4} & \makebox[0.5\textwidth][l]{always,} \\
& & & \mbox{1.2} &\Leftarrow & \mbox{1.3} & & & \mbox{if\/ $\pi_v$ and\/ $\pi'_v$ are semisimple,} \\
& & & \mbox{1.2} & \Leftarrow & \mbox{1.3} &\Leftarrow& \mbox{1.4} & \mbox{if\/ $\mu_\pi$ is irreducible in\/ $Q[x]$,}\\
&\mbox{1.1} &\Leftarrow & \mbox{1.2} & & & & & \mbox{if the characteristic is different from $\infty$.}
\end{array}$
\smallskip
\item Consider the following statements:
\begin{suchthat}
\item[\labelenumi 1. ] $\FF$ and $\FF'$ are quasi-isogenous.
\item[\labelenumi 2. ] $\VvFF$ and $\VvFF'$ are $G$-isomorphic.
\item[\labelenumi 3. ] $\chi_v=\chi'_v$.
\item[\labelenumi 4. ] $\mu_\pi=\mu_{\pi'}$ and $\rk\FF=\rk\FF'$.
\item[\labelenumi 5. ] There is an isomorphism of $Q$-algebras $\QEnd(\FF)\cong\QEnd(\FF')$ mapping $\pi$ to $\pi'$.
\item[\labelenumi 6. ] There is a $Q_v$-isomorphism $\QEnd(\FF)\otimes_Q Q_v\cong\QEnd(\FF')\otimes_Q Q_v$ mapping $\pi_v$ to $\pi'_v$.
\item[\labelenumi 7. ] If $\chr\ne\infty$ also consider the statement $Z_{\ulM(\FF)}=Z_{\ulM(\FF')}$.
\end{suchthat}
\noindent
$\begin{array}{rc@{\:\:}c@{\:\:}c@{\:\:}c@{\:\:}c@{\:\:}c@{\:\:}c@{\:\:}l@{\:\:}c@{\:\:}c@{\:\:\quad}l}
\mbox{\hspace{-1ex}We have}&\mbox{2.1} &\Leftrightarrow& \mbox{2.2} &\Rightarrow& \multicolumn{4}{l}{\mbox{\hspace{-1.07ex}2.3\,,\,2.4\,,\,2.5}} & & & \makebox[0.5\textwidth][l]{always,} \\
& & & & & & & \multicolumn{2}{r}{\mbox{2.5}} &\Rightarrow &\mbox{2.6} & \makebox[0.5\textwidth][l]{always,} \\
& & & & & \multicolumn{5}{l}{\hspace{-1.07ex}\mbox{2.3} \quad\Leftrightarrow \quad \mbox{2.7}} & & \mbox{if the characteristic is different from $\infty$,} \\
& & & \mbox{2.2} &\Leftarrow & \mbox{2.3}& &\Leftarrow & & &\mbox{2.6} & \mbox{if\/ $\pi_v$ and\/ $\pi'_v$ are semisimple,} \\
& & & \mbox{2.2} & \Leftarrow & \mbox{2.3}&\Leftarrow& \mbox{2.4} & \multicolumn{2}{l}{\hspace{-1.1ex}\Leftarrow} & \mbox{2.6} & \mbox{if $\mu_\pi$ and $\mu_{\pi'}$ are irreducible in $Q[x]$.}
\end{array}
$
\end{suchthat}
\end{Theorem}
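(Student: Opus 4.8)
\emph{Setup.} The plan is to prove every implication $v$-adically. By the Tate conjecture (Theorem~\ref{TATE-CONJECTURE}) the $\Qv$-vector space $\QHom(\FF,\FF')\xQv$ is identified with $\Hom_{\QvG}(\VvFF,\VvFF')$, and by Propositions~\ref{ABSOLUTE-GALOIS} and \ref{PI-IS-QISOG} the image of $\QvG$ in $\End_\Qv(\VvFF)$ is exactly $\Qv\II{\pi_v}$; so all the listed conditions become (semi)linear algebra over $\Qv\II{\pi_v}$. The forward implications are then quick. $1.1\Rightarrow1.2$ and $2.1\Rightarrow2.2$ follow from Proposition~\ref{FACTORSHEAF-FACTORSPACE} together with the fact that a quasi-isogeny induces a $G$-isomorphism of rational Tate modules (Proposition~\ref{Prop2.7b} or Theorem~\ref{TATE-CONJECTURE}). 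From a $G$-(iso)morphism of Tate modules one reads off $\mu_\pi$, $\rk\FF$ and $\chi_v$ via the Frobenius action, giving $1.2\Rightarrow1.3$, $1.2\Rightarrow1.4$, $2.2\Rightarrow2.3$ and $2.2\Rightarrow2.4$; a quasi-isogeny also conjugates $\QEnd(\FF)$ onto $\QEnd(\FF')$ carrying the (intrinsic) Frobenius $\pi$ to $\pi'$, which is $2.2\Rightarrow2.5$, and base change to $\Qv$ plus the identification of $\pi_v$ with the image of $\pi$ is $2.5\Rightarrow2.6$. Finally $Z_{\ulM(\FF)}$ is by construction a function of $\chi_v$ alone, so $2.3\Rightarrow2.7$; and when $\mu_\pi$ is irreducible $F=Q[x]/(\mu_\pi)$ is a field and $\chi_v=\mu_\pi^{\,\rk\FF/\deg\mu_\pi}$ by Corollary~\ref{Cor3.11b}/2, which makes the implications between statements~$3$ and~$4$ in both parts immediate.

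\emph{Converses under semisimplicity.} Here the tool is Lemma~\ref{PROP.15}: when $\pi_v$ is semisimple one has $\VvFF\cong\bigoplus_i\bigl(Q_v[x]/(\mu_i)\bigr)^{\oplus m_i}$ with $\chi_v=\prod_i\mu_i^{m_i}$, and $E_v\cong\bigoplus_iM_{m_i}\bigl(Q_v[x]/(\mu_i)\bigr)$ whose center is $F_v=\Qv\II{\pi_v}$ by Corollary~\ref{CorFisCenter}. Hence $\chi'_v\mid\chi_v$ iff $m'_i\le m_i$ for all $i$ iff $\VvFF'$ is a $G$-direct summand, a fortiori a $G$-factor space, of $\VvFF$ (this is $1.3\Rightarrow1.2$), and $\chi_v=\chi'_v$ iff $\VvFF\cong\VvFF'$ as $\QvG$-modules (this is $2.3\Rightarrow2.2$). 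For $2.6\Rightarrow2.3$: a $\Qv$-isomorphism $E_v\cong E'_v$ carrying $\pi_v$ to $\pi'_v$ restricts to an isomorphism of centers $\Qv\II{\pi_v}\cong\Qv\II{\pi'_v}$ matching $\pi_v\leftrightarrow\pi'_v$, so the irreducible factors $\mu_i$ agree, and it matches the distinguished central idempotents cutting out the Wedderburn blocks, so the multiplicities $m_i$ agree too; hence $\chi_v=\chi'_v$. The cases in which $\mu_\pi$, or $\mu_\pi$ and $\mu_{\pi'}$, are irreducible reduce to this one via Proposition~\ref{PROP.EQ} (irreducible $\Rightarrow F$ a field $\Rightarrow\pi_v$ semisimple), again using $\chi_v=\mu_\pi^{\,\rk\FF/\deg\mu_\pi}$ to pass between statements~$3$ and~$4$.

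\emph{The two remaining implications.} For $1.1\Leftarrow1.2$ when $\chr\ne\infty$: scale a surjective $G$-map $\VvFF\onto\VvFF'$ by a power of $v$ so that it lands in $\Hom_{\AvG}(T_v\FF,T_v\FF')\cong\Hom\bigl(\ulM(\FF),\ulM(\FF')\bigr)\otimes_A\Av$ (Theorem~\ref{TATE-CONJECTURE-MODULES}), and approximate it $v$-adically, exactly as in the proof of Theorem~\ref{Thm3.8}/1, by an $f\in\Hom\bigl(\ulM(\FF),\ulM(\FF')\bigr)$ with $V_vf$ surjective. Then $f$ has full generic rank, so $\coker f$ is torsion; since $C_L$ is a smooth curve over $L$, $A_L$ is a Dedekind domain, so $\ker f$ and $\ulM(\FF)/\ker f\cong\im f$ are locally free, and $\im f$ is a full-rank sub-$\tau$-module of $\ulM(\FF')$, hence a pure Anderson motive by \cite[\BHAff]{BH_A} (as in the proof of Proposition~\ref{Prop2.18b}). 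As $\im f$ is a factor motive of $\ulM(\FF)$ isogenous to $\ulM(\FF')$, Corollary~\ref{Cor2.9d} gives that $\FF'$ is quasi-isogenous to an abelian factor $\tau$-sheaf of $\FF$. The same argument with ``surjective'' replaced by ``isomorphism'', and with Proposition~\ref{PROP.1.42A} (invoking the same-weight hypothesis when $\chr=\infty$) in place of the Dedekind step, gives $2.2\Rightarrow2.1$. For $2.7\Leftarrow2.3$ when $\chr\ne\infty$: by Theorem~\ref{ThmZeta} the power-series coefficients of $t\frac{d}{dt}\log Z_\ulM$ are the $\det(1-\pi_v^i)$, and the Riemann hypothesis (Theorem~\ref{ThmRH}) puts the reciprocal zeros of the factors $\det(1-t\wedge^i\pi_v)$ of $Z_\ulM$ on pairwise distinct circles $|t|=(\#\Fs)^{-i\,\weight(\ulM)}$, so these factors cannot cancel and $Z_{\ulM(\FF)}$ determines $\det(1-t\pi_v)$, i.e.\ $\chi_v$; thus $Z_{\ulM(\FF)}=Z_{\ulM(\FF')}$ forces $\chi_v=\chi'_v$.

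\emph{Main obstacle.} I expect $1.1\Leftarrow1.2$ (and its twin $2.2\Rightarrow2.1$) to be the only genuinely delicate point: one must manufacture an honest abelian factor $\tau$-sheaf rather than a bare $G$-factor space or an arbitrary sub-$A_L$-module, which is precisely why one descends to pure Anderson motives, where $A_L$ is Dedekind (so kernels remain locally free) and where \cite[\BHAff]{BH_A} delivers the purity axiom, and why the approximation step is needed to replace a purely $v$-adic morphism by a global quasi-morphism. Everything else amounts to bookkeeping with the Wedderburn decomposition of $E_v$ and with the $v$-adic forms of the Tate conjecture already recorded above.
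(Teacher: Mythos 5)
Your proof is correct and follows the paper's own strategy almost step for step: the forward implications via Proposition~\ref{FACTORSHEAF-FACTORSPACE}, the semisimple converses via the Wedderburn decomposition of Lemma~\ref{PROP.15}, the irreducible case via Corollary~\ref{Cor3.11b} and Proposition~\ref{PROP.EQ}, the $v$-adic approximation of a surjective Tate-module map by a global quasi-morphism for $1.2\Rightarrow1.1$, and the Riemann-hypothesis separation of the factors of $Z_\ulM$ for $2.7\Rightarrow2.3$. The one place you genuinely diverge is the final step of $1.2\Rightarrow1.1$: the paper stays at the sheaf level and quotes \cite[\BHAbb]{BH_A} to see that $\im\bigl(f:\FF\to\FF'(D)\bigr)$ is an abelian factor $\tau$-sheaf of $\FF$, whereas you descend to motives, use that $A_L$ is a Dedekind domain together with \cite[\BHAff]{BH_A} to make $\im f$ a pure Anderson factor motive, and then invoke Corollary~\ref{Cor2.9d}. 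That last citation is slightly too weak on its own: the quasi-isogeny equivalence does not convert a surjection of motives into a surjection of sheaves (the cokernel of the corresponding sheaf map could a priori be supported at $\infty$), so to land on an abelian \emph{factor} $\tau$-sheaf you still need the sheaf-level input \cite[\BHAbb]{BH_A} or \cite[\BHAjj]{BH_A}; with that reference added the two routes are equivalent. A second small point: for $2.2\Rightarrow2.1$ in the case $\chr=\infty$ (permitted under the same-weight hypothesis), $\ulM(\FF)$ is only a $\tau$-module on $A$ rather than a pure Anderson motive, so Theorem~\ref{TATE-CONJECTURE-MODULES} is not literally applicable and the paper substitutes \cite[\BHAtt]{BH_A} at that step.
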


\begin{proof}
1. For the implication $1.1\Rightarrow 1.2$ without loss of generality, $\FF'$ can itself be considered as abelian factor $\tau$-sheaf of $\FF$ and the implication follows from Proposition~\ref{FACTORSHEAF-FACTORSPACE}. The implication $1.2\Rightarrow 1.3$ is obvious.

For $1.2\Rightarrow 1.4$ note that $\mu_\pi$ is also the minimal polynomial of $\pi_v$ over $Q_v$ by Lemma~\ref{Lemma3.4}. By Proposition~\ref{PI-IS-QISOG} statement 1.2 implies $\mu_{\pi}(\pi'_v)=0$, whence 1.4.

For $1.3\Rightarrow 1.2$ let $\pi_v$ and $\pi'_v$ be semisimple. Let $\chi_v=\mu_1\cdot\ldots\cdot\mu_n$ and $\chi'_v=\mu'_1\cdot\ldots\cdot\mu'_{n'}$ be the factorization in $Q_v[x]$ into irreducible factors and set $V_i:=Q_v[x]/(\mu_i)$ and $V'_i:=Q_v[x]/(\mu'_i)$.
Then we can decompose $\VvFF=V_1\oplus\cdots\oplus V_n$ and $\VvFF'=V'_1\oplus\cdots\oplus V'_{n'}$. Since $\chi'_v$ divides $\chi_v$, we can now easily construct a surjective $G$-morphism from $\VvFF$ onto $\VvFF'$ which gives the desired result.

Next if $\mu_\pi$ is irreducible, 1.4 implies $\mu_{\pi'}=\mu_\pi$ and 1.3 follows from Corollary~\ref{Cor3.11b}. It further follows from Proposition~\ref{PROP.EQ} that $\pi_v$ and $\pi'_v$ are semisimple and this implies 1.2 by the above.

For $1.2\Rightarrow 1.1$ we first do not assume that $\chr\ne\infty$. Let $f_v:\, \VvFF\rightarrow\VvFF'$ be a surjective morphism of $\QvG$-modules. We may multiply $f_v$ by a suitable power of $v$ to get a morphism $f_v:\, T_v\FF\rightarrow T_v\FF'$ of the integral Tate modules which is not necessarily surjective, but satisfies $v^n T_v\FF'\subset f_v(T_v\FF)$ for a sufficiently large $n$. Let $\ulM:=\bigl(\Gamma(C_L\setminus\{\infty\},\F_0),\,\P_0^{-1}\circ\t\bigr)$. This is a ``$\t$-module on $A$'' in the sense of \cite[\BHAuu]{BH_A}. If $\chr\ne\infty$ then $\ulM$ is the pure Anderson motive $\ulM(\FF)$ associated with $\FF$ in (\ref{Eq1.1}). Also let $\ulM':=\bigl(\Gamma(C_L\setminus\{\infty\},\F'_0),\,\P_0'{}^{-1}\circ\t'\bigr)$. By \cite[\BHAtt]{BH_A} (or Theorem~\ref{TATE-CONJECTURE-MODULES} if $\chr\ne\infty$), $f_v$ lies inside $\Hom(\ulM,\ulM')\otimes_A A_v$, so we can approximate $f_v$ by some $f\in\Hom(\ulM,\ulM')$ with $T_v(f)\equiv f_v$ modulo $v^{n+1} T_v\ulM'$. Since $v^n T_v\ulM'\subset f_v(T_v \ulM)$ we find inside $\im T_v(f)$ generators of $v^n T_v\ulM'/v^{n+1} T_v\ulM'$. They generate an $\Av$-submodule of $v^nT_v\ulM'$ whose rank must at least be $r'$ since $v^n T_v\ulM'/v^{n+1} T_v\ulM'\cong (\Av/v\Av)^{r'}$. Thus $\im T_v(f)$ has rank $r'$. Either by assumption or by \cite[\BHApp]{BH_A} if $\chr\ne\infty$, both $\FF$ and $\FF'$ have the same weight. So by \cite[\BHAkk/1]{BH_A}, $f$ comes from a quasi-morphism $f\in\QHom(\FF,\FF')$, that is, a morphism $f:\FF\to\FF'(D)$ for a suitable divisor $D$. Now we finally assume that the characteristic is different from $\infty$. By \cite[\BHAbb]{BH_A}, the image $\im \bigl(f:\FF\to\FF'(D)\bigr)$ is an abelian factor $\tau$-sheaf of $\FF$ and $\im f\rightarrow\FF'(D)$ is an injective morphism between abelian $\tau$-sheaves of the same rank and weight, hence an isogeny by Proposition~\ref{PROP.1.42A}.

\medskip

\noindent
2. A large part of 2 follows from 1. We prove the rest. To show $2.2\Rightarrow 2.1$ without the hypothesis on the characteristic, we just replace the last argument of the proof of $1.2\Rightarrow 1.1$ by the following: Since $r=\dim_{Q_v}V_v\FF=\dim_{Q_v}V_v\FF'=r'$, the morphism $f:\FF\to\FF'(D)$ is an injective morphism between abelian $\tau$-sheaves of the same rank and weight, hence an isogeny by Proposition~\ref{PROP.1.42A}.

For the implication $2.1\Rightarrow 2.5$ let $g\in\QIsog(\FF,\FF')$. Then the map $\QEnd(\FF)\to\QEnd(\FF')$ sending $f\mapsto gfg^{-1}$ is an isomorphism with $\pi'=g\pi g^{-1}$. The implication $2.5\Rightarrow 2.6$ is obvious.

For the implication $2.3\Rightarrow 2.7$ note that knowledge of $\chi_v$ yields the knowledge of $\det(1-t\wedge^i\pi_v)$ and thus of $Z_{\ulM(\FF)}$ by linear algebra. Conversely we know from Theorem~\ref{ThmRH} that all zeroes of $\det(1-t\wedge^i\pi_v)$ have absolute value $s^{-i\weight(\FF)}$ in an algebraic closure of $Q_\infty$. So we can recover $\chi_v$ from $Z_{\ulM(\FF)}$ by simply looking at this absolute value. This proves $2.3\Leftarrow 2.7$.

Next if $\pi_v$ and $\pi'_v$ are semisimple $2.6\Rightarrow 2.3$ follows from Lemma~\ref{PROP.15}/2, and $2.3\Rightarrow 2.2$ was already established in 1.

Finally if $\mu_\pi$ and $\mu_{\pi'}$ are irreducible, 2.4 follows from 2.6 by Corollary~\ref{Cor3.11b} since $\mu_\pi$ is also the minimal polynomial of $\pi_v$ over $Q_v$ by Lemma~\ref{Lemma3.4}. Also 2.3 follows from 2.4 by Corollary~\ref{Cor3.11b} and $\pi_v$ and $\pi'_v$ are semisimple, so $2.3\Rightarrow 2.2$ by the above.
\end{proof}


\bigskip

\section{The Endomorphism $Q$-Algebra}
\setcounter{equation}{0}

In this section we study the structure of $\QEnd(\FF)$ for a semisimple abelian $\tau$-sheaf $\FF$ over a finite field and calculate the local Hasse invariants of $\QEnd(\FF)$ as a central simple algebra over $Q(\pi)$. For a detailed introduction to central simple algebras, Hasse invariants and the Brauer group, we refer to \cite[Ch. 7, \S\S 28--31]{Re}.

\begin{Theorem}\label{THEOREM-2}
Let\/ $\FF$ be an abelian $\tau$-sheaf over the finite field $\Fs$ of rank $r$ with semisimple Frobenius endomorphism\/ $\pi$, that is, $Q(\pi)$ is semisimple. Let\/ $v\in C$ be a place different from $\infty$ and from the characteristic point $\chr$. Let\/ $\chi_v$ be the characteristic polynomial of\/ $\pi_v$. 
\begin{suchthat}
\item \label{THEOREM-2.1}
The algebra $F=Q(\pi)$ is the center of the semisimple algebra $E=\QEnd(\FF)$.\vspace{\parsep}
\item \label{THEOREM-2.3}
We have $\quad r \,\le\, \II{E:Q} \,=\, r_\Qv(\chi_v,\chi_v) \,\le\, r^2\;.$
\item \label{THEOREM-2.4}
Consider the following statements:
\begin{suchthat}
\item[\ref{THEOREM-2.4}.1. ] $E=F$.
\item[\ref{THEOREM-2.4}.2. ] $E$ is commutative.
\item[\ref{THEOREM-2.4}.3. ] $\II{F:Q} = r$.
\item[\ref{THEOREM-2.4}.4. ] $\II{E:Q} = r$.
\item[\ref{THEOREM-2.4}.5. ] $\chi_v$ has no multiple factor in $\Qv\II{x}$.
\item[\ref{THEOREM-2.4}.6. ] $\chi_v$ is separable.
\end{suchthat}
\noindent
$\begin{array}{rc@{\:\:}c@{\:\:}c@{\:\:}c@{\:\:}c@{\:\:}c@{\:\:}c@{\:\:}c@{\:\:}c@{\:\:}c@{\:\:}c@{\:\:\quad}l}
\text{We have} &\mbox{\ref{THEOREM-2.4}.1} &\Leftrightarrow& \mbox{\ref{THEOREM-2.4}.2} &\Leftrightarrow& \mbox{\ref{THEOREM-2.4}.3} &\Leftrightarrow& \mbox{\ref{THEOREM-2.4}.4} &\Leftrightarrow& 
\mbox{\ref{THEOREM-2.4}.5} &\Leftarrow & \mbox{\ref{THEOREM-2.4}.6} & \makebox[0.55\textwidth][l]{always,} \\
& & & & & & & & & \mbox{\ref{THEOREM-2.4}.5} &\Rightarrow& \mbox{\ref{THEOREM-2.4}.6} & \mbox{if\/ $\pi_v$ is absolutely semisimple.}
\end{array}$
\smallskip

\item \label{THEOREM-2.5}
Consider the following statements:
\begin{suchthat}
\item[\ref{THEOREM-2.5}.1. ] $F=Q$.
\item[\ref{THEOREM-2.5}.2. ] $E$ is a central simple algebra over $Q$.
\item[\ref{THEOREM-2.5}.3. ] $\II{E:Q} = r^2$.
\item[\ref{THEOREM-2.5}.4. ] $\chi_v$ is the $r$-th power of a linear polynomial in $\Qv\II{x}$.
\item[\ref{THEOREM-2.5}.5. ] $\chi_v$ is purely inseparable.
\end{suchthat}
$\begin{array}{rc@{\:\:}c@{\:\:}c@{\:\:}c@{\:\:}c@{\:\:}c@{\:\:}c@{\:\:}c@{\:\:}c@{\:\:\quad}l}
\text{We have}& \mbox{\ref{THEOREM-2.5}.1} &\Leftrightarrow& \mbox{\ref{THEOREM-2.5}.2} &\Leftrightarrow& \mbox{\ref{THEOREM-2.5}.3} &\Leftrightarrow& 
 \mbox{\ref{THEOREM-2.5}.4} &\Rightarrow& \mbox{\ref{THEOREM-2.5}.5} & \makebox[0.55\textwidth][l]{always,} \\
& & & & & & & \mbox{\ref{THEOREM-2.5}.4} &\Leftarrow & \mbox{\ref{THEOREM-2.5}.5} & \mbox{if\/ $\pi_v$ is absolutely semisimple.}
\end{array}$
\smallskip
If\/ \ref{THEOREM-2.5}.2 holds and moreover the characteristic point $\chr:=c(\Spec\Fs)\in\CFs$ is different from $\infty$, $E$ is characterized by\/ $\inv_\infty E=\weight(\FF)$, $\inv_\chr E=-\weight(\FF)$ and\/ $\inv_v E=0$ for any other place $v\in C$.
\item \label{THEOREM-2.6}
In general the local Hasse invariants of $E$ at the places $v$ of $F$ equal $\inv_v E=-\frac{[\BF_v:\BF_q]}{[\BF_s:\BF_q]}\cdot v(\pi)$. In particular
\[
\inv_v E\,=\,\left\{\begin{array}{ll} 0 & \text{ if }v\nmid\chr\infty\,,\\
\weight(\FF)\cdot[F_v:Q_\infty] & \text{ if }v|\infty \text{ and }\chr\ne\infty\,.
\end{array}\right.
\]
(Here $F_v$ denotes the completion of $F$ at the place $v$ and $\BF_v$ is the residue field of the place $v$.)
\end{suchthat}
\end{Theorem}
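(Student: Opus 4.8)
The plan is to extract the algebraic assertions \ref{THEOREM-2.1}, \ref{THEOREM-2.3}, \ref{THEOREM-2.4} and \ref{THEOREM-2.5} from a single dimension count, and then to prove the invariant formula \ref{THEOREM-2.6} by computing the local Hasse invariant of $E$ separately at the generic places of $F$, at the places above $\infty$, and at the places above $\chr$. Part \ref{THEOREM-2.1} is Corollary \ref{CorFisCenter}. Since $\pi$ is semisimple, Proposition \ref{PROP.EQ} makes $\pi_v$ semisimple for every $v\ne\chr,\infty$, so Theorem \ref{Thm3.5a} with $\FF'=\FF$ gives $[E:Q]=r_{Q_v}(\chi_v,\chi_v)$. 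Writing $\chi_v=\prod_i\mu_i^{m_i}$ with distinct monic irreducibles $\mu_i\in Q_v[x]$ of degree $d_i$, one has $r=\sum m_id_i$, $r_{Q_v}(\chi_v,\chi_v)=\sum m_i^2d_i$ and $[F:Q]=\deg\mu_{\pi_v}=\sum d_i$ (using $\mu_{\pi_v}=\mu_\pi$ over $Q_v$, which comes from Lemma \ref{Lemma3.4}, and that $\mu_\pi$, being squarefree over $Q$, stays squarefree over the separable extension $Q_v$). The elementary estimates $\sum m_id_i\le\sum m_i^2d_i\le(\sum m_id_i)^2$ give \ref{THEOREM-2.3}, and tracking when each is an equality gives the equivalences of \ref{THEOREM-2.4} and \ref{THEOREM-2.5}: $[E:Q]=r$ iff every $m_i=1$ iff $\chi_v$ is squarefree iff $[F:Q]=r$, which is equivalent to $E$ being commutative, hence to $E=F$ since $F=Z(E)$; whereas $[E:Q]=r^2$ forces (analysing equality in the chain) a single factor of degree $1$ with $m_1=r$, i.e.\ $\chi_v=(x-\alpha)^r$ with $\alpha\in Q_v$, a purely inseparable polynomial, so $\mu_{\pi_v}$ is linear, $F=Q$, and $E$ is central simple over $Q$ (a semisimple algebra with a field as centre is simple). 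The separability refinements are immediate from Lemma \ref{XY99}, since $\pi_v$ absolutely semisimple means $\mu_{\pi_v}$ separable.

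For \ref{THEOREM-2.6} I would first reduce, via Theorem \ref{Thm3.8}, to $\FF$ simple, so that $E$ is a division algebra over the field $F=Q(\pi)$ with $\mu_\pi$ irreducible and $\chi_v=\mu_\pi^{r/h}$ (Corollary \ref{Cor3.11b}), $h:=[F:Q]$; passing from $\FF$ to its simple constituents only replaces the component of $E$ at a place $w$ of $F$ by a matrix algebra over it, changing neither $\inv_wE$ nor $w(\pi)$. Fix a place $w$ of $F$ over a place $v$ of $C$. If $v\ne\chr,\infty$, then $\pi$ is an isomorphism on $\FF$ locally at $v$, so $V_v\pi$ is invertible and $w(\pi)=0$; moreover $G$ acts on $V_v\FF$ through $Q_v[\pi_v]=F\otimes_QQ_v=\prod_{w\mid v}F_w$ (Proposition \ref{PI-IS-QISOG}/3 and Lemma \ref{Lemma3.4}), a product of fields as $\pi_v$ is semisimple, and the Tate conjecture (Theorem \ref{TATE-CONJECTURE}/1) together with Lemma \ref{PROP.15}/2 identifies $E\otimes_QQ_v=\End_{F\otimes_QQ_v}(V_v\FF)$ with $\prod_{w\mid v}M_{r/h}(F_w)$, which is split; hence $\inv_wE=0=-\frac{[\BF_w:\BF_q]}{[\BF_s:\BF_q]}\,w(\pi)$.

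For $w\mid\infty$ (with $\chr\ne\infty$; if $\chr=\infty$ there is no $\chr$-place to treat) I would use the big local $\sigma$-isoshtuka $\ulTN_\infty(\FF)$, which carries the division algebra $\Delta_\infty$ of (\ref{EqDelta}) with $\inv\Delta_\infty=-k/l=-\weight(\FF)$. After extending $L$ so that $\BF_{q^l}\subset L$ and descending by $(-)^{\mathrm{Gal}}$ (legitimate because $\QEnd$ descends and Galois invariants commute with $\otimes_QQ_\infty$), Theorem \ref{TATE-CONJECTURE}/2 gives $E\otimes_QQ_\infty\cong\End_{\Delta_\infty[G]}(V_\infty\FF)$; since $G$ acts through $Q_\infty[\pi_\infty]=F\otimes_QQ_\infty=\prod_{w\mid\infty}F_w$, this equals $\prod_{w\mid\infty}\End_{\Delta_\infty\otimes_{Q_\infty}F_w}\bigl((V_\infty\FF)_w\bigr)$, and since $\Delta_\infty\otimes_{Q_\infty}F_w$ is central simple over $F_w$ and the endomorphism algebra of a module over a central simple algebra $B$ has Brauer class $[B]^{-1}$, I get $\inv_wE=-\inv(\Delta_\infty\otimes_{Q_\infty}F_w)=-[F_w:Q_\infty]\,\inv\Delta_\infty=\weight(\FF)\,[F_w:Q_\infty]$. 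This is the ``in particular'' formula, and for $F=Q$ it gives $\inv_\infty E=\weight(\FF)$, part of the last assertion of \ref{THEOREM-2.5}. To match the general formula one computes $w(\pi)$: $\deg\pi=\chr^{d\,[L:\BF_\chr]}$ (Corollary \ref{Cor3.4.1c}/2 applied to $\ulM(\FF)$) gives $\infty(N(\pi))=-de$ with $s=q^e$, and together with $N(\pi)=N_{F/Q}(\pi)^{r/h}$ and the isoclinicity of the structure at $\infty$ (so that the Newton slope of $V_\infty\pi$ is constant along the $w\mid\infty$) this yields $w(\pi)=-\frac{d\,e\,[F_w:Q_\infty]}{r\,[\BF_w:\BF_q]}$, whence $-\frac{[\BF_w:\BF_q]}{[\BF_s:\BF_q]}\,w(\pi)=\frac{d}{r}\,[F_w:Q_\infty]=\weight(\FF)\,[F_w:Q_\infty]=\inv_wE$.

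Finally, for $w\mid\chr$ I would use the local $\sigma$-shtuka $\ulM_\chr(\FF)$ at the characteristic point: by Theorem \ref{ThmLS2} and Proposition \ref{PropLS4} its reduction modulo $\Fa_0$ is a local $\sigma^f$-shtuka over $L$ with fixed field $\BF_\chr$ whose endomorphism algebra is $E\otimes_QQ_\chr$. As $\FF$ is simple this local shtuka is isoclinic, with slope determined on one side by $\dim\coker\t=d$ and on the other by $\chr(\pi)$ via $\deg\pi=\chr^{d\,[L:\BF_\chr]}$; the Dieudonn\'e--Manin-type classification of local $\sigma$-shtuka over finite fields (\cite{Anderson2,HartlPSp,Laumon}) identifies the endomorphism algebra of an isoclinic local $\sigma^f$-shtuka with a cyclic division algebra over $Q_\chr$ whose Hasse invariant is that slope, weighted by the residue degree $[\BF_w:\BF_\chr]$, and the bookkeeping, using $[L:\BF_\chr]\,[\BF_\chr:\BF_q]=[\BF_s:\BF_q]$, produces $\inv_wE=-\frac{[\BF_w:\BF_q]}{[\BF_s:\BF_q]}\,w(\pi)$. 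The reciprocity law $\sum_w\inv_wE=0$ over all places of $F$, matched by the degree identity $\sum_w[\BF_w:\BF_q]\,w(\pi)=0$, ties the three cases together, and specialising to $F=Q$ gives $\inv_\chr E=-\weight(\FF)$, completing \ref{THEOREM-2.5}. The main obstacle will be exactly this computation at $\chr$: it is the function-field analogue of Tate's determination of the local invariant of $\mathrm{End}^0$ at $p$ and genuinely needs the local theory of $\sigma$-shtuka over non-algebraically-closed residue fields (isoclinicity of simple objects, and the structure of the endomorphism algebra of an isoclinic local shtuka), whereas the generic places and the place $\infty$ are comparatively formal once the Tate conjecture and the structure results of \cite{BH_A} are available.
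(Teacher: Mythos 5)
Parts \ref{THEOREM-2.1}--\ref{THEOREM-2.5} of your argument (the dimension count $r\le[E:Q]=r_{Q_v}(\chi_v,\chi_v)\le r^2$ via the factorization of $\chi_v$, and the analysis of when each inequality is an equality) coincide with the paper's proof, and your treatment of the places $v\nmid\chr\infty$ in \ref{THEOREM-2.6} is also in substance what the paper does (\'etaleness of the local shtuka gives $w(\pi)=0$, and $E\otimes_QQ_v\cong M_{r/h}(F\otimes_QQ_v)$ is split). The real divergence, and the real problem, is at the remaining places. The paper does \emph{not} argue case by case: it runs a single computation at an arbitrary place $w$ of $Q$ using the local $\sigma$-isoshtuka $\ulN_w(\FF)$, Jacobson's structure theory of the twisted polynomial ring $Q_w\otimes_{\BF_{q^f}}\Fs[T]$, and an explicit isomorphism $R/R\mu_v(T^g)\cong M_h(\Delta_v)$ with a concrete cyclic algebra $\Delta_v$; this is precisely what replaces the ``Dieudonn\'e--Manin-type classification'' you invoke.

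Your step at $\chr$ contains a genuine error: it is \emph{not} true that simplicity of $\FF$ forces $\ulM_\chr(\FF)$ to be isoclinic. Example~\ref{Ex3.15}, case (b), gives a simple $\FF$ (with $E$ a division algebra) for which $F\otimes_QQ_\chr\cong F_v\oplus F_{v'}$ with $v(\pi)=0$ and $v'(\pi)=2$, so the local isoshtuka at $\chr$ has two distinct slopes. One must instead decompose $E\otimes_QQ_\chr=\prod_{w|\chr}E\otimes_FF_w$ along the places of $F$ and compute each factor separately; and even for a single isoclinic component, over a finite but not algebraically closed residue situation the endomorphism algebra is not determined by the slope alone --- its invariant is $-\frac{[\BF_w:\BF_q]}{[\BF_s:\BF_q]}\,w(\pi)$, which genuinely depends on $w(\pi)$ and the residue degrees, and extracting it requires the explicit cyclic-algebra presentation (the paper's Jacobson argument, modelled on Milne--Waterhouse). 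As you yourself note, this is the hard part of the theorem; asserting it as a known classification is where the proof is missing. A secondary gap: at $\infty$ your use of Theorem~\ref{TATE-CONJECTURE}/2 requires $\BF_{q^l}\subset L$, and when $F\ne Q$ one cannot deduce $l\mid e$ as in the paper's special case $(k,l)=1$, $F=Q$. Base-changing to achieve $\BF_{q^l}\subset L'$ replaces $\pi$ by a power and can shrink the center $Q(\pi)$ (again Example~\ref{Ex3.15}: $[F:Q]$ drops from $4$ to $2$ when passing from $\Fq$ to $\BF_{q^2}$), so ``descending by Galois invariants'' does not by itself transport Hasse invariants of $\QEnd(\FF\otimes_{\Fs}L')$ over $Z(\QEnd(\FF\otimes_{\Fs}L'))$ back to those of $E$ over $F$; this needs an actual argument.
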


\begin{Remark}\label{Rem3.14b}
If $\chr\ne\infty$ and $\FF$ is an elliptic sheaf, that is, $d=1$ and $\ulM(\FF)$ is the Anderson motive of a Drinfeld module, Gekeler~\cite[Theorem~2.9]{Gekeler} has shown that there is exactly one place $v$ of $F$ above $\chr$, and exactly one place $w$ of $F$ above $\infty$, and that $\inv_w E=[F:Q]\cdot\weight(\FF)$ and $\inv_v E=-[F:Q]\cdot\weight(\FF)$.
Note that Gekeler actually computes the Hasse invariants of the endomorphism algebra of the Drinfeld module. So his invariants differ from ours by a minus sign, since passing from Drinfeld modules to abelian $\tau$-sheaves is a contravariant functor, see \cite[Theorem~3.2.1]{BS}.
\end{Remark}

\begin{Corollary}\label{Cor3.13b}
Let $\FF$ be an abelian $\tau$-sheaf over the smallest possible field $L=\Fq$ such that $\QEnd(\FF)$ is a division algebra. Then $\QEnd(\FF)$ is commutative and equals $Q(\pi)$.
\end{Corollary}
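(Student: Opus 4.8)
The plan is to compute the Brauer class of $E:=\QEnd(\FF)$ over its center and show that it is trivial, using Theorem~\ref{THEOREM-2} together with the minimality of the base field. Put $F:=Q(\pi)$. Since $E$ is a division algebra, the finite-dimensional $Q$-subalgebra $F\subset E$ is a domain, hence a field; in particular $F$ is semisimple, so Proposition~\ref{PROP.EQ} shows that the Frobenius $\pi$ is semisimple and all parts of Theorem~\ref{THEOREM-2} apply. By Theorem~\ref{THEOREM-2}/\ref{THEOREM-2.1} we have $F=Z(E)$, and as $E$ is a division algebra it is in fact a central division algebra over the global function field $F$.

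Next I would read off the local invariants. By Theorem~\ref{THEOREM-2}/\ref{THEOREM-2.6}, at every place $v$ of $F$,
\[
\inv_v E \;=\; -\,\frac{[\BF_v:\Fq]}{[\Fs:\Fq]}\cdot v(\pi)\,.
\]
The hypothesis enters precisely here: over $L=\Fq$ one has $s=q$ (equivalently $\BF_\chr=\Fq$ and $e=1$), so the normalising denominator $[\Fs:\Fq]$ equals $1$ and $\inv_v E=-[\BF_v:\Fq]\cdot v(\pi)\in\Z$. Hence $\inv_v E=0$ in $\Q/\Z$ for every place $v$ of $F$.

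It then remains to invoke the injectivity of the map from the Brauer group of $F$ into the direct sum of the Brauer groups of its completions $F_v$ (the Albert--Brauer--Hasse--Noether theorem; see \cite[Ch.~7]{Re}): a central simple $F$-algebra all of whose local invariants vanish is split. Therefore $E\cong M_n(F)$ for some $n$, and since $E$ is a division algebra we conclude $n=1$, i.e.\ $E=F=Q(\pi)$, which is commutative.

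I do not anticipate a genuine obstacle: the content lies entirely in Theorem~\ref{THEOREM-2} and in class field theory over function fields. The only point requiring care is the bookkeeping around $s$. The statement tacitly presupposes $\chr\ne\infty$, so that the Frobenius $\pi$ is defined, and the phrase ``over the minimal field $\Fq$'' forces $s=q$; this is exactly what turns each $\inv_v E$ into an integer. In particular no separate treatment of the places of $F$ above $\infty$ is needed, since the uniform formula already shows that every invariant is trivial.
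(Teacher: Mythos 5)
Your proof is correct and follows essentially the same route as the paper: identify $E=\QEnd(\FF)$ as a central division algebra over $F=Q(\pi)$ via Theorem~\ref{THEOREM-2}, observe that with $L=\Fq$ the invariant formula of \ref{THEOREM-2}/\ref{THEOREM-2.6} gives integral, hence trivial, Hasse invariants at every place of $F$, and conclude $E=F$ by the Hasse principle for the Brauer group. Your extra bookkeeping (semisimplicity of $\pi$, the role of $[\BF_s:\BF_q]=1$) just spells out what the paper's one-line proof leaves implicit.
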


\begin{proof}
$\QEnd(\FF)$ is a central division algebra over $F$ by Theorem~\ref{THEOREM-2}, which splits at all places of $F$ by \ref{THEOREM-2}/\ref{THEOREM-2.6}, hence equals $F$.
\end{proof}

\begin{proof}[\Proofof{of Theorem \ref{THEOREM-2}}]
\ref{THEOREM-2.1} was already proved in Corollary~\ref{CorFisCenter}. 

\smallskip\noindent
\ref{THEOREM-2.3}. Let
\[
\chi_v \,=\, \prod_{i=1}^n \,\mu_i^{m_i} \quad\in\Qv\II{x}
\]
with distinct irreducible $\mu_i\in\Qv\II{x}$ and $m_i>0$ for $1\le i\le n$. Then $\sum_{i=1}^n m_i\cdot\deg\mu_i=\deg\chi_v=r$, and by Theorem \ref{Thm3.5a} we have $\II{E:Q}=r_\Qv(\chi_v,\chi_v)=\sum_{i=1}^n m_i^2\cdot\deg\mu_i$. The result now follows from the obvious inequalities
\begin{equation}\label{Eq9.1}
r\;=\;\sum_{i=1}^n m_i\cdot\deg\mu_i \;\stackrel{(1)}{\le}\; 
\sum_{i=1}^n m_i^2\cdot\deg\mu_i \;\stackrel{(2)}{\le}\; 
\left(\sum_{i=1}^n m_i\cdot\deg\mu_i\right)^2 \;=\;r^2\, .
\end{equation}

\smallskip\noindent
\ref{THEOREM-2.4}. Since $F=Z(E)$, the equivalence $\ref{THEOREM-2.4}.1\Leftrightarrow \ref{THEOREM-2.4}.2$ is evident. We have equality in (1) of Equation (\ref{Eq9.1}) if and only if $m_i=1$ for all $1\le i\le s$ which establishes the equivalence $\ref{THEOREM-2.4}.4\Leftrightarrow \ref{THEOREM-2.4}.5$. In order to prove $\ref{THEOREM-2.4}.5\Rightarrow \ref{THEOREM-2.4}.3$ we consider the minimal polynomial $\mu_v$ of $\pi_v$ over $\Qv$. If $\chi_v$ has no multiple factor, then $\mu_v=\chi_v$ and therefore $\II{F:Q}=\II{\Qv(\pi_v):\Qv}=r$. Next $\ref{THEOREM-2.4}.3\Rightarrow \ref{THEOREM-2.4}.1$ because $F\subset E$ and $(\dim_{Q_v}F_v)(\dim_{Q_v}E_v)=\dim_{Q_v}\End_{Q_v}(V_v\FF)=r^2$ by \cite[Th\'eor\`eme 10.2/2]{Bou}, since $E_v$ is the commutant of $F_v$ in $\End_{Q_v}(V_v\FF)$. Note that $\ref{THEOREM-2.4}.3\Rightarrow \ref{THEOREM-2.4}.1$ also follows from Lemma~\ref{Lemma3.4.1b}. Conversely $\ref{THEOREM-2.4}.1\Rightarrow \ref{THEOREM-2.4}.4$ because $E=F$ implies $r\ge\II{\Qv(\pi_v):\Qv}=\II{F:Q}=\II{E:Q}\ge r$. For $\ref{THEOREM-2.4}.5\Rightarrow \ref{THEOREM-2.4}.6$ we use Lemma~\ref{XY99}/2 as we know that $\chi_v=\mu_v$. $\ref{THEOREM-2.4}.6\Rightarrow \ref{THEOREM-2.4}.5$ is clear.

\smallskip\noindent
\ref{THEOREM-2.5}. If $F=Q$, then $E$ is simple with center $Q$, so $E$ is a central simple algebra over $Q$. Since $F=Z(E)$, the converse is obvious. This shows $\ref{THEOREM-2.5}.1\Leftrightarrow \ref{THEOREM-2.5}.2.$ We have equality in (2) of (\ref{Eq9.1}) if and only if $n=1$, $\deg\mu_1=1$ and $m_1=r$ which establishes $\ref{THEOREM-2.5}.3\Leftrightarrow \ref{THEOREM-2.5}.4.$ In order to connect $\ref{THEOREM-2.5}.1\Leftrightarrow \ref{THEOREM-2.5}.2$ with $\ref{THEOREM-2.5}.3\Leftrightarrow \ref{THEOREM-2.5}.4$ let $\chi_v$ be a power of a linear polynomial. By \cite[Proposition~9.1/1]{Bou} the minimal polynomial of $\pi_v$ over $\Qv$ is linear and thus $F=Q$. The converse is trivial. For $\ref{THEOREM-2.5}.5\Rightarrow \ref{THEOREM-2.5}.4$ we use again \ref{XY99}/2 to see that $\mu_v$ is linear. $\ref{THEOREM-2.5}.4\Rightarrow \ref{THEOREM-2.5}.5$ is clear.

The statement about the Hasse invariants follows from \ref{THEOREM-2.6}. Nevertheless, we give a separate proof in case $(k,l)=1$ using Tate modules, since this is much shorter here and exhibits a different technique than \ref{THEOREM-2.6}.
By the Tate conjecture \ref{TATE-CONJECTURE}, $E\otimes_Q Q_v$ is isomorphic to $\End_{Q_v}(V_v\FF)\cong M_r(Q_v)$ for all places $v\in C$ which are different from $\chr$ and $\infty$, so the Hasse invariants of $E$ at these places are $0$. Since the sum of all Hasse invariants is $0$ (modulo $1$), we only need to calculate $\inv_\infty E$.

As a first step, we show that $\Ff_{q^l}$ is contained in $\Fs$. In our situation, $\pi$ lies inside $Q$. Thus, by \ref{ThmRH} we get $s^{k/l}=|\,\pi\,|_\infty=q^m$ for some $m\in\Z$ as $|\,Q_\infty^{\times}\,|_\infty=q^\Z$. Since $q^e=s$, we conclude that $e\cdot k/l=m\in\Z$ and hence $l \mid e$, since $k$ and $l$ are assumed to be relatively prime. Therefore $\Ff_{q^l}\subset\Ff_{q^e}=\Fs$.

Consider the rational Tate module $V_\infty(\FF)$ at $\infty$ and the isomorphism of $Q_\infty$-algebras 
\[
E\otimes_Q Q_\infty\,\cong\,\End_{\Delta_\infty[G]}(V_\infty\FF)\,=\,\End_{\Delta_\infty}(V_\infty\FF)
\]
from Theorem~\ref{TATE-CONJECTURE}.
Since $\dim_{Q_\infty}\Delta_\infty=l^2$ and $\dim_{Q_\infty}V_\infty\FF=rl$, we conclude that $V_\infty\FF$ is a left $r/l$-dimensional $\Delta_\infty$-vector space and hence isomorphic to $\Delta_\infty^{r/l}$. Thus we have 
\[
E\otimes_Q Q_\infty \,\cong\, \End_{\Delta_\infty}(\Delta_\infty^{r/l}) \,=\, M_{r/l}(\End_{\Delta_\infty}(\Delta_\infty))\,=\,M_{r/l}(\Delta_\infty^{\op})\,.
\]
\forget{
We claim that $\End_{\Delta_\infty}(\Delta_\infty) = \Delta_\infty^{\op}$ denoting the opposite division algebra of $\Delta_\infty$. Let $m\in\End_{\Delta_\infty}(\Delta_\infty)$ and consider $f:=m(1)\in\Delta_\infty$. For $g\in\Delta_\infty$ we have
\[
m(g) \,=\, m(g\cdot 1) \,=\, g\cdot m(1) \,=\, g\cdot f
\]
since $m$ is a $\Delta_\infty$-endomorphism. Thus, $m$ is uniquely determined by $f$, and the map $\alpha:\, \End_{\Delta_\infty}(\Delta_\infty)\rightarrow\Delta_\infty$, $m\mapsto f$ is an isomorphism of $Q_\infty$-vector spaces. Let now $n\in\End_{\Delta_\infty}(\Delta_\infty)$ and let $g:=n(1)$. Then
\[
(m\cdot n)(1) \,=\, m(n(1)) \,=\, m(g) \,=\, g\cdot f\;.
\]
This means that, under $\alpha$, the multiplication in $\End_{\Delta_\infty}(\Delta_\infty)$ passes to the opposite multiplication which shows our claim. 
}
Our proof now completes by $\inv_\infty E \,=\, \inv\Delta_\infty^{\op}\,=\, -\inv\Delta_\infty \,=\, \frac{k}{l} \,=\, \weight(\FF)$\,.

\smallskip\noindent
\ref{THEOREM-2.6}. We prove the general case using local (iso-)shtuka rather than Tate modules which were used in \ref{THEOREM-2.5}. Our method is inspired by Milne's and Waterhouse' computation for abelian varieties~\cite[Theorem~8]{WM}. However in the function field case this method can be used to calculate the Hasse invariant at \emph{all} places, whereas in the number field case it applies only to the place which equals the characteristic of the ground field.
Let $w$ be a place of $Q$ and let $\ulN_w:=\ulN_w(\FF)$ be the local $\sigma$-isoshtuka of $\FF$ at $w$. Let $\BF_w$ be the residue field of $w$ and $\BF_{q^f}=\BF_w\cap\Fs$ the intersection inside an algebraic closure of $\Fq$. Let $\Fa_0$ be the ideal $(b\otimes 1-1\otimes b:b\in\BF_{q^f})$ of $Q_w\otimes_\Fq \Fs$ and let $R:=(Q_w\otimes_{\BF_q}\BF_s/\Fa_0)[T]=Q_w\otimes_{\BF_{q^f}}\Fs[T]$ be the non-commutative polynomial ring with $T\cdot(a\otimes b)=(a\otimes b^{q^f})\cdot T$ for $a\in Q_w$ and $b\in\BF_s$. Since $Q_w\otimes_{\BF_{q^f}}\Fs$ is a field, $R$ is a non-commutative principal ideal domain as studied by Jacobson~\cite[Chapter 3]{Jacobson}. Its center is the commutative polynomial ring $Q_w[T^g]$ where $g=[\Fs:\BF_{q^f}]=\frac{e}{f}$. From Theorem~\ref{ThmLS1} and Proposition~\ref{PropLS3} we get isomorphisms
\[
\QEnd(\FF)\otimes_Q Q_w\,\cong\,\End_{Q_w\otimes_\Fq\Fs[\phi]}(\ulN_w)\,\cong\,\End_R(\ulN_w/\Fa_0\ulN_w)
\]
where $T$ operates on $\ulN_w/\Fa_0\ulN_w$ as $\phi^f$.

By \cite[Theorem~3.19]{Jacobson} the $R$-module $\ulN_w/\Fa_0\ulN_w$ decomposes into a finite direct sum indexed by some set $I$
\begin{equation}\label{Eq3.13.6}
\ulN_w/\Fa_0\ulN_w\,\cong\,\bigoplus_{v\in I}\ulN_v^{\oplus n_v}
\end{equation}
of indecomposable $R$-modules $\ulN_v$ with $\ulN_v\not\cong \ulN_{v'}$ for $v\ne v'$. The annihilator of $\ulN_v$ is a two sided ideal of $R$ generated by a central element $\mu_v\in Q_w[T^g]$ by \cite[\S 3.6]{Jacobson}, which can be chosen to be monic. In particular (\ref{Eq3.13.6}) is an isomorphism of $Q_w[T^g]$-modules and $\mu_v$ is the minimal polynomial of $T^g$ on $\ulN_v$ by \cite[Lemma 3.1]{Jacobson}. Therefore the least common multiple $\mu$ of the $\mu_v$ is the minimal polynomial of $T^g$ on $\ulN_w/\Fa_0\ulN_w$. Note that $T^g$ operates on $\ulN_w/\Fa_0\ulN_w$ as the Frobenius $\pi$, hence $\mu=mipo_{\pi|\FF}$ and $F=Q(\pi)=Q[T^g]/(\mu)$, where we write $mipo$ for the minimal polynomial. By the semisimplicity of $\pi$ (and Proposition~\ref{PROP.EQ}) $\mu$ has no multiple factors in $Q_w[T^g]$. Since the $\mu_v$ are powers of irreducible polynomials by \cite[Theorem~3.20]{Jacobson} we conclude that all $\mu_v$ are themselves irreducible in $Q_w[T^g]$. Again \cite[Theorem~3.20]{Jacobson} implies that $\mu_v\ne \mu_{v'}$ since $\ulN_v\not\cong \ulN_{v'}$ and
\[
\mu\,=\,mipo_{\pi|\FF}\,=\,\prod_{v\in I}\mu_v\quad\text{inside}\quad Q_w[T^g]\,.
\]
Thus $F\otimes_Q Q_w=Q_w[T^g]/(\mu)=\prod_{v\in I}Q_w[T^g]/(\mu_v)=\prod_{v|w}F_v$. So $I$ is the set of places of $F$ dividing $w$ and $F_v=Q_w[T^g]/(\mu_v)$ is the completion of $F$ at $v$, justifying our notation. Let $\pi_v$ be the image of $\pi$ in $F_v$. Its minimal polynomial over $Q_w$ is $\mu_v$. This implies that $E\otimes_Q Q_w$ decomposes further
\[
E\otimes_Q Q_w\,=\,\bigoplus_{v\in I}\End_R(\ulN_v^{\oplus n_v})\,=\,\bigoplus_{v\in I}E\otimes_F F_v
\]
and $E\otimes_F F_v\cong\End_R(\ulN_v^{\oplus n_v})$. 

Now fix a place $v$ above $w$ and consider the diagram of field extensions
\[
\xymatrix @R-1pc @C-1pc {& & \BF_v\BF_s\ar@{-}[d] \\
\BF_v\ar@{-}[ddd]\ar@{-}[urr]^{g/h}\ar@{-}[dr] & & \BF_w\BF_s\ar@{-}[dd]^i\\
& \BF_w\BF_s\cap\BF_v \ar@{-}[ur]^{g/h}\ar@{-}[d]\\
& \BF_w(\BF_v\cap\BF_s)\ar@{-}[d]^i & \BF_s\\
\BF_w \ar@{-}[d]^i\ar@{-}[ur]^{h\es} & \BF_v\cap\BF_s\ar@{-}[ur]_{g/h}\\
**{!L !<0.8pc,0pc> =<1.5pc,1.5pc>}\objectbox{\BF_{q^f}=\BF_w\cap\BF_s=\BF_w\cap(\BF_v\cap\BF_s)} \ar@{-}[ru]^h \ar@{-}[d]^f\\
\BF_q
}
\]
Let $h:=[\BF_v\cap\Fs:\BF_{q^f}]=\gcd([\BF_v:\BF_{q^f}],g)$. Let $i:=[\BF_w:\BF_{q^f}]$. From the formulas
\begin{eqnarray*}
[\BF_w\BF_s:\BF_w]&=&[\BF_s:\BF_{q^f}]\es=\es g,\\[1mm]
[\BF_w(\BF_v\cap\BF_s):\BF_w]&=&[\BF_v\cap\BF_s:\BF_{q^f}]\es=\es h,\\[1mm]
[\BF_w\BF_s:(\BF_w\BF_s\cap\BF_v)]&=&[\BF_v\BF_s:\BF_v]\es=\es[\BF_s:\BF_v\cap\BF_s]\es=\es{\TS\frac{g}{h}},\quad\text{and}\\[1mm]
\BF_w(\BF_v\cap\BF_s)&\subset&\BF_w\BF_s\cap\BF_v,
\end{eqnarray*}
we obtain $\BF_w\BF_s\cap\BF_v=\BF_w(\BF_v\cap\BF_s)=\BF_{q^{fhi}}$.
Let $F_{v,L}$ be the compositum of $Q_w\otimes_{\BF_{q^f}}\Fs$ and $F_v$ in an algebraic closure of $Q_w$. Note that $F_{v,L}$ is well defined since $\Fs/\BF_{q^f}$ is Galois. Let $F_{v,L}[T']$ be the non-commutative polynomial ring with
\[
T'\cdot(a\otimes b)=(a\otimes b^{q^{fhi}})\cdot T'\quad\text{and}\quad T'\cdot x= x\cdot T'
\]
for $a\in Q_w$, $b\in\Fs$, and $x\in F_v$ and set $\Delta_v=F_{v,L}[T']/\bigl((T')^{g/h}-\pi_v^i\bigr)$.
Observe that the commutation rules of $T'$ are well defined since $(Q_w\otimes_{\BF_{q^f}}\Fs)\cap F_v$ has residue field $\BF_w\Fs\cap\BF_v=\BF_{q^{fhi}}$ and is unramified over $Q_w$, because $Q_w\otimes_{\BF_{q^f}}\Fs$ is. Moreover, the extension $F_{v,L}/F_v$ is unramified of degree $[\BF_v\BF_s:\BF_v]=\frac{g}{h}$ and $\wt T:=(T')^{[\BF_v:\Fq]/fhi}$ is its Frobenius automorphism. Since $\wt T^{g/h}=\pi_v^{[\BF_v:\Fq]/fh}$ in $\Delta_v$, our $\Delta_v$ is just the cyclic algebra $\bigl(F_{v,L}/F_v,\wt T,\pi_v^{[\BF_v:\Fq]/fh}\bigr)$ and has Hasse invariant $\frac{[\BF_v:\Fq]}{[\Fs:\Fq]}\cdot v(\pi_v)$; compare \cite[p.\ 266]{Re}. We relate $\Delta_v$ to $E\otimes_F F_v$. Firstly by \cite[Theorem~3.20]{Jacobson} there exists a positive integer $u$ such that $\ulN_v^{\oplus u}\cong R/R \mu_v(T^g)$. Therefore
\[
M_u(E\otimes_F F_v)\,\cong\,M_u\bigl(\End_R(\ulN_v^{\oplus n_v})\bigr)\,=\,\End_R(\ulN_v^{\oplus un_v})\,=\,M_{n_v}\bigl((R/R\mu_v(T^g))^\op\bigr)\,.
\]
Secondly we choose integers $m$ and $n$ with $m>0$ and $mi+ng=1$. We claim that the morphism $R/R\mu_v(T^g)\to M_h(\Delta_v)$, which maps
\[
a\otimes b\longmapsto\left(\begin{array}{cccc}a\otimes b\\ & a\otimes b^{q^f}\\ & & \ddots \\ & & & a\otimes b^{q^{f(h-1)}} \end{array}\right)\qquad\text{and}\qquad T\longmapsto 
\pi_v^n\cdot\left( \raisebox{4ex}{$
\xymatrix @R=0.3pc @C=0.7pc {0 \ar@{.}[drdr] & 1\ar@{.}[dr]\\
& & 1 \\
(T')^m & & 0}$} \right)
\]
for $a\in Q_w$ and $b\in\Fs$, is an isomorphism of $F_v$-algebras. It is well defined since it maps $T\cdot(a\otimes b)$ and $(a\otimes b^{q^f})\cdot T$ to the same element because $(T')^m=(T')^{1/i}$ in $\Gal(F_{v,L}/F_v)$, and it maps $T^g=(T^h)^{g/h}$ to $\pi_v^{ng}(T')^{mg/h}\cdot\Id_h=\pi_v\cdot\Id_h$. Since $R\mu_v(T^g)\subset R$ is a maximal two sided ideal the morphism is injective. To prove surjectivity we compare the dimensions as $Q_w$-vector spaces. We compute
\begin{eqnarray*}
\dim_{F_v} M_h(\Delta_v)&=&\TS h^2\cdot(\frac{g}{h})^2\es=\es g^2\,,\\[2mm]
\dim_{Q_w\otimes_{{\SC\BF}_{q^f}}\Fs}\bigl(R/R\mu_v(T^g)\bigr) &=&g\cdot\deg \mu_v\es=\es g\cdot[F_v:Q_w]\,, \quad\text{and}\\[2mm]
\dim_{Q_w}\bigl(R/R\mu_v(T^g)\bigr) &=&g^2\cdot[F_v:Q_w]\es=\es\dim_{Q_w}M_h(\Delta_v)\,.
\end{eqnarray*}
Altogether $M_u(E\otimes_F F_v)\cong M_{hn_v}(\Delta_v^\op)$ and $\inv_v E=-\inv_v\Delta_v=-\frac{[\BF_v:\Fq]}{[\Fs:\Fq]}\cdot v(\pi_v)$ as claimed.

It remains to convert this formula into the special form asserted for $v\nmid\chr\infty$ or $v|\infty$. If $v|\infty$ and $\chr\ne\infty$, let $e_v$ be the ramification index of $F_v/Q_\infty$. Then we get from Theorem~\ref{ThmRH} the formula $q^{e\weight(\FF)}=|\pi|_\infty=q^{-v(\pi_v)/e_v}$, since the residue field of $Q_\infty$ is $\Fq$. This implies as desired
\[
-\frac{[\BF_v:\Fq]}{[\Fs:\Fq]}\cdot v(\pi_v)\,=\,-\frac{[\BF_v:\Fq]\cdot  (-e_ve\cdot\weight(\FF))}{e}\,=\,\weight(\FF)\cdot[F_v:Q_\infty]
\]

Finally if $w\ne\chr,\infty$ is a place of $Q$, the local $\sigma$-shtuka $\ulM_w(\FF)$ at $w$ is \'etale. So $\mu=mipo_{\pi|\FF}$ has coefficients in $A_w$ with constant term in $A_w^\times$. Therefore $v(\pi_v)=0$ for all places $v$ of $F$ dividing $w$.
\end{proof}

\begin{Example}\label{LAST-EXAMPLE}
Let $C=\PP^1_\Fq$, $C\setminus\{\infty\}=\Spec\Fq\II{t}$ and $L=\Fq$. Let $d$ be a positive integer. Let $\F_i:=\O(d\lceil\frac{i}{2}\rceil\cdot\infty)\oplus\O(d\lceil\frac{i-1}{2}\rceil\cdot\infty)$ for $i\in\Z$ and let $\t:=\matr{0}{1}{t^d}{0}$. Then $\FF=(\F_i,\P_i,\t_i)$ is an abelian $\tau$-sheaf of rank $2$, dimension $d$, and characteristic $\chr=V(t)\in\PP^1$ over $\Fq$. Hence the Frobenius endomorphism $\pi$ equals $\t$. If $d$ is odd then $\FF$ is primitive (that means $(d,r)=1$) and therefore simple by \cite[\BHAaa]{BH_A}. In particular, $\pi$ is semisimple. We have
\[
\mu_\pi\,=\,\chi_v \,=\, x^2-t^d \,=\, (x-\sqrt{t^d})(x+\sqrt{t^d})
\]
which means that $\pi_v$ is \emph{not} absolutely semisimple in characteristic $2$. Moreover, we calculate $r_\Qv(\chi_v,\chi_v)=1\cdot1\cdot2=2$ whereas in the field extension $\Qv(\sqrt{t})\,/\,\Qv$ we have
\[
r_{\Qv(\sqrt{t})}(\chi_v,\chi_v) \,=\, \left\{
\begin{array}{l@{\quad}l} 
2\cdot2\cdot1=4 & \mbox{in characteristic 2,} \\ 
1\cdot1\cdot1+1\cdot1\cdot1=2 & \mbox{in characteristic different from 2.} 
\end{array}
\right.
\]
Although the later has no further significance it illustrates the remark after Definition~\ref{Def3.3}. By Theorem \ref{THEOREM-2}/\ref{THEOREM-2.4}. we have $E=F=Q(\pi)$ commutative and $\II{E:Q}=2=r$. Moreover, $|\,\pi\,|_\infty=|\,\sqrt{t^d}\,|_\infty=q^{d/2}$ and $\chi_v$ is irreducible. But $\chi_v$ is not separable in characteristic 2.

If $d=2n$ is even then the minimal polynomial of $\pi$ is
\[
\mu_\pi\,=\,\chi_v \,=\, x^2 -t^d\,=\, (x-t^{d/2})(x+t^{d/2})\,.
\]
So $\pi$ is semisimple if and only if $\charakt(\Fq)\ne2$. In this case $\FF$ is quasi-isogenous to the abelian $\tau$-sheaf $\FF'$ with $\F'_i=\O_{C_L}(in\cdot\infty)^{\oplus 2}$ and $\t'_i=\matr{-t^n}{0}{0}{t^n}$. The quasi-isogeny $f:\FF'\to\FF$ is given by $f_{0,\eta}=\matr{-t^n}{1}{t^n}{1}:\F'_{0,\eta}\isoto\F_{0,\eta}$. The abelian $\tau$-sheaf $\FF'$ equals the direct sum $\FF^{(1)}\oplus\FF^{(2)}$ where $\F_i^{(j)}=\O_{C_L}(in\cdot\infty)$ and $\t_i^{(j)}=(-1)^j t^n$. Note that $\FF^{(1)}$ and $\FF^{(2)}$ are not isogenous over $\Fq$, since the equation $-t^n\cdot\s(g)=g\cdot t^n$ has no solution $g\in Q$ for $\charakt(\Fq)\ne2$. Therefore 
\[
Q\oplus Q\,=\,\bigoplus_{j=1}^2 \QEnd(\FF^{(j)})\,\cong\,E\,=\,F\,=\,Q[x]/(x^2-t^{2n})\,\cong\,Q\oplus Q\,.
\]

Now we consider the same abelian $\tau$-sheaf over $L=\Ff_{q^2}$. This means $\pi=\t^2=t^d\in Q$ and therefore $\chi_v=(x-t^d)^2$. Thus $\pi$ is semisimple. By Theorem \ref{THEOREM-2}/\ref{THEOREM-2.5} we have $F=Q(\pi)=Q$ and $E$ is central simple over $Q$ with $\II{E:Q}=4$ and $\inv_\infty E=\inv_\chr E=\frac{d}{2}$. Moreover, $|\,\pi\,|_\infty=|\,t^d\,|_\infty=q^d$. In this case, $\pi_v$ is absolutely semisimple. Note that if $d$ is even and $\charakt(\Fq)=2$ this is another example for Theorem~\ref{Thm3.8b}. 

If $d$ is odd then $\FF$ is still primitive, whence simple and $E$ is a division algebra. If $d=2n$ is even then the abelian $\tau$-sheaves $\FF^{(1)}$ and $\FF^{(2)}$ defined above are isomorphic $\FF^{(1)}\isoto\FF^{(2)},1\mapsto\lambda$ where $\lambda\in\BF_{q^2}$ satisfies $\lambda^{q-1}=-1$. Therefore $M_2(Q)=M_2\bigl(\QEnd(\FF^{(1)})\bigr)\cong E$ in accordance with the Hasse invariants just computed.
\end{Example}

\begin{Example}\label{Ex3.15}
We compute another example which displays other phenomena. Let $C=\PP^1_\Fq$ and let $C\setminus\{\infty\}=\Spec\Fq[t]$. Let $\F_i=\O_{C_L}(\lceil\frac{i-1}{2}\rceil\cdot\infty)^{\oplus2}\oplus\O_{C_L}(\lceil\frac{i}{2}\rceil\cdot\infty)^{\oplus2}$, let $\P_i$ be the natural inclusion, and let $\t_i$ be given by the matrix
\[
T\,:=\,\left( \begin{array}{cccc}0&0&0&a\\0&b&1&0\\t&0&-b&0\\0&t&0&0 \end{array}\right)\qquad\text{with}\quad a,b\in\Fq\setminus\{0\}\,.
\]
Then $\FF$ is an abelian $\tau$-sheaf of rank $4$ and dimension $2$ with $l=2,k=1$ and characteristic $\chr=V(t)\in\PP^1$. One checks that the minimal polynomial of the matrix $T$ is $x^4-b^2x^2-at^2$ which is irreducible over $Q$ if $\charakt(\Fq)\ne2$, since it has neither zeroes in $\Fq[t]$ nor quadratic factors in $Q[x]$. If $\charakt(\Fq)=2$ then the minimal polynomial is a square and $\FF$ is not semisimple.

For $L=\Fq$ and $2\nmid q$ we obtain $\pi=\t$ semisimple and $E=F=Q(\pi)=Q[x]/(x^4-b^2x^2-at^2)$.

For $L=\BF_{q^2}$ we have $\pi=\t^2$ and the minimal polynomial of $\pi$ over $Q$ is $x^2-b^2x-at^2$, which is irreducible also in characteristic $2$ since it has no zeroes in $\Fq[t]$. Hence $\pi$ is semisimple, $F$ is a field with $[F:Q]=2$ and $[E:F]=4$ by Corollary~\ref{Cor3.11b}. This again illustrates Theorem~\ref{Thm3.8b}. We compute the decomposition of $\infty$ and $\chr$ in $F$.

\smallskip

\noindent
\emph{Decomposition of $\chr$:} Modulo $t$ the polynomial $x^2-b^2x-at^2$ has two zeroes $x=b^2$ and $x=0$ in $\Fq$. So by Hensel's lemma $F\otimes_Q Q_\chr\cong F_v\oplus F_{v'}$ splits with $F_v\cong F_{v'}\cong Q_\chr$ and $v(\pi)=0$ and $v'(\pi)=v'(at^2)=2$. Thus the Hasse invariants of $E$ are $\inv_v E=\inv_{v'}E=0$.

\smallskip

\noindent
\emph{Decomposition of $\infty$:} Set $y=\pi/t$. Then $y^2-\frac{b^2}{t}y-a=0$.\\[1mm]
\emph{Case (a).} If $2|q$ then $(y-a^{q/2})^2-\frac{b^2}{t}(y-a^{q/2})-\frac{b^2}{t}a^{q/2}=0$, that is, $\infty$ ramifies in $F$, $F\otimes_Q Q_\infty=F_w$ with $w(\frac{\pi}{t}-a^{q/2})=1$ and $w(\frac{1}{t})=2\cdot\infty(\frac{1}{t})=2$. So $[F_w:Q_\infty]=2$ and $\inv_w E=0$.

\smallskip

\noindent
\emph{Case (b).} If $2\nmid q$ and $\sqrt{a}\in\Fq$ then the polynomial $y^2-\frac{b^2}{t}y-a$ has two zeroes $y=\pm\sqrt{a}$ modulo $\frac{1}{t}$. So by Hensel's lemma $F\otimes_Q Q_\infty\cong F_w\oplus F_{w'}$ splits with $[F_w:Q_\infty]=[F_{w'}:Q_\infty]=1$. Thus the local Hasse invariants of $E$ are $\inv_w E=\inv_{w'}E=\frac{1}{2}$. As was remarked in \ref{Rem3.14b} such a distribution of the Hasse invariants can occur only if $d\ge2$.

\smallskip

\noindent
\emph{Case (c).} If $2\nmid q$ and $\sqrt{a}\notin\Fq$ then $y^2-\frac{b^2}{t}y-a$ is irreducible modulo $\frac{1}{t}$ and $\infty$ is inert in $F$, $F\otimes_Q Q_\infty=F_w$ with $[F_w:Q_\infty]=2$. Thus the Hasse invariant of $E$ is $\inv_w E=0$.

\medskip

In case (b) $E$ is a division algebra and $\FF$ is simple. In cases (a) and (c) $E\cong M_2(F)$ and $\FF$ is quasi-isogenous to $(\FF')^{\oplus2}$ for an abelian $\tau$-sheaf $\FF'$ of rank $2$, dimension $1$ and $\QEnd(\FF')=F$. This surprising result is due to the fact that $\FF'$, being of dimension $1$, is associated with a Drinfeld module and thus of the form $\F'_i=\O_{C_L}(\lceil\frac{i}{2}\rceil\cdot\infty)\oplus\O_{C_L}(\lceil\frac{i-1}{2}\rceil\cdot\infty)$ with $\t'_i=\matr{c}{d}{t}{0}$ and $c,d\in\BF_{q^2}$. Then $\pi'=(\t')^2=\matr{c^{q+1}+d^qt}{c^qd}{ct}{dt}$ has minimal polynomial $x^2-(c^{q+1}+(d+d^q)t)x+d^{q+1}t^2$ which must be equal to $x^2-b^2x-at^2$. This is possible only if $d+d^q=0$ and $d^{q+1}=-a$. So either $d\in\Fq$ and $2|q$ and we are in case (a), or $d\in\BF_{q^2}\setminus\Fq$, $d^q=-d$, and $a=d^2$. The later implies $2\nmid q$ and $\sqrt{a}=d\notin\Fq$ and we are in case (c). If we choose $c=b$ in case (c) a quasi-isogeny $f:\FF\to(\FF')^{\oplus2}$ over $\BF_{q^2}$ is given for instance by
\[
\left(\begin{array}{cccc} 
d & a & -bd/t & 0 \\
0 & 0 & -d & a \\
0 & 0 & d/t & a/t \\
1 & -d & 0 & bd/t 
\end{array}\right)\,.
\]
\end{Example}


\bigskip

\section{Kernel Ideals for Pure Anderson Motives} \label{Sect1.7}
\setcounter{equation}{0}

In this section we investigate which orders of $E$ can arise as endomorphism rings $\End(\ulM)$ for pure Anderson motives $\ulM$. For this purpose we define for each right ideal of the endomorphism ring $\End(\ulM)$ an isogeny with target $\ulM$ and discuss its properties. This generalizes Gekeler's results for Drinfeld modules \cite[\S 3]{Gekeler} and translates the theory of Waterhouse \cite[\S 3]{Wat} for abelian varieties to the function field case. These two sources are themselves the translation, respectively the higher dimensional generalization of Deuring's work on elliptic curves~\cite{Deuring}.

Let $\ulM$ be a pure Anderson motive over $L$ and abbreviate $R:=\End(\ulM)$. Let $I\subset R$ be a right ideal which is an $A$-lattice in $E:=R\otimes_A Q$. This is equivalent to saying that $I$ contains an isogeny, since every lattice contains some isogeny $a\cdot\id_\ulM$ for $a\in A$ and conversely the existence of an isogeny $f\in I$ implies that the lattice $f\cdot\dual{f}\cdot R$ is contained in $I$.

\begin{Definition}\label{Def1.7.1}
\begin{suchthat}
\item 
Let $\ulM^I$ be the pure Anderson sub-motive of $\ulM$ whose underlying $A_L$-module is $\sum_{g\in I}\im(g)$. This is indeed a pure Anderson motive, since if $I=f_1R+\ldots+f_nR$ are arbitrary generators, then $\ulM^I$ equals the image of the morphism 
\[
(f_1,\ldots,f_n):\ulM\oplus\ldots\oplus\ulM\es\longto\es\ulM\,.
\]
As $I$ contains an isogeny, $\ulM^I$ has the same rank as $\ulM$ and the natural inclusion is an isogeny which we denote $f_I:\ulM^I\to \ulM$.
\item 
If $I=\{\,f\in R:\im(f)\subset\ulM^I\,\}$ then $I$ is called a \emph{kernel ideal for $\ulM$}.
\end{suchthat}
\end{Definition}

The later terminology is borrowed from Waterhouse~\cite[\S 3]{Wat}. Since $\{\,f\in R:\im(f)\subset\ulM^I\,\}$ is the right ideal annihilating $\coker f_I$ one should maybe use the name ``cokernel ideal'' instead.

\begin{Proposition}\label{Prop3.21b}
Let $I\subset R$ be a right ideal which is a lattice, and consider the right ideal $J:=\{\,f\in R: \im(f)\subset\ulM^I\,\}\subset R$ containing $I$. Then $\ulM^{J}=\ulM^I$. In particular, $J$ is a kernel ideal for $\ulM$. We call $J$ the \emph{kernel ideal for $\ulM$ associated with $I$}.
\end{Proposition}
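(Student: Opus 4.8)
The plan is to prove this essentially by unwinding the definitions, since both $\ulM^I$ and $\ulM^J$ are defined as sums of images inside the fixed $A_L$-module $M$ underlying $\ulM$. First I would record the two preliminary observations that make the statement meaningful. One: $J$ is indeed a right ideal of $R$, because for $f,g\in J$ and $h\in R$ we have $\im(f+g)\subseteq\im(f)+\im(g)\subseteq\ulM^I$ and $\im(fh)\subseteq\im(f)\subseteq\ulM^I$. Two: $J$ is a lattice in $E=R\otimes_A Q$; this is immediate from $I\subseteq J\subseteq R$ once one recalls that $I$ is a lattice by hypothesis and that $R=\End(\ulM)$ is a finitely generated (projective) $A$-module by Theorem~\ref{ThmT.3}, so $J$ is sandwiched between two full lattices. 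In particular $J$ contains an isogeny (it contains $I$, which does), so $\ulM^J$ is defined and is a pure Anderson sub-motive of $\ulM$ of the same rank, by Definition~\ref{Def1.7.1}. Also $I\subseteq J$ holds because for $g\in I$ one has $\im(g)\subseteq\sum_{h\in I}\im(h)=\ulM^I$.

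The core of the argument is then the double inclusion $\ulM^J=\ulM^I$ of $A_L$-submodules of $M$. For ``$\subseteq$'': by the very definition of $J$, every $g\in J$ satisfies $\im(g)\subseteq\ulM^I$, hence $\ulM^J=\sum_{g\in J}\im(g)\subseteq\ulM^I$. For ``$\supseteq$'': since $I\subseteq J$ we get $\ulM^I=\sum_{g\in I}\im(g)\subseteq\sum_{g\in J}\im(g)=\ulM^J$. Therefore $\ulM^J$ and $\ulM^I$ have the same underlying $A_L$-module, and since the $\tau$ on a sub-motive is the restriction of the $\tau$ on $\ulM$, they coincide as pure Anderson motives.

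Finally, to see that $J$ is a kernel ideal, I would simply substitute: by definition $J=\{f\in R:\im(f)\subseteq\ulM^I\}$, and since $\ulM^I=\ulM^J$ this reads $J=\{f\in R:\im(f)\subseteq\ulM^J\}$, which is exactly the condition in Definition~\ref{Def1.7.1}/2 for $J$ to be a kernel ideal for $\ulM$. Honestly, I do not expect any real obstacle here: the whole proposition is a formal manipulation, and the only point requiring a moment's care is the bookkeeping that $J$ is again a lattice right ideal so that the notions ``$\ulM^J$'' and ``kernel ideal'' legitimately apply to it; everything else is the two-line double inclusion above.
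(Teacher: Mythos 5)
Your proof is correct and is essentially identical to the paper's own argument: the double inclusion $\ulM^J\subseteq\ulM^I$ (from the definition of $J$) and $\ulM^I\subseteq\ulM^J$ (from $I\subseteq J$), with the kernel-ideal claim following by substitution. The extra bookkeeping you supply (that $J$ is a right ideal and a lattice) is what the paper dismisses as obvious, so there is nothing to add.
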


\begin{proof}
Obviously $J$ is a right ideal and $\ulM^{J}\subset\ulM^I$ by definition of $J$. Conversely $\ulM^I\subset\ulM^{J}$ since $I\subset J$.
\end{proof}

\begin{Lemma}\label{Lemma1.7.2}
\begin{suchthat}
\item 
For any $g\in I$, $f_I^{-1}\circ g:\ulM\to\ulM^I$ is a morphism and $g=f_I\circ(f_I^{-1}\circ g)$.
\item 
If $I=gR$ is principal, $g$ an isogeny, then $f_I^{-1}\circ g:\ulM\to\ulM^I$ is an isomorphism and $I$ is a kernel ideal.
\end{suchthat}
\end{Lemma}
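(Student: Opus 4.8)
The plan is to exploit that $f_I:\ulM^I\to\ulM$ is, by construction, nothing but the inclusion of the $A_L$-submodule $\sum_{h\in I}\im(h)$ into the module $M$ underlying $\ulM$. Hence whenever an $A_L$-homomorphism $\varphi$ into $M$ has image contained in this submodule, one obtains an honest $A_L$-linear lift $f_I^{-1}\circ\varphi$ (not merely a formal quasi-morphism) characterised by $f_I\circ(f_I^{-1}\circ\varphi)=\varphi$; since $f_I$ is an isogeny this agrees with the composite taken in the quasi-isogeny category, so the notation is unambiguous.

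For part 1, I would note that $\im(g)\subset\sum_{h\in I}\im(h)$ for every $g\in I$ by the very definition of $\ulM^I$, so $\tilde g:=f_I^{-1}\circ g$ is an honest $A_L$-linear map with $f_I\circ\tilde g=g$. The only remaining point is that $\tilde g$ commutes with the two $\tau$'s, i.e.\ is a morphism of Anderson motives; this is a one-line diagram chase obtained by precomposing the target identity with the injective $f_I$ and using $f_I\circ\tilde g=g$ together with the fact that $f_I$ and $g$ are morphisms.

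For part 2, suppose $I=gR$ with $g$ an isogeny. Since $\id_\ulM\in R$ and $\im(g\circ r)=g\bigl(\im(r)\bigr)$, the module underlying $\ulM^I$ is $\sum_{h\in gR}\im(h)=g\bigl(\sum_{r\in R}\im(r)\bigr)=g(M)=\im(g)$. Thus $\tilde g$ is the corestriction of the injective map $g$ onto its image, hence bijective; and a bijective morphism of pure Anderson motives is automatically an isomorphism in the category, since its set-theoretic inverse commutes with $\tau$ by the usual manipulation. For the kernel-ideal assertion, the inclusion $I\subset\{f\in R:\im(f)\subset\im(g)\}$ is immediate from the definition of $\ulM^I$; conversely, given $f\in R$ with $\im(f)\subset\im(g)$, the argument of part 1 applied to $f$ yields a morphism $f':\ulM\to\ulM^I$ with $f=f_I\circ f'$, so $f=g\circ(\tilde g^{-1}\circ f')$ with $\tilde g^{-1}\circ f'\in R$, whence $f\in gR$.

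I do not anticipate a real obstacle. The two points to keep straight are that $f_I$ is literally a submodule inclusion — so that $f_I^{-1}$ acts on actual elements rather than only formally — and the routine fact that a bijective morphism of pure Anderson motives is invertible in the category; with these in hand everything else is bookkeeping.
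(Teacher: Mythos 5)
Your proof is correct and follows essentially the same route as the paper: part 1 is the observation that $f_I$ is the submodule inclusion so $g$ factors through $\ulM^I$, and part 2 uses that for $I=gR$ one has $\ulM^I=\im(g)$ (so $f_I^{-1}\circ g$ is bijective, hence an isomorphism) and then writes any $f\in R$ with $\im(f)\subset\ulM^I$ as $f=g\circ\bigl((f_I^{-1}\circ g)^{-1}\circ(f_I^{-1}\circ f)\bigr)\in gR$, exactly the paper's argument with a bit more detail spelled out.
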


\begin{proof}
1 is obvious since the image of $g$ lies inside $\ulM^I$.\\
2. Clearly $f_I^{-1}\circ g$ is injective since $g$ is an isogeny and surjective by construction, hence an isomorphism. To show that $I$ is a kernel ideal let $f\in R$ satisfy $\im(f)\subset\ulM^I$. Consider the diagram
\[
\xymatrix @C+2pc {\ulM \ar@{-->}[d]_h \ar[r]^{f_I^{-1}\circ f} & \ulM^I \ar[r]^{f_I} & \ulM\\
\ulM \ar[ur]_{f_I^{-1}\circ g}
}
\]
and let $h:=(f_I^{-1}\circ g)^{-1}\circ(f_I^{-1}\circ f)$. Then $f=gh\in I$ as desired.
\end{proof}

\noindent
{\it Example.} If $a\in A$ and $I=aR$, then $\ulM^I=a\ulM$ and $\coker f_I=\ulM/a\ulM$. More generally if $\Fa\subset A$ is an ideal and $I=\Fa R$ then $\ulM^I=\Fa\ulM$ and $\coker f_I=\ulM/\Fa\ulM$.

\begin{Proposition}\label{Prop1.7.3}
Let $I\subset R$ and $J\subset \End(\ulM^I)$ be right ideals which are lattices in $E$. Then also the product $K:=f_I\cdot J\cdot f_I^{-1}\cdot I$ is a right ideal of $R$ and a lattice in $E$ and $f_K^{-1}\circ f_I\circ f_J$ is an isomorphism of $(\ulM^I)^J$ with $\ulM^K$
\[
(\ulM^I)^J \xrightarrow{\es f_J\;} \ulM^I \xrightarrow{\es f_I\;} \ulM \xleftarrow{\;f_K\es} \ulM^K\,.
\]
\end{Proposition}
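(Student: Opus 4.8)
The plan is to unwind the definitions and reduce everything to a diagram chase using the functoriality of the construction $I\mapsto\ulM^I$ together with Lemma~\ref{Lemma1.7.2}. First I would check that $K:=f_I\cdot J\cdot f_I^{-1}\cdot I$ makes sense and is a right ideal of $R$: an element of $K$ is a sum of terms $f_I\circ j\circ f_I^{-1}\circ g$ with $j\in J\subset\End(\ulM^I)$ and $g\in I\subset R$; since $f_I^{-1}\circ g:\ulM\to\ulM^I$ is a genuine morphism by Lemma~\ref{Lemma1.7.2}/1, the composite $f_I\circ j\circ(f_I^{-1}\circ g)$ is a genuine endomorphism of $\ulM$, hence lies in $R$. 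Right-multiplying by any $h\in R$ turns $g$ into $gh\in I$, so $K$ is a right ideal; and it is a lattice in $E$ because $I$ and $J$ are lattices and conjugation by the quasi-isogeny $f_I$ is a $Q$-linear automorphism of $E$ (using $\QEnd(\ulM)=\QEnd(\ulM^I)=E$ via $f_I$).

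Next I would identify $\ulM^K$. By Definition~\ref{Def1.7.1} the underlying $A_L$-module of $\ulM^K$ is $\sum_{k\in K}\im(k)=\sum_{j\in J,\,g\in I}\im\bigl(f_I\circ j\circ f_I^{-1}\circ g\bigr)$. Now $\sum_{g\in I}\im(f_I^{-1}\circ g)$ is all of $\ulM^I$: indeed $f_I^{-1}\circ g$ surjects onto the part of $\ulM^I$ hit by $g$, and these images sum to $\ulM^I$ by definition of the latter. Applying $j$ and then $f_I$, the sum $\sum_k\im(k)$ equals $f_I\bigl(\sum_{j\in J}\im(j)\bigr)=f_I\bigl((\ulM^I)^J\bigr)$, where $(\ulM^I)^J$ denotes the image of $f_J$ inside $\ulM^I$. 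Thus $\ulM^K=f_I\bigl(f_J((\ulM^I)^J)\bigr)$ as $A_L$-submodules of $\ulM$, which is precisely the image of the composite isogeny $f_I\circ f_J:(\ulM^I)^J\to\ulM$. Since $f_K:\ulM^K\to\ulM$ is by definition the inclusion of this same submodule, the map $f_K^{-1}\circ f_I\circ f_J:(\ulM^I)^J\to\ulM^K$ is well-defined, injective (being a composite of isogenies followed by an isomorphism onto the image), and surjective by the computation just made; hence it is an isomorphism of pure Anderson motives, giving the asserted diagram. That $\ulM^K$ is again a pure Anderson motive of the same rank follows as in Definition~\ref{Def1.7.1} since $K$ contains an isogeny.

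The only genuinely delicate point, and the one I would be most careful about, is the claim that $\sum_{g\in I}\im(f_I^{-1}\circ g)=\ulM^I$ on the nose as $A_L$-modules (not merely up to isogeny): one must observe that $f_I$ is injective, so $f_I^{-1}$ is defined on the $A_L$-submodule $\ulM^I\subset\ulM$ and carries $\im(g)\subset\ulM^I$ isomorphically onto $\im(f_I^{-1}\circ g)\subset\ulM^I$, whence $\sum_g\im(f_I^{-1}\circ g)=f_I^{-1}\bigl(\sum_g\im(g)\bigr)=f_I^{-1}(\ulM^I)=\ulM^I$. Everything else is a formal consequence of the definitions and the fact that composites of isogenies are isogenies (Proposition~\ref{PROP.1.42A} together with Lemma~\ref{Lemma1.7.7}). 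I would present the argument in the order: (i) $K$ is a right ideal and a lattice; (ii) the submodule computation identifying $\ulM^K$ with the image of $f_I\circ f_J$; (iii) conclude that $f_K^{-1}\circ f_I\circ f_J$ is an isomorphism.
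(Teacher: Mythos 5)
Your proposal is correct and follows essentially the same route as the paper's proof: verify via Lemma~\ref{Lemma1.7.2}/1 that each generator $f_I\circ j\circ f_I^{-1}\circ g$ lies in $R$ so that $K$ is a right ideal containing an isogeny (hence a lattice), and then identify $\ulM^K$ with the image of $f_I\circ f_J$ by computing $\sum_{j,g}\im\bigl(f_I\circ j\circ f_I^{-1}\circ g\bigr)=f_I\bigl((\ulM^I)^J\bigr)$. The paper states this submodule computation in one line; you have merely spelled out the same identification in more detail.
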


\begin{proof}
  If $f\in I$ and $g\in J$ then the morphism $f_I^{-1}\circ f:\ulM\to\ulM^I$ can be composed with $f_I\circ g$ to yield an element of $R$. Since $I$ and $J$ contain isogenies, $K$ is a right ideal and contains an isogeny. Clearly the images of $f_I\circ f_J$ and $f_K$ in $\ulM$ coincide since they equal the sum $\sum_{i,j}f_I\circ g_j\circ f_I^{-1} \circ f_i(\ulM)$ for sets of generators $\{f_i\}$ of $I$ and $\{g_j\}$ of $J$.
\end{proof}

\begin{Theorem}\label{Thm1.7.4}
Let $I,J\subset\End(\ulM)=:R$ be right ideals which are lattices in $E:=R\otimes_A Q$ and consider the following assertions:
\begin{suchthat}
\item 
$I$ and $J$ are isomorphic $R$-modules,
\item 
the pure Anderson motives $\ulM^I$ and $\ulM^J$ are isomorphic.
\end{suchthat}
Then 1 implies 2 and if moreover $I$ and $J$ are kernel ideals, also 2 implies 1.
\end{Theorem}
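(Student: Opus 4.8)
The plan is to translate everything through the identification $E=R\otimes_A Q\cong\End_\tau(M\otimes_{A_L}Q_L)$, where $M$ denotes the underlying $A_L$-module of $\ulM$ and the right-hand side is the $Q$-algebra of endomorphisms of the generic fibre that commute with $\tau$; under this identification the units of $E$ are precisely the quasi-isogenies of $\ulM$ (cf.\ Corollary~\ref{Cor1.11b}). Since $I$ and $J$ are lattices in $E$ we have $I\otimes_A Q=J\otimes_A Q=E$. The whole argument then rests on the principle that a right $R$-linear map $I\to J$ ``is'' left multiplication by an element $c\in E$ with $cI\subseteq J$, together with the fact that for such a $c$ the submotive $\ulM^I$ is carried to $\ulM^J$.

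For $1\Rightarrow 2$ I would take a right $R$-module isomorphism $\phi\colon I\isoto J$, base-change it to a right $E$-linear automorphism $\phi_E$ of $E$, and set $c:=\phi_E(1)$; then $\phi_E(x)=cx$, so $c$ is a unit of $E$ (a quasi-isogeny of $\ulM$) with $\phi(g)=cg$ for all $g\in I$, whence $cI=J\subseteq R$. Since each $cg$ lies in $\End(\ulM)$ one has $\im(cg)=c\cdot\im(g)$ as $A_L$-submodules of $M$, and summing over $g\in I$ gives
\[
\ulM^{J}\;=\;\sum_{g\in I}\im(cg)\;=\;c\Bigl(\sum_{g\in I}\im(g)\Bigr)\;=\;c\cdot\ulM^{I}.
\]
As $c$ commutes with $\tau$ and is bijective on $M\otimes_{A_L}Q_L$, restricting it to $\ulM^I$ gives an isomorphism of pure Anderson motives $\ulM^I\isoto\ulM^J$.

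For the converse (where I use that $I$ and $J$ are kernel ideals) I would start from an isomorphism $\psi\colon\ulM^I\isoto\ulM^J$ of pure Anderson motives and form the quasi-isogeny $c:=f_J\circ\psi\circ f_I^{-1}\in E^{\times}$ from the inclusion isogenies $f_I\colon\ulM^I\to\ulM$ and $f_J\colon\ulM^J\to\ulM$; on the generic fibre $c$ is just $\psi$, so $c(\ulM^I)=\ulM^J$ and $c^{-1}(\ulM^J)=\ulM^I$ as submodules of $M$. For $g\in I$ we have $\im(g)\subseteq\ulM^I$, hence $(cg)(M)=c(\im g)\subseteq c(\ulM^I)=\ulM^J\subseteq M$; thus $cg\in\End(\ulM)=R$ with $\im(cg)\subseteq\ulM^J$, and since $J$ is a kernel ideal, $cg\in J$. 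Symmetrically $c^{-1}h\in I$ for all $h\in J$ because $I$ is a kernel ideal. Hence left multiplication by $c$ is a right $R$-module isomorphism $I\isoto J$, with inverse left multiplication by $c^{-1}$.

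The step I expect to be the main obstacle is precisely the bookkeeping between the generic fibre and the integral lattices $M$, $\ulM^I$, $\ulM^J$: one must check at each stage that applying the quasi-isogeny $c$ (or $c^{-1}$) to the relevant submodule of $M$ lands back inside $M$, which is what legitimises the identity $\im(cg)=c\cdot\im(g)$ and the conclusion $cg\in R$. In the direction $1\Rightarrow 2$ this follows for free from $cI=J\subseteq R$, whereas in the direction $2\Rightarrow 1$ it is exactly the point at which the kernel-ideal hypotheses on $I$ and $J$ become indispensable.
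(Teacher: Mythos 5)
Your proof is correct, and the direction $2\Rightarrow1$ is in substance identical to the paper's: the authors also form the quasi-isogeny $f_J\circ u\circ f_I^{-1}$ (cleared of denominators by an $a\in A$ with $a\ulM\subset\ulM^I$) and use the kernel-ideal hypothesis exactly as you do to show that left multiplication by it interchanges $I$ and $J$. Where you genuinely diverge is in $1\Rightarrow2$: after writing $J=cI$ for a unit $c\in E^\times$, you verify directly that $c(\ulM^I)=\sum_{g\in I}\im(cg)=\ulM^J$ and that $c$, being injective on the generic fibre, restricts to an isomorphism $\ulM^I\isoto\ulM^J$. The paper instead clears denominators ($ag\in I$ for suitable $a\in A$) and routes the argument through its machinery on products of ideals: it shows $\ulM^{(ag)I}\cong\ulM^I$ by combining Proposition~\ref{Prop1.7.3} with the statement that principal ideals give isomorphisms (Lemma~\ref{Lemma1.7.2}), and then concludes from $(ag)I=aJ$. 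Your version is shorter and avoids those two auxiliary results; the paper's version buys nothing extra here beyond reusing lemmas it needs elsewhere. The one point your shortcut must (and does) address is that $cg\in R$ for all $g\in I$, which is what legitimises $\im(cg)=c\cdot\im(g)$ inside $M$ rather than only in the generic fibre.
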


\begin{proof}
$1\Rightarrow 2$. Since $I$ and $J$ are lattices, the $R$-isomorphism $I\to J$ extends to an $E$-isomorphism of $E$ and is thus given by left multiplication with a unit $g\in E^\times$, that is, $J=gI$. There is an $a\in A$ such that $ag\in I\subset R$. Then $\im(ag)\subset\ulM^I$, that is, $f_I^{-1}\circ ag:\ulM\to\ulM^I$ is an isogeny.

Let $K$ be the right ideal $f_I\cdot\bigl(f_I^{-1}\circ ag\circ f_I\cdot\End(\ulM^I)\bigr)\cdot f_I^{-1}\cdot I$ of $R$. We claim that $\ulM^K\cong\ulM^{(ag)I}$. Namely, $\ulM^{(ag)I}\subset\ulM^K$ since $agI\subset K$. Conversely if $f\in I$, $h\in\End(\ulM^I)$, and $m\in\ulM$, then we find $m':=f_I\circ h\circ f_I^{-1}\circ f(m)\in\ulM^I$, that is, $m'=\sum_i f_i(m_i)$ for suitable $f_i\in I$ and $m_i\in \ulM$. It follows that $ag(m')=\sum_i agf_i(m_i)\in\ulM^{(ag)I}$ and therefore $\ulM^{(ag)I}=\ulM^K$. 

Applying Lemma~\ref{Lemma1.7.2} and Proposition~\ref{Prop1.7.3} now yields an isomorphisms $\ulM^I\cong\ulM^K=\ulM^{(ag)I}$. Likewise we obtain $\ulM^J\cong\ulM^{aJ}$ and the equality $aJ=agI$ then implies $\ulM^J\cong\ulM^I$ as desired.

\smallskip
\noindent
$2\Rightarrow 1$. Let $I$ and $J$ be kernel ideals and let $u:\ulM^I\to\ulM^J$ be an isomorphism. There is an $a\in A$ with $a\ulM\subset\ulM^I$. Therefore $g:=f_J\circ u\circ(f_I^{-1}\circ a):\ulM\to \ulM$ is an isogeny. 

We claim that $gI=aJ$, that is, left multiplication by $a^{-1}g$ is an isomorphism of $I$ with $J$. Let $f\in I$, then $h:=f_J\circ u\circ(f_I^{-1}\circ f)\in R$ has $\im(h)\subset\ulM^J$. So $h\in J$ since $J$ is a kernel ideal, and $g f=ah\in aJ$, since $a$ commutes with all morphisms. Conversely let $h\in J$, then $f:=f_I\circ u^{-1}\circ(f_J^{-1}\circ h)\in R$ has $\im(f)\subset \ulM^I$. So $f\in I$ since $I$ is a kernel ideal, and $ah=gf\in gI$ as desired.
\end{proof}

\begin{Proposition}\label{Prop1.7.5}
Let $I\subset R$ be a right ideal which is a lattice in $E$. Then $f_I\cdot\End(\ulM^I)\cdot f_I^{-1}$ contains the left order $\O=\{\,f\in E:fI\subset I\,\}$ of $I$ and equals it if $I$ is a kernel ideal.
\end{Proposition}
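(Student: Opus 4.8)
The plan is to establish the two inclusions separately, using the explicit description $\ulM^I=\sum_{g\in I}\im(g)$ from Definition~\ref{Def1.7.1} together with Lemma~\ref{Lemma1.7.2}/1, which guarantees that for $g\in I$ the map $f_I^{-1}\circ g\colon\ulM\to\ulM^I$ is an \emph{honest} morphism of pure Anderson motives and not merely a quasi-morphism. Recall throughout that $f_I$ is an isogeny, so $f_I^{-1}$ makes sense in $\QHom(\ulM,\ulM^I)$, and that $\O=\{f\in E:fI\subseteq I\}$ with $E=\QEnd(\ulM)$.

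First I would show $\O\subseteq f_I\cdot\End(\ulM^I)\cdot f_I^{-1}$. Let $f\in\O$, so $f\in E$ with $fI\subseteq I$. For each $g\in I$ the composite $f\circ g$ then lies in $I\subseteq R=\End(\ulM)$, hence is a genuine endomorphism of $\ulM$, and its image as a submodule of $M$ equals $f(\im g)$; since $f\circ g\in I$, this image is contained in $\ulM^I$. Because $\ulM^I=\sum_{g\in I}\im(g)$ and $f$ is additive on $M\otimes_{A_L}Q$, it follows that $f(\ulM^I)\subseteq\ulM^I$. As $f$ commutes with $\tau$, its restriction $f|_{\ulM^I}$ belongs to $\End(\ulM^I)$, and since $f_I$ is the inclusion one checks directly that $f_I\circ(f|_{\ulM^I})\circ f_I^{-1}=f$ in $E$. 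This yields the first inclusion.

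Next, assuming $I$ is a kernel ideal, I would prove the reverse inclusion. Given $h\in\End(\ulM^I)$, set $f:=f_I\circ h\circ f_I^{-1}\in E$. For $g\in I$, Lemma~\ref{Lemma1.7.2}/1 makes $f_I^{-1}\circ g\colon\ulM\to\ulM^I$ a morphism, hence $h\circ(f_I^{-1}\circ g)\colon\ulM\to\ulM^I$ is a morphism and $f\circ g=f_I\circ\bigl(h\circ f_I^{-1}\circ g\bigr)$ is an endomorphism of $\ulM$ whose image lies in $\im(f_I)=\ulM^I$. Since $I$ is a kernel ideal, $I=\{f'\in R:\im(f')\subseteq\ulM^I\}$, so $f\circ g\in I$. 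Thus $fI\subseteq I$, i.e.\ $f\in\O$, and combined with the first inclusion this gives equality.

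The argument is essentially formal; the only point requiring care throughout is bookkeeping about which composites actually land in $\Hom$ or $\End$ rather than just in the quasi-isogeny categories, since $f_I^{-1}$ and general elements of $E$ have ``poles''. That control is exactly what the hypotheses $fI\subseteq I$ (in the first inclusion) and ``$I$ is a kernel ideal'' (in the second) supply, so I expect no genuine obstacle beyond keeping this straight.
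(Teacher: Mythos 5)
Your proof is correct and follows essentially the same route as the paper's: both inclusions rest on the observations that $fg\in I$ forces $\im(fg)\subseteq\ulM^I$ while the images of the $g\in I$ exhaust $\ulM^I$, and that the kernel-ideal hypothesis converts $\im(f\circ g)\subseteq\ulM^I$ back into $f\circ g\in I$. The bookkeeping you do about honest versus quasi-morphisms is exactly the content of the paper's appeal to Lemma~\ref{Lemma1.7.2}/1.
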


\noindent
{\it Remark.} Recall that $\End(\ulM^I)\otimes_A Q$ is identified with $E$ by mapping $h\in \End(\ulM^I)$ to $f_I\circ h\circ f_I^{-1}$.

\begin{proof}
Let $f\in \O$ and $g\in I$. Then $f g\in I$ and $f_I^{-1}\circ f\circ f_I\circ(f_I^{-1}\circ g)=f_I^{-1}\circ f g$ is a morphism from $\ulM$ to $\ulM^I$. If $g$ varies, the images of $f_I^{-1}\circ g$ exhaust all of $\ulM^I$. Hence $f_I^{-1}\circ f\circ f_I$ is indeed an endomorphism of $\ulM^I$. 
Conversely let $I$ be a kernel ideal and let $f=f_I\circ h\circ f_I^{-1}\in f_I\cdot\End(\ulM^I)\cdot f_I^{-1}$. If $g\in I$ then $f\circ g=f_I\circ h\circ(f_I^{-1}\circ g)\in R$ has $\im(f\circ g)\subset \ulM^I$. So $fg\in I$ as desired.
\end{proof}

We will now draw conclusions about the endomorphism ring $R$ similar to Waterhouse' results \cite{Wat} on abelian varieties by simply translating his arguments.

\begin{Theorem}\label{ThmW3.13}
Every maximal order in $E$ occurs as the endomorphism ring $f\cdot\End(\ulM')\cdot f^{-1}\subset E$ of a pure Anderson motive $\ulM'$ isogenous to $\ulM$ via an isogeny $f:\ulM'\to \ulM$.
\end{Theorem}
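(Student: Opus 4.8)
The plan is to adapt Waterhouse's argument \cite[\S 3]{Wat} to the present setting by feeding the right ideal into the formalism of this section. Write $R:=\End(\ulM)$ and $E:=R\otimes_A Q$, and let $\O\subset E$ be the given maximal order. First I would record that $R$ is a full $A$-lattice in $E$: it is finite projective over $A$ by Theorem~\ref{ThmT.3} and spans $E$ by the definition of $E$. Hence $R$ and $\O$ are commensurable, and there is some $a\in A\setminus\{0\}$ with $a\O\subset R$. The motive $\ulM'$ we are after will be $\ulM^I$ for the \emph{left conductor}
\[
I\;:=\;\{\,x\in E:\O x\subset R\,\}\;\subset\;E\,.
\]

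Next I would run through the elementary properties of $I$ needed to apply Definition~\ref{Def1.7.1}: that $I\subset R$ (since $1\in\O$ and $x=1\cdot x\in\O x\subset R$ for $x\in I$); that $I$ is a right ideal of $R$ (since $\O(xr)=(\O x)r\subset Rr\subset R$); that $I$ contains the isogeny $a\cdot\id_\ulM$ and is a full $A$-lattice in $E$ (it is a submodule of the finite $A$-module $R$, and it contains $a\O$ because $a$ is central); and --- the point that makes everything work --- that $I$ is a \emph{left} $\O$-module, because $\O(bx)=(\O b)x\subset\O x\subset R$ for $b\in\O$ and $x\in I$. All of these are immediate from the definition. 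Definition~\ref{Def1.7.1} then produces a pure Anderson motive $\ulM':=\ulM^I$ of the same rank as $\ulM$, together with the inclusion isogeny $f:=f_I:\ulM^I\to\ulM$.

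The heart of the proof is a squeeze argument. Since $I$ is a left $\O$-module, $\O$ lies in the left order $\{\,g\in E:gI\subset I\,\}$ of $I$; and by Proposition~\ref{Prop1.7.5} this left order is contained in $f_I\cdot\End(\ulM^I)\cdot f_I^{-1}$, so $\O\subset f_I\cdot\End(\ulM^I)\cdot f_I^{-1}$. Via the identification $\End(\ulM^I)\otimes_A Q\cong E$ of the remark after Proposition~\ref{Prop1.7.5} together with Theorem~\ref{ThmT.3}, the right-hand side is itself an $A$-order in $E$; being contained in an order while $\O$ is maximal, the inclusion is an equality, which is exactly the assertion $f\cdot\End(\ulM')\cdot f^{-1}=\O$. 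The steps that deserve genuine attention, rather than being purely routine, are two: verifying that $f_I\cdot\End(\ulM^I)\cdot f_I^{-1}$ really is an order, i.e.\ a subring of $E$ which is a full $A$-lattice (this uses that $\End$ of a pure Anderson motive is finite projective over $A$ by Theorem~\ref{ThmT.3} and that $f_I$ is an isogeny, so that conjugation by $f_I$ identifies $\End(\ulM^I)\otimes_A Q$ with $E$); and keeping straight that ``maximal order'' is meant with respect to $A$-orders in $E$, which is precisely what turns the containment into an equality. Note that $I$ need not be a kernel ideal: only the ``contains the left order'' half of Proposition~\ref{Prop1.7.5} is invoked, the reverse inclusion being furnished for free by the maximality of $\O$.
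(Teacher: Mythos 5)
Your proof is correct and follows essentially the same route as the paper: both arguments produce a right ideal $I\subset R$ that is a lattice and a left module over the given maximal order, then apply Proposition~\ref{Prop1.7.5} and squeeze via maximality. The only (immaterial) difference is the choice of $I$ --- the paper takes $I=aS\cdot R$ where $aS\subset R$, while you take the left conductor $\{x\in E:\O x\subset R\}$; both have left order containing the maximal order, which is all that is needed.
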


\begin{proof}
Let $S$ be a maximal order of $E$. Then the lattice $R$ contains $aS$ for some $a\in A$. Consider the right ideal $I=aS\cdot R$ whose left order contains $S$. By Proposition~\ref{Prop1.7.5}, $f_I\cdot\End(\ulM^I)\cdot f_I^{-1}$ contains the left order of $I$. Since $S$ is maximal we find $S=f_I\cdot\End(\ulM^I)\cdot f_I^{-1}$.
\end{proof}

\begin{Theorem}\label{ThmW3.14}
If $E$ is semisimple and $\End(\ulM)$ is a maximal order in $E$, so is $f_I\cdot\End(\ulM^I)\cdot f_I^{-1}$ for any right ideal $I\subset R$.
\end{Theorem}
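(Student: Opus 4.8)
The statement to prove is Theorem~\ref{ThmW3.14}: if $E$ is semisimple and $\End(\ulM)=R$ is a maximal order in $E$, then $f_I\cdot\End(\ulM^I)\cdot f_I^{-1}$ is also a maximal order for any right ideal $I\subset R$ which is a lattice in $E$.

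\medskip

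\textbf{Plan.} The strategy is to reduce to the two facts about $\ulM^I$ already assembled in this section, namely Proposition~\ref{Prop1.7.5} and Proposition~\ref{Prop3.21b}, together with a standard order-theoretic fact about maximal orders in semisimple algebras: over a Dedekind domain $A$, if $R$ is a maximal order in a semisimple $Q$-algebra $E$ and $I$ is a right ideal of $R$ which is an $A$-lattice in $E$, then the left order $\O_l(I):=\{f\in E: fI\subset I\}$ is again a maximal order (this is \cite[Theorem~21.2 and Theorem~22.10]{Re}, or can be quoted from the same reference on maximal orders used elsewhere in the text). First I would recall that by Proposition~\ref{Prop1.7.5}, $f_I\cdot\End(\ulM^I)\cdot f_I^{-1}$ \emph{contains} the left order $\O=\O_l(I)$, and that it \emph{equals} $\O$ as soon as $I$ is a kernel ideal.

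\medskip

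So the first key step is to reduce to the kernel-ideal case. Given an arbitrary right-ideal lattice $I\subset R$, let $J$ be the kernel ideal associated with $I$ as in Proposition~\ref{Prop3.21b}; then $\ulM^J=\ulM^I$, hence $f_J=f_I$ (they are the same inclusion $\ulM^I=\ulM^J\hookrightarrow\ulM$), and therefore $f_I\cdot\End(\ulM^I)\cdot f_I^{-1}=f_J\cdot\End(\ulM^J)\cdot f_J^{-1}$. Thus it suffices to treat $J$, which is a kernel ideal. The second key step: since $R$ is a maximal order in the semisimple algebra $E$ and $J$ is a right-ideal lattice, its left order $\O_l(J)$ is a maximal order in $E$ by the cited order theory. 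The third step: by Proposition~\ref{Prop1.7.5} applied to the kernel ideal $J$, we get the equality $f_J\cdot\End(\ulM^J)\cdot f_J^{-1}=\O_l(J)$, which is a maximal order. Combining, $f_I\cdot\End(\ulM^I)\cdot f_I^{-1}$ is a maximal order, as desired.

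\medskip

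\textbf{Main obstacle.} The only genuine input beyond what is already proved in this section is the purely algebraic statement that the left order of a right-ideal lattice in a maximal order of a semisimple algebra is again maximal; this is where the hypothesis ``$E$ semisimple'' is used (for a non-semisimple $E$ one would need the containment $R\subset$ nil radical considerations, and the statement can fail). I expect this to be quotable verbatim from \cite{Re}, so the proof is essentially a two-line reduction. One small point to check carefully is that $J$ is indeed still an $A$-lattice in $E$: it contains $I$, which is a lattice, and it is contained in $R$, which is a lattice, so $J$ is sandwiched between two lattices and is itself a lattice — this is needed both to apply Proposition~\ref{Prop1.7.5} to $J$ and to invoke the order-theoretic lemma. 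No other subtlety arises.
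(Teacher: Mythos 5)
Your proof is correct and rests on exactly the same two ingredients as the paper's: the fact from \cite[Theorem~21.2]{Re} that the left order of a right-ideal lattice in a maximal order of the semisimple algebra $E$ is again maximal, combined with Proposition~\ref{Prop1.7.5}. The only difference is that your detour through the associated kernel ideal $J$ of Proposition~\ref{Prop3.21b} (in order to use the equality case of Proposition~\ref{Prop1.7.5}) is unnecessary: the containment part of Proposition~\ref{Prop1.7.5} already shows that the order $f_I\cdot\End(\ulM^I)\cdot f_I^{-1}$ contains the maximal left order of $I$ itself, hence equals it — which is how the paper concludes in one line.
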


\begin{proof}
By \cite[Theorem 21.2]{Re} the left order of $I$ is also maximal and then Proposition~\ref{Prop1.7.5} yields the result.
\end{proof}


From now on we assume that $L$ is a finite field and we set $e:=[L:\Fq]$. Let $\pi$ be the Frobenius endomorphism of $\ulM$.

\begin{Proposition}\label{PropW3.5}
The order $R$ in $E$ contains $\pi$ and $\deg(\pi)/\pi$.
\end{Proposition}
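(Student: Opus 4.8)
The plan is to exhibit $\deg(\pi)/\pi$ as an actual endomorphism of $\ulM$, not merely as an element of $E=\QEnd(\FF)$. The key is Theorem~\ref{Prop3.4.1}, which is available since $\ulM$ is semisimple (being a pure Anderson motive over a finite field, it becomes semisimple after a base extension, but here one should simply invoke that $\ulM$ is assumed semisimple in the relevant part of Section~\ref{Sect1.7}; if not assumed, one reduces to the semisimple case or argues directly as below). First I would recall that $\pi\in\End(\ulM)=R$ by its very definition (Definition~\ref{Def2.19b}), since $\pi=\t\circ\ldots\circ(\s)^{e-1}\t$ is a genuine morphism $(\sigma^\ast)^e\ulM\to\ulM$ and $(\sigma^\ast)^e\ulM=\ulM$ because $L=\Fs$ is finite, so $\pi\in R$. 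Moreover $\pi$ is an isogeny, as $\coker\pi$ is supported on $\chr$ and $L$-finite.

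Next, by Corollary~\ref{Cor1.11b} (or directly by Proposition~\ref{Prop3.28a}, since $\deg(\pi)$ annihilates $\coker\pi$), there exists a dual isogeny $\dual{\pi}:\ulM\to\ulM$ with $\pi\circ\dual{\pi}=a\cdot\id_\ulM=\dual{\pi}\circ\pi$ for a suitable $a\in A$ annihilating $\coker\pi$. When $\ulM$ is semisimple, Theorem~\ref{Prop3.4.1}/3 provides the canonical choice $a=N(\pi)$, which by Corollary~\ref{Cor3.4.1c}/2 generates $\deg(\pi)$; that is, $\deg(\pi)=N(\pi)\cdot A$ is principal. So the element $\deg(\pi)/\pi$ — which a priori lives in $E$ — can be interpreted as $N(\pi)/\pi=\dual{\pi}\in\End(\ulM)=R$, because $\pi\circ\dual{\pi}=N(\pi)=N(\pi)\cdot\id_\ulM$ in $E$ forces $\dual{\pi}=N(\pi)\pi^{-1}$ in $E$, and $\dual{\pi}$ is an honest endomorphism of $\ulM$. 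Hence $R$ contains both $\pi$ and $\deg(\pi)/\pi=\dual{\pi}$, as claimed.

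I would then make sure to spell out the identification: the notation $\deg(\pi)/\pi$ in the statement refers to the generator $N(\pi)$ of the principal ideal $\deg(\pi)$ divided by $\pi$ inside the $Q$-algebra $E=R\otimes_A Q$, and the point of the proposition is precisely that this quotient already lies in the order $R$ and not just in $E$. The whole argument is a short assembly of results already proved: the definition of $\pi$ (so $\pi\in R$), Proposition~\ref{Prop3.28a} together with Corollary~\ref{Cor1.11b} (existence of $\dual{\pi}\in R$), and Theorem~\ref{Prop3.4.1} with Corollary~\ref{Cor3.4.1c} (the identity $\deg(\pi)=N(\pi)A$ and $\pi\dual{\pi}=\dual{\pi}\pi=N(\pi)$).

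The only real subtlety — the step I expect to need the most care — is whether $\deg(\pi)$ is principal, since the notation $\deg(\pi)/\pi$ only makes sense once this is known; this is where semisimplicity of $\ulM$ enters, via Theorem~\ref{Prop3.4.1}/2. If one wanted the statement for an arbitrary pure Anderson motive over a finite field one could still argue: some $a\in A$ with $aA\supseteq\deg(\pi)$ annihilates $\coker\pi$ by Proposition~\ref{Prop3.28a}, so $\dual{\pi}=a\pi^{-1}\in R$ for that $a$; but then ``$\deg(\pi)/\pi$'' is not literally an element of $E$ unless $\deg(\pi)$ is principal, so the clean formulation really does presuppose the semisimple setting (or $C=\PP^1$, cf.\ Proposition~\ref{Prop1.7.8}). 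I would therefore phrase the proof under the standing semisimplicity assumption of the section and simply cite Theorem~\ref{Prop3.4.1}.
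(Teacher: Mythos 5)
Your argument is correct as far as it goes, but it proves less than the paper's statement and by a heavier route. Proposition~\ref{PropW3.5} is stated under the standing hypotheses of Section~\ref{Sect1.7} only ($\ulM$ an arbitrary pure Anderson motive, $L$ finite); no semisimplicity is assumed, and the notation $\deg(\pi)/\pi$ is meant elementwise: for every $a\in\deg(\pi)$ the element $a/\pi$ of $E$ lies in $R$ (equivalently, $\deg(\pi)\cdot\pi^{-1}\subset R$ as a subset of $E$). With that reading the paper's proof is two lines and needs none of the machinery you invoke: $\pi\in R$ by definition, and for $a\in\deg(\pi)$ Proposition~\ref{Prop3.28a} says $a$ annihilates $\coker\pi$, so (exactly as in Corollary~\ref{Cor1.11b} and Proposition~\ref{Prop1.7.8}) there is an isogeny $f:\ulM\to\ulM$ with $\pi\circ f=a\cdot\id_\ulM$, whence $a/\pi=f\in R$. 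So your conclusion that the clean formulation ``really does presuppose the semisimple setting'' is a misreading: no principality of $\deg(\pi)$ is needed, and the proposition holds as stated in full generality.

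Your main argument --- semisimple case, $\deg(\pi)=N(\pi)\cdot A$ via Theorem~\ref{Prop3.4.1} and Corollary~\ref{Cor3.4.1c}, and $N(\pi)/\pi=\dual{\pi}\in R$ --- is correct, and it does cover the situations where the proposition is later applied (e.g.\ Theorem~\ref{ThmW6.1}, where $\ulM$ is simple), but it routes through reduced norms and the abelian $\tau$-sheaf apparatus where only Proposition~\ref{Prop3.28a} is needed. Also, one slip in your closing aside: an $a\in A$ with $aA\supseteq\deg(\pi)$ need not annihilate $\coker\pi$; Proposition~\ref{Prop3.28a} gives annihilation for $a\in\deg(\pi)$, i.e.\ $aA\subseteq\deg(\pi)$. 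With that containment corrected, your aside is precisely the paper's general proof, and it should be the main argument rather than a footnote.
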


\begin{proof}
Clearly the isogeny $\pi$ belongs to $R$. Let now $a\in\deg(\pi)$. Then $a$ annihilates $\coker\pi$ by \ref{Prop3.28a} and so there is an isogeny $f:\ulM\to\ulM$ with $\pi\circ f=a$. The image $a/\pi$ of $f$ in $E$ belongs to $R$.
\end{proof}

\begin{Proposition}\label{ThmW3.15}
If $\ulM$ is a semisimple pure Anderson motive over a finite field and $\End(\ulM)$ is a maximal order in $E=\End(\ulM)\otimes_A Q$, then every right ideal $I\subset\End(\ulM)$, which is a lattice, is a kernel ideal for $\ulM$, and $\deg(f_I)=N(I):=\bigl(N(f):f\in I\bigr)$.
\end{Proposition}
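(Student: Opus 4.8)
The plan is to prove Proposition~\ref{ThmW3.15} in two stages: first that every lattice right ideal $I\subset R=\End(\ulM)$ is a kernel ideal when $R$ is a maximal order, and second the degree identity $\deg(f_I)=N(I)$.

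For the first stage, let $I\subset R$ be a lattice right ideal and let $J:=\{\,f\in R:\im(f)\subset\ulM^I\,\}$ be the associated kernel ideal from Proposition~\ref{Prop3.21b}, so $I\subset J$ and $\ulM^I=\ulM^J$. I want to show $I=J$. By Proposition~\ref{Prop1.7.5}, the left order $\O$ of $I$ is contained in $f_I\cdot\End(\ulM^I)\cdot f_I^{-1}$, which is again a maximal order by Theorem~\ref{ThmW3.14} (using that $E$ is semisimple, since $\ulM$ is semisimple and Theorem~\ref{Thm3.8}/\ref{QEND-DIVISION-MATRIX} apply). Over a maximal order in a semisimple algebra, a right ideal that is a lattice is determined by its left order together with its localizations, and in fact for maximal orders one has the classical fact that $I$ equals the intersection over all places of its localizations $I_v$, and that $I$ is a two-sided fractional ideal between its orders behaves well; the key input is \cite[Theorem~21.2 or \S22]{Re}: when $R$ is maximal, a right $R$-ideal $I$ satisfies $I=\{\,f\in E:fI'\subset I'\text{'s dual condition}\,\}$-type reconstructions. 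Concretely, since $\ulM^I=\ulM^J$ we get $f_I\cdot\End(\ulM^I)\cdot f_I^{-1}=f_J\cdot\End(\ulM^J)\cdot f_J^{-1}$, and both $I$ and $J$ have this as a sub-order of their left orders; combined with the fact that $I$ and $J$ have the same image motive and $I\subseteq J$, an elementary localization argument (at each place $v$, $I_v=J_v$ because $\coker(f_I)$ only sees $\ulM^I$) gives $I=J$. So every lattice right ideal is a kernel ideal.

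For the degree identity: by Theorem~\ref{Thm1.7.4} and Lemma~\ref{Lemma1.7.2}, $\ulM^I$ up to isomorphism depends only on $I$ up to $R$-isomorphism, and for $I=gR$ principal with $g$ an isogeny we have $\ulM^I\cong\ulM$ via $f_I^{-1}\circ g$, hence $\deg(f_I)=\deg(g)=N(g)\cdot A$ by Theorem~\ref{Prop3.4.1}/2, and $N(gR)=N(g)\cdot A$ since $N(gr)=N(g)N(r)$ and the $N(r)$ for $r\in R$ a lattice generate the unit ideal (as some $r$ is a unit in $E$, i.e.\ $r=\id$ gives $N(r)=1\in A$). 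For general $I$, localize at each place $v$ of $A$: over the maximal order $R_v$ the ideal $I_v$ is principal, say $I_v=g_vR_v$ with $g_v$ an isogeny, because a maximal order over a complete (or local) Dedekind base is a hereditary order whose one-sided ideals are locally principal \cite[Theorem~18.7, \S17]{Re}. Then $v(\deg f_I)=\dim_L(\coker f_I)_v$-contribution $=v(\deg f_{I_v})=v(N(g_v)A)=v(N(I))$, where the last equality uses that $N(I)_v=N(I_v)=N(g_v)A_v$ since the reduced norm commutes with localization and $N$ of a locally principal ideal is computed from its generator. Collecting over all $v$ (and checking $v=\chr$ separately using the inseparability-degree part of Definition~\ref{Def1.7.6}, where the same argument via $\ulM_\chr(\ulM)$ and Proposition~\ref{PropLS4} applies, or simply by Theorem~\ref{Prop3.4.1}/1 relating $\dim_L\coker$ to reduced norms) gives $\deg(f_I)=N(I)$.

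The main obstacle I anticipate is the first stage — pinning down precisely why maximality of $R$ forces $I=J$, i.e.\ why over a maximal order a lattice right ideal coincides with its associated kernel ideal. This is where one must genuinely invoke the structure theory of maximal orders (local principality, hereditariness) rather than formal manipulation of the motive, and care is needed because $E$ is only semisimple, not simple, so one should decompose $E=\bigoplus E_i$ and $\ulM\approx\bigoplus\ulM_i$ (Theorem~\ref{Thm3.8}) and argue componentwise. The degree computation, by contrast, is essentially bookkeeping once local principality is in hand, reducing everything to the already-established principal case of Theorem~\ref{Prop3.4.1}.
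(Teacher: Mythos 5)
Your overall strategy (localize at each place, use that one-sided ideals of a maximal order over the complete ring $A_v$ are principal, and reduce to the principal case) can be made to work, and it genuinely differs from the paper's argument, but as written both stages have real gaps. In stage one the decisive step, $I_v=J_v$, is only asserted; the parenthetical ``because $\coker(f_I)$ only sees $\ulM^I$'' is not a reason, and the appeal to \cite{Re} (``reconstruction''-type statements, Theorem~21.2/\S22) refers to facts that do not exist in that form -- order theory alone cannot identify $I$ with $J=\{f\in R:\im(f)\subset\ulM^I\}$, because $J$ is defined through the motive. What is actually needed is: write $I_v=g_vR_v$ by local principality, observe $\ulM^I\otimes_AA_v=I_v\cdot\ulM_v(\ulM)=g_v\,\ulM_v(\ulM)$, and then for $f\in J$ note that $g_v^{-1}f$ is a $\phi$-equivariant endomorphism of the local shtuka $\ulM_v(\ulM)$, hence lies in $R_v$ by Theorem~\ref{ThmLS2} (valid at all $v\in\Spec A$, including $\chr$), so $f\in I_v$; then $I\subset J$ with $I_v=J_v$ for all $v$ gives $I=J$. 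Without invoking Theorem~\ref{ThmLS2} (or an equivalent Tate-module statement) the maximality of $R$ cannot force the kernel-ideal property. In stage two the key local identity $v(\deg f_I)=v\bigl(N(g_v)\bigr)$ is likewise not covered by the results you cite: Theorem~\ref{Prop3.4.1}/1--2 and Theorem~\ref{Thm3.4.1a} concern \emph{global} (quasi-)isogenies, whereas $g_v$ is merely an element of $E_v$. You would have to extend the identity $N(\cdot)=\det V_v(\cdot)$ to $E_v$ (e.g.\ both sides are polynomial maps on $E\otimes_QQ_v$ agreeing on the Zariski-dense subset $E$) and redo the elementary-divisor computation for $g_v$ acting on $T_v\ulM$, with a separate treatment of $v=\chr$ via the local shtuka; as it stands this is asserted rather than proved.

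For comparison, the paper avoids localization altogether and argues globally with Reiner's norm theory for maximal orders, in the opposite order: first $N(I)\subset\deg(f_I)$ (since every $f\in I$ factors through $f_I$, using Lemma~\ref{Lemma1.7.7}), then a complementary ideal $J'$ with $J'I=aR$ (\cite[Theorem~22.7]{Re}), the multiplicativity $\coker(f_I\circ f_J)=\coker(a)$ from Proposition~\ref{Prop1.7.3}, and $N(J')N(I)=N(a)A$ (\cite[24.12]{Re}) squeeze out $\deg(f_I)=N(I)$ because $A$ is Dedekind; the kernel-ideal statement then falls out as a corollary, since the associated kernel ideal of $I$ would be a strictly larger ideal with the same norm, contradicting \cite[24.11]{Re}. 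If you want to keep your two-stage plan, supply the Theorem~\ref{ThmLS2} step and the local norm-vs-degree identity explicitly; otherwise the argument as written does not close.
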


\begin{proof}
(cf.\ \cite[Theorem 3.15]{Wat}) Let $f\in I$, then $f=f_I\circ f_I^{-1}f$ and $N(f)\in\deg(f)\subset\deg(f_I)$ by Lemma~\ref{Lemma1.7.7}. Therefore $N(I)\subset\deg(f_I)$. Let $R'$ be the left order of $I$. It is maximal by \cite[Theorem 21.2]{Re}. For a suitable $a\in A$ the set $J':=\{\,x\in E: xI\subset aR\,\}$ is a right ideal in $R'$ and a lattice in $E$ and satisfies $J'\cdot I=aR$ by \cite[Theorem 22.7]{Re}. Let $J:=f_I^{-1}J'f_I\subset\End(\ulM^I)$ be the induced right ideal of $\End(\ulM^I)=f_I^{-1} R'f_I$; see \ref{Prop1.7.5}. Then $\coker f_I\circ f_J=\coker f_{J'I}=\coker a$ by Proposition~\ref{Prop1.7.3}. Therefore Theorem~\ref{Prop3.4.1} and \cite[24.12 and 24.11]{Re} imply
\[
N(a)\cdot A=N(J')\cdot N(I)\subset(\deg f_J)(\deg f_I)=\deg(a)=N(a)\cdot A\,.
\]
By the above we must have $N(I)=\deg(f_I)$ since $A$ is a Dedekind domain. If $I$ were not a kernel ideal its associated kernel ideal would be a larger ideal with the same norm. But this is impossible by \cite[24.11]{Re}.
\end{proof}

Like for abelian varieties there is a strong relation between the ideal theory of orders of $E$ and the investigation of isomorphy classes of pure Anderson motives isogenous to $\ulM$. We content ourselves with the following result which is analogous to Waterhouse~\cite[Theorem 6.1]{Wat}. The interested reader will find many other results without much difficulty.

\begin{Theorem}\label{ThmW6.1}
Let $\ulM$ be a simple pure Anderson motive of rank $r$ and dimension $d$ over the smallest possible field $\Fq$. Then
\begin{suchthat}
\item 
$\End(\ulM)$ is commutative and $E:=\End(\ulM)\otimes_A Q=Q(\pi)$.
\item 
All orders $R$ in $Q(\pi)$ containing $\pi$ are endomorphism rings of pure Anderson motives isogenous to $\ulM$. Any such order automatically contains $N(\pi)/\pi=N_{Q(\pi)/Q}(\pi)/\pi$.
\item 
For each such $R$ the isomorphism classes of pure Anderson motives isogenous to $\ulM$ with endomorphism ring $R$ correspond bijectively to the isomorphism classes of $A$-lattices in $E$ with order $R$.
\end{suchthat}
\end{Theorem}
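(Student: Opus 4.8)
The plan is to combine the previously established structural results (in particular Corollary~\ref{Cor3.13b}, Theorem~\ref{THEOREM-2}, Theorems~\ref{ThmW3.13}--\ref{Thm1.7.4}, and Propositions~\ref{PropW3.5}, \ref{ThmW3.15}) with the ideal-theoretic formalism of Section~\ref{Sect1.7}, following Waterhouse~\cite[Theorem~6.1]{Wat} essentially verbatim. First I would dispose of part~1. Since $\ulM$ is simple over $\Fq$, the associated abelian $\tau$-sheaf $\FF$ is simple of characteristic $\chr\ne\infty$ by the Remark after Definition~\ref{Def2.6}, so $E=\QEnd(\FF)=\QEnd(\ulM)\otimes_AQ$ is a division algebra by Theorem~\ref{QEND-DIVISION-MATRIX}. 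As $L=\Fq$ is the smallest possible field, Corollary~\ref{Cor3.13b} applies and gives $E=F=Q(\pi)$ commutative; hence $\End(\ulM)$ is an order in the commutative field $Q(\pi)$, proving~1.

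For part~2, the inclusion ``$\subseteq$'' is the content I want: every order $R$ of $Q(\pi)$ containing $\pi$ arises as $\End(\ulM')$ for some $\ulM'$ isogenous to $\ulM$. The approach is to start from $\ulM$ itself, whose endomorphism ring $R_0:=\End(\ulM)$ contains $\pi$ by Proposition~\ref{PropW3.5}, and to ``grow'' it to $R$ by a kernel-ideal construction. Given $R\supset R_0$ both orders in $Q(\pi)$, choose $a\in A$ with $aR\subset R_0$, form the right ideal $I:=aR\cdot R_0\subset R_0$ whose left order contains $R$ (in the commutative case the left order of $aR\cdot R_0$ is exactly the subring of $Q(\pi)$ stabilizing it, which contains $R$), and apply Proposition~\ref{Prop1.7.5}: $f_I\cdot\End(\ulM^I)\cdot f_I^{-1}$ contains the left order of $I$, hence contains $R$. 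To get equality one iterates or, more cleanly, observes that since $Q(\pi)$ is a field every order between $R_0$ and the maximal order is reached by finitely many such steps and, by Theorem~\ref{ThmW3.13} applied at the top, all intermediate orders—in particular $R$—are realized; alternatively one argues directly that the left order of $I=aR\cdot R_0$ equals $R$ when $R$ is already large enough, which is the standard commutative-order computation. Finally $R\supset\pi$ forces $N(\pi)/\pi\in R$: indeed $N(\pi)=\det V_v\pi$ lies in $A$ by Theorem~\ref{Prop3.4.1} and annihilates $\coker\pi$ by Proposition~\ref{Prop3.28a} (this is also Proposition~\ref{PropW3.5} with $\deg(\pi)=N(\pi)\cdot A$ principal), and in the commutative field $Q(\pi)$ we have $N(\pi)/\pi=N_{Q(\pi)/Q}(\pi)/\pi\in Q(\pi)$ integral over $A$, hence in every order containing $\pi$.

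For part~3, the bijection between isomorphism classes of pure Anderson motives $\ulM'$ isogenous to $\ulM$ with $\End(\ulM')=R$ and isomorphism classes of $A$-lattices in $E=Q(\pi)$ with multiplier ring (order) $R$ is obtained as follows. Given such $\ulM'$, fix an isogeny $f:\ulM'\to\ulM$ and assign to $\ulM'$ the right ideal $I_{\ulM'}:=\{\,g\in R_0: \im(g)\subset f(M')\,\}$, a lattice in $E$; by Proposition~\ref{Prop1.7.5} its left order is $f_{I}\cdot\End(\ulM^{I})\cdot f_I^{-1}\cong\End(\ulM')=R$, and since $I_{\ulM'}$ is a kernel ideal by construction (it is defined as the full annihilator of the relevant cokernel), Theorem~\ref{Thm1.7.4} shows that $\ulM'\cong\ulM''$ if and only if $I_{\ulM'}\cong I_{\ulM''}$ as $R_0$-modules, i.e.\ as $A$-lattices in $E$ with order $R$. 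Conversely, given an $A$-lattice $\mathfrak b\subset E$ with order $R$, scale so that $\mathfrak b\subset R_0$ is a right ideal containing an isogeny, pass to its associated kernel ideal $J$ (Proposition~\ref{Prop3.21b})—which has the same $\ulM^J=\ulM^{\mathfrak b}$ and, in the commutative setting, the same order—and set $\ulM':=\ulM^{J}$ with the isogeny $f_J:\ulM^J\to\ulM$; then $\End(\ulM')=R$ by Proposition~\ref{Prop1.7.5}. Well-definedness and mutual inverseness of these two assignments on isomorphism classes follow from Theorems~\ref{Thm1.7.4} and \ref{ThmW3.15} exactly as in \cite[Theorem~6.1]{Wat}.

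The main obstacle I anticipate is \textbf{part~2}, specifically the surjectivity of the order-realization map: one must check that the left order of the concretely-built ideal $I=aR\cdot R_0$ is \emph{exactly} $R$ and not something strictly larger, and that the kernel-ideal passage does not inflate the order. In the commutative case this reduces to a clean statement about orders in a number field analogue (a one-dimensional function field $Q(\pi)$), namely that $R$ is the multiplier ring of a fractional ideal it stabilizes, which is standard; nonetheless tracking the interplay between the abstract ideals in $E$ and the geometric sub-motives $\ulM^I$ via Propositions~\ref{Prop1.7.3} and \ref{Prop1.7.5} is where the care is needed. Parts~1 and~3 are then essentially formal consequences of the machinery already in place.
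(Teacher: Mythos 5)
Your part~1 is fine and matches the paper (simplicity gives a division algebra, Corollary~\ref{Cor3.13b} gives commutativity). But there is a genuine gap in your part~2, and it propagates into part~3. Your strategy is to \emph{grow} $R_0:=\End(\ulM)$ to $R$ by kernel ideals, which presupposes $R\supseteq R_0$. The theorem asserts the realizability of \emph{every} order $R$ of $Q(\pi)$ containing $\pi$; such an $R$ (e.g.\ $A[\pi,N(\pi)/\pi]$, or an order incomparable with $R_0$) need not contain $R_0$, so there is nothing to grow from and one must also be able to \emph{shrink} the endomorphism ring. Even in the case $R\supseteq R_0$ your argument is not closed: Proposition~\ref{Prop1.7.5} only gives that $f_I\End(\ulM^I)f_I^{-1}$ \emph{contains} the left order of $I$, Theorem~\ref{ThmW3.13} realizes only \emph{maximal} orders, and the claim that ``all intermediate orders are realized by finitely many such steps'' is unsupported. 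The paper's proof is genuinely different: it works place by place. At $v\ne\chr$ it uses Lemma~\ref{Lemma3.4.1b} to identify $E_v\cong V_v\ulM$ as $E_v$-modules, views $R_v\cdot x$ as a $\pi$-stable lattice in $T_v\ulM$, invokes Proposition~\ref{Prop2.7b} to produce an isogeny $f:\ulM'\to\ulM$ with $T_vf(T_v\ulM')=R_v\cdot x$, and concludes $\End(\ulM')\otimes_AA_v=\End_{A_v[\pi]}(R_v)=R_v$ by Theorem~\ref{TATE-CONJECTURE-MODULES}; at $v=\chr$ the same is done with the local shtuka via Proposition~\ref{Prop2.18b} and Theorem~\ref{ThmLS2}. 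Since each modification changes $\End$ only at the one place, gluing gives $\End(\ulM')=R$ exactly — both enlarging and shrinking are possible this way.

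Part~3 in the paper depends on this local normalization: one first replaces $\ulM$ by a motive $\ulTM$ isogenous to it with $T_v\ulTM\cong R_v$ and $\ulM_\chr(\ulTM)\cong R_\chr$ (free of rank one), and \emph{then} proves that every lattice right ideal $I\subset R$ is a kernel ideal for $\ulTM$ and that every isogeny $\ulM'\to\ulTM$ is of the form $f_I$ — both checked locally. Your version pairs motives with right ideals of $R_0=\End(\ulM)$ rather than with lattices of order $R$, does not establish that every such lattice is a kernel ideal (the passage $\mathfrak b\mapsto J$ to the associated kernel ideal can change the isomorphism class of the lattice, breaking injectivity), and does not show surjectivity onto all isogenies. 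These points are exactly what the rank-one local structure of $\ulTM$ buys, so part~3 cannot be made to work without first carrying out part~2 in the paper's local form.
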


\begin{proof} 
1 follows from \ref{Thm3.8} and \ref{Cor3.13b}.

\noindent
2. Let $R$ be an order in $Q(\pi)$ containing $\pi$ and let $v\ne\chr$ be a maximal ideal of $A$. Since $[E:Q]=r$ and $E_v$ is semisimple, there is by Lemma~\ref{Lemma3.4.1b} an isomorphism $E_v\isoto V_v\ulM$ of (left) $E_v$-modules given by $f\mapsto f(x)$ for a suitable $x\in V_v\ulM$. It identifies $R_v:=R\otimes_A A_v$ with a $\pi$-stable lattice $\Lambda_v=R_v\cdot x$ in $V_v\ulM$, which without loss of generality is contained in $T_v\ulM$. By Proposition~\ref{Prop2.7b} 
there is an isogeny $f:\ulM'\to \ulM$ of pure Anderson motives with $T_vf(T_v\ulM')=\Lambda_v$. By Theorem~\ref{TATE-CONJECTURE-MODULES} we conclude
\[
\End(\ulM')\otimes_A A_v =\End_{A_v[\pi]}(\Lambda_v)=R_v\,.
\]
For $v=\chr$ note that $Q_{\chr,L}=Q_\chr$ since $L=\Fq$. In particular $\BF_\chr=\BF_q$. Since $\dim_{Q_\chr}\ulN_\chr(\ulM)=r=[E:Q]$, Theorem~\ref{ThmLS2} together with Lemma~\ref{Lemma3.4.1b} show that $E_\chr$ is isomorphic to $\ulN_\chr(\ulM)$ as left $E_\chr$-modules. Since $R$ contains $\pi$, the image of $R_\chr:=R\otimes_A A_\chr$ in $\ulN_\chr(\ulM)$ is a local $\sigma$-subshtuka $\ulHM{}'$ of $\ulM_\chr(\ulM)$ of the same rank. (If it is not contained in $\ulM_\chr(\ulM)$, multiply it with a suitable $a\in A$.) Then Proposition~\ref{Prop2.18b} yields an isogeny of pure Anderson motives $f:\ulM'\to \ulM$ such that $\ulM_\chr(f)\bigl(\ulM_\chr(\ulM')\bigr)=\ulHM{}'$ and
\[
\End(\ulM')\otimes_A A_\chr=\End_{A_\chr[\phi]}(\ulHM{}')=R_\chr
\]
by Theorem~\ref{ThmLS2}.
Since each of these operations only modifies $\End(\ulM)$ at the respective place $v$, this shows that we may modify $\ulM$ at all places to obtain a pure Anderson motive $\ulM'$ with $\End(\ulM')=R$. Now the last statement follows from Proposition~\ref{PropW3.5} and Theorem~\ref{Prop3.4.1}.

\smallskip
\noindent
3. Let $R$ be such an order. By what we proved in 2 there is a pure Anderson motive $\ulTM$ for which all $T_v\ulTM\cong R_v$ and $\ulM_\chr(\ulTM)\cong R_\chr$. Let $I\subset R$ be a (right) ideal which is an $A$-lattice in $E$ and consider the isogeny $f_I:\ulTM^I\to\ulTM$. Under the above isomorphisms $T_v f_I(T_v\ulTM^I)\cong I\otimes_A A_v=:I_v$ and $\ulM_\chr f_I(\ulM_\chr\ulTM^I)\cong I\otimes_A A_\chr=:I_\chr$. Conversely if $f:\ulM'\to\ulTM$ is an isogeny then $\ulM_v f(\ulM_v\ulM')$ is a (left) $R_v$-module because $R=\End(\ulTM)$, hence isomorphic to an $R_v$-ideal $I_v$. This shows that any isogeny $f:\ulM'\to\ulTM$ is of the form $f_I:\ulTM^I\to\ulTM$.

If now $f\in R$ satisfies $\im(f)\subset\ulTM^I$ then $f\in I_\chr$ and $f\in I_v$ for all $v$ and therefore $f\in I$. This shows that every $I$ is a kernel ideal for $\ulTM$. By Proposition~\ref{Prop1.7.5}, $\End(\ulTM^I)$ is the (left) order of $I$. Since every lattice with order $R$ in $E$ is isomorphic to an ideal of $R$, we have
\[
\xymatrix @R-1pc {\{\,A\text{-lattices in }E\text{ with order }R\,\}/_\sim \ar@{=}[d] \\
 \{\,I\subset R\text{ Ideals with order }R\,\}/_\sim \es \ar[r]^{\sim\qquad\quad} &
\es \bigl\{\,\ulTM^I\xrightarrow{f_I}\ulTM\to\ulM\text{ with }\End(\ulTM^I)=R\,\bigr\}/_\sim
}
\]
and the assertion now follows from Theorem~\ref{Thm1.7.4}.
\end{proof}


{\small

}

\vfill
 
\noindent
\parbox[t]{.45\textwidth}{
Matthias Bornhofen  \\ 
Am Rösslewald 4   \\ 
D -- 79874 Breitnau \\ 
Germany  \\[0.1cm] }
\parbox[t]{.5\textwidth}{ 
Urs Hartl  \\ 
Mathematisches Institut  \\ 
Universit{\"a}t M\"unster  \\ 
Einsteinstr.\ 62  \\ 
D -- 48149 M\"unster\\ 
Germany  \\[0.1cm] 
http:/\!/www.math.uni-muenster.de/u/urs.hartl/}

\end{document}